\documentclass[11pt]{article}

\usepackage{amsmath,amssymb,amsthm}
\usepackage[margin=1in]{geometry}
\usepackage[hidelinks]{hyperref}
\allowdisplaybreaks
\numberwithin{equation}{section}

\hypersetup{
  pdftitle={Componentwise and Measure-Sensitive Bounds for One-Dimensional Time-Frequency Localization},
  pdfauthor={Ahmadreza Azimifard},
  pdfsubject={Time-frequency localization, residual energy, measurable windows,
    soft masks, moments, and separated-interaction kernels},
  pdfkeywords={time-frequency localization, plunge region, defect trace,
    residual energy, Schatten bounds, Chebyshev-Bessel approximation,
    finite-rank approximation, measurable windows, soft masks,
    separated interactions}
}

\newtheorem{theorem}{Theorem}[section]
\newtheorem{proposition}[theorem]{Proposition}
\newtheorem{corollary}[theorem]{Corollary}
\newtheorem{lemma}[theorem]{Lemma}
\newtheorem{example}[theorem]{Example}
\theoremstyle{definition}

\theoremstyle{remark}
\newtheorem{remark}[theorem]{Remark}

\newcommand{\R}{\mathbb R}
\newcommand{\F}{\mathcal F}
\newcommand{\Sp}{\mathcal S}
\newcommand{\one}{\mathbf 1}
\newcommand{\eps}{\epsilon}
\newcommand{\Lpl}{\Lambda}
\newcommand{\Csharp}{C_{\sharp}}
\newcommand{\logplus}{\log_{2,+}}

\title{Componentwise and Measure-Sensitive Bounds for\\
One-Dimensional Time--Frequency Localization}

\author{Ahmadreza Azimifard}

\date{}

\begin{document}
\maketitle

\begin{abstract}
We prove explicit upper bounds for the number of eigenvalues of a
one-dimensional time--frequency localization operator in an open transition
window.  An abstract residual-energy principle combines a rank-$N$ approximant
with the Hilbert--Schmidt energy of its remainder and retains the strict integer
correction imposed by the open threshold.  For the Fourier kernel, centered
least-squares polynomials, optimized moments, and Chebyshev--Bessel truncations
improve or match the diameter--Taylor certificate; on interval blocks the
moment envelope gains the exact factor $1/(2N+1)$.  Finite measurable
partitions yield best-polynomial matrix and residual-energy bounds, together
with two-level threshold allocations that exploit spatial or Fourier-side
orthogonality and remain stable under large empty gaps.

A complementary defect channel gives a componentwise variational Schatten
envelope for finite interval unions.  For arbitrary finite-measure hard
windows, exact cross-boundary and symmetric-difference formulas give strict
transition counting and a certified near-one cluster that can be subtracted
from direct counts.  We also treat integrable soft masks, a root-sum-square
core--tail decomposition, and unbounded windows controlled by moments.  An
explicit unbounded finite-measure set has all polynomial moments and infinite
fractional translation perimeter for every order in $(0,1]$, yet admits an
$O(\log(1/\epsilon)/\log\log(1/\epsilon))$ certificate.  Finally, kernels with
finitely many bounded separated phase interactions and finite separated
amplitude rank admit factorial, least-squares, Chebyshev--Bessel,
span-compressed, and anisotropic-degree bounds.  For bounded hard Fourier
windows, the final hybrid is no larger than the corresponding earlier
certificates proved here; no universal comparison with all regular-domain
estimates is asserted.
\end{abstract}

\section{Introduction}
\label{sec:introduction}

Let $P_A$ denote multiplication by the indicator of a measurable set
$A\subset\mathbb R$, and let
\[
  Q_B:=\mathcal F^{-1}P_B\mathcal F,
  \qquad
  (\mathcal Ff)(\xi)=\int_{\mathbb R}e^{-2\pi i x\xi}f(x)\,dx.
\]
For $c>0$ the one-dimensional time--frequency localization operator is
\begin{equation}
  K_{A,B,c}:=P_{cB}\mathcal FP_A,
  \qquad
  S_{A,B,c}:=K_{A,B,c}^*K_{A,B,c}=P_AQ_{cB}P_A.
  \label{eq:intro-KS}
\end{equation}
The operator $K_{A,B,c}$ is a contraction, and
$S_{A,B,c}=K_{A,B,c}^*K_{A,B,c}$ is a positive contraction.  For
$0<\epsilon<1/2$ we study the open-window count
\begin{equation}
  \Lambda_\epsilon(A,cB)
  :=\operatorname{rank}\mathbf1_{(\epsilon,1-\epsilon)}(S_{A,B,c}).
  \label{eq:intro-plunge-count}
\end{equation}
When a general positive contraction $S$ replaces $S_{A,B,c}$, we write
$\Lambda_\epsilon(S)$ for the same spectral count.
For a compact operator $L$, singular values are ordered
$s_1(L)\ge s_2(L)\ge\cdots$, repeated with multiplicity, and
\[
  n(\tau;L):=\#\{k:s_k(L)>\tau\}.
\]

Classical work identifies the asymptotic transition profile for interval
windows, while recent quantitative results control the count under interval,
boundary-size, or fractional-regularity hypotheses
\cite{LandauWidom,KarnikRombergDavenport,KulikovLarsen,
HughesIsraelMayeli,AzimifardIndependent,AzimifardDeterminant}.
Those results answer difficult questions that are not displaced here.  In
particular, this paper does not improve the Landau--Widom coefficient, the
moving-threshold determinant asymptotics of
\cite{AzimifardDeterminant}, or the best known logarithmic order in their
respective regular regimes.  Our question is different:

\begin{quote}
Can one obtain explicit finite-parameter transition bounds that retain local
measure and local geometry, continue to work for soft masks and certain
unbounded rough windows, and extend beyond the Fourier phase itself?
\end{quote}

We answer this question by separating the spectral argument from the kernel
approximation.  Two complementary engines result.

\paragraph{The defect/Schatten engine.}
For a positive contraction $S$, the transition count is exactly the strict
singular-value count of $(S-S^2)^{1/2}$ at
$\sqrt{\epsilon(1-\epsilon)}$.  For finite unions of intervals, the
one-sided block estimate from \cite[Proposition~5.1]{AzimifardIndependent}
can then be assembled without replacing all component lengths by their
maxima.  An independent dilation variable for every spatial--frequency pair
produces the explicit two-branch envelope $G_*$ in
Theorem~\ref{thm:sch-componentwise}.  This engine is useful across a
broad threshold range and preserves interval-component data.
For arbitrary finite-measure hard windows, we use the cross-boundary
defect-trace identity of \cite[Proposition~1.2]{HughesIsraelMayeli} and its
frequency-dual and symmetric-difference forms.  The defect also certifies part
of the spectral cluster near one, which can be subtracted from any valid
direct upper count.

\paragraph{The direct finite-rank engine.}
If $S=K^*K$, then
$\Lambda_\epsilon(S)\le n(\sqrt\epsilon;K)$.  A rank-at-most-$N$
approximation of
$K$ with error at most $\sqrt\epsilon$ therefore gives an integer count of at
most $N$.  More generally, a Hilbert--Schmidt residual of squared energy
$H_N$ gives the sharper strict count
$N+\kappa(H_N/\epsilon)$ even before the entire remainder falls below the
threshold.  On the imaginary axis the exponential Taylor remainder is exactly
bounded by $|t|^N/N!$, with no generic $e^{|t|}$ loss.  If either window has
measure zero, then $K=0$ and the count vanishes.  For positive-measure bounded
measurable windows, this yields
\begin{equation}
  s_{N+1}(P_{cB}\mathcal FP_A)
  \le \sqrt{c|A||B|}\,
       \frac{\left(\frac{\pi c}{2}
       \operatorname{diam}_{\rm ess}(A)
       \operatorname{diam}_{\rm ess}(B)\right)^N}{N!}
  \label{eq:intro-main-envelope}
\end{equation}
for arbitrary bounded measurable $A$ and $B$, not only intervals.  The exact
factorial certificate is primary; a principal-branch Lambert--$W_0$
expression is a convenient explicit surrogate.

The distinction between the two engines matters.  The defect identity uses
the smaller threshold $\sqrt{\epsilon(1-\epsilon)}$ but is exact for the
two-sided spectral window.  The direct method uses the larger threshold
$\sqrt\epsilon$ and discards the upper endpoint, but can be much sharper when
the same kernel approximation controls both routes.  We retain the minimum of
available certificates rather than claim that one construction dominates the
other.

\subsection{Main contributions and exact scope}
\label{sec:intro-contributions}

The paper establishes the following results.  Statements imported from
earlier work are identified at the point of use.

\begin{enumerate}
  \item Section~\ref{sec:abstract-spectral} gives an operator-independent
  transfer theorem, a finite-rank approximation rule, and a residual-energy
  counting lemma with exact strict-threshold rounding.  It also gives direct
  and weighted-defect master bounds, the exact discrete factorial certificate,
  and a careful Lambert--$W_0$ inversion including zero-parameter and rounding
  cases.  Thus Fourier analysis is isolated as one source of approximation
  errors, not built into the spectral mechanism.

  \item Section~\ref{sec:componentwise-schatten} develops the componentwise
  variational Schatten theorem for finite interval unions.  Every product
  $c|I_i||J_j|$ is retained, every block receives its own dilation scale, and
  the remaining Schatten exponent is optimized only after the finite sum is
  assembled.

  \item Section~\ref{sec:fourier-extensions} proves
  \eqref{eq:intro-main-envelope}, extends the bounded-support envelope to
  integrable masks $0\le u,v\le1$, and gives an exact two-channel defect
  factorization for soft effects.  For bounded hard windows it replaces the
  fixed Taylor polynomial by a computable least-squares polynomial in the
  centered product, optimized moment centres, or a Chebyshev--Bessel
  truncation.  Their residual-energy minimum is formally no worse than the
  original factorial certificate.  A moment version treats unbounded hard or
  soft windows at every order for which the displayed moments are finite:
  \[
    s_{N+1}(P_{cB}\mathcal FP_A)
    \le \frac{\sqrt c(2\pi c)^N}{N!}
       \left(\int_A|x-\alpha|^{2N}\,dx
             \int_B|\eta-\beta|^{2N}\,d\eta\right)^{1/2}.
  \]
  A bounded-core plus tail formula supplies a second route when global high
  moments are inconvenient; its three disjoint kernel errors combine by a
  root-sum-square identity rather than a triangle inequality.

  \item For bounded measurable windows, Section~\ref{sec:partitions} proves a
  finite-partition theorem in which local Taylor errors form a scalar matrix
  whose $\ell^2\!\to\ell^2$ norm controls the global operator error.  This
  retains local measures and diameters and is invariant under enlarging empty
  gaps.  Local least-squares polynomials and optimized moments give a formal
  hierarchy below the diameter matrix; for an interval--interval block, the
  moment remainder is exactly the diameter--Taylor remainder divided by
  $2N+1$.  Separately, the weighted Ky--Fan theorem applies to arbitrary
  finite-measure windows and distributes the exact defect threshold among
  measurable pieces.  A stronger two-level allocation uses an outer
  $\ell^2$ budget over orthogonal spatial inputs and an inner $\ell^1$ budget
  over frequency pieces, together with an independently proved Fourier-dual
  column version.

  \item Section~\ref{sec:rough-example} constructs an explicit unbounded set
  $A_*$ of finite measure having all polynomial moments but infinite
  translation fractional perimeter for every $0<\gamma\le1$.  The moment
  theorem still gives
  \[
     \Lambda_\epsilon(A_*,cA_*)
     =O_c\!\left(\frac{\log(1/\epsilon)}
                        {\log\log(1/\epsilon)}\right).
  \]
  Thus the assumptions of being a finite interval union, having finite
  essential diameter or the relevant finite upper-Minkowski boundary content,
  and having finite $\operatorname{Per}_\gamma$ for some
  $0<\gamma\le1$ all fail.  Generic Hilbert--Schmidt estimates and qualitative
  finite-measure results may still apply, and we say so explicitly.

  \item Section~\ref{sec:analytic-kernels} replaces the Fourier interaction by
  a finite sum of separated nonlinear interactions.  A total-degree Taylor
  expansion gives rank at most
  $p\binom{N+r-1}{r}$ approximation with factorial error.  Joint
  half-oscillation, weighted least-squares polynomials, span-compressed ranks,
  Chebyshev--Bessel alternatives, and anisotropic interaction degrees sharpen
  the same finite-interaction class.  This supplies a precise non-Fourier
  extension without asserting that arbitrary integral transforms enjoy the
  same envelope.

  \item Section~\ref{sec:sharp-defect-energy} starts from the
  cross-boundary defect-trace identity of
  \cite[Proposition~1.2]{HughesIsraelMayeli}, records its one-dimensional
  normalization, and derives the frequency-dual, symmetric-difference, and
  scaled forms used here.  It also proves a strict defect-energy count, a
  high-cluster subtraction from any direct count, and higher defect-moment
  certificates.  These retain information discarded by a cutoff-only
  approximation argument.

  \item Section~\ref{sec:synthesis-strengthened-hybrid} combines the new
  entries.  The resulting bounded-window and interval-union envelopes are
  formally no larger than the earlier factorial, matrix, flat Ky--Fan, and
  rounded Schatten certificates proved in this paper.  This is an internal
  dominance theorem, not a claim of numerical superiority to every estimate
  in the regular-domain literature.
\end{enumerate}

Two finite examples clarify what the new parameters detect.  First, two
widely separated copies of an arbitrary bounded measurable component have a
partition bound independent of the separating gap.  Second, a full-span
fat-Cantor set and its enclosing interval have the same hulls but different
plunge counts at explicit parameters: the former count is zero and the latter
is nonzero.  Thus local measure is not cosmetic data, and an enclosing-interval
bound can miss a qualitative distinction.

\subsection{Basic compactness and endpoint conventions}
\label{sec:intro-compactness}

We record the elementary analytic foundation once.  It will also make clear
that boundedness of the sets is not needed for compactness.

\begin{lemma}[Finite-measure Fourier windows]
\label{lem:intro-finite-measure}
If $A,B\subset\mathbb R$ are measurable with finite measure, then
$K_{A,B,c}$ is Hilbert--Schmidt and
\begin{equation}
  \|K_{A,B,c}\|_{\mathcal S_2}^2=c|A||B|.
  \label{eq:intro-HS-mass}
\end{equation}
Consequently $S_{A,B,c}$ is a positive trace-class contraction and
$\operatorname{tr}S_{A,B,c}=c|A||B|$.
\end{lemma}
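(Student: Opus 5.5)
The plan is to realize $K_{A,B,c}$ as an explicit integral operator with a square-integrable kernel, compute its Hilbert--Schmidt norm directly, and then read off the statements about $S_{A,B,c}$ from the identity $\operatorname{tr}(K^*K)=\|K\|_{\mathcal S_2}^2$. Since $P_{cB}$ and $P_A$ are orthogonal projections and $\mathcal F$ is unitary on $L^2(\mathbb R)$ (Plancherel), $K_{A,B,c}$ is a contraction and $S_{A,B,c}=K^*K$ is a positive contraction; this part needs no finite-measure hypothesis. The key point is to check that $K$ is given by the kernel
\[
  k(\xi,x)=\one_{cB}(\xi)\,e^{-2\pi i x\xi}\,\one_A(x).
\]

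For $f\in L^2(\mathbb R)$ we have $P_Af=\one_Af\in L^1\cap L^2$, because $|A|<\infty$ and Cauchy--Schwarz give $\|\one_Af\|_1\le|A|^{1/2}\|f\|_2$. Hence $\mathcal F(P_Af)$ is given pointwise by the absolutely convergent integral $\int_A e^{-2\pi i x\xi}f(x)\,dx$; the $L^1$ and $L^2$ definitions of the Fourier transform agree on $L^1\cap L^2$. Multiplying by $\one_{cB}$ shows that $(Kf)(\xi)=\int k(\xi,x)f(x)\,dx$ for almost every $\xi$. This identification of the unitary extension with the integral formula on the relevant subspace is the only point needing care; it is standard and is the step I regard as the main (mild) obstacle.

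Next I compute the kernel norm:
\[
  \int\!\!\int |k(\xi,x)|^2\,dx\,d\xi=|cB|\,|A|=c|A||B|<\infty,
\]
since $|e^{-2\pi i x\xi}|=1$ and $|cB|=c|B|$. An integral operator with kernel in $L^2(\mathbb R^2)$ is Hilbert--Schmidt with $\mathcal S_2$-norm equal to the $L^2$ norm of the kernel, which gives \eqref{eq:intro-HS-mass}.

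Finally, $K\in\mathcal S_2$ implies $S=K^*K\in\mathcal S_1$, with $\operatorname{tr}S=\sum_k s_k(K)^2=\|K\|_{\mathcal S_2}^2=c|A||B|$. To confirm the formula $S=P_AQ_{cB}P_A$ in \eqref{eq:intro-KS}, I would use $\mathcal F^*=\mathcal F^{-1}$ and $P_{cB}^2=P_{cB}$, so that $K^*K=P_A\mathcal F^{-1}P_{cB}\mathcal FP_A=P_AQ_{cB}P_A$. Positivity and contractivity of $S$ follow from $S=K^*K$ with $\|K\|\le1$.
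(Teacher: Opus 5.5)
Your proof is correct and follows the paper's route. You write $K_{A,B,c}$ as the integral operator with kernel $\one_{cB}(\xi)e^{-2\pi i x\xi}\one_A(x)$, which has unit modulus on $cB\times A$, so its Hilbert--Schmidt norm squared is $|cB||A|=c|A||B|$; positivity, contractivity and $\operatorname{tr}S=\|K\|_{\mathcal S_2}^2$ then follow from $S=K^*K$. You also check that the $L^2$ Fourier transform of $\one_A f\in L^1\cap L^2$ agrees with the absolutely convergent integral, a detail the paper leaves implicit, and you handle it correctly.
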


\begin{proof}
After identifying the domain with $L^2(A)$ and the range with $L^2(cB)$,
$K_{A,B,c}$ has kernel
$\mathbf1_{cB}(\xi)e^{-2\pi ix\xi}\mathbf1_A(x)$.  Its modulus is one on
$cB\times A$ and zero elsewhere.  Therefore
\[
 \|K_{A,B,c}\|_{\mathcal S_2}^2
 =\int_{cB}\int_A1\,dx\,d\xi
 =|cB|\,|A|=c|A||B|,
\]
which proves the Hilbert--Schmidt assertion.  Since $\mathcal F$ and the two
projections are contractions, $K_{A,B,c}$ is a contraction.  Hence
$S_{A,B,c}=K_{A,B,c}^*K_{A,B,c}$ is positive and bounded above by the
identity.  For a Hilbert--Schmidt operator $K$, the product $K^*K$ is
trace class, and the standard Hilbert--Schmidt identity gives
$\operatorname{tr}(K^*K)=\|K\|_{\mathcal S_2}^2$, proving the last claim.
\end{proof}

All counts in this paper are strict and all transition windows are open.  This
convention makes the endpoint treatment explicit: equality at the threshold is
excluded on both sides of every spectral--singular-value correspondence.

\section{Abstract spectral transfer and inversion of approximation errors}
\label{sec:abstract-spectral}

This section isolates the operator-theoretic mechanism used throughout the
paper.  It is independent of the Fourier transform and, in particular, makes
clear which parts of the later arguments depend only on a finite-rank
approximation estimate.

Let $\mathcal H$ and $\mathcal G$ be separable Hilbert spaces.  For a bounded
operator $A\colon\mathcal H\to\mathcal G$ and $\tau>0$, write
\begin{equation}
  n(\tau;A)
  :=\operatorname{rank}\mathbf 1_{(\tau,\infty)}(|A|),
  \qquad |A|=(A^*A)^{1/2}.
  \label{eq:abstract-singular-count}
\end{equation}
The value is allowed to be $+\infty$.  When $A$ is compact, this is the number
of singular values of $A$ that are strictly larger than $\tau$, counted with
multiplicity.  The use of a strict inequality is important: it matches the
open transition interval used below and makes all endpoint conventions
unambiguous.

For a positive contraction $0\leq S\leq I$ and
$0<\varepsilon<\tfrac12$, set
\begin{equation}
  \Lambda_\varepsilon(S)
  :=\operatorname{rank}\mathbf 1_{(\varepsilon,1-\varepsilon)}(S),
  \qquad
  t_\varepsilon:=\sqrt{\varepsilon(1-\varepsilon)},
  \qquad
  D_S:=(S-S^2)^{1/2}.
  \label{eq:abstract-transition-defect}
\end{equation}

\begin{theorem}[Exact defect transfer]
\label{thm:abstract-defect-transfer}
For every positive contraction $S$,
\begin{equation}
  \Lambda_\varepsilon(S)=n(t_\varepsilon;D_S).
  \label{eq:abstract-exact-transfer}
\end{equation}
Consequently, $S-S^2$ being compact is sufficient for
$\Lambda_\varepsilon(S)<\infty$ for every fixed
$0<\varepsilon<\tfrac12$; compactness of $S$ itself is not required.
\end{theorem}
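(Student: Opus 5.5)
The plan is to apply the Borel functional calculus of the self-adjoint operator $S$, whose spectrum lies in $[0,1]$. The key observation is the scalar identity for $\lambda\in[0,1]$:
\[
  \lambda-\lambda^2>\varepsilon(1-\varepsilon)
  \iff \varepsilon<\lambda<1-\varepsilon,
\]
since $\lambda-\lambda^2-\varepsilon(1-\varepsilon)=(\lambda-\varepsilon)(1-\varepsilon-\lambda)$. This factorization is positive exactly when $\lambda$ lies strictly between $\varepsilon$ and $1-\varepsilon$; these bounds are distinct because $\varepsilon<\tfrac12$. Let $g(\lambda)=(\lambda-\lambda^2)^{1/2}$ on $[0,1]$. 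Then $g(\lambda)>t_\varepsilon$ if and only if $\lambda\in(\varepsilon,1-\varepsilon)$, with strict inequality preserved on both sides. This is where the open-window and strict-count conventions match exactly.

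Next, identify the spectral projections. We have $D_S=g(S)$, which is positive, so $|D_S|=D_S$. By the composition rule of the functional calculus (spectral mapping for Borel functions),
\[
  \mathbf 1_{(t_\varepsilon,\infty)}(D_S)
  =\mathbf 1_{(t_\varepsilon,\infty)}(g(S))
  =(\mathbf 1_{(t_\varepsilon,\infty)}\circ g)(S)
  =\mathbf 1_{(\varepsilon,1-\varepsilon)}(S).
\]
Taking ranks gives \eqref{eq:abstract-exact-transfer}, including the case where both sides equal $+\infty$. The only point needing care is the justification of $\mathbf 1_E(g(S))=\mathbf 1_{g^{-1}(E)}(S)$. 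This is standard: the spectral measure of $g(S)$ is the pushforward of that of $S$ under $g$. I would cite it rather than reprove it, and I expect no real obstacle here.

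For the consequence, suppose $S-S^2$ is compact. Then $D_S=(S-S^2)^{1/2}$ is compact, since the square root of a positive compact operator is compact: apply the functional calculus to its discrete spectrum, or use a uniform polynomial approximation of $\sqrt{\cdot}$ on $[0,\|S-S^2\|]$ vanishing at $0$. For a compact positive operator and any $\tau>0$, the spectral projection $\mathbf 1_{(\tau,\infty)}$ has finite rank. Apply this with $\tau=t_\varepsilon>0$, which is positive since $0<\varepsilon<\tfrac12$, to get $\Lambda_\varepsilon(S)<\infty$. No compactness of $S$ is used. To illustrate the remark, an orthogonal projection of infinite rank has $S-S^2=0$ compact while $S$ is not compact.
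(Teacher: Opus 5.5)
Your proposal is correct and follows essentially the same route as the paper: the scalar factorization $(\lambda-\varepsilon)(1-\varepsilon-\lambda)$, the Borel functional-calculus identity $\mathbf 1_{(t_\varepsilon,\infty)}(D_S)=\mathbf 1_{(\varepsilon,1-\varepsilon)}(S)$, and then compactness of $D_S$ giving finite-rank spectral projections above the positive threshold $t_\varepsilon$. Your infinite-rank projection example is a valid (and simpler) alternative to the paper's illustration that compactness of $S$ itself is not needed.
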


\begin{proof}
We proceed directly through the spectral calculus.

\emph{Step 1: a scalar identity.}
For $\lambda\in[0,1]$,
\begin{align}
 \lambda(1-\lambda)-\varepsilon(1-\varepsilon)
 &= (\lambda-\varepsilon)(1-\varepsilon-\lambda).
 \label{eq:abstract-scalar-factorization}
\end{align}
Because the quadratic $\lambda\mapsto\lambda(1-\lambda)$ is positive
between its two level-set roots, \eqref{eq:abstract-scalar-factorization}
gives the equivalence
\begin{equation}
 \sqrt{\lambda(1-\lambda)}>t_\varepsilon
 \quad\Longleftrightarrow\quad
 \varepsilon<\lambda<1-\varepsilon.
 \label{eq:abstract-scalar-equivalence}
\end{equation}

\emph{Step 2: apply Borel functional calculus.}
The defect is the function
$D_S=d(S)$ with $d(\lambda)=\sqrt{\lambda(1-\lambda)}$.  Hence
\eqref{eq:abstract-scalar-equivalence} implies the equality of spectral
projections
\begin{equation}
 \mathbf 1_{(t_\varepsilon,\infty)}(D_S)
 =\mathbf 1_{(\varepsilon,1-\varepsilon)}(S).
 \label{eq:abstract-projection-identity}
\end{equation}
Taking ranks proves \eqref{eq:abstract-exact-transfer}.

\emph{Step 3: compactness.}
If $S-S^2$ is compact, then its positive square root $D_S$ is compact.
Every spectral projection of a compact positive operator corresponding to an
interval bounded away from zero has finite rank.  Since
$t_\varepsilon>0$, the right-hand side of
\eqref{eq:abstract-exact-transfer} is therefore finite.
\end{proof}

The preceding hypothesis is genuinely weaker than compactness of $S$.  For
example, on $\mathcal H_0\oplus\ell^2(\mathbb N)$ the operator
\begin{equation}
  S=I_{\mathcal H_0}\oplus\operatorname{diag}(e^{-1},e^{-2},\ldots)
  \label{eq:abstract-noncompact-example}
\end{equation}
is noncompact whenever $\mathcal H_0$ is infinite dimensional, whereas
$S-S^2$ is compact.  Its identity summand contributes no transition
eigenvalues, and a direct count gives
$\Lambda_\varepsilon(S)=\max\{0,\lceil\log(1/\varepsilon)\rceil-1\}$.

\begin{theorem}[Factorization through a contraction]
\label{thm:abstract-contraction-factorization}
Let $K\colon\mathcal H\to\mathcal G$ be a contraction, put $S=K^*K$, and
define
\begin{equation}
  R:=(I_{\mathcal G}-KK^*)^{1/2}K.
  \label{eq:abstract-R-definition}
\end{equation}
Then
\begin{equation}
  R^*R=S-S^2,
  \qquad |R|=D_S,
  \qquad
  \Lambda_\varepsilon(S)=n(t_\varepsilon;R).
  \label{eq:abstract-R-identities}
\end{equation}
There is also the generally weaker but often more convenient direct bound
\begin{equation}
  \Lambda_\varepsilon(S)\leq n(\sqrt\varepsilon;K).
  \label{eq:abstract-direct-count}
\end{equation}
\end{theorem}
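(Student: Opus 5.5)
The plan is to compute $R^*R$ directly, then read off the count from Theorem~\ref{thm:abstract-defect-transfer}, and finally prove \eqref{eq:abstract-direct-count} by a spectral inclusion.

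\emph{Step 1: computing $R^*R$.} Since $K$ is a contraction, $I_{\mathcal G}-KK^*\ge0$, so its positive square root $T:=(I_{\mathcal G}-KK^*)^{1/2}$ is a well-defined bounded self-adjoint operator. Then
\[
R^*R=K^*T^2K=K^*(I-KK^*)K=K^*K-(K^*K)^2=S-S^2 .
\]
Here $S=K^*K$ is a positive contraction, so $S-S^2\ge0$. Hence $|R|=(R^*R)^{1/2}=(S-S^2)^{1/2}=D_S$ by uniqueness of positive square roots. Because \eqref{eq:abstract-singular-count} defines $n(\tau;R)$ through $|R|$ alone, we get $n(t_\varepsilon;R)=n(t_\varepsilon;D_S)$. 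Theorem~\ref{thm:abstract-defect-transfer} then gives $\Lambda_\varepsilon(S)=n(t_\varepsilon;R)$. This step involves no compactness, so infinite ranks are allowed.

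\emph{Step 2: the direct bound.} First, $|K|=S^{1/2}$, so by the functional calculus $\mathbf 1_{(\sqrt\varepsilon,\infty)}(|K|)=\mathbf 1_{(\varepsilon,\infty)}(S)$. This gives $n(\sqrt\varepsilon;K)=\operatorname{rank}\mathbf 1_{(\varepsilon,\infty)}(S)$. Next, $(\varepsilon,1-\varepsilon)\subset(\varepsilon,\infty)$, so
\[
\mathbf 1_{(\varepsilon,1-\varepsilon)}(S)\le \mathbf 1_{(\varepsilon,\infty)}(S)
\]
as projections. The range of the smaller projection is contained in the range of the larger, so its rank is no larger. This yields \eqref{eq:abstract-direct-count}, with the value $+\infty$ permitted on either side.

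\emph{Main obstacle.} Nothing here is deep. The only point needing care is keeping the endpoint conventions strict, so that $\sqrt{\lambda}>\sqrt\varepsilon\iff\lambda>\varepsilon$ holds exactly. One must also note that the rank comparison is valid for possibly infinite-rank projections; range inclusion handles this cleanly.
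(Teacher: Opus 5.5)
Your proposal is correct and follows essentially the same route as the paper: compute $R^*R=K^*(I-KK^*)K=S-S^2$, identify $|R|=D_S$ and invoke Theorem~\ref{thm:abstract-defect-transfer}, then obtain the direct bound from $\mathbf 1_{(\varepsilon,1-\varepsilon)}(S)\le\mathbf 1_{(\varepsilon,\infty)}(S)=\mathbf 1_{(\sqrt\varepsilon,\infty)}(|K|)$. Your explicit remarks on uniqueness of the positive square root and on range inclusion for possibly infinite ranks are correct and slightly more careful than the paper's wording.
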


\begin{proof}
Because $K$ is a contraction, $I_{\mathcal G}-KK^*$ is positive and its
square root in \eqref{eq:abstract-R-definition} is well defined.  Expanding
$R^*R$ and using associativity yields
\begin{align}
 R^*R
 &=K^*(I_{\mathcal G}-KK^*)K \\
 &=K^*K-K^*KK^*K \\
 &=S-S^2.
 \label{eq:abstract-R-computation}
\end{align}
Thus $|R|=(R^*R)^{1/2}=D_S$, and the exact count follows from
Theorem~\ref{thm:abstract-defect-transfer}.

For the direct estimate, the inclusion of scalar intervals
$(\varepsilon,1-\varepsilon)\subset(\varepsilon,\infty)$ gives
\begin{equation}
 \mathbf 1_{(\varepsilon,1-\varepsilon)}(S)
 \leq \mathbf 1_{(\varepsilon,\infty)}(S).
 \label{eq:abstract-projection-inclusion}
\end{equation}
Since $|K|=(K^*K)^{1/2}$, functional calculus gives the exact projection
identity
\[
 \mathbf1_{(\varepsilon,\infty)}(K^*K)
 =\mathbf1_{(\sqrt\varepsilon,\infty)}(|K|).
\]
Therefore the rank of the right-hand projection in
\eqref{eq:abstract-projection-inclusion} is $n(\sqrt\varepsilon;K)$, even if
$K$ is not compact.  Taking ranks proves
\eqref{eq:abstract-direct-count}.
\end{proof}

The direct threshold is larger because
$\sqrt\varepsilon>t_\varepsilon$ for $0<\varepsilon<\tfrac12$.
Accordingly, if the same approximation error is available for both $K$ and
$R$, the direct estimate can be sharper.  The value of the exact defect
identity is that a separate, better approximation of $R$ can exploit the
two-sided spectral condition rather than discard its upper endpoint.

\begin{lemma}[Finite-rank approximation rule]
\label{lem:abstract-finite-rank-rule}
Let $A\colon\mathcal H\to\mathcal G$ be bounded, let $r\in\mathbb N_0$,
and let $E\geq0$.  Suppose that $A_0$ has rank at most $r$ and
\begin{equation}
  \|A-A_0\|\leq E.
  \label{eq:abstract-approximation-hypothesis}
\end{equation}
Then, for every $\tau>0$ with $\tau\geq E$,
\begin{equation}
  n(\tau;A)\leq r.
  \label{eq:abstract-finite-rank-conclusion}
\end{equation}
In particular, the $(r+1)$st approximation number, and hence the
$(r+1)$st singular value when $A$ is compact, is at most $E$.
\end{lemma}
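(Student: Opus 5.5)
The plan is a variational (min--max) argument carried out directly with spectral projections. Compactness of $A$ is not assumed, so we cannot simply quote Weyl's inequality for singular values. We argue by contradiction. Suppose $n(\tau;A)\ge r+1$. Then the spectral subspace $M:=\operatorname{ran}\mathbf 1_{(\tau,\infty)}(|A|)$ has dimension at least $r+1$; it may be infinite, which is allowed by \eqref{eq:abstract-singular-count}.

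The key steps, in order, are as follows.
\begin{enumerate}
\item \emph{Lower bound on $M$.} For every $x\in M$ with $x\neq0$, we have $\|Ax\|=\||A|x\|>\tau\|x\|$. To see this, note that $\|Ax\|^2=\langle A^*Ax,x\rangle=\||A|x\|^2$. By the spectral theorem, $\||A|x\|^2=\int_{(\tau,\infty)}\lambda^2\,d\mu_x(\lambda)$, where $\mu_x$ is the spectral measure of $x$, supported in $(\tau,\infty)$ with total mass $\|x\|^2>0$. This integral is strictly larger than $\tau^2\|x\|^2$. Strictness holds because a positive measure on $(\tau,\infty)$ integrates $\lambda^2-\tau^2>0$ to a positive number.
\item \emph{Finding a kernel vector.} The kernel of $A_0$ has codimension at most $r$, since $\ker A_0=(\operatorname{ran}A_0^*)^\perp$ and $\operatorname{rank}A_0^*=\operatorname{rank}A_0\le r$. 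Take any $(r+1)$-dimensional subspace $M'\subset M$. The restriction $A_0|_{M'}$ maps into a space of dimension at most $r$, so it has a nontrivial kernel. This gives a unit vector $x\in M\cap\ker A_0$.
\item \emph{Contradiction.} For this $x$,
\[
  \tau<\|Ax\|=\|(A-A_0)x\|\le E\le\tau,
\]
which is impossible. Hence $n(\tau;A)\le r$.
\end{enumerate}

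For the ``in particular'' clause, we use the characterization of the $(r+1)$st approximation number as $a_{r+1}(A)=\inf\{\|A-F\|:\operatorname{rank}F\le r\}$. The hypothesis gives $a_{r+1}(A)\le E$ immediately from this definition. When $A$ is compact, $a_{r+1}(A)=s_{r+1}(A)$ by the Schmidt decomposition. Alternatively, for compact $A$ one can apply the main conclusion with every $\tau>E$ (or $\tau=E$ if $E>0$): at most $r$ singular values exceed $\tau$, so $s_{r+1}(A)\le\tau$, and letting $\tau\downarrow E$ gives $s_{r+1}(A)\le E$. This limiting step also handles $E=0$.

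The only delicate point is the strict inequality in step 1 when $M$ is infinite dimensional or the spectrum accumulates at $\tau$. Working with the spectral measure $\mu_x$, rather than with eigenvectors, is what handles it. Everything else is linear algebra.
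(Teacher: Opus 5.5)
Your proof is correct and follows the same route as the paper: assume the spectral subspace $\operatorname{Ran}\mathbf 1_{(\tau,\infty)}(|A|)$ has dimension at least $r+1$, find a unit vector in it annihilated by $A_0$, and reach the contradiction $\tau<\|Af\|=\|(A-A_0)f\|\le E\le\tau$ using the strict spectral inequality. Restricting to an $(r+1)$-dimensional subspace $M'$ makes the rank--nullity step explicit when the spectral subspace is infinite dimensional, and citing the definition of the approximation number settles the ``in particular'' clause, which the paper's proof leaves implicit.
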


\begin{proof}
Assume for contradiction that the spectral subspace
\begin{equation}
  \mathcal M:=\operatorname{Ran}\mathbf 1_{(\tau,\infty)}(|A|)
\end{equation}
has dimension at least $r+1$.  Since $A_0\vert_{\mathcal M}$ has range of
dimension at most $r$, its kernel contains a unit vector $f\in\mathcal M$.
For such a vector, the spectral theorem and the strict interval
$(\tau,\infty)$ give
\begin{equation}
  \|Af\|^2=\langle |A|^2f,f\rangle>\tau^2\|f\|^2=\tau^2.
\end{equation}
On the other hand, $A_0f=0$, and therefore
\begin{equation}
  \|Af\|=\|(A-A_0)f\|\leq E\leq\tau,
\end{equation}
a contradiction.  Thus $\dim\mathcal M\leq r$, which is exactly
\eqref{eq:abstract-finite-rank-conclusion}.
\end{proof}

Combining the last two results gives two reusable routes.  If
$\operatorname{rank}K_N\leq r_N$ and $\|K-K_N\|\leq E_N$, then
\begin{equation}
 E_N\leq\sqrt\varepsilon
 \quad\Longrightarrow\quad
 \Lambda_\varepsilon(K^*K)\leq r_N.
 \label{eq:abstract-direct-approx-route}
\end{equation}
Moreover, with
$R_N=(I-KK^*)^{1/2}K_N$, one has
$\operatorname{rank}R_N\leq r_N$ and
$\|R-R_N\|\leq E_N$; hence
\begin{equation}
 E_N\leq t_\varepsilon
 \quad\Longrightarrow\quad
 \Lambda_\varepsilon(K^*K)\leq r_N.
 \label{eq:abstract-defect-approx-route}
\end{equation}
The second statement follows because the left factor
$(I-KK^*)^{1/2}$ is a contraction.

\subsection{Residual-energy counting and weighted defect residuals}
\label{sec:sharp-residual-energy}

The operator-norm finite-rank rule uses only the largest residual singular
value.  When the residual is Hilbert--Schmidt, retaining its entire singular
value energy gives a strictly endpoint-correct refinement.  For $r\geq0$ set
\begin{equation}
 \kappa(r):=\max\{0,\lceil r\rceil-1\}.
 \label{eq:sharp-kappa-definition}
\end{equation}
Thus $\kappa(r)$ is the largest nonnegative integer strictly smaller than
$r$ when $r>0$; in particular, $\kappa(L)=L-1$ at every positive integer
$L$.  This one-unit endpoint correction is forced by the strict spectral
counts used in this paper.

\begin{lemma}[Hilbert--Schmidt residual-energy count]
\label{lem:sharp-residual-energy}
Let $T\colon\mathcal H\to\mathcal G$ be bounded, let $T_N$ have rank at most
$N\in\mathbb N_0$, and suppose that
\[
 E_N:=T-T_N\in\mathcal S_2.
\]
Then, for every $\tau>0$,
\begin{equation}
 n(\tau;T)
 \leq
 N+\kappa\!\left(\frac{\|E_N\|_{\mathcal S_2}^2}{\tau^2}\right).
 \label{eq:sharp-residual-energy-count}
\end{equation}
\end{lemma}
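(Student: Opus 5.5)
We reduce the count to a subspace of dimension $N+m$ on which $T$ is large, and then compress the residual $E_N$ to that subspace. Put $H:=\|E_N\|_{\mathcal S_2}^2$ and suppose, for contradiction, that $n(\tau;T)\ge N+m$ with $m:=\kappa(H/\tau^2)+1$. We may assume $n(\tau;T)$ is finite, or else pass to a finite-dimensional subspace of the spectral subspace. Let $\mathcal M\subset\operatorname{Ran}\mathbf 1_{(\tau,\infty)}(|T|)$ have dimension exactly $N+m$. As in the proof of Lemma~\ref{lem:abstract-finite-rank-rule}, the restriction $T_N|_{\mathcal M}$ has rank at most $N$. Its kernel $\mathcal K:=\ker(T_N|_{\mathcal M})$ therefore has dimension at least $m$. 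On $\mathcal K$ we have $E_N f=Tf$, and the spectral theorem on $\mathcal M$ gives $\|Tf\|^2=\langle|T|^2f,f\rangle>\tau^2\|f\|^2$ for every nonzero $f\in\mathcal K$.

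Choose an orthonormal family $f_1,\dots,f_m$ in $\mathcal K$. The Hilbert--Schmidt norm dominates the sum of $\|E_Ne_k\|^2$ over any orthonormal family $(e_k)$, since the family extends to a basis and the remaining terms are nonnegative. Hence
\[
H\ \ge\ \sum_{k=1}^m\|E_Nf_k\|^2=\sum_{k=1}^m\|Tf_k\|^2>m\tau^2 .
\]
This yields $m<H/\tau^2$ strictly, because each term satisfies a strict inequality and $m\ge1$.

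It remains to compare this with the definition of $m$. By the definition of $\kappa$, the number $m=\kappa(r)+1$ with $r=H/\tau^2$ is at least $r$. If $r>0$, then $\kappa(r)=\lceil r\rceil-1$, so $m=\lceil r\rceil\ge r$. If $r=0$, then $m=1>0=r$. Either case contradicts $m<r$. Therefore $n(\tau;T)\le N+m-1=N+\kappa(H/\tau^2)$.

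The only delicate point is the strict endpoint bookkeeping: the strict inequality $\|Tf\|>\tau$ must survive summation, and it is this strictness that justifies the $-1$ inside $\kappa$. When $n(\tau;T)$ is infinite, any choice of $\mathcal M$ with dimension $N+m$ still works, so that case needs no separate treatment.
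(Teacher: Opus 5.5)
Your proof is correct and follows essentially the same route as the paper: restrict to the spectral subspace of $|T|$ above $\tau$, use rank--nullity to find $m$ orthonormal vectors in $\ker T_N$ there, compare the strict inequality $\|Tf_k\|^2>\tau^2$ with $\|E_N\|_{\mathcal S_2}^2$, and finish with the integer rounding encoded in $\kappa$. The only differences are cosmetic. You argue by contradiction, and you work inside an $(N+m)$-dimensional subspace, so you never need the compactness of $T$ that the paper invokes to make the spectral subspace finite-dimensional.
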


\begin{proof}
We give the dimension argument in detail, including the strict endpoint.

\emph{Step 1: isolate the spectral subspace above $\tau$.}
Since $T=T_N+E_N$ is the sum of a finite-rank operator and a compact
operator, $T$ is compact.  Therefore
\[
 \mathcal M:=\operatorname{Ran}\one_{(\tau,\infty)}(|T|)
\]
has finite dimension $q=n(\tau;T)$.  If $q\leq N$, the asserted estimate is
immediate, so suppose $q>N$.

\emph{Step 2: remove the rank-$N$ part.}
The restriction $T_N|_{\mathcal M}$ has rank at most $N$.  Rank--nullity on
the finite-dimensional space $\mathcal M$ gives
\[
 \dim\bigl(\mathcal M\cap\ker T_N\bigr)
 =\dim\ker(T_N|_{\mathcal M})
 \geq q-N.
\]
Choose orthonormal vectors
$f_1,\ldots,f_{q-N}\in\mathcal M\cap\ker T_N$.

\emph{Step 3: use the open spectral endpoint.}
For every nonzero $f\in\mathcal M$, the spectral measure $\mu_f$ of $|T|$
is supported in $(\tau,\infty)$.  Consequently,
\[
 \|Tf\|^2
 =\langle |T|^2f,f\rangle
 =\int_{(\tau,\infty)}s^2\,d\mu_f(s)
 >\tau^2\int_{(\tau,\infty)}d\mu_f(s)
 =\tau^2\|f\|^2.
\]
The inequality is strict because $s^2-\tau^2$ is strictly positive
throughout the supporting interval.  Since $T_Nf_j=0$, we have
$E_Nf_j=Tf_j$, and hence
\begin{equation}
 (q-N)\tau^2
 <\sum_{j=1}^{q-N}\|E_Nf_j\|^2
 \leq\|E_N\|_{\mathcal S_2}^2.
 \label{eq:sharp-residual-strict-energy}
\end{equation}
The last inequality follows by extending the orthonormal family to an
orthonormal basis of $\mathcal H$ and using the definition of the
Hilbert--Schmidt norm.

\emph{Step 4: convert the strict real inequality to an integer bound.}
Put $r=\|E_N\|_{\mathcal S_2}^2/\tau^2$.  Equation
\eqref{eq:sharp-residual-strict-energy} says that the positive integer
$q-N$ is strictly smaller than $r$.  If $r$ is a positive integer, this
gives $q-N\leq r-1$; if $r$ is not an integer, it gives
$q-N\leq\lfloor r\rfloor=\lceil r\rceil-1$.  The case $r=0$ cannot occur
under the standing assumption $q>N$.  In all cases
$q-N\leq\kappa(r)$, which proves
\eqref{eq:sharp-residual-energy-count}.
\end{proof}

When a Hilbert--Schmidt envelope is used also as an operator-norm envelope,
the lemma refines the rule that stops only after that envelope falls below
the threshold.  It still gives information before that happens, charging
only the number of additional singular values that the total squared
residual energy can support.  A separately available, sharper operator-norm
estimate remains complementary and should be retained.

\begin{theorem}[Direct and weighted-defect residual master bounds]
\label{thm:sharp-residual-master}
Let $K\colon\mathcal H\to\mathcal G$ be a contraction and put
\begin{equation}
 S:=K^*K,
 \qquad
 W:=(I_{\mathcal G}-KK^*)^{1/2},
 \qquad
 R:=WK.
 \label{eq:sharp-W-R-definition}
\end{equation}
Fix $0<\epsilon<1/2$.  Suppose that $K_N$ has rank at most
$N\in\mathbb N_0$ and that $K-K_N\in\mathcal S_2$.  Define the unweighted
and weighted residual energies
\begin{equation}
 e_N^2:=\|K-K_N\|_{\mathcal S_2}^2,
 \qquad
 d_N^2:=\|W(K-K_N)\|_{\mathcal S_2}^2.
 \label{eq:sharp-two-residual-energies}
\end{equation}
Then
\begin{align}
 Q_\epsilon(S)
 &:=\operatorname{rank}\one_{(\epsilon,\infty)}(S)
 \leq N+\kappa\!\left(\frac{e_N^2}{\epsilon}\right),
 \label{eq:sharp-direct-Q-bound}\\
 \Lambda_\epsilon(S)
 &\leq
 \min\left\{
 N+\kappa\!\left(\frac{e_N^2}{\epsilon}\right),
 N+\kappa\!\left(
       \frac{d_N^2}{\epsilon(1-\epsilon)}\right)
 \right\}.
 \label{eq:sharp-direct-defect-master}
\end{align}
Moreover,
\begin{equation}
 d_N^2
 =\operatorname{Tr}\!\left((K-K_N)^*(I-KK^*)(K-K_N)\right)
 \leq e_N^2.
 \label{eq:sharp-weighted-energy-trace}
\end{equation}
The inequalities may be minimized over every available rank and every
available approximant $K_N$.
\end{theorem}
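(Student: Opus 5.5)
The plan is to reduce every assertion to Lemma~\ref{lem:sharp-residual-energy}, applied once to $K$ and once to $R$, and then combine the results using the spectral identities of Theorems~\ref{thm:abstract-defect-transfer} and~\ref{thm:abstract-contraction-factorization}.

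\emph{Direct bound.} Since $\one_{(\epsilon,\infty)}(K^*K)=\one_{(\sqrt\epsilon,\infty)}(|K|)$ by functional calculus, as in the proof of Theorem~\ref{thm:abstract-contraction-factorization}, we have $Q_\epsilon(S)=n(\sqrt\epsilon;K)$. We then apply Lemma~\ref{lem:sharp-residual-energy} with $T=K$, $T_N=K_N$ and $\tau=\sqrt\epsilon$. This yields \eqref{eq:sharp-direct-Q-bound}. The inclusion \eqref{eq:abstract-projection-inclusion} gives $\Lambda_\epsilon(S)\le Q_\epsilon(S)$, which supplies the first entry of the minimum in \eqref{eq:sharp-direct-defect-master}.

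\emph{Defect bound.} By Theorem~\ref{thm:abstract-contraction-factorization}, $\Lambda_\epsilon(S)=n(t_\epsilon;R)$ with $R=WK$. Take the approximant $R_N:=WK_N$. Its rank is at most $N$, and the residual is $R-R_N=W(K-K_N)$. This residual is Hilbert--Schmidt because $\mathcal S_2$ is an ideal and $\|W\|\le1$. Applying Lemma~\ref{lem:sharp-residual-energy} with $\tau=t_\epsilon$, so that $\tau^2=\epsilon(1-\epsilon)$, gives the second entry of the minimum. Taking the minimum of the two bounds proves \eqref{eq:sharp-direct-defect-master}.

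\emph{Weighted energy.} Put $E=K-K_N$. Then $\|WE\|_{\mathcal S_2}^2=\operatorname{Tr}(E^*W^2E)$, and $W^2=I-KK^*$, which gives the trace formula. For the inequality, note that $0\le W^2\le I$ implies $E^*W^2E\le E^*E$, and the trace is monotone on positive trace-class operators. Alternatively, one can use $\|WE\|_{\mathcal S_2}\le\|W\|\,\|E\|_{\mathcal S_2}$ directly. The final remark about minimizing is immediate, since the bounds hold for every admissible pair $(N,K_N)$.

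There is no genuine obstacle. The only points needing care are the endpoint bookkeeping, which is already handled inside Lemma~\ref{lem:sharp-residual-energy}, and the verification that $R_N$ has rank at most $N$ while its residual stays in $\mathcal S_2$.
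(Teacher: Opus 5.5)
Your proposal is correct and follows essentially the same route as the paper. You apply Lemma~\ref{lem:sharp-residual-energy} to $K$ at $\tau=\sqrt\epsilon$ and to $R=WK$ with $R_N=WK_N$ at $\tau=t_\epsilon$, and you get $d_N^2\le e_N^2$ from the trace identity and $0\le W^2\le I$. The only difference is that you cite Theorem~\ref{thm:abstract-contraction-factorization} for $\Lambda_\epsilon(S)=n(t_\epsilon;R)$, whereas the paper re-derives $R^*R=S-S^2$ and the scalar equivalence inline.
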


\begin{proof}
\emph{Step 1: the direct spectral channel.}
Functional calculus for $S=K^*K=|K|^2$ gives
\[
 \one_{(\epsilon,\infty)}(S)
 =\one_{(\sqrt\epsilon,\infty)}(|K|).
\]
Consequently,
\[
 Q_\epsilon(S)=n(\sqrt\epsilon;K).
\]
Apply Lemma~\ref{lem:sharp-residual-energy} to $T=K$, $T_N=K_N$, and
$\tau=\sqrt\epsilon$.  This proves
\eqref{eq:sharp-direct-Q-bound}.  Since
$(\epsilon,1-\epsilon)\subset(\epsilon,\infty)$, it also gives the first
entry on the right of \eqref{eq:sharp-direct-defect-master}.

\emph{Step 2: the exact defect factorization.}
Because $K$ is a contraction, $I-KK^*$ is positive, so $W$ is well defined
and $\|W\|\leq1$.  Direct multiplication gives
\begin{align*}
 R^*R
 &=K^*W^2K
 =K^*(I-KK^*)K\\
 &=K^*K-K^*KK^*K
 =S-S^2.
\end{align*}
For a scalar $\lambda\in[0,1]$,
\[
 \lambda(1-\lambda)-\epsilon(1-\epsilon)
 =(\lambda-\epsilon)(1-\epsilon-\lambda).
\]
It follows, with both endpoints audited, that
\[
 \lambda\in(\epsilon,1-\epsilon)
 \quad\Longleftrightarrow\quad
 \sqrt{\lambda(1-\lambda)}
   >\sqrt{\epsilon(1-\epsilon)}.
\]
Applying this scalar equivalence to $S$ and using
$|R|=(R^*R)^{1/2}=(S-S^2)^{1/2}$ yields the exact identity
\begin{equation}
 \Lambda_\epsilon(S)
 =n\!\left(\sqrt{\epsilon(1-\epsilon)};R\right).
 \label{eq:sharp-exact-defect-count}
\end{equation}

\emph{Step 3: approximate the defect operator with the weighted residual.}
Set $R_N:=WK_N$.  Then
\[
 \operatorname{rank}R_N\leq\operatorname{rank}K_N\leq N,
 \qquad
 R-R_N=W(K-K_N).
\]
The ideal property of the Hilbert--Schmidt class shows that the latter
operator is Hilbert--Schmidt.  Lemma~\ref{lem:sharp-residual-energy}, now
with threshold $\sqrt{\epsilon(1-\epsilon)}$, gives
\[
 n\!\left(\sqrt{\epsilon(1-\epsilon)};R\right)
 \leq
 N+\kappa\!\left(
       \frac{d_N^2}{\epsilon(1-\epsilon)}\right).
\]
Together with \eqref{eq:sharp-exact-defect-count}, this proves the second
entry in \eqref{eq:sharp-direct-defect-master}.

\emph{Step 4: compare the two residual energies without discarding the
weight.}
Since $W^2=I-KK^*$,
\begin{align*}
 d_N^2
 &=\operatorname{Tr}\bigl((W(K-K_N))^*W(K-K_N)\bigr)\\
 &=\operatorname{Tr}\bigl((K-K_N)^*(I-KK^*)(K-K_N)\bigr).
\end{align*}
Also $0\leq W^2\leq I$, whence
$d_N^2\leq e_N^2$.  Notice that the defect threshold has the smaller square
$\epsilon(1-\epsilon)$.  Therefore $d_N^2\leq e_N^2$ alone does not order
the two entries in \eqref{eq:sharp-direct-defect-master}; the defect entry
improves the direct one only when the weight $W$ removes enough residual
energy.  This is exactly why both certified bounds are retained.
\end{proof}

If $K$ is Hilbert--Schmidt, the zero approximant is always admissible.  In
that case, writing
\begin{equation}
 m:=\operatorname{Tr}S=\|K\|_{\mathcal S_2}^2,
 \qquad
 \mathcal D:=\operatorname{Tr}(S-S^2)=\|WK\|_{\mathcal S_2}^2,
 \label{eq:sharp-m-D-definition}
\end{equation}
Theorem~\ref{thm:sharp-residual-master} at $N=0$ gives
\begin{equation}
 Q_\epsilon(S)\leq\kappa\!\left(\frac m\epsilon\right),
 \qquad
 \Lambda_\epsilon(S)
 \leq\min\left\{
 \kappa\!\left(\frac m\epsilon\right),
 \kappa\!\left(\frac{\mathcal D}{\epsilon(1-\epsilon)}\right)
 \right\}.
 \label{eq:sharp-zero-rank-two-counts}
\end{equation}
The second entry retains the exact defect energy rather than bounding it by
$m$.

\subsection{Factorial envelopes and the Lambert--\texorpdfstring{$W$}{W} function}
\label{sec:abstract-factorial}

The next proposition performs the discrete inversion that will recur in the
finite-interaction kernel estimates.  The minimum below is the exact order
certified
by the factorial error envelope; it need not be the actual singular-value
count.

\begin{proposition}[Exact factorial certificate and explicit surrogate]
\label{prop:abstract-factorial-inversion}
Let $A\colon\mathcal H\to\mathcal G$ be bounded.  Assume that, for every
$N\in\mathbb N_0$, there is an operator $A_N$ of rank at most $N$ such that
\begin{equation}
  \|A-A_N\|\leq E_N,
  \qquad
  E_N=C\frac{\zeta^N}{N!},
  \label{eq:abstract-factorial-envelope}
\end{equation}
where $C>0$ and $\zeta\geq0$.  At $N=0$ we use the convention
$\zeta^0=1$, so $E_0=C$.  For $\tau>0$, define
\begin{equation}
 \mathcal N_{\mathrm{ex}}(C,\zeta;\tau)
 :=\min\left\{N\in\mathbb N_0:
 C\frac{\zeta^N}{N!}\leq\tau\right\}.
 \label{eq:abstract-N-exact}
\end{equation}
Then
\begin{equation}
  n(\tau;A)\leq \mathcal N_{\mathrm{ex}}(C,\zeta;\tau).
  \label{eq:abstract-factorial-count}
\end{equation}
More explicitly:
\begin{enumerate}
\item If $C\leq\tau$, then $\mathcal N_{\mathrm{ex}}=0$.
\item If $C>\tau$ and $\zeta=0$, then
      $\mathcal N_{\mathrm{ex}}=1$.
\item If $C>\tau$ and $\zeta>0$, put
\begin{equation}
  \rho:=\log\frac C\tau,
  \qquad
  \mathcal N_W(C,\zeta;\tau)
  :=\left\lceil
  \frac{\rho}{W_0\!\left(\rho/(e\zeta)\right)}
  \right\rceil,
  \label{eq:abstract-N-W}
\end{equation}
where $W_0$ is the principal real branch of the Lambert function.  Then
\begin{equation}
  \mathcal N_{\mathrm{ex}}(C,\zeta;\tau)
  \leq \mathcal N_W(C,\zeta;\tau),
  \qquad
  n(\tau;A)\leq\mathcal N_W(C,\zeta;\tau).
  \label{eq:abstract-W-conclusion}
\end{equation}
\end{enumerate}
\end{proposition}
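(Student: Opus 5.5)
The main estimate \eqref{eq:abstract-factorial-count} follows at once from Lemma~\ref{lem:abstract-finite-rank-rule}. First check that the minimum in \eqref{eq:abstract-N-exact} exists: since $\zeta^N/N!\to0$, the set of admissible $N$ is nonempty. Let $N_*=\mathcal N_{\mathrm{ex}}$. Then $A_{N_*}$ has rank at most $N_*$ and satisfies $\|A-A_{N_*}\|\le E_{N_*}\le\tau$. Applying the lemma with $r=N_*$ and $E=E_{N_*}$ gives $n(\tau;A)\le N_*$.

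Items 1 and 2 are direct evaluations of the minimum.
\begin{enumerate}
\item If $C\le\tau$, then $N=0$ is admissible because $E_0=C$.
\item If $C>\tau$ and $\zeta=0$, then $N=0$ fails, while $E_N=0\le\tau$ for every $N\ge1$ (using $0^N=0$ for $N\ge1$). Hence the minimum is $1$.
\end{enumerate}

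For item 3 it suffices to show that $M:=\mathcal N_W$ is admissible, i.e.\ $C\zeta^M/M!\le\tau$. This gives $\mathcal N_{\mathrm{ex}}\le M$, and the second inequality in \eqref{eq:abstract-W-conclusion} then follows from \eqref{eq:abstract-factorial-count}.

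\begin{itemize}
\item \emph{Reduction.} Use Stirling's lower bound $M!\ge (M/e)^M$, valid for all $M\ge1$, e.g.\ from $e^M=\sum_k M^k/k!\ge M^M/M!$. It then suffices that $(e\zeta/M)^M\le\tau/C=e^{-\rho}$, that is,
\[
 M\log\frac{M}{e\zeta}\ge\rho .
\]
Note that $M\ge1$ because $\rho>0$ and $W_0(\rho/(e\zeta))>0$.
\item \emph{The function $g(x)=x\log(x/(e\zeta))$.} It is increasing on $[\zeta,\infty)$, since $g'(x)=\log(x/\zeta)$.
\item \emph{Solving at the Lambert value.} Put $w=W_0(\rho/(e\zeta))>0$ and $x_0=\rho/w$. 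The defining relation $we^w=\rho/(e\zeta)$ gives $x_0=\rho/w=e\zeta e^w$, so $\log(x_0/(e\zeta))=w$ and $g(x_0)=x_0w=\rho$. Moreover $x_0=e^{1+w}\zeta>\zeta$, so $x_0$ lies in the monotone region.
\item \emph{Conclusion.} Since $M=\lceil x_0\rceil\ge x_0>\zeta$, monotonicity gives $g(M)\ge g(x_0)=\rho$, which is the required inequality.
\end{itemize}

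The main obstacle is this last step. One must be careful with the monotonicity domain of $g$: $g$ is negative and decreasing on $(0,\zeta)$. Placing $x_0$ to the right of $\zeta$, as above, is what makes the ceiling step legitimate. The Stirling bound must also cover small $M$, including $M=1$, which the series argument does.
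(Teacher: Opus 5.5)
Your proof is correct and follows the paper's argument essentially step for step: the finite-rank rule at the least admissible order, direct evaluation of the two endpoint cases, and the Lambert identity $x_0\log(x_0/(e\zeta))=\rho$ combined with monotonicity of $x\log(x/(e\zeta))$ and the bound $M!\ge(M/e)^M$. The only differences are cosmetic: you derive $M!\ge(M/e)^M$ from the exponential series rather than from the integral comparison $\log M!\ge\int_1^M\log x\,dx$, and you use the larger monotonicity region $[\zeta,\infty)$ rather than $(e\zeta,\infty)$.
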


\begin{proof}
We verify the certificate and then invert it.

\emph{Step 1: use the finite-rank rule.}
For every $N$ satisfying $E_N\leq\tau$,
Lemma~\ref{lem:abstract-finite-rank-rule}, applied to $A_N$, gives
$n(\tau;A)\leq N$.  The factorial sequence tends to zero: when
$\zeta>0$ its consecutive ratio is
$E_{N+1}/E_N=\zeta/(N+1)$, and when $\zeta=0$ all terms with $N\geq1$
vanish.  Thus the set in \eqref{eq:abstract-N-exact} is nonempty, and
choosing its least element proves \eqref{eq:abstract-factorial-count}.

\emph{Step 2: settle the two endpoint cases.}
If $C\leq\tau$, then $E_0=C\leq\tau$, so the minimum is zero.  If
$C>\tau$ and $\zeta=0$, then $E_0=C>\tau$ whereas $E_1=0\leq\tau$;
hence the minimum is one.  Separating the latter case is necessary because
the expression in \eqref{eq:abstract-N-W} contains division by $\zeta$.

\emph{Step 3: solve the continuous comparison equation.}
Assume now that $C>\tau$ and $\zeta>0$, so $\rho>0$.  Write
\begin{equation}
  w:=W_0\!\left(\frac{\rho}{e\zeta}\right)>0,
  \qquad
  N_*:=\frac{\rho}{w}.
  \label{eq:abstract-N-star}
\end{equation}
The defining identity $we^w=\rho/(e\zeta)$ gives
\begin{equation}
  N_*=e\zeta e^w,
  \qquad
  \log\frac{N_*}{e\zeta}=w,
  \qquad
  N_*\log\frac{N_*}{e\zeta}=\rho.
  \label{eq:abstract-N-star-identity}
\end{equation}
In particular $N_*>e\zeta$.

\emph{Step 4: pass to an integer without losing the inequality.}
Let $N=\lceil N_*\rceil$.  The function
\begin{equation}
  g(x):=x\log\frac{x}{e\zeta}
\end{equation}
has derivative
$g'(x)=\log(x/(e\zeta))+1>0$ for $x\geq N_*>e\zeta$.
Consequently,
\begin{equation}
  N\log\frac{N}{e\zeta}=g(N)\geq g(N_*)=\rho.
  \label{eq:abstract-rounded-comparison}
\end{equation}

\emph{Step 5: invoke the elementary factorial lower bound.}
For every integer $N\geq1$,
\begin{equation}
 \log N!=\sum_{k=1}^N\log k
 \geq\int_1^N\log x\,dx
 =N\log N-N+1
 \geq N\log N-N.
 \label{eq:abstract-factorial-lower-proof}
\end{equation}
Thus $N!\geq(N/e)^N$.  Combining this inequality with
\eqref{eq:abstract-rounded-comparison} gives
\begin{align}
 C\frac{\zeta^N}{N!}
 &\leq C\left(\frac{e\zeta}{N}\right)^N \\
 &=C\exp\left(-N\log\frac{N}{e\zeta}\right) \\
 &\leq Ce^{-\rho}=\tau.
 \label{eq:abstract-W-final-estimate}
\end{align}
Therefore $N$ is admissible in the exact minimum
\eqref{eq:abstract-N-exact}.  This proves
$\mathcal N_{\mathrm{ex}}\leq N=\mathcal N_W$ and completes the proof.
\end{proof}

For a transition count, Proposition~\ref{prop:abstract-factorial-inversion}
is used with $(A,\tau)=(K,\sqrt\varepsilon)$ through
\eqref{eq:abstract-direct-approx-route}, or with
$(A,\tau)=(R,t_\varepsilon)$ through the exact defect route
\eqref{eq:abstract-defect-approx-route}.  In either application the discrete
factorial minimum is at least as sharp as its Lambert--$W$ surrogate; the
latter is valuable because it is closed form.

\begin{corollary}[Other approximation laws]
\label{cor:abstract-other-laws}
Let $A\colon\mathcal H\to\mathcal G$ be bounded.  Suppose that for every
$N\in\mathbb N_0$ there is an $A_N$ of rank at most $N$.  Let $C>0$,
$\tau>0$, and put $\rho_+:=\max\{0,\log(C/\tau)\}$.  If one of the
following error estimates holds for every $N\in\mathbb N_0$, then the
corresponding singular-value count follows:
\begin{align}
 \|A-A_N\|\leq Ce^{-aN},\ a>0
 &\quad\Longrightarrow\quad
 n(\tau;A)\leq\left\lceil\frac{\rho_+}{a}\right\rceil,
 \label{eq:abstract-exponential-law}\\
 \|A-A_N\|\leq Ce^{-aN^q},\ a,q>0
 &\quad\Longrightarrow\quad
 n(\tau;A)\leq
 \left\lceil\left(\frac{\rho_+}{a}\right)^{1/q}\right\rceil,
 \label{eq:abstract-stretched-law}\\
 \|A-A_N\|\leq C(N+1)^{-a},\ a>0
 &\quad\Longrightarrow\quad
 n(\tau;A)\leq
 \left\lceil\max\left\{0,
 \left(\frac C\tau\right)^{1/a}-1\right\}\right\rceil.
 \label{eq:abstract-algebraic-law}
\end{align}
If the approximants have rank at most $r_N$ instead, define
\[
 N_{\rm exp}=\left\lceil\frac{\rho_+}{a}\right\rceil,
 \quad
 N_{\rm str}=\left\lceil\left(\frac{\rho_+}{a}\right)^{1/q}\right\rceil,
 \quad
 N_{\rm alg}=\left\lceil\max\left\{0,
       \left(\frac C\tau\right)^{1/a}-1\right\}\right\rceil.
\]
The corresponding conclusions are, respectively,
$n(\tau;A)\le r_{N_{\rm exp}}$,
$n(\tau;A)\le r_{N_{\rm str}}$, and
$n(\tau;A)\le r_{N_{\rm alg}}$; no monotonicity of $r_N$ is required.
\end{corollary}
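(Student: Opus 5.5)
The argument reduces every case to the finite-rank approximation rule, Lemma~\ref{lem:abstract-finite-rank-rule}. If for some index $N_0$ the approximant $A_{N_0}$ has rank at most $r_{N_0}$ and $\|A-A_{N_0}\|\le E_{N_0}\le\tau$, then $n(\tau;A)\le r_{N_0}$. In the rank-$N$ version, $r_N=N$. So for each law we only have to check that the displayed integer $N_{\rm exp}$, $N_{\rm str}$ or $N_{\rm alg}$ makes its error envelope at most $\tau$. We never use a second index, so no monotonicity of $r_N$ is needed.

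For the \textbf{exponential law}, take $N=N_{\rm exp}=\lceil\rho_+/a\rceil$. Then $aN\ge\rho_+\ge\log(C/\tau)$, so $Ce^{-aN}\le C\cdot(\tau/C)=\tau$. When $C\le\tau$ we have $\rho_+=0$ and $N=0$, and the bound $E_0=C\le\tau$ holds directly; this is consistent with the general computation, since $e^{-0}=1$. The \textbf{stretched law} is the same argument: with $N=\lceil(\rho_+/a)^{1/q}\rceil$ we get $N^q\ge\rho_+/a$, because $x\mapsto x^q$ is increasing on $[0,\infty)$ for $q>0$. Hence $aN^q\ge\log(C/\tau)$ and $Ce^{-aN^q}\le\tau$. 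At $N=0$ we use the convention $0^q=0$, and this case again occurs exactly when $\rho_+=0$.

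For the \textbf{algebraic law}, take $N=N_{\rm alg}$. If $C\le\tau$, then $(C/\tau)^{1/a}-1\le0$, so $N=0$ and $C(0+1)^{-a}=C\le\tau$. Otherwise $N\ge(C/\tau)^{1/a}-1$, which gives $N+1\ge(C/\tau)^{1/a}$. Raising to the power $a>0$ gives $(N+1)^a\ge C/\tau$, i.e.\ $C(N+1)^{-a}\le\tau$.

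The only point needing care, though it is routine, is the endpoint bookkeeping at $N=0$. There the convention $\rho_+=\max\{0,\cdot\}$ and the outer $\max\{0,\cdot\}$ make each displayed integer nonnegative. The non-strict inequality $E_N\le\tau$ is exactly what Lemma~\ref{lem:abstract-finite-rank-rule} accepts, because that lemma counts singular values strictly above $\tau$.
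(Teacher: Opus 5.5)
Your proposal is correct and follows essentially the same route as the paper: in each case you take the displayed integer index, check that its error envelope is at most $\tau$, and apply Lemma~\ref{lem:abstract-finite-rank-rule} with rank $N$ (or $r_N$). The only cosmetic difference is that the paper treats the case $C\le\tau$ separately up front, whereas you handle it inside each computation, using $\rho_+\ge\log(C/\tau)$ and the outer maxima.
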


\begin{proof}
If $C\leq\tau$, the rank-zero approximant at $N=0$ already suffices in all
three cases.  Assume $C>\tau$, so $\rho_+=\log(C/\tau)>0$.

For \eqref{eq:abstract-exponential-law}, the choice
$N=\lceil\rho_+/a\rceil$ gives $aN\geq\rho_+$ and hence
$Ce^{-aN}\leq Ce^{-\rho_+}=\tau$.  For
\eqref{eq:abstract-stretched-law}, choosing
$N=\lceil(\rho_+/a)^{1/q}\rceil$ gives $aN^q\geq\rho_+$ and the same
conclusion.  Finally,
$N=\lceil(C/\tau)^{1/a}-1\rceil$ satisfies
$N+1\geq(C/\tau)^{1/a}$, so $C(N+1)^{-a}\leq\tau$.
In each case Lemma~\ref{lem:abstract-finite-rank-rule} converts the error
inequality into the asserted count.  The version with rank $r_N$ is the
same argument with that rank inserted in
\eqref{eq:abstract-finite-rank-conclusion}.
\end{proof}

\section{The componentwise variational Schatten engine}
\label{sec:componentwise-schatten}

This section develops the all-parameter estimate for finite unions of intervals.
The only non-elementary ingredient is the one-sided Schatten estimate from
\cite[Proposition~5.1]{AzimifardIndependent}, restated as
Proposition~\ref{prop:sch-published-block}.  Every subsequent reduction and
optimization is proved below.

Throughout this section,
\[
 A=\bigcup_{i=1}^{M}I_i,
 \qquad
 B=\bigcup_{j=1}^{K}J_j,
\]
where the intervals within each union are pairwise disjoint, bounded, and of
positive length.  We retain the Fourier normalization
\[
 (\F f)(\xi)=\int_{\R}e^{-2\pi i x\xi}f(x)\,dx,
 \qquad Q_E=\F^{-1}P_E\F .
\]
For \(c>0\), set
\[
 S=P_AQ_{cB}P_A,
 \qquad
 T=P_{A^c}Q_{cB}P_A,
 \qquad
 t_\eps=\sqrt{\eps(1-\eps)}.
\]
By Lemma~\ref{lem:intro-finite-measure}, \(T\) is Hilbert--Schmidt, and the
abstract defect identity gives
\begin{equation}
 \Lpl_\eps(A,cB)
 =\operatorname{Tr}\one_{(\eps,1-\eps)}(S)
 =n(t_\eps;T),
 \qquad 0<\eps<\tfrac12 .
 \label{eq:sch-defect-count}
\end{equation}

For \(0<p<\infty\), write
\(\|X\|_p^p:=\sum_{k\geq1}s_k(X)^p\), possibly \(+\infty\), and write
\(X\in\Sp_p\) when the sum is finite.  We shall use two standard
singular-value facts.  If \(X,Y\) are compact and \(m,n\) are nonnegative
integers, then
\begin{equation}
 s_{m+n+1}(X+Y)\leq s_{m+1}(X)+s_{n+1}(Y).
 \label{eq:sch-kyfan}
\end{equation}
If \(0<p\leq1\) and \(X,Y\in\Sp_p\), Rotfel'd's inequality gives
\begin{equation}
 \|X+Y\|_p^p\leq\|X\|_p^p+\|Y\|_p^p.
 \label{eq:sch-rotfeld}
\end{equation}
Both statements follow first for finite matrices and then for compact operators
by approximation; see, for example, \cite{BhatiaMatrix,SimonTrace}.

\subsection{The one-sided input and its dilation invariance}

For \(\ell,b>0\), define an operator between two explicitly different Hilbert
spaces by
\begin{equation}
 \Gamma_{\ell,b}:L^2(0,\ell)\longrightarrow L^2(0,\infty),
 \qquad
 (\Gamma_{\ell,b}f)(x)
 =\int_0^\ell
 \frac{\sin\!\bigl(\pi b(x+y)\bigr)}{\pi(x+y)}f(y)\,dy .
 \label{eq:sch-gamma}
\end{equation}
Translations, a reflection of the input interval, and unimodular
multiplications identify this operator with either one-sided tail of an interval
localization block.  The quantitative input is the following previously proved
estimate.

\begin{proposition}[Imported one-sided block estimate]
\label{prop:sch-published-block}
For \(\ell,b>0\) and \(0<p\leq1\),
\begin{equation}
 \|\Gamma_{\ell,b}\|_p^p
 \leq \frac1p\left[
    \pi e\,b+8+
    \Csharp\bigl(1+\logplus(\ell p)\bigr)
 \right],
 \qquad
 \Csharp=3+\frac1{\log2},
 \label{eq:sch-published-block}
\end{equation}
where \(\logplus u=\max\{0,\log_2u\}\).
\end{proposition}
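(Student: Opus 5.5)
This estimate is imported from \cite[Proposition~5.1]{AzimifardIndependent}, so the plan is to reconstruct its mechanism rather than cite it blindly. First, by the dilation $x\mapsto x/b$ (unitary after $L^2$ normalization), $\Gamma_{\ell,b}$ is unitarily equivalent to $\Gamma_{\ell b,1}$. This means the bound must depend on $(\ell,b)$ only through scale-invariant quantities. The factor $b$ appearing separately from $\ell$ in \eqref{eq:sch-published-block} suggests the argument splits the kernel at an intrinsic scale rather than being purely dilation-covariant.

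Next, I would decompose the kernel $k(x,y)=\sin(\pi b(x+y))/(\pi(x+y))$ dyadically in the variable $s=x+y$:
\begin{itemize}
\item a near region $s\lesssim 1/b$, where the kernel is bounded by $b$ and smooth;
\item dyadic shells $s\in[2^m,2^{m+1})/b$, where $|k|\lesssim 1/s$.
\end{itemize}
On the near region, the contribution is essentially a smooth kernel on a box of side $\sim 1/b$. Expanding $\sin(\pi b s)/(\pi s)=\sum_n (-1)^n(\pi b)^{2n+1}s^{2n}/(\pi(2n+1)!)$ and then expanding $s^{2n}=(x+y)^{2n}$ binomially gives finite-rank pieces with factorially decaying norms. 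Summing $p$-th powers of their norms, using Rotfel'd \eqref{eq:sch-rotfeld}, yields a contribution of order $\pi e b/p$. The constant $e$ is the footprint of the Stirling step $N!\ge (N/e)^N$, already used in Proposition~\ref{prop:abstract-factorial-inversion}.

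Each dyadic shell carries a Hankel-type kernel $\sim e^{\pm i\pi b s}/s$ localized to $s\sim 2^m/b$. Factoring out the unimodular phases $e^{\pm i\pi b x}e^{\pm i\pi b y}$, which does not change singular values, leaves a smooth homogeneous kernel of size $2^{-m}b$ on a region of width $2^m/b$. Such a piece has $\mathcal S_p$ quasinorm bounded by an absolute constant, uniformly in $m$, because it is a dilate of a fixed smooth bump-type kernel. Only about $1+\logplus(\ell b)$ shells meet the input interval $[0,\ell]$. The remaining far tail, with $y\le\ell$ and $x$ large, has kernel roughly $\ell/x$ integrated against $f$; I would treat it as a rank-one-dominated piece plus a fast-decaying remainder, which supplies the additive constant $8$.

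The main obstacle is uniformity in $p\to0$ together with the exact form $\logplus(\ell p)$ rather than $\logplus(\ell b)$. This indicates that the original proof optimizes a truncation level depending on $p$: it discards shells beyond scale $\sim 1/p$ using the $1/p$-weighted tail sum $\sum_m 2^{-mp}\approx 1/(p\log 2)$. That sum is the source of $1/\log 2$ in $\Csharp$. The step I expect to be hardest is reproducing these precise constants. Rather than reprove them, I would take \eqref{eq:sch-published-block} verbatim from \cite{AzimifardIndependent}. I would then verify only that our $\Gamma_{\ell,b}$ matches their normalization, namely $\mathcal F$ with $e^{-2\pi i x\xi}$ and the $\sin(\pi b s)/(\pi s)$ kernel, by checking the tail identity for $P_{(-\infty,0)}Q_{[-b/2,b/2]}P_{(0,\ell)}$ after reflection.
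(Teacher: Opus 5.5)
Your actual argument is the paper's: you take the estimate verbatim from \cite[Proposition~5.1]{AzimifardIndependent} and check, via the reflected tail of $P_{(-\infty,0)}Q_{[-b/2,b/2]}P_{(0,\ell)}$, that the kernel $\sin(\pi b(x+y))/(\pi(x+y))$ and the constant $\pi e$ match the $e^{-2\pi i x\xi}$ convention, so the proposal is correct. The mechanistic reconstruction before it is heuristic only and should not be presented as part of the proof: all of its cuts sit at $b$-scaled positions, so by Lemma~\ref{lem:sch-halfline-normalization} it can only give a bound depending on $\ell b$ and $p$. In particular, on your near box of side $\sim1/b$ the Taylor parameter $\pi b(x+y)$ is $O(1)$, so that region contributes an amount bounded independently of $b$ and cannot be the source of the separate $\pi e\,b$ term.
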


Proposition~\ref{prop:sch-published-block} is exactly
\cite[Proposition~5.1]{AzimifardIndependent}, with the domain and codomain
written explicitly; \cite[Lemma~4.2]{AzimifardIndependent} defines
\(\Csharp=3+1/\log2<4.5\).  Its kernel and coefficient \(\pi e\) agree with
the Fourier convention used above.

\begin{lemma}[Half-line normalization]
\label{lem:sch-halfline-normalization}
For every \(\ell,b>0\), the operators
\(\Gamma_{\ell,b}\) and \(\Gamma_{b\ell,1}\) have identical singular values.
\end{lemma}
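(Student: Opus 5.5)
The plan is to realize the dilation as a pair of unitary rescalings, one on the domain and one on the codomain, that conjugate $\Gamma_{\ell,b}$ exactly into $\Gamma_{b\ell,1}$. Singular values are invariant under composition with unitaries on both sides, so the claim will follow at once.

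Define $U_\ell\colon L^2(0,\ell)\to L^2(0,b\ell)$ by $(U_\ell f)(y)=b^{-1/2}f(y/b)$, and $V\colon L^2(0,\infty)\to L^2(0,\infty)$ by $(Vg)(x)=b^{-1/2}g(x/b)$. Both are unitary by the change of variables $y\mapsto y/b$, with inverses $(U_\ell^{-1}h)(y)=b^{1/2}h(by)$ and $(V^{-1}h)(x)=b^{1/2}h(bx)$. I will check the intertwining identity
\[
 V\,\Gamma_{\ell,b}=\Gamma_{b\ell,1}\,U_\ell .
\]
For $f\in L^2(0,\ell)$ the left side is
\[
 (V\Gamma_{\ell,b}f)(x)=b^{-1/2}\int_0^\ell\frac{\sin(\pi(x+by))}{\pi(x/b+y)}f(y)\,dy .
\]
Substituting $y=u/b$, so $dy=du/b$, this becomes
\[
 b^{-1/2}\int_0^{b\ell}\frac{\sin(\pi(x+u))}{\pi(x+u)}f(u/b)\,du
 =\int_0^{b\ell}\frac{\sin(\pi(x+u))}{\pi(x+u)}(U_\ell f)(u)\,du .
\]
The last expression is $(\Gamma_{b\ell,1}U_\ell f)(x)$, which proves the identity.

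It follows that $\Gamma_{\ell,b}=V^{-1}\Gamma_{b\ell,1}U_\ell$. Then
\[
 \Gamma_{\ell,b}^*\Gamma_{\ell,b}=U_\ell^{-1}\bigl(\Gamma_{b\ell,1}^*\Gamma_{b\ell,1}\bigr)U_\ell ,
\]
so the two positive operators $|\Gamma_{\ell,b}|^2$ and $|\Gamma_{b\ell,1}|^2$ are unitarily equivalent. Unitarily equivalent positive operators have the same spectral multiplicities, so the singular values coincide with multiplicity. The same argument gives equal counts $n(\tau;\cdot)$ and equal $\|\cdot\|_p$ even without assuming compactness in advance. Compactness does hold anyway, since the kernel is Hilbert--Schmidt on $(0,\infty)\times(0,\ell)$ because $\int_0^\infty\int_0^\ell(x+y)^{-2}\,dy\,dx<\infty$ fails only logarithmically near $0$, where $\sin$ cancels the singularity.

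The only point needing care is bookkeeping the Jacobian factors. The $b^{-1/2}$ normalizations, the $1/b$ from the substitution, and the factor $b$ produced by rescaling the denominator $x/b+y$ must cancel exactly. The computation above shows that they do. I expect no genuine obstacle; the lemma is a scaling symmetry of the kernel $\sin(\pi b(x+y))/(\pi(x+y))$, which is homogeneous of degree $-1$ in $(x,y)$ after the phase is absorbed.
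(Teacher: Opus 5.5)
Your proof is correct and is essentially the paper's argument. You use the same pair of $b^{-1/2}$-normalized dilations on the input space and on the output half-line, your intertwining relation $V\Gamma_{\ell,b}=\Gamma_{b\ell,1}U_\ell$ is a rearrangement of the paper's conjugation identity $U_b^{\mathrm{out}}\Gamma_{\ell,b}(U_b^{\mathrm{in}})^{-1}=\Gamma_{b\ell,1}$, and two-sided unitary invariance of singular values finishes both proofs. Your compactness aside is not needed for the lemma; it is awkwardly worded but right in substance, since $|\sin(\pi b s)/(\pi s)|\le\min\{b,1/(\pi s)\}$ makes the kernel square-integrable on $(0,\infty)\times(0,\ell)$.
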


\begin{proof}
The proof uses one unitary on the input space and another on the output space.
Define
\[
 U_b^{\mathrm{in}}:L^2(0,\ell)\longrightarrow L^2(0,b\ell),
 \qquad
 (U_b^{\mathrm{in}}f)(u)=b^{-1/2}f(u/b),
\]
and
\[
 U_b^{\mathrm{out}}:L^2(0,\infty)\longrightarrow L^2(0,\infty),
 \qquad
 (U_b^{\mathrm{out}}g)(u)=b^{-1/2}g(u/b).
\]
For example,
\[
 \|U_b^{\mathrm{in}}f\|_2^2
 =\int_0^{b\ell}b^{-1}|f(u/b)|^2\,du
 =\int_0^\ell|f(y)|^2\,dy,
\]
and the same change of variables proves that \(U_b^{\mathrm{out}}\) is
unitary.

Let \(h\in L^2(0,b\ell)\).  Since
\((U_b^{\mathrm{in}})^{-1}h(y)=b^{1/2}h(by)\), direct substitution gives
\begin{align*}
 &\bigl(U_b^{\mathrm{out}}\Gamma_{\ell,b}
       (U_b^{\mathrm{in}})^{-1}h\bigr)(u)\\
 &\quad=b^{-1/2}\int_0^\ell
 \frac{\sin\!\bigl(\pi b(u/b+y)\bigr)}
      {\pi(u/b+y)}\,b^{1/2}h(by)\,dy\\
 &\quad=\int_0^\ell
 b\frac{\sin\!\bigl(\pi(u+by)\bigr)}{\pi(u+by)}h(by)\,dy\\
 &\quad=\int_0^{b\ell}
 \frac{\sin\!\bigl(\pi(u+v)\bigr)}{\pi(u+v)}h(v)\,dv,
\end{align*}
where the last line uses \(v=by\).  This is precisely
\((\Gamma_{b\ell,1}h)(u)\).  Hence
\[
 U_b^{\mathrm{out}}\Gamma_{\ell,b}
 (U_b^{\mathrm{in}})^{-1}=\Gamma_{b\ell,1}.
\]
Left and right multiplication by unitaries leaves every singular value
unchanged, proving the claim.
\end{proof}

\begin{lemma}[Free component dilation]
\label{lem:sch-free-dilation}
Let \(\ell,b,\beta>0\) and put \(w=\ell b\).  Then
\[
 \Gamma_{\ell,b}\sim\Gamma_{w/\beta,\beta},
\]
where \(\sim\) means equality of all singular values.
\end{lemma}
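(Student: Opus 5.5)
The plan is to reduce both operators to the common normal form supplied by Lemma~\ref{lem:sch-halfline-normalization}, and then use that equality of singular values is an equivalence relation.

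First, apply Lemma~\ref{lem:sch-halfline-normalization} with parameters $(\ell,b)$. This gives $\Gamma_{\ell,b}\sim\Gamma_{b\ell,1}=\Gamma_{w,1}$, since $w=\ell b$.

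Second, apply the same lemma with the parameters $(\ell',b')=(w/\beta,\beta)$. Both are positive because $w,\beta>0$, so the lemma applies. It gives $\Gamma_{w/\beta,\beta}\sim\Gamma_{\beta\cdot w/\beta,1}=\Gamma_{w,1}$.

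Finally, combine the two relations. The relation $\sim$ is transitive and symmetric, because it means equality of the full ordered singular-value sequences repeated with multiplicity. Hence $\Gamma_{\ell,b}\sim\Gamma_{w/\beta,\beta}$.

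For concreteness, the explicit intertwiner can be recorded: the composite $(U_\beta^{\mathrm{out}})^{-1}U_b^{\mathrm{out}}=U_{b/\beta}^{\mathrm{out}}$ on the output side, and the corresponding input dilation $L^2(0,\ell)\to L^2(0,w/\beta)$. This composite conjugates $\Gamma_{\ell,b}$ into $\Gamma_{w/\beta,\beta}$.

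No step is a real obstacle. The only point needing care is the bookkeeping of the input domains, which change from $L^2(0,\ell)$ to $L^2(0,w)$ to $L^2(0,w/\beta)$. The unitaries must therefore be taken between these different spaces, exactly as they were in the proof of Lemma~\ref{lem:sch-halfline-normalization}.
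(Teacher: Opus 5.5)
Your proposal is correct and is essentially the paper's own proof: apply Lemma~\ref{lem:sch-halfline-normalization} once with $(\ell,b)$ and once with $(w/\beta,\beta)$ to reach the common normal form $\Gamma_{w,1}$, then use symmetry and transitivity of $\sim$. Your explicit output intertwiner $U_{b/\beta}^{\mathrm{out}}$ is also correct, though the paper does not need it.
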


\begin{proof}
Lemma~\ref{lem:sch-halfline-normalization} first gives
\(\Gamma_{\ell,b}\sim\Gamma_{\ell b,1}=\Gamma_{w,1}\).  Applying the same
lemma with \((\ell,b)=(w/\beta,\beta)\) gives
\(\Gamma_{w/\beta,\beta}\sim\Gamma_{w,1}\).  Symmetry and transitivity of
unitary equivalence complete the proof.
\end{proof}

The preceding lemma supplies the variational parameter: within this family the
singular values depend on \((\ell,b)\) only through the product \(w=\ell b\).
Each component pair may therefore select its own \(\beta\).

\subsection{Canonical normalization of an interval pair}

The same calculation also records why a single interval pair depends on only
one dimensionless product.

\begin{lemma}[Canonical one-interval localization]
\label{lem:sch-one-interval}
Let \(I,J\subset\R\) be bounded intervals of positive length and let
\(c>0\).  If
\(w=c|I||J|\), then
\[
 P_IQ_{cJ}P_I
 \simeq
 P_{(0,w)}Q_{(-1/2,1/2)}P_{(0,w)}.
\]
Consequently every unitarily invariant spectral count of the one-interval
operator depends on \(I,J,c\) only through \(w\).
\end{lemma}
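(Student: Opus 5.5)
The plan is to build the unitary equivalence as a product of three elementary unitaries. Each one commutes appropriately with the Fourier transform, and each transports $P_I$ and $Q_{cJ}$ to the canonical projections. Write $I=(a,a+|I|)$ and $cJ=(\eta_0-c|J|/2,\eta_0+c|J|/2)$, where $\eta_0$ is the centre of $cJ$. Endpoints are irrelevant because they form null sets, so closed, open and half-open intervals give the same projections.

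\emph{Step 1 (frequency centering).} Let $M_{\eta_0}$ be modulation, $(M_{\eta_0}f)(x)=e^{2\pi i\eta_0x}f(x)$. It commutes with every $P_E$ in space. On the Fourier side it is translation by $\eta_0$, so $M_{\eta_0}^{*}Q_{cJ}M_{\eta_0}=Q_{cJ-\eta_0}$. Conjugating by $M_{\eta_0}$ therefore replaces $cJ$ by the centred interval $(-c|J|/2,c|J|/2)$ and leaves $P_I$ unchanged.

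\emph{Step 2 (spatial translation).} Let $\tau_a f(x)=f(x+a)$. Then $\tau_aP_I\tau_a^{*}=P_{I-a}=P_{(0,|I|)}$. On the Fourier side $\tau_a$ becomes a modulation, which commutes with $P_E$, so $Q_E$ is unchanged.

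\emph{Step 3 (dilation).} Let $(D_\lambda f)(x)=\lambda^{1/2}f(\lambda x)$. One checks the two conjugation rules
\[
D_\lambda P_ED_\lambda^{*}=P_{E/\lambda},
\qquad
\mathcal F D_\lambda=D_{1/\lambda}\mathcal F,
\]
and together they give $D_\lambda Q_ED_\lambda^{*}=Q_{\lambda E}$. The choice is $\lambda=1/(c|J|)$. This turns the centred frequency interval into $(-1/2,1/2)$ and the spatial interval $(0,|I|)$ into $(0,c|I||J|)=(0,w)$.

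Composing the three unitaries $U=D_\lambda\tau_aM_{\eta_0}^{*}$ gives $UP_IQ_{cJ}P_IU^{*}=P_{(0,w)}Q_{(-1/2,1/2)}P_{(0,w)}$. Since every unitarily invariant spectral count is preserved under conjugation, the consequence follows at once. The only step that needs real care is Step 3. There the directions of the dilation factors, and the fact that $\mathcal F$ intertwines $D_\lambda$ with $D_{1/\lambda}$, must be tracked precisely, so that the product comes out as $c|I||J|$ rather than as its reciprocal. I would verify this by a direct change of variables in $\mathcal F(D_\lambda f)(\xi)=\lambda^{-1/2}(\mathcal Ff)(\xi/\lambda)$.
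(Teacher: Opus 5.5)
Your proof is correct and follows essentially the same route as the paper: modulation and translation normalize the frequency and spatial intervals, and an $L^2$-normalized dilation by the factor $c|J|$ then rescales them. The paper centers both intervals, dilates, and translates to $(0,w)$ at the end, whereas you translate to $(0,|I|)$ before dilating; your $D_{1/(c|J|)}$ is the paper's $D_{c|J|}$, and your conjugation rules, including $\mathcal F D_\lambda=D_{1/\lambda}\mathcal F$ and hence $D_\lambda Q_E D_\lambda^*=Q_{\lambda E}$, are all correct.
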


\begin{proof}
Write, modulo endpoints and null sets,
\[
 I=x_I+(-|I|/2,|I|/2),
 \qquad
 cJ=c\xi_J+(-c|J|/2,c|J|/2),
\]
where \(x_I\) and \(\xi_J\) are the centers of \(I\) and \(J\).  Introduce
the unitaries
\[
 (T_af)(x)=f(x-a),
 \qquad
 (M_\eta f)(x)=e^{2\pi i\eta x}f(x),
 \qquad
 (D_\gamma f)(x)=\gamma^{-1/2}f(x/\gamma).
\]

First, translation sends the centered copy of \(I\) to \(I\).  Modulation by
\(c\xi_J\) sends the centered frequency interval to \(cJ\).  The extra Fourier
phase caused by spatial translation commutes with the frequency projection.
Therefore, with \(U_0=M_{c\xi_J}T_{x_I}\),
\[
 U_0^{-1}(P_IQ_{cJ}P_I)U_0
 =P_{(-|I|/2,|I|/2)}
  Q_{(-c|J|/2,c|J|/2)}
  P_{(-|I|/2,|I|/2)}.
\]

Second, direct calculation gives
\[
 D_\gamma P_ED_\gamma^{-1}=P_{\gamma E},
 \qquad
 \F D_\gamma f(\xi)=\gamma^{1/2}(\F f)(\gamma\xi),
\]
and hence
\[
 D_\gamma Q_ED_\gamma^{-1}=Q_{E/\gamma}.
\]
Choose \(\gamma=c|J|\).  Since \(\gamma|I|=w\), conjugation by \(D_\gamma\)
turns the preceding centered operator into
\[
 P_{(-w/2,w/2)}Q_{(-1/2,1/2)}P_{(-w/2,w/2)}.
\]
Finally, conjugation by \(T_{w/2}\) sends the spatial interval
\((-w/2,w/2)\) to \((0,w)\) and leaves the centered frequency projection
unchanged.  Composing the three unitary conjugations proves the assertion.
\end{proof}

\subsection{Exact optimization of the block scale}

\begin{lemma}[Two-branch optimized envelope]
\label{lem:sch-Gstar}
For \(u>0\), define
\[
 G_*(u)=\inf_{\beta>0}
 \left\{\pi e\,\beta+
 \Csharp\logplus\!\left(\frac{u}{\beta}\right)\right\},
 \qquad
 \beta_0=\frac{\Csharp}{\pi e\log2}.
\]
Then
\begin{equation}
 G_*(u)=
 \begin{cases}
  \pi e\,u,
     &0<u\leq\beta_0,\\[3pt]
  \displaystyle\frac{\Csharp}{\log2}
     +\Csharp\log_2\!\left(\frac{u}{\beta_0}\right),
     &u\geq\beta_0.
 \end{cases}
 \label{eq:sch-Gstar-formula}
\end{equation}
\end{lemma}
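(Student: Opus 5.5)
The plan is to split the infimum over $\beta>0$ according to whether $\beta\ge u$ or $\beta<u$, since $\logplus(u/\beta)$ vanishes exactly on the first region. Put $h(\beta)=\pi e\,\beta+\Csharp\logplus(u/\beta)$. On $[u,\infty)$ we have $h(\beta)=\pi e\beta$, which is increasing, so the infimum over this region is $\pi e u$ and it is attained at $\beta=u$. On $(0,u)$ we have $h(\beta)=\pi e\beta+\Csharp\log_2 u-\Csharp\log_2\beta$. This function is strictly convex in $\beta$, and its derivative is $h'(\beta)=\pi e-\Csharp/(\beta\log 2)$, which vanishes exactly at $\beta=\beta_0$. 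Hence $h$ decreases on $(0,\beta_0]$ and increases on $[\beta_0,\infty)$ wherever this formula applies. Also $h\to+\infty$ as $\beta\to0^+$, so no infimum escapes to the boundary.

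\emph{Case $u\le\beta_0$.} The interval $(0,u)$ lies inside $(0,\beta_0]$, where $h$ is decreasing. So the infimum over $(0,u)$ is approached as $\beta\uparrow u$, and the limit is $h(u)=\pi e u$ because $h$ is continuous at $u$. Combining the two regions gives $G_*(u)=\pi e u$.

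\emph{Case $u\ge\beta_0$.} Here $\beta_0\in(0,u]$. On $(0,u]$ the log formula is valid, including at the endpoint $u$ where both formulas agree. By convexity, the minimum over $(0,u]$ is attained at $\beta_0$. This value is at most $h(u)=\pi e u$, and $h(u)$ in turn is the infimum over $[u,\infty)$. So $G_*(u)=h(\beta_0)=\pi e\beta_0+\Csharp\log_2(u/\beta_0)$. By the definition of $\beta_0$ we have $\pi e\beta_0=\Csharp/\log2$, which gives the second branch of \eqref{eq:sch-Gstar-formula}.

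Finally, I would check that the two branches coincide at $u=\beta_0$: both equal $\pi e\beta_0=\Csharp/\log2$, so the piecewise definition is consistent. There is no real obstacle. The only point needing care is continuity of $h$ at $\beta=u$, where the $\logplus$ switches formulas. That continuity is what lets the two regions be glued, and it justifies taking the infimum over the half-open interval $(0,u)$ when $u\le\beta_0$.
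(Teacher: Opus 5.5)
Your proof is correct and follows essentially the same route as the paper: split at $\beta=u$ where $\logplus$ switches formulas, use strict convexity of the logarithmic branch with unique critical point $\beta_0$, and treat the cases $u\le\beta_0$ and $u\ge\beta_0$ separately, gluing by continuity at $\beta=u$. The identity $\pi e\beta_0=\Csharp/\log 2$ then gives the second branch, and the two branches agree at $u=\beta_0$, exactly as you checked.
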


\begin{proof}
Put
\[
 F_u(\beta)=\pi e\,\beta+
 \Csharp\logplus(u/\beta).
\]
We minimize on the two regions separated by \(\beta=u\).

If \(\beta\geq u\), then \(u/\beta\leq1\), so
\(\logplus(u/\beta)=0\).  Thus \(F_u(\beta)=\pi e\beta\), and its minimum on
\([u,\infty)\) occurs at \(\beta=u\), with value \(\pi eu\).

If \(0<\beta<u\), then
\[
 F_u(\beta)=\pi e\beta+\Csharp\log_2(u/\beta),
\]
so
\[
 F_u'(\beta)=\pi e-\frac{\Csharp}{\beta\log2},
 \qquad
 F_u''(\beta)=\frac{\Csharp}{\beta^2\log2}>0.
\]
There is exactly one critical point, namely \(\beta_0\), and strict convexity
makes it the unique minimum whenever it lies in the region \(0<\beta<u\).

If \(u\leq\beta_0\), then \(F_u'(\beta)\leq0\) throughout \((0,u)\).
The infimum on that interval is therefore its boundary value at \(\beta=u\),
which agrees with the first-region minimum and equals \(\pi eu\).

If \(u\geq\beta_0\), the critical point is admissible (with equality at the
common boundary when \(u=\beta_0\)).  For \(u>\beta_0\), the function decreases
on \((0,\beta_0)\), increases on \((\beta_0,u)\), and remains increasing on
\([u,\infty)\); it is continuous at \(\beta=u\).  Thus \(\beta_0\) is the
global minimizer.  Since \(\pi e\beta_0=\Csharp/\log2\), evaluation at
\(\beta_0\) yields the second line of \eqref{eq:sch-Gstar-formula}.  At
\(u=\beta_0\), both formulas equal \(\Csharp/\log2\); hence there is no gap
between the cases.
\end{proof}

\subsection{The componentwise finite-union theorem}

\begin{theorem}[Componentwise variational Schatten envelope]
\label{thm:sch-componentwise}
Let \(A=\bigcup_{i=1}^{M}I_i\) and
\(B=\bigcup_{j=1}^{K}J_j\) be finite pairwise disjoint unions of bounded
intervals of positive length.  Put
\[
 w_{ij}=c|I_i||J_j|,
 \qquad t_\eps=\sqrt{\eps(1-\eps)}.
\]
Then, for \(0<\eps<1/2\),
\begin{equation}
 \boxed{
 \Lpl_\eps(A,cB)
 \leq
 2\inf_{0<p\leq1}\frac{t_\eps^{-p}}p
 \sum_{i=1}^{M}\sum_{j=1}^{K}
 \left[8+\Csharp+G_*(w_{ij}p)\right].}
 \label{eq:sch-componentwise}
\end{equation}
Each occurrence of \(G_*(w_{ij}p)\) is optimized with an independent block
dilation.
\end{theorem}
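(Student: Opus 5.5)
Starting from the defect identity \eqref{eq:sch-defect-count}, $\Lpl_\eps(A,cB)=n(t_\eps;T)$ with $T=P_{A^c}Q_{cB}P_A$, the plan is to bound $n(t_\eps;T)$ by a Schatten quasi-norm via the Chebyshev-type count $n(t;X)\le t^{-p}\|X\|_p^p$. That count holds for every $0<p\le1$, since each singular value above $t$ contributes more than $t^p$ to the sum. It therefore suffices to prove
\[
 \|T\|_p^p\le \frac2p\sum_{i,j}\bigl[8+\Csharp+G_*(w_{ij}p)\bigr],
\]
and then take the infimum over $p$.

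\emph{Decomposition.} Since the intervals are disjoint, $P_A=\sum_iP_{I_i}$ and $Q_{cB}=\sum_jQ_{cJ_j}$. Hence $T=\sum_{i,j}P_{A^c}Q_{cJ_j}P_{I_i}$. Moreover $P_{A^c}\le P_{I_i^c}$, and $I_i^c$ is the union of the two half-lines $L_i$ (left of $I_i$) and $R_i$ (right of $I_i$). It follows that
\[
 \|P_{A^c}Q_{cJ_j}P_{I_i}\|_p^p\le\|P_{I_i^c}Q_{cJ_j}P_{I_i}\|_p^p
 \le \|P_{L_i}Q_{cJ_j}P_{I_i}\|_p^p+\|P_{R_i}Q_{cJ_j}P_{I_i}\|_p^p .
\]
The first inequality holds because left multiplication by a projection does not increase singular values. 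The second follows from Rotfel'd's inequality \eqref{eq:sch-rotfeld} (or, more simply, from the orthogonality of the ranges, which gives $s_k$ of a direct sum). Rotfel'd's inequality \eqref{eq:sch-rotfeld} with $p\le1$ then sums the $MK$ blocks into $\|T\|_p^p$. This is what produces the factor $2$ and the double sum.

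\emph{Identifying each tail with $\Gamma$.} This step is the main bookkeeping obstacle. Each one-sided tail $P_{R_i}Q_{cJ_j}P_{I_i}$ must be identified, up to unitaries, with $\Gamma_{|I_i|,c|J_j|}$. The tool is the conjugation from Lemma~\ref{lem:sch-one-interval}. Translating $I_i$ to $(0,\ell)$ and removing the frequency centre by modulation, the kernel of $Q_{cJ}$ becomes $\sin(\pi b(x-y))/(\pi(x-y))$ with $b=c|J_j|$. Reflecting the input $y\mapsto -y$ over $(-\ell,0)$, and placing the output on $x>0$ measured from the right endpoint, gives exactly the kernel $\sin(\pi b(x+y))/(\pi(x+y))$ on $L^2(0,\ell)\to L^2(0,\infty)$. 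The left tail is handled in the same way after the reflection $x\mapsto-x$. I would check the precise shift so that the argument is $x+y$ with both variables nonnegative.

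\emph{Applying the estimate with a free dilation.} By Lemma~\ref{lem:sch-free-dilation}, $\Gamma_{\ell,b}\sim\Gamma_{w/\beta,\beta}$ with $w=w_{ij}$ and any $\beta>0$. Proposition~\ref{prop:sch-published-block} then gives
\[
 \|\Gamma_{\ell,b}\|_p^p\le\frac1p\Bigl[\pi e\beta+8+\Csharp+\Csharp\logplus(wp/\beta)\Bigr].
\]
Taking the infimum over $\beta$ and using Lemma~\ref{lem:sch-Gstar} yields $\frac1p[8+\Csharp+G_*(w_{ij}p)]$. The parameter $\beta$ is chosen separately for each block and each side, and this is legitimate because the bound is applied block by block before the sum is formed. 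Finally, I would combine these block bounds with the Rotfel'd summation and the Chebyshev count to obtain \eqref{eq:sch-componentwise}.
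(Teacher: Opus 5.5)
Your proposal is correct and follows the paper's proof essentially step for step. The shared steps are: the block decomposition $T=\sum_{i,j}P_{A^c}Q_{cJ_j}P_{I_i}$; enlargement of the output projection to $P_{I_i^c}$; splitting into two one-sided tails, each unitarily equivalent to $\Gamma_{|I_i|,c|J_j|}$; the free dilation of Lemma~\ref{lem:sch-free-dilation}, optimized blockwise into $G_*(w_{ij}p)$; Rotfel'd assembly of the $MK$ blocks; and the strict Chebyshev count at $t_\eps$, with the infimum over $p$ taken afterwards.

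The one thing to delete is your parenthetical alternative to Rotfel'd's inequality. The two tails $P_{L_i}Q_{cJ_j}P_{I_i}$ and $P_{R_i}Q_{cJ_j}P_{I_i}$ have orthogonal ranges but the same initial space $L^2(I_i)$, so their sum is not a direct sum. Orthogonality only gives $\widetilde T_{ij}^*\widetilde T_{ij}=L^*L+R^*R$, and the eigenvalues of this sum are not the union of those of $L^*L$ and $R^*R$. This is exactly why the paper explicitly remarks that orthogonality of the output half-lines does not suffice, and uses \eqref{eq:sch-rotfeld} to obtain the factor $2$.
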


\begin{proof}
We give the reduction in five steps.

\emph{Step 1: decompose the global defect block.}
Because the \(J_j\) are disjoint,
\(Q_{cB}=\sum_{j=1}^{K}Q_{cJ_j}\).  Likewise
\(P_A=\sum_{i=1}^{M}P_{I_i}\).  The sums are finite, and hence
\begin{equation}
 T=P_{A^c}Q_{cB}P_A
   =\sum_{i=1}^{M}\sum_{j=1}^{K}T_{ij},
 \qquad
 T_{ij}=P_{A^c}Q_{cJ_j}P_{I_i}.
 \label{eq:sch-block-sum}
\end{equation}

\emph{Step 2: enlarge the output of each block.}
Since \(A^c\subset I_i^c\),
\[
 T_{ij}=P_{A^c}\widetilde T_{ij},
 \qquad
 \widetilde T_{ij}=P_{I_i^c}Q_{cJ_j}P_{I_i}.
\]
The projection \(P_{A^c}\) is a contraction.  The ideal property of singular
values therefore gives
\begin{equation}
 s_k(T_{ij})\leq s_k(\widetilde T_{ij})
 \quad(k\geq1),
 \qquad
 \|T_{ij}\|_p^p\leq\|\widetilde T_{ij}\|_p^p.
 \label{eq:sch-output-compression}
\end{equation}

\emph{Step 3: identify the two one-sided tails.}
To see this without suppressing a geometric step, write
\(I_i=(a,a+\ell)\), where \(\ell=|I_i|\), and let
\(b=c|J_j|\).  After shifting the frequency interval to be centered at zero,
the kernel of \(Q_{cJ_j}\), up to a unimodular factor in each variable, is
\[
 q_b(x-y)=\frac{\sin(\pi b(x-y))}{\pi(x-y)}.
\]
On the right tail put \(r=x-a-\ell>0\) and \(s=a+\ell-y\in(0,\ell)\).
Then \(x-y=r+s\), so a reflection of the input turns this tail exactly into
\(\Gamma_{\ell,b}\).  On the left tail put \(r=a-x>0\) and
\(s=y-a\in(0,\ell)\).  Now \(x-y=-(r+s)\); since \(q_b\) is even, this is
again \(\Gamma_{\ell,b}\).  If \(L_{ij}\) and \(R_{ij}\) denote the two tail
operators, embedded by zero extension into the common codomain
$L^2(I_i^c)$, then
\(\widetilde T_{ij}=L_{ij}+R_{ij}\), and each of \(L_{ij},R_{ij}\) has the
same singular values as \(\Gamma_{\ell,b}\).  (Orthogonality of the output
half-lines alone does not justify equality of their Schatten \(p\)-powers.)
Rotfel'd's inequality gives the estimate that is needed:
\begin{equation}
 \|\widetilde T_{ij}\|_p^p
 \leq2\|\Gamma_{|I_i|,c|J_j|}\|_p^p.
 \label{eq:sch-two-tails}
\end{equation}

\emph{Step 4: optimize each component scale.}
Fix \(p\in(0,1]\).  Lemma~\ref{lem:sch-free-dilation} gives, for every
\(\beta_{ij}>0\),
\[
 \Gamma_{|I_i|,c|J_j|}
 \sim\Gamma_{w_{ij}/\beta_{ij},\beta_{ij}}.
\]
Apply Proposition~\ref{prop:sch-published-block} with
\(\ell=w_{ij}/\beta_{ij}\) and \(b=\beta_{ij}\), and then use
\eqref{eq:sch-two-tails}:
\begin{align*}
 \|\widetilde T_{ij}\|_p^p
 &\leq\frac2p\left[
   \pi e\beta_{ij}+8+
   \Csharp\left(1+
   \logplus\!\left(\frac{w_{ij}p}{\beta_{ij}}\right)\right)
 \right].
\end{align*}
The parameter \(\beta_{ij}\) occurs nowhere else.  We may therefore minimize
it separately for every pair \((i,j)\).  Lemma~\ref{lem:sch-Gstar} yields
\begin{equation}
 \|\widetilde T_{ij}\|_p^p
 \leq\frac2p\left[8+\Csharp+G_*(w_{ij}p)\right].
 \label{eq:sch-local-envelope}
\end{equation}

\emph{Step 5: assemble the blocks and count.}
Repeated application of Rotfel'd's inequality
\eqref{eq:sch-rotfeld} to the finite sum \eqref{eq:sch-block-sum}, followed by
\eqref{eq:sch-output-compression} and \eqref{eq:sch-local-envelope}, gives
\[
 \|T\|_p^p
 \leq\frac2p\sum_{i=1}^{M}\sum_{j=1}^{K}
       [8+\Csharp+G_*(w_{ij}p)].
\]
For any compact operator \(X\), strict singular-value Chebyshev counting says
\[
 n(t;X)t^p
 \leq\sum_{s_k(X)>t}s_k(X)^p
 \leq\|X\|_p^p,
\]
and hence \(n(t;X)\leq t^{-p}\|X\|_p^p\).  Apply this with
\(X=T\) and \(t=t_\eps\), then use \eqref{eq:sch-defect-count}.  This proves
the asserted right-hand side for each fixed \(p\in(0,1]\).  Taking the
infimum only after those fixed-\(p\) inequalities are established gives
\eqref{eq:sch-componentwise}.
\end{proof}

\subsection{What the variational scale improves}

It is useful to compare the optimized block function with a concrete fixed
scale, rather than making an unsupported global dominance claim.  Define
\[
 U_1(u)=\pi e+\Csharp\logplus(u).
\]
The choice \(\beta=1\) is admissible in the definition of \(G_*\), so
\begin{equation}
 G_*(u)\leq U_1(u)\qquad(u>0).
 \label{eq:sch-fixed-scale-dominance}
\end{equation}
This comparison is strict for every \(u>0\).  Indeed,
\(\beta_0\approx0.750545<1\).  If \(0<u\leq\beta_0\), then
\[
 U_1(u)-G_*(u)=\pi e(1-u)>0.
\]
If \(\beta_0<u<1\), strict convexity in the proof of
Lemma~\ref{lem:sch-Gstar} shows that the minimizer \(\beta_0\) is different
from the admissible but nonoptimal value \(1\).  Finally, if \(u\geq1\), the
gain is the positive constant
\begin{equation}
 U_1(u)-G_*(u)
 =\Delta_\sharp
 :=\pi e-\frac{\Csharp}{\log2}
   +\Csharp\log_2\beta_0
 \approx0.291050.
 \label{eq:sch-constant-gain}
\end{equation}
Thus the improvement is functional in the small-product regime: the variable
part changes from the constant \(\pi e\) to \(\pi eu\).

There is also a direct comparison with the component scale used in
\cite{AzimifardIndependent}.  In the block
\(\Gamma_{|I_i|,c|J_j|}\), select the admissible value
\(\beta_{ij}=|J_j|\).  Lemma~\ref{lem:sch-free-dilation} then gives
\(\Gamma_{c|I_i|,|J_j|}\), whose variable contribution in the cited
estimate is
\[
 \pi e|J_j|+
 \Csharp\logplus(c|I_i|p).
\]
This is the normalization $A=cA_0$, $B=B_0$ used in the cited assembly.  At
the global operator level the same scaling follows from
\[
 D_c(P_AQ_{cB}P_A)D_c^{-1}=P_{cA}Q_BP_{cA},
\]
with the dilation $D_c$ defined in
Lemma~\ref{lem:sch-one-interval}.
The independently optimized contribution is no larger, because it takes the
infimum over all \(\beta_{ij}>0\), including this value.  It also retains every
\(w_{ij}=c|I_i||J_j|\), instead of replacing component lengths by their
maxima.  Finally, the outer infimum in \(p\) includes every fixed admissible
exponent used in that construction.

These are precise comparisons of the two upper-bound formulas.  They do not
claim that \eqref{eq:sch-componentwise} has a smaller asymptotic order than, or
is numerically below, every other known theorem in every regime.  The
separated-interaction finite-rank estimates developed later in the paper cover complementary deep-tail
and rough-window regimes.

\begin{remark}[A safe explicit exponent]
\label{rem:sch-explicit-p}
For a fully explicit, nonoptimized bound one may take
\[
 p_0=\min\left\{1,\frac2L\right\},
 \qquad
 L=\log\frac1{\eps(1-\eps)}.
\]
This is always admissible.  No assertion that it globally minimizes the full
right-hand side of \eqref{eq:sch-componentwise} is needed.
\end{remark}

\section{Measurable Fourier windows, soft masks, and moments}
\label{sec:fourier-extensions}

This section develops the direct singular-value method in a form that does not
require the windows to be intervals.  The first result applies to arbitrary
bounded measurable sets.  The second replaces characteristic functions by
soft masks.  The third replaces boundedness by a moment condition and therefore
also treats unbounded windows.  Throughout,
\[
  (\mathcal Ff)(\xi)=\int_{\mathbb R}e^{-2\pi i x\xi}f(x)\,dx
\]
denotes the unitary Fourier transform on $L^2(\mathbb R)$.  Singular values are
listed as $s_1(K)\ge s_2(K)\ge\cdots$, with multiplicity.

\subsection{The exact Taylor remainder on the imaginary axis}

The absence of an exponential loss in the next lemma is important.  A generic
complex Taylor estimate would introduce an unnecessary factor such as
$e^{|t|}$.

\begin{lemma}[Imaginary-axis Taylor remainder]
\label{lem:imaginary-taylor}
For every $t\in\mathbb R$ and every integer $N\ge1$,
\begin{equation}
 \left|e^{it}-\sum_{k=0}^{N-1}\frac{(it)^k}{k!}\right|
 \le \frac{|t|^N}{N!}.
 \label{eq:imaginary-taylor}
\end{equation}
\end{lemma}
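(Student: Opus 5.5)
We will use the integral form of the Taylor remainder for the entire function $s\mapsto e^{is}$ along the real segment from $0$ to $t$. Because we integrate only along real points, the factor $e^{is}$ stays unimodular, and this is exactly what removes the generic $e^{|t|}$ loss.

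\emph{Step 1.} Write the remainder with an integral kernel. The $N$th derivative of $s\mapsto e^{is}$ is $i^Ne^{is}$. Taylor's formula with integral remainder, which is valid for smooth complex-valued functions by applying it to the real and imaginary parts or by iterating the fundamental theorem of calculus, gives
\[
 e^{it}-\sum_{k=0}^{N-1}\frac{(it)^k}{k!}
 =\frac{i^N}{(N-1)!}\int_0^t (t-s)^{N-1}e^{is}\,ds .
\]
To be safe, we will verify this identity by induction on $N$. The case $N=1$ is $e^{it}-1=i\int_0^te^{is}\,ds$. For the inductive step, integrate by parts in $s$: the boundary term produces the next Taylor coefficient $(it)^N/N!$, and the remaining integral carries $(t-s)^N/N!$.

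\emph{Step 2.} Take absolute values. Since $|i^N|=1$ and $|e^{is}|=1$ for real $s$,
\[
 \left|e^{it}-\sum_{k=0}^{N-1}\frac{(it)^k}{k!}\right|
 \le\frac{1}{(N-1)!}\left|\int_0^t|t-s|^{N-1}\,ds\right|
 =\frac{1}{(N-1)!}\cdot\frac{|t|^N}{N}
 =\frac{|t|^N}{N!}.
\]
The case $t<0$ needs attention only to the orientation of the integral. Either substitute $s=-\sigma$, or note that $t\mapsto -t$ turns the left side of \eqref{eq:imaginary-taylor} into its complex conjugate, so the modulus is unchanged. The case $t=0$ is trivial.

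No step is a real obstacle. The only points requiring care are the sign and orientation when $t<0$, and confirming that the remainder formula holds for complex-valued functions. We use the integral form precisely because the mean-value (Lagrange) form fails for complex-valued functions.
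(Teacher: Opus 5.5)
Your proof is correct and takes essentially the same route as the paper. Both use the integral form of Taylor's remainder along the real segment from $0$ to $t$, bound it using $|e^{is}|=1$, and handle $t<0$ by reversing the orientation or by conjugating the $-t$ case. Your extra inductive check of the remainder identity by integration by parts is sound.
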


\begin{proof}
Fix $N\ge1$.  Taylor's formula with integral remainder, applied along the real
line segment from $0$ to $t$, gives
\[
 e^{it}-\sum_{k=0}^{N-1}\frac{(it)^k}{k!}
 =\frac{i^N}{(N-1)!}\int_0^t e^{is}(t-s)^{N-1}\,ds.
\]
If $t\ge0$, taking absolute values and using $|e^{is}|=1$ yields
\[
 \left|e^{it}-\sum_{k=0}^{N-1}\frac{(it)^k}{k!}\right|
 \le \frac1{(N-1)!}\int_0^t(t-s)^{N-1}\,ds
 =\frac{t^N}{N!}.
\]
For $t<0$, reverse the direction of integration, or apply the already proved
case to $-t$ and take complex conjugates.  This gives exactly
\eqref{eq:imaginary-taylor}.
\end{proof}

For $N=0$ we use the empty Taylor approximant, namely the zero operator.  Thus
the estimates below remain valid for $N=0$ without interpreting
Lemma~\ref{lem:imaginary-taylor} at that index.

\subsection{Arbitrary bounded measurable Fourier windows}

For a bounded measurable set $E\subset\mathbb R$, write
\[
 \operatorname{diam}_{\mathrm{ess}}(E)
 :=\operatorname*{ess\,sup}_{x,y\in E}|x-y|.
\]
For a null set we use the convention
$\operatorname{diam}_{\mathrm{ess}}(E)=0$.
Changing $E$ on a null set does not change this quantity or any operator below.
When $|E|>0$, the midpoint of
$[\operatorname*{ess\,inf}E,\operatorname*{ess\,sup}E]$ will be called an
essential centre of $E$; for a null set, a centre may be chosen arbitrarily.

\begin{theorem}[Factorial singular-value envelope for measurable windows]
\label{thm:bounded-measurable-fourier}
Let $A,B\subset\mathbb R$ be bounded measurable sets of finite measure and let
$c>0$.  Put
\[
 K_{A,B,c}:=P_{cB}\mathcal F P_A,
 \qquad S_{A,B,c}:=K_{A,B,c}^*K_{A,B,c},
\]
and define
\begin{equation}
 m:=c|A||B|,
 \qquad
 \zeta:=\frac{\pi c}{2}
 \operatorname{diam}_{\mathrm{ess}}(A)
 \operatorname{diam}_{\mathrm{ess}}(B).
 \label{eq:m-zeta-bounded}
\end{equation}
Then, for every $N\in\mathbb N_0$,
\begin{equation}
 s_{N+1}(K_{A,B,c})
 \le \sqrt m\,\frac{\zeta^N}{N!}.
 \label{eq:bounded-fourier-envelope}
\end{equation}
Consequently, for $0<\epsilon<1/2$,
\begin{equation}
 \Lambda_\epsilon(S_{A,B,c})
 \le \mathcal N_{\mathrm{fac}}(m,\zeta,\epsilon)
 :=\min\left\{N\in\mathbb N_0:
       \sqrt m\,\frac{\zeta^N}{N!}\le\sqrt\epsilon\right\}.
 \label{eq:exact-factorial-count}
\end{equation}
In particular, $m\le\epsilon$ implies
$\Lambda_\epsilon(S_{A,B,c})=0$.  If $m>\epsilon$, set
\[
 \rho:=\frac12\log\frac{m}{\epsilon}>0.
\]
Then the explicit Lambert--$W$ consequence is
\begin{equation}
 \Lambda_\epsilon(S_{A,B,c})
 \le
 \left\lceil
 \frac{\rho}{W_0\!\left(\rho/(e\zeta)\right)}
 \right\rceil
 =
 \left\lceil
 \frac{\rho}{W_0\!\left(2\rho/(e\pi h)\right)}
 \right\rceil,
 \qquad
 h:=c\operatorname{diam}_{\mathrm{ess}}(A)
       \operatorname{diam}_{\mathrm{ess}}(B),
 \label{eq:bounded-fourier-lambert}
\end{equation}
where $W_0$ is the principal real branch of the Lambert function.
The exact factorial minimum in \eqref{eq:exact-factorial-count} is at least as
strong as its explicit surrogate \eqref{eq:bounded-fourier-lambert}.
At order zero the convention is $\zeta^0=1$, including when $\zeta=0$.
\end{theorem}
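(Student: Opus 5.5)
The plan is to construct an explicit rank-$N$ approximant of $K=K_{A,B,c}$ from a centred Taylor expansion of the Fourier kernel, bound the remainder in Hilbert--Schmidt norm, and then feed this into Lemma~\ref{lem:abstract-finite-rank-rule} and Proposition~\ref{prop:abstract-factorial-inversion}.

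\emph{Trivial cases.} If $|A|=0$ or $|B|=0$, then $K=0$ and everything is trivial, so assume both measures are positive.

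\emph{Centring.} Let $\alpha$ and $\beta$ be essential centres of $A$ and $B$. Almost everywhere on $A$ we then have $|x-\alpha|\le \tfrac12\operatorname{diam}_{\rm ess}(A)$, and on $cB$ we have $|\xi-c\beta|\le \tfrac c2\operatorname{diam}_{\rm ess}(B)$. Expanding gives
\[
 e^{-2\pi i x\xi}=e^{-2\pi i(\alpha\xi+c\beta x-c\alpha\beta)}\,e^{-2\pi i (x-\alpha)(\xi-c\beta)}.
\]
The first factor is a product of unimodular functions of $x$ alone and of $\xi$ alone, up to a constant phase. Hence $K=M_1 K' M_2$, where $M_1,M_2$ are unitary multiplications and $K'$ has kernel $\mathbf 1_{cB}(\xi)e^{-2\pi i (x-\alpha)(\xi-c\beta)}\mathbf 1_A(x)$. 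Singular values are unchanged by this factorization.

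\emph{Approximant and remainder.} For $N\ge1$, take $K'_N$ to be the operator whose kernel is the degree-$(N-1)$ Taylor polynomial of $e^{it}$ evaluated at $t=-2\pi(x-\alpha)(\xi-c\beta)$. This is a sum of $N$ separated products $(x-\alpha)^k(\xi-c\beta)^k$, so it has rank at most $N$. For $N=0$, use the zero operator. By Lemma~\ref{lem:imaginary-taylor}, the remainder kernel is pointwise bounded on $cB\times A$ by
\[
 \frac{(2\pi|x-\alpha||\xi-c\beta|)^N}{N!}
 \le \frac{\bigl(2\pi\cdot\tfrac12 d_A\cdot\tfrac c2 d_B\bigr)^N}{N!}
 =\frac{\zeta^N}{N!},
\]
where $d_A,d_B$ denote the essential diameters. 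Squaring and integrating over $cB\times A$, whose measure is $m$ by Lemma~\ref{lem:intro-finite-measure}, gives
\[
 \|K'-K'_N\|\le\|K'-K'_N\|_{\mathcal S_2}\le \sqrt m\,\zeta^N/N!.
\]
At $N=0$ this is just $\|K\|_{\mathcal S_2}=\sqrt m$. Lemma~\ref{lem:abstract-finite-rank-rule} then yields \eqref{eq:bounded-fourier-envelope}.

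\emph{Counting consequences.} Theorem~\ref{thm:abstract-contraction-factorization} gives $\Lambda_\epsilon\le n(\sqrt\epsilon;K)$, and Proposition~\ref{prop:abstract-factorial-inversion} with $C=\sqrt m$ and $\tau=\sqrt\epsilon$ gives \eqref{eq:exact-factorial-count}. The case $m\le\epsilon$ is item~1 of that proposition. When $m>\epsilon$ we have $\rho=\log(C/\tau)=\tfrac12\log(m/\epsilon)$. If $\zeta>0$, item~3 gives the Lambert bound, and the identity $e\zeta=e\pi h/2$ rewrites the argument as $2\rho/(e\pi h)$.

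The main obstacle I expect is the degenerate case $\zeta=0$ with $m>\epsilon$, since there the Lambert expression is not defined. This case cannot occur: positive measure forces positive essential diameter, so $\zeta>0$ automatically. I would state this explicitly. The remaining points are routine checks that the essential-sup bounds hold almost everywhere, which suffices for the Hilbert--Schmidt integral.
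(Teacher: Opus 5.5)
Your proposal is correct and follows the paper's proof step for step: you centre at essential centres, use the degree-$(N-1)$ Taylor approximant of the interaction with the imaginary-axis remainder bound, estimate the Hilbert--Schmidt norm over $cB\times A$, and note that $m>\epsilon$ forces $\zeta>0$. The only cosmetic difference is that you get the count and the Lambert--$W_0$ bound by invoking Proposition~\ref{prop:abstract-factorial-inversion} with $C=\sqrt m$ and $\tau=\sqrt\epsilon$, whereas the paper repeats that inversion inline.
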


\begin{proof}
If either set has measure zero, then $K_{A,B,c}=0$ and every assertion is
immediate.  We therefore suppose $|A||B|>0$.

\emph{Step 1: centre the phase.}
Let $\alpha$ and $\beta$ be essential centres of $A$ and $B$, respectively.
For $x\in A$ and $\xi\in cB$, put
\[
 u=x-\alpha,
 \qquad v=\xi-c\beta.
\]
The elementary identity
\[
 x\xi=uv+\alpha\xi+c\beta x-\alpha c\beta
\]
shows that the Fourier kernel factors as
\begin{equation}
 e^{-2\pi i x\xi}
 =e^{-2\pi i\alpha\xi}
  e^{-2\pi i c\beta x}
  e^{2\pi i\alpha c\beta}
  e^{-2\pi iuv}.
 \label{eq:centered-phase-factorization}
\end{equation}
The first three factors have modulus one and are one-variable multipliers, up
to the harmless constant phase.  They therefore do not affect ranks, operator
norms, or singular values.

\emph{Step 2: construct a rank-$N$ kernel.}
For $N\ge1$, replace the last factor in
\eqref{eq:centered-phase-factorization} by
\[
 q_{N-1}(u,v):=\sum_{k=0}^{N-1}\frac{(-2\pi iuv)^k}{k!}.
\]
After restoring the one-variable phase factors, the resulting integral kernel
is a sum of $N$ products of a function of $x$ and a function of $\xi$.
Consequently, the associated operator $K_N$ has rank at most $N$.
For $N=0$, set $K_0=0$, which has rank zero.

\emph{Step 3: estimate the remainder in Hilbert--Schmidt norm.}
By Lemma~\ref{lem:imaginary-taylor}, for $N\ge1$,
\[
 |e^{-2\pi iuv}-q_{N-1}(u,v)|
 \le \frac{(2\pi|u||v|)^N}{N!}.
\]
The choice of essential centres gives
\[
 |u|\le\frac12\operatorname{diam}_{\mathrm{ess}}(A),
 \qquad
 |v|\le\frac c2\operatorname{diam}_{\mathrm{ess}}(B)
\]
almost everywhere on $A\times cB$.  Hence $2\pi|u||v|\le\zeta$, and
\[
 \|K_{A,B,c}-K_N\|_{\mathrm{HS}}^2
 \le |A|\,|cB|\left(\frac{\zeta^N}{N!}\right)^2
 =m\left(\frac{\zeta^N}{N!}\right)^2.
\]
For $N=0$, the same conclusion is simply
$\|K_{A,B,c}\|\le\|K_{A,B,c}\|_{\mathrm{HS}}=\sqrt m$.
Since operator norm is bounded by Hilbert--Schmidt norm,
\[
 \|K_{A,B,c}-K_N\|
 \le\sqrt m\,\frac{\zeta^N}{N!}.
\]
The approximation-number characterization
\[
 s_{N+1}(K)=\inf_{\operatorname{rank}R\le N}\|K-R\|
\]
now proves \eqref{eq:bounded-fourier-envelope}.

\emph{Step 4: transfer the singular-value estimate to the plunge count.}
The eigenvalues of $S_{A,B,c}=K_{A,B,c}^*K_{A,B,c}$ are
$s_j(K_{A,B,c})^2$.  Therefore
\[
 \Lambda_\epsilon(S_{A,B,c})
 \le \#\{j:s_j(K_{A,B,c})>\sqrt\epsilon\}.
\]
If the inequality in the defining set of
\eqref{eq:exact-factorial-count} holds, then
$s_{N+1}(K_{A,B,c})\le\sqrt\epsilon$; at most $N$ singular values are
strictly larger than $\sqrt\epsilon$.  This proves
\eqref{eq:exact-factorial-count}.  Taking $N=0$ proves the stated zero-count
criterion.

\emph{Step 5: solve a sufficient inequality with Lambert $W$.}
Assume $m>\epsilon$.  Then both sets have positive measure; a
positive-measure subset of $\mathbb R$ has positive essential diameter, so
$\zeta>0$ and the displayed Lambert expression is well defined.  The
elementary factorial inequality
$N!\ge(N/e)^N$ gives, for $N\ge1$,
\[
 \sqrt m\,\frac{\zeta^N}{N!}
 \le \sqrt m\left(\frac{e\zeta}{N}\right)^N.
\]
The right-hand side is at most $\sqrt\epsilon$ whenever
\begin{equation}
 N\log\frac{N}{e\zeta}\ge\rho.
 \label{eq:lambert-sufficient}
\end{equation}
Let
\[
 N_*:=\frac{\rho}{W_0(\rho/(e\zeta))}.
\]
Writing $w=W_0(\rho/(e\zeta))$, so that $we^w=\rho/(e\zeta)$,
one obtains
\[
 N_*=e\zeta e^w>e\zeta,
 \qquad
 \log\frac{N_*}{e\zeta}=w,
 \qquad
 N_*\log\frac{N_*}{e\zeta}=\rho.
\]
The function $x\mapsto x\log(x/(e\zeta))$ is strictly increasing for
$x>e\zeta$ because its derivative is
$1+\log(x/(e\zeta))>1$.  Thus $N=\lceil N_*\rceil$ satisfies
\eqref{eq:lambert-sufficient}, proving the first expression in
\eqref{eq:bounded-fourier-lambert}.  The second follows from
$\zeta=\pi h/2$.
\end{proof}

The next corollary records precisely what the Lambert expression does in a
deep tail.  It is an asymptotic statement about the explicit certificate, not
an assertion that the actual plunge count has the same asymptotic size.

\begin{corollary}[Ultra-deep direct certificate]
\label{cor:ultradeep-fourier}
Let $L=\log(1/\epsilon)$.  Suppose, as $\epsilon\downarrow0$, that
\[
 \frac{L}{h}\longrightarrow\infty,
 \qquad |\log m|=o(L).
\]
Then the unrounded expression on the right of
\eqref{eq:bounded-fourier-lambert} satisfies
\begin{equation}
 \frac{\rho}{W_0(2\rho/(e\pi h))}
 \sim
 \frac{L}{2W_0(L/(e\pi h))}
 =o(L).
 \label{eq:ultradeep-asymptotic}
\end{equation}
In particular, the right side of \eqref{eq:bounded-fourier-lambert} is
sublinear in $\log(1/\epsilon)$.
\end{corollary}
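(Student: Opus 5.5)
The plan is to reduce the statement to standard asymptotics of the principal Lambert branch.

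\emph{Step 1: the numerator.} Write $\rho=\tfrac12\log(m/\epsilon)=\tfrac12(L+\log m)$. The hypothesis $|\log m|=o(L)$ gives $\rho=\tfrac L2(1+o(1))$, so $\rho\sim L/2$. In particular $\rho>0$ for small $\epsilon$, so $m>\epsilon$ and the expression is defined.

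\emph{Step 2: the argument of $W_0$.} Put $x=2\rho/(e\pi h)$ and $y=L/(e\pi h)$. Then $x/y=2\rho/L\to1$, and $y\to\infty$ because $L/h\to\infty$. We use two facts:
\begin{itemize}
\item $W_0$ is increasing.
\item $W_0(z)\to\infty$ and $W_0(z)\sim\log z$ as $z\to\infty$.
\end{itemize}
From these, $W_0(x)/W_0(y)\to1$. To see this, for large $y$ we have $\tfrac12y\le x\le 2y$. Monotonicity then traps $W_0(x)$ between $W_0(y/2)$ and $W_0(2y)$. By concavity, or from $W_0(cz)-W_0(z)\le|\log c|$ for $z\ge e$ (which follows from $W_0(z)+\log W_0(z)=\log z$), these differ from $W_0(y)$ by $O(1)$. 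Since $W_0(y)\to\infty$, the ratio tends to one.

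The only point needing care is exactly this $O(1)$ comparison. It comes from differentiating $w+\log w=\log z$: we get $dw/d\log z=w/(1+w)\le1$, so $W_0$ is $1$-Lipschitz in $\log z$. Combining, $\rho/W_0(x)\sim (L/2)/W_0(y)$, which is the asymptotic equivalence in \eqref{eq:ultradeep-asymptotic}.

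\emph{Step 3: sublinearity.} Since $W_0(y)\to\infty$, we get $L/(2W_0(y))=o(L)$. The ceiling adds at most one, and since $L\to\infty$ this is also $o(L)$. So the right side of \eqref{eq:bounded-fourier-lambert} is sublinear in $\log(1/\epsilon)$.

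Note that $h$ may vary with $\epsilon$, including $h\to0$ or $h\to\infty$ slowly; only $L/h\to\infty$ is used. The main obstacle is the uniform comparison $W_0(x)/W_0(y)\to1$ in Step 2, which the Lipschitz property handles.
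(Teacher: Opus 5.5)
Your proof is correct and follows the paper's argument step for step. You show $\rho\sim L/2$, then that the two Lambert arguments have ratio tending to one while tending to infinity, so their $W_0$ values are asymptotically equal; finally $W_0\to\infty$ gives $o(L)$, and the ceiling is negligible. The only difference is that you justify $W_0(x)/W_0(y)\to1$ through the self-contained bound $dW_0/d\log z=W_0/(1+W_0)\le1$, where the paper cites the expansion $W_0(x)=\log x-\log\log x+o(1)$. Your bound in fact gives $|W_0(x)-W_0(y)|\le|\log(x/y)|\to0$ directly, so the trapping between $W_0(y/2)$ and $W_0(2y)$ is not even needed.
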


\begin{proof}
The hypothesis on $m$ gives
\[
 L+\log m=L(1+o(1))>0
\]
for all sufficiently small $\epsilon$.  Thus $m>\epsilon$ eventually, so the
quantity $\rho$ from Theorem~\ref{thm:bounded-measurable-fourier} is defined,
and
\[
 \rho=\frac12(L+\log m)=\frac L2(1+o(1)).
\]
Consequently,
\[
 \frac{2\rho}{e\pi h}
 =\frac{L}{e\pi h}(1+o(1))\longrightarrow\infty.
\]
For positive $x\to\infty$ and $a_x=1+o(1)$, monotonicity of $W_0$ together
with $W_0(x)=\log x-\log\log x+o(1)$ gives
$W_0(a_x\,x)/W_0(x)\to1$.  Applying this observation to the preceding display
and substituting $\rho\sim L/2$ proves the asymptotic equivalence in
\eqref{eq:ultradeep-asymptotic}.  Finally,
$W_0(L/(e\pi h))\to\infty$, so division by this factor makes the expression
$o(L)$.  The ceiling changes it by at most one and therefore does not affect
the conclusion.
\end{proof}

\subsection{Sharper polynomial approximation and residual-energy counting}
\label{subsec:sharper-fourier-polynomial}

The factorial estimate in Theorem~\ref{thm:bounded-measurable-fourier}
uses one particular polynomial, estimates its residual by a pointwise
supremum, and then asks the entire residual operator to lie below one
singular-value threshold.  All three choices can be sharpened without adding
regularity assumptions on the windows.  The residual-energy mechanism used
below is Lemma~\ref{lem:sharp-residual-energy}, with the strict-integer
function $\kappa$ defined in Section~\ref{sec:sharp-residual-energy}.

We now specialize this principle to bounded measurable Fourier windows while
retaining the normalization
\[
 K_{A,B,c}=P_{cB}\mathcal F P_A,
 \qquad
 (\mathcal Ff)(\xi)=\int_{\mathbb R}e^{-2\pi i x\xi}f(x)\,dx.
\]
Let $A,B\subset\mathbb R$ be bounded measurable sets of finite measure and
let $c>0$.  For centres $\alpha,\beta\in\mathbb R$, set
\begin{equation}
 t_{\alpha,\beta}(x,\xi)
 :=2\pi(x-\alpha)(\xi-c\beta),
 \qquad (x,\xi)\in A\times cB.
 \label{eq:centered-product-variable}
\end{equation}
The push-forward of Lebesgue measure on $A\times cB$ under
$t_{\alpha,\beta}$ is a finite positive Borel measure
$\nu_{\alpha,\beta}$.  Its total mass is
\begin{equation}
 \nu_{\alpha,\beta}(\mathbb R)=|A|\,|cB|=c|A||B|=m.
 \label{eq:pushforward-total-mass}
\end{equation}
No density or regularity of $\nu_{\alpha,\beta}$ is assumed.

For $N\geq1$, let $\mathcal P_{N-1}$ be the complex polynomials of degree at
most $N-1$, and put $\mathcal P_{-1}:=\{0\}$.  Define the weighted best
polynomial residual
\begin{align}
 \mathcal R_N(\alpha,\beta)^2
 &: =\min_{p\in\mathcal P_{N-1}}
 \int_{\mathbb R}|e^{-it}-p(t)|^2\,d\nu_{\alpha,\beta}(t)
 \label{eq:best-weighted-residual}\\
 &=\min_{p\in\mathcal P_{N-1}}
 \int_A\int_{cB}
 \left|e^{-it_{\alpha,\beta}(x,\xi)}
       -p\bigl(t_{\alpha,\beta}(x,\xi)\bigr)\right|^2
 \,d\xi\,dx.
 \nonumber
\end{align}
At $N=0$ this definition gives
$\mathcal R_0(\alpha,\beta)=\sqrt m$.

\begin{theorem}[Weighted best-polynomial and moment envelopes]
\label{thm:best-polynomial-moment-envelope}
For every $N\in\mathbb N_0$ and every $\alpha,\beta\in\mathbb R$,
\begin{equation}
 s_{N+1}(K_{A,B,c})\leq\mathcal R_N(\alpha,\beta).
 \label{eq:best-polynomial-sv-envelope}
\end{equation}
For $N\geq1$, the minimum in
\eqref{eq:best-weighted-residual} is an explicit finite-dimensional
least-squares problem.  More precisely, with indices $0\leq j,k<N$, define
\begin{equation}
 G_{jk}:=\int_{\mathbb R}t^{j+k}\,d\nu_{\alpha,\beta}(t),
 \qquad
 b_j:=\int_{\mathbb R}t^j e^{-it}\,d\nu_{\alpha,\beta}(t).
 \label{eq:polynomial-Gram-data}
\end{equation}
If $G^\dagger$ denotes the Moore--Penrose inverse, then
\begin{equation}
 \mathcal R_N(\alpha,\beta)^2
 =m-b^*G^\dagger b.
 \label{eq:best-residual-Gram-formula}
\end{equation}
In the original $A\times B$ variables the Gram entries are products of
one-dimensional moments:
\begin{equation}
 G_{jk}
 =c(2\pi c)^{j+k}
 \left(\int_A(x-\alpha)^{j+k}\,dx\right)
 \left(\int_B(\eta-\beta)^{j+k}\,d\eta\right).
 \label{eq:Gram-product-moments}
\end{equation}

Define, with the convention $|x-a|^0=1$,
\begin{equation}
 \mathcal M_N(A):=\inf_{a\in\mathbb R}
       \int_A|x-a|^{2N}\,dx,
 \qquad
 \mathcal M_N(B):=\inf_{b\in\mathbb R}
       \int_B|\eta-b|^{2N}\,d\eta,
 \label{eq:optimized-even-moments}
\end{equation}
and
\begin{equation}
 E_N^{\mathrm{mom}}
 :=\frac{(2\pi c)^N\sqrt c}{N!}
 \bigl(\mathcal M_N(A)\mathcal M_N(B)\bigr)^{1/2}.
 \label{eq:optimized-moment-error}
\end{equation}
Then there is a rank-at-most-$N$ approximation to $K_{A,B,c}$ whose
Hilbert--Schmidt residual is at most $E_N^{\mathrm{mom}}$, and hence
\begin{equation}
 s_{N+1}(K_{A,B,c})\leq E_N^{\mathrm{mom}}.
 \label{eq:optimized-moment-sv}
\end{equation}
If $D_A=\operatorname{diam}_{\mathrm{ess}}(A)$,
$D_B=\operatorname{diam}_{\mathrm{ess}}(B)$, and
\begin{equation}
 \zeta=\frac{\pi c}{2}D_AD_B,
 \label{eq:zeta-recalled-sharper}
\end{equation}
then
\begin{equation}
 E_N^{\mathrm{mom}}
 \leq\sqrt m\,\frac{\zeta^N}{N!}.
 \label{eq:moment-dominates-diameter-Taylor}
\end{equation}
If $A$ and $B$ are intervals of positive length, equality in the optimized
moments gives the explicit envelope
\begin{equation}
 E_N^{\mathrm{mom}}
 =\sqrt m\,\frac{\zeta^N}{N!(2N+1)}.
 \label{eq:interval-moment-improvement}
\end{equation}
The formula includes $N=0$, when the additional factor is one; for every
$N\geq1$ it is strictly smaller than the diameter-based Taylor envelope.
\end{theorem}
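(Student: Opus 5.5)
Every assertion comes from one construction: keep the centred factorization \eqref{eq:centered-phase-factorization} from the proof of Theorem~\ref{thm:bounded-measurable-fourier}, but replace the fixed Taylor polynomial by an arbitrary $p\in\mathcal P_{N-1}$ evaluated at $t_{\alpha,\beta}(x,\xi)$.

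\emph{Rank and residual.} Since $p(t_{\alpha,\beta})=\sum_{k<N}a_k(2\pi)^k(x-\alpha)^k(\xi-c\beta)^k$, the kernel $\mathbf 1_{cB}(\xi)\,(\text{unimodular one-variable phases})\,p(t_{\alpha,\beta}(x,\xi))\,\mathbf 1_A(x)$ is a sum of at most $N$ separated products. The resulting operator $K_N^p$ therefore has rank at most $N$. The one-variable phases have modulus one, so
\[
\|K_{A,B,c}-K_N^p\|_{\mathcal S_2}^2=\int_A\int_{cB}|e^{-it_{\alpha,\beta}}-p(t_{\alpha,\beta})|^2\,d\xi\,dx=\int|e^{-it}-p(t)|^2\,d\nu_{\alpha,\beta}(t).
\]
The second equality is the change of variables defining the push-forward. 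Taking the infimum over $p$ and using $s_{N+1}(K)\le\|K-K_N^p\|\le\|K-K_N^p\|_{\mathcal S_2}$ gives \eqref{eq:best-polynomial-sv-envelope}. Since $\nu_{\alpha,\beta}$ is a finite measure with compact support (the windows are bounded), $\mathcal P_{N-1}$ is a finite-dimensional subspace of $L^2(\nu_{\alpha,\beta})$. The minimum is therefore attained as the orthogonal projection of $e^{-it}$ onto that subspace. Writing $p=\sum_j a_jt^j$, the squared residual equals $m-2\operatorname{Re}(a^*b)+a^*Ga$, using $\int|e^{-it}|^2d\nu=m$. Here $b_j=\int t^je^{-it}\,d\nu$, and one checks the sign/conjugation convention carefully so that $\langle e^{-it},t^j\rangle$ matches $b_j$ or $\bar b_j$ consistently. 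The normal equations $Ga=b$ are solvable because $b\in\operatorname{Ran}G$, as $b$ is the coefficient vector of a projection onto the span. The minimum value is then $m-b^*G^\dagger b$, with $G$ possibly singular only if $\nu$ has finitely many atoms (not for positive-measure windows, but the pseudoinverse covers it anyway). Formula \eqref{eq:Gram-product-moments} follows by Fubini after substituting $\xi=c\eta$, $d\xi=c\,d\eta$, $\xi-c\beta=c(\eta-\beta)$.

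\emph{Moment envelope.} I would take $p$ to be the Taylor polynomial $q_{N-1}$ with centres $(a,b)$ free, and use Lemma~\ref{lem:imaginary-taylor} pointwise: $|e^{-it}-q_{N-1}(t)|\le|t|^N/N!$. Integrating over $A\times cB$ gives
\[
\frac{(2\pi)^{2N}}{N!^2}\int_A|x-a|^{2N}\,dx\int_{cB}|\xi-cb|^{2N}\,d\xi=\frac{(2\pi c)^{2N}c}{N!^2}\int_A|x-a|^{2N}\int_B|\eta-b|^{2N}.
\]
The infimum over $a,b$ is attained, since each moment is continuous and coercive in the centre, and this yields \eqref{eq:optimized-moment-sv}. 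The case $N=0$ is the zero approximant. For \eqref{eq:moment-dominates-diameter-Taylor} I would choose the essential centres, where $|x-a|\le D_A/2$ a.e., so $\mathcal M_N(A)\le|A|(D_A/2)^{2N}$; multiplying the analogous bound for $B$ gives exactly $m\,\zeta^{2N}/N!^2$.

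\emph{Intervals.} For an interval of length $\ell$, convexity and symmetry show that the midpoint minimizes $\int|x-a|^{2N}dx$. Its value is $2(\ell/2)^{2N+1}/(2N+1)=|A|(\ell/2)^{2N}/(2N+1)$. The product of the two factors $1/(2N+1)$ under the square root gives one factor $1/(2N+1)$ in \eqref{eq:interval-moment-improvement}, and this is strict for $N\ge1$. The main obstacle, modest as it is, lies in the least-squares step: getting the conjugation conventions in $b$ and $G$ right and justifying the pseudoinverse formula when $G$ is singular. I would handle both by working abstractly with orthogonal projection in $L^2(\nu_{\alpha,\beta})$.
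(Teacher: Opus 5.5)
Your proposal is correct and follows essentially the same route as the paper. The chain is the same: centred phase factorization, a polynomial in $t_{\alpha,\beta}$ giving rank at most $N$ with Hilbert--Schmidt residual equal to the $L^2(\nu_{\alpha,\beta})$ error, normal equations solved by the Moore--Penrose inverse, Fubini for the product moments, Taylor at moment-minimizing centres, essential-hull midpoints for the diameter comparison, and the explicit interval moment. The only variation is how you justify $b\in\operatorname{Ran}G$: you use existence of the orthogonal projection, whose coefficient vector $a$ satisfies $Ga=b$ (note it is $a$, not $b$, that is the projection's coefficient vector). The paper instead shows $b\perp\ker G$ directly, since any $z\in\ker G$ yields a polynomial vanishing $\nu_{\alpha,\beta}$-a.e., and either argument works.
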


\begin{proof}
If $m=0$, then at least one window is null, $K_{A,B,c}=0$, and all the
assertions follow immediately.  We may therefore assume $m>0$ whenever a
minimizing centre is needed.

\emph{Step 1: convert a product polynomial into a finite-rank kernel.}
The centred phase identity
\begin{equation}
 e^{-2\pi ix\xi}
 =e^{-2\pi i\alpha\xi}
  e^{-2\pi i c\beta x}
  e^{2\pi i\alpha c\beta}
  e^{-it_{\alpha,\beta}(x,\xi)}
 \label{eq:best-poly-phase-factorization}
\end{equation}
separates three unimodular one-variable factors from the interaction.
If
$p(t)=\sum_{k=0}^{N-1}a_kt^k$, replace the last factor in
\eqref{eq:best-poly-phase-factorization} by
$p(t_{\alpha,\beta}(x,\xi))$.  Since
\[
 t_{\alpha,\beta}(x,\xi)^k
 =(2\pi)^k(x-\alpha)^k(\xi-c\beta)^k,
\]
the resulting integral kernel is a sum of at most $N$ products of one
function of $x$ and one function of $\xi$.  The corresponding operator,
denoted by $Q_p$, therefore has rank at most $N$.  All the separated
functions belong to the relevant $L^2$ spaces because both windows are
bounded and have finite measure.  At $N=0$, take $p=0$ and $Q_p=0$.

\emph{Step 2: identify the residual exactly.}
All prefactors in \eqref{eq:best-poly-phase-factorization} have modulus one.
The Hilbert--Schmidt norm of the residual is consequently
\begin{align}
 \|K_{A,B,c}-Q_p\|_{\mathrm{HS}}^2
 &=\int_A\int_{cB}
 \left|e^{-it_{\alpha,\beta}(x,\xi)}
 -p\bigl(t_{\alpha,\beta}(x,\xi)\bigr)\right|^2
 \,d\xi\,dx.
 \label{eq:poly-residual-is-HS}
\end{align}
The finite-dimensional space $\mathcal P_{N-1}$ is closed in
$L^2(\nu_{\alpha,\beta})$ after quotienting by its possible null
polynomials, so the minimum is attained.  Choosing a minimizer in
\eqref{eq:poly-residual-is-HS} and using the approximation-number
characterization of $s_{N+1}$ proves
\eqref{eq:best-polynomial-sv-envelope}.

\emph{Step 3: solve the weighted least-squares problem, including a singular
Gram matrix.}
Write $p(t)=\sum_{k=0}^{N-1}a_kt^k$ and $a=(a_0,\ldots,a_{N-1})^T$.
Expanding the square gives
\begin{equation}
 \int|e^{-it}-p(t)|^2\,d\nu
 =m-2\operatorname{Re}(a^*b)+a^*Ga,
 \label{eq:Gram-objective-expanded}
\end{equation}
where $G$ and $b$ are defined in
\eqref{eq:polynomial-Gram-data}.  The normal equations are
\begin{equation}
 Ga=b.
 \label{eq:Gram-normal-equations}
\end{equation}
They are consistent even when $G$ is singular.  Indeed, if $z\in\ker G$,
then
\[
 0=z^*Gz=\int\left|\sum_{k=0}^{N-1}z_kt^k\right|^2d\nu(t),
\]
so the polynomial $\sum z_kt^k$ vanishes $\nu$-almost everywhere.  It
follows that
\[
 z^*b
 =\int\overline{\sum_{k=0}^{N-1}z_kt^k}\,e^{-it}\,d\nu(t)=0.
\]
Thus $b\perp\ker G$, equivalently $b\in\operatorname{Ran}G$.  The vector
$a=G^\dagger b$ solves \eqref{eq:Gram-normal-equations}.  Substitution into
\eqref{eq:Gram-objective-expanded} yields
\[
 m-2b^*G^\dagger b+b^*G^\dagger GG^\dagger b
 =m-b^*G^\dagger b,
\]
which proves \eqref{eq:best-residual-Gram-formula}.  Finally, substitute
$\xi=c\eta$ into the first formula in
\eqref{eq:polynomial-Gram-data}.  Since $d\xi=c\,d\eta$ and
\[
 t_{\alpha,\beta}(x,c\eta)
 =2\pi c(x-\alpha)(\eta-\beta),
\]
The integrand is absolutely integrable because both windows are bounded and
have finite measure.  Fubini's theorem therefore separates the two factors and
gives \eqref{eq:Gram-product-moments}.

\emph{Step 4: show that the optimized moments are attained.}
For $N=0$, both quantities in \eqref{eq:optimized-even-moments} are simply
the measures of the corresponding sets, independently of the centre.
Suppose $N\geq1$.  Because $A$ is bounded, the function
\[
 a\longmapsto\int_A|x-a|^{2N}\,dx
\]
is continuous.  If $A\subset[-R,R]$ up to a null set and $|a|>2R$, then
$|x-a|\geq|a|/2$ almost everywhere on $A$, so
\[
 \int_A|x-a|^{2N}\,dx
 \geq |A|\left(\frac{|a|}{2}\right)^{2N}\longrightarrow\infty
 \quad\text{as }|a|\to\infty.
\]
Thus the function is coercive and has a minimizer $\alpha_N$.  The same
argument gives a minimizer $\beta_N$ for $B$.

\emph{Step 5: use Taylor's remainder at the moment-minimizing centres.}
For $N\geq1$, take the degree-$(N-1)$ Taylor polynomial of $e^{-it}$ at
$t=0$.  Lemma~\ref{lem:imaginary-taylor} and
\eqref{eq:centered-product-variable} give
\begin{align}
 \left|e^{-it_{\alpha_N,\beta_N}(x,\xi)}
 -\sum_{k=0}^{N-1}\frac{(-it_{\alpha_N,\beta_N}(x,\xi))^k}{k!}\right|
 &\leq\frac{(2\pi)^N}{N!}
 |x-\alpha_N|^N|\xi-c\beta_N|^N.
 \label{eq:moment-Taylor-pointwise}
\end{align}
Square, integrate on $A\times cB$, and use $\xi=c\eta$.  The result is
\begin{align}
 \|K_{A,B,c}-Q_N^{\mathrm{mom}}\|_{\mathrm{HS}}^2
 &\leq
 \frac{c(2\pi c)^{2N}}{(N!)^2}
 \mathcal M_N(A)\mathcal M_N(B)
 =\bigl(E_N^{\mathrm{mom}}\bigr)^2.
 \label{eq:moment-HS-residual}
\end{align}
The approximant has rank at most $N$ by Step~1.  For $N=0$, the zero
approximant has squared Hilbert--Schmidt error
$\|K_{A,B,c}\|_{\mathrm{HS}}^2=m$, which is exactly
$(E_0^{\mathrm{mom}})^2$.  This proves
\eqref{eq:optimized-moment-sv} at every order.

\emph{Step 6: compare with the diameter envelope.}
Let $a_0$ and $b_0$ be the midpoints of the smallest essential closed
intervals containing $A$ and $B$.  Then
\[
 \mathcal M_N(A)
 \leq\int_A|x-a_0|^{2N}\,dx
 \leq |A|\left(\frac{D_A}{2}\right)^{2N},
\]
and similarly
\[
 \mathcal M_N(B)
 \leq |B|\left(\frac{D_B}{2}\right)^{2N}.
\]
Substitution into \eqref{eq:optimized-moment-error} gives
\begin{align*}
 E_N^{\mathrm{mom}}
 &\leq
 \frac{(2\pi c)^N\sqrt c}{N!}
 \sqrt{|A||B|}
 \left(\frac{D_AD_B}{4}\right)^N
 =\sqrt m\,\frac{\zeta^N}{N!},
\end{align*}
which is \eqref{eq:moment-dominates-diameter-Taylor}.

\emph{Step 7: evaluate the interval case.}
If $A$ is an interval of length $a$, symmetry and convexity show that the
minimizing centre is its midpoint.  After translation,
\begin{equation}
 \mathcal M_N(A)
 =\int_{-a/2}^{a/2}|x|^{2N}\,dx
 =\frac{a^{2N+1}}{2^{2N}(2N+1)}.
 \label{eq:interval-exact-even-moment}
\end{equation}
For $N\geq1$, convexity also proves minimality directly; for $N=0$ every
centre gives $\mathcal M_0(A)=a$.  Applying the same computation to an
interval $B$ of length $b$ and substituting in
\eqref{eq:optimized-moment-error} yields
\[
 E_N^{\mathrm{mom}}
 =\sqrt{cab}\,
 \frac{(\pi cab/2)^N}{N!(2N+1)}
 =\sqrt m\,\frac{\zeta^N}{N!(2N+1)},
\]
as asserted.
\end{proof}

The next construction improves the pointwise polynomial itself.  Let
$J_k$ denote the Bessel function of the first kind and $T_k$ the Chebyshev
polynomial determined by $T_k(\cos\theta)=\cos(k\theta)$.  Define
\begin{equation}
 \chi_0(\zeta):=1,
 \qquad
 \chi_N(\zeta):=\min\left\{
  1,\ \frac{\zeta^N}{N!},
  2\sum_{k=N}^{\infty}|J_k(\zeta)|
 \right\}
 \quad(N\geq1).
 \label{eq:Chebyshev-Bessel-chi}
\end{equation}
The first entry corresponds to the zero polynomial, the second to Taylor's
polynomial, and the third to a truncated Chebyshev--Bessel expansion.  Thus
the minimum never weakens the Taylor estimate.

\begin{lemma}[Chebyshev--Bessel polynomial with a geometric tail]
\label{lem:Chebyshev-Bessel-polynomial}
Let $\zeta\geq0$ and $N\in\mathbb N_0$.  There is a complex polynomial
$p_{N-1}$ of degree at most $N-1$, with $p_{-1}=0$, such that
\begin{equation}
 \sup_{|t|\leq\zeta}|e^{-it}-p_{N-1}(t)|
 \leq\chi_N(\zeta).
 \label{eq:Chebyshev-Bessel-uniform-error}
\end{equation}
For every $k\in\mathbb N_0$,
\begin{equation}
 |J_k(\zeta)|\leq\frac{(\zeta/2)^k}{k!}.
 \label{eq:Bessel-Poisson-bound}
\end{equation}
Consequently, if $N\geq1$ and $N+1>\zeta/2$, then
\begin{equation}
 2\sum_{k=N}^{\infty}|J_k(\zeta)|
 \leq
 \frac{2(\zeta/2)^N}
 {N!\left(1-\dfrac{\zeta}{2(N+1)}\right)}.
 \label{eq:Bessel-geometric-tail}
\end{equation}
In particular, in this range the infinite sum in
\eqref{eq:Chebyshev-Bessel-chi} may be replaced by the right-hand side of
\eqref{eq:Bessel-geometric-tail} to obtain a completely elementary
certificate.
\end{lemma}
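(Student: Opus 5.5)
The plan is to prove the uniform error bound \eqref{eq:Chebyshev-Bessel-uniform-error} one entry of the minimum at a time, since each entry of $\chi_N$ comes from its own polynomial.

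\emph{The first two entries.} For $N=0$ take $p_{-1}=0$; then $|e^{-it}|=1=\chi_0$. For $N\ge1$ the zero polynomial gives error $1$. The degree-$(N-1)$ Taylor polynomial of $e^{-it}$ gives error at most $|t|^N/N!\le\zeta^N/N!$ on $|t|\le\zeta$, by Lemma~\ref{lem:imaginary-taylor} applied to $-t$.

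\emph{The Chebyshev--Bessel entry.} Here I would use the Jacobi--Anger expansion, in the form
\[
 e^{-i\zeta\cos\theta}=\sum_{k\in\mathbb Z}(-i)^kJ_k(\zeta)e^{ik\theta}=J_0(\zeta)+2\sum_{k\ge1}(-i)^kJ_k(\zeta)\cos(k\theta),
\]
which uses $J_{-k}=(-1)^kJ_k$. For $\zeta>0$ substitute $t=\zeta\cos\theta$, so that $\cos(k\theta)=T_k(t/\zeta)$; this yields $e^{-it}=J_0(\zeta)+2\sum_{k\ge1}(-i)^kJ_k(\zeta)T_k(t/\zeta)$ for $|t|\le\zeta$. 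The series converges absolutely and uniformly because of \eqref{eq:Bessel-Poisson-bound}. Truncating at $k\le N-1$ gives a polynomial $p_{N-1}$ of degree at most $N-1$. Since $|T_k|\le1$ on $[-1,1]$, the sup error is at most $2\sum_{k\ge N}|J_k(\zeta)|$. The case $\zeta=0$ is trivial: Taylor's entry is then $0$ for $N\ge1$. For each $N$, choose the polynomial attaining the smallest of the three certified errors, which gives $\chi_N$.

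\emph{The Poisson bound.} To prove \eqref{eq:Bessel-Poisson-bound}, start from
\[
 J_k(\zeta)=\frac{(\zeta/2)^k}{\Gamma(k+1/2)\sqrt\pi}\int_{-1}^1(1-s^2)^{k-1/2}\cos(\zeta s)\,ds .
\]
Bounding $|\cos|\le1$ leaves the Beta integral $\int_{-1}^1(1-s^2)^{k-1/2}\,ds=\sqrt\pi\,\Gamma(k+1/2)/\Gamma(k+1)$, and hence $|J_k(\zeta)|\le(\zeta/2)^k/k!$. An alternative route avoids the integral representation: use the power series $J_k(\zeta)=\sum_m(-1)^m(\zeta/2)^{2m+k}/(m!(m+k)!)$ together with $|J_k|\le1$. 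That only gives $e^{\zeta^2/4}$-type losses, however, so Poisson's representation is the route I would take.

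\emph{The geometric tail.} For \eqref{eq:Bessel-geometric-tail}, let $a_k=(\zeta/2)^k/k!$. For $k\ge N$ the ratio $a_{k+1}/a_k=(\zeta/2)/(k+1)$ is at most $q:=\zeta/(2(N+1))<1$. Therefore $\sum_{k\ge N}a_k\le a_N/(1-q)$, and doubling gives the stated bound. Since this quantity dominates $2\sum_{k\ge N}|J_k|$, it may replace the third entry of $\chi_N$.

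\emph{Main obstacle.} The delicate point is normalization bookkeeping in the Jacobi--Anger expansion: the sign convention $e^{-it}$ versus $e^{it}$, the factor $2$ for $k\ge1$, and the identification of the $k=0$ term. The error bound itself only needs $|(-i)^k|=1$ and $|T_k|\le1$, so sign slips do not affect the estimate. What must be checked is that the truncated sum is genuinely a polynomial in $t$ of degree at most $N-1$. This holds because $T_k(t/\zeta)$ has degree $k$ in $t$ when $\zeta>0$.
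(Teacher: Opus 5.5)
Your proposal is correct and follows essentially the same route as the paper. You use the same three competitors (zero polynomial, Taylor polynomial, truncated Chebyshev--Bessel series), the same Jacobi--Anger substitution $t=\zeta\cos\theta$ with $T_k(t/\zeta)$ of degree $k$, the same Poisson-integral and Beta-function bound for $|J_k(\zeta)|$, and the same ratio comparison for the geometric tail. The differences are cosmetic: the paper derives the expansion from the generating function $e^{(\zeta/2)(z-z^{-1})}$ at $z=-ie^{i\theta}$ and sketches the Poisson representation. You cite both as standard and use the cosine form of the integral, which is equivalent because the odd part integrates to zero.
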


\begin{proof}
\emph{Step 1: derive the Chebyshev--Bessel expansion.}
For $\zeta\geq0$ and $z\neq0$, multiplication of the two exponential power
series gives the Bessel generating identity
\begin{equation}
 e^{(\zeta/2)(z-z^{-1})}
 =\sum_{k=-\infty}^{\infty}J_k(\zeta)z^k.
 \label{eq:Bessel-generating-identity}
\end{equation}
Indeed, the coefficient of $z^k$ for $k\geq0$ is
\[
 \sum_{\ell=0}^{\infty}
 \frac{(-1)^\ell(\zeta/2)^{2\ell+k}}
      {\ell!(\ell+k)!}
 =J_k(\zeta),
\]
and the negative coefficients obey
$J_{-k}(\zeta)=(-1)^kJ_k(\zeta)$.  On $|z|=1$ the product series is
absolutely convergent, so the rearrangement is legitimate.

Set $z=-ie^{i\theta}$.  Then
$z-z^{-1}=-2i\cos\theta$, and pairing the $k$ and $-k$ terms in
\eqref{eq:Bessel-generating-identity} yields
\begin{equation}
 e^{-i\zeta\cos\theta}
 =J_0(\zeta)
  +2\sum_{k=1}^{\infty}(-i)^kJ_k(\zeta)\cos(k\theta).
 \label{eq:Jacobi-Anger-cosine}
\end{equation}
With $s=\cos\theta$ and $T_k(s)=\cos(k\theta)$, this becomes
\begin{equation}
 e^{-i\zeta s}
 =J_0(\zeta)
  +2\sum_{k=1}^{\infty}(-i)^kJ_k(\zeta)T_k(s),
 \qquad -1\leq s\leq1.
 \label{eq:Chebyshev-Bessel-expansion}
\end{equation}

\emph{Step 2: bound the Bessel coefficients from Poisson's integral.}
For an integer $k\geq0$, Poisson's integral representation is
\begin{equation}
 J_k(\zeta)
 =\frac{(\zeta/2)^k}
 {\sqrt\pi\,\Gamma(k+\tfrac12)}
 \int_{-1}^{1}e^{i\zeta u}(1-u^2)^{k-1/2}\,du.
 \label{eq:Bessel-Poisson-integral}
\end{equation}
For completeness, this representation follows by expanding
$e^{i\zeta u}$, observing that its odd powers integrate to zero, and using
\[
 \int_{-1}^{1}u^{2\ell}(1-u^2)^{k-1/2}\,du
 =B\!\left(\ell+\tfrac12,k+\tfrac12\right).
\]
The identity
$\Gamma(\ell+\tfrac12)=(2\ell)!\sqrt\pi/(4^\ell\ell!)$
then recovers exactly the Bessel series displayed in Step~1.  Termwise
integration is justified by uniform convergence of the exponential series
on $[-1,1]$.

Taking absolute values in \eqref{eq:Bessel-Poisson-integral} and using
$|e^{i\zeta u}|=1$ gives
\begin{align*}
 |J_k(\zeta)|
 &\leq\frac{(\zeta/2)^k}
 {\sqrt\pi\,\Gamma(k+\tfrac12)}
 \int_{-1}^{1}(1-u^2)^{k-1/2}\,du\\
 &=\frac{(\zeta/2)^k}
 {\sqrt\pi\,\Gamma(k+\tfrac12)}
 B\!\left(\tfrac12,k+\tfrac12\right)
 =\frac{(\zeta/2)^k}{\Gamma(k+1)},
\end{align*}
which is \eqref{eq:Bessel-Poisson-bound}.  This also proves absolute and
uniform convergence of \eqref{eq:Chebyshev-Bessel-expansion}, because
$|T_k(s)|\leq1$ on $[-1,1]$ and
$\sum_{k\geq0}(\zeta/2)^k/k!<\infty$.

\emph{Step 3: truncate and compare the three admissible polynomials.}
Assume first that $N\geq1$ and $\zeta>0$.  Truncating
\eqref{eq:Chebyshev-Bessel-expansion} gives
\begin{equation}
 p^{\mathrm{CB}}_{N-1}(t)
 :=J_0(\zeta)
 +2\sum_{k=1}^{N-1}(-i)^kJ_k(\zeta)
 T_k\!\left(\frac{t}{\zeta}\right).
 \label{eq:truncated-Chebyshev-Bessel-polynomial}
\end{equation}
This is a polynomial in $t$ of degree at most $N-1$.  For $|t|\leq\zeta$,
the omitted tail and $|T_k(t/\zeta)|\leq1$ imply
\begin{equation}
 |e^{-it}-p^{\mathrm{CB}}_{N-1}(t)|
 \leq2\sum_{k=N}^{\infty}|J_k(\zeta)|.
 \label{eq:CB-exact-tail}
\end{equation}
Two other degree-at-most-$(N-1)$ choices are available: the zero polynomial,
whose error is exactly one, and the Taylor polynomial, whose error is at most
$\zeta^N/N!$ by Lemma~\ref{lem:imaginary-taylor}.  Select the polynomial
with the smallest of these three certified errors.  This proves
\eqref{eq:Chebyshev-Bessel-uniform-error}.

If $\zeta=0$ and $N\geq1$, the interval $[-\zeta,\zeta]$ consists only of
$0$; the constant polynomial $p(t)=1$ has zero error, which agrees with the
Taylor entry $\zeta^N/N!=0$.  If $N=0$, the prescribed zero polynomial has
uniform error one for every $\zeta\geq0$.  Thus all endpoint cases are
covered without dividing by $\zeta$.

\emph{Step 4: sum the Poisson bounds geometrically.}
Put $a:=\zeta/2$.  For $k\geq N$, consecutive terms in the exponential tail
satisfy
\[
 \frac{a^{k+1}/(k+1)!}{a^k/k!}
 =\frac{a}{k+1}
 \leq\frac{a}{N+1}=:q.
\]
If $N+1>a$, then $0\leq q<1$, and therefore
\begin{align*}
 2\sum_{k=N}^{\infty}|J_k(\zeta)|
 &\leq2\sum_{k=N}^{\infty}\frac{a^k}{k!}
 \leq2\frac{a^N}{N!}\sum_{\ell=0}^{\infty}q^\ell
 =\frac{2a^N}{N!(1-q)}.
\end{align*}
This is exactly \eqref{eq:Bessel-geometric-tail}.
\end{proof}

\begin{theorem}[Combined best-polynomial, moment, and Bessel certificate]
\label{thm:combined-sharper-Fourier-certificate}
Retain the assumptions and notation of
Theorem~\ref{thm:best-polynomial-moment-envelope}.  Choose essential centres
$\alpha_0,\beta_0$ of $A,B$.  For every $N\geq1$, choose moment-minimizing
centres $\alpha_N,\beta_N$ as in
\eqref{eq:optimized-even-moments}; at $N=0$ choose arbitrary centres.  Put
\begin{equation}
 \begin{aligned}
 \mathfrak R_N
 &:=\min\bigl\{
 \mathcal R_N(\alpha_0,\beta_0),
 \mathcal R_N(\alpha_N,\beta_N)
 \bigr\},\\
 \widehat E_N
 &:=\min\left\{
 E_N^{\mathrm{mom}},\sqrt m\,\chi_N(\zeta)
 \right\}.
 \end{aligned}
 \label{eq:exact-and-explicit-hybrid-residuals}
\end{equation}
Then
\begin{equation}
 \mathfrak R_N\leq\widehat E_N
 \leq\sqrt m\,\frac{\zeta^N}{N!}
 \qquad(N\in\mathbb N_0).
 \label{eq:hybrid-residual-hierarchy}
\end{equation}
For every $0<\epsilon<1/2$,
\begin{align}
 \Lambda_\epsilon(S_{A,B,c})
 &\leq
 \min_{N\in\mathbb N_0}
 \left[
 N+\kappa\!\left(\frac{\mathfrak R_N^2}{\epsilon}\right)
 \right]
 \label{eq:exact-best-residual-energy-count}\\
 &\leq
 \min_{N\in\mathbb N_0}
 \left[
 N+\kappa\!\left(\frac{\widehat E_N^2}{\epsilon}\right)
 \right]
 \label{eq:explicit-hybrid-residual-energy-count}\\
 &\leq
 \min_{N\in\mathbb N_0}
 \left[
 N+\kappa\!\left(
 \frac{m\chi_N(\zeta)^2}{\epsilon}
 \right)
 \right].
 \label{eq:Bessel-residual-energy-count}
\end{align}
Each minimum is finite.  Moreover, every bound above is no larger than the
factorial certificate
$\mathcal N_{\mathrm{fac}}(m,\zeta,\epsilon)$ in
\eqref{eq:exact-factorial-count}.  Thus the refinement cannot worsen that
certificate; it may be strict because it uses either a better polynomial, a
smaller weighted residual, or the Hilbert--Schmidt energy across the entire
residual.
\end{theorem}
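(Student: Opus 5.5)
The plan is to prove the residual hierarchy \eqref{eq:hybrid-residual-hierarchy} first, and then push it through the residual-energy count of Lemma~\ref{lem:sharp-residual-energy}.

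\emph{Second inequality.} It is immediate: $E_N^{\mathrm{mom}}\le\sqrt m\,\zeta^N/N!$ is \eqref{eq:moment-dominates-diameter-Taylor}, and $\chi_N(\zeta)\le\zeta^N/N!$ by the definition \eqref{eq:Chebyshev-Bessel-chi}. At $N=0$ both sides reduce to $\sqrt m$.

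\emph{First inequality.} I would split $\mathfrak R_N\le\widehat E_N$ into two comparisons.
\begin{itemize}
\item The Taylor polynomial at the centres $\alpha_N,\beta_N$ is admissible in \eqref{eq:best-weighted-residual}. Step~5 of the proof of Theorem~\ref{thm:best-polynomial-moment-envelope} shows its squared residual is at most $(E_N^{\mathrm{mom}})^2$, so $\mathcal R_N(\alpha_N,\beta_N)\le E_N^{\mathrm{mom}}$.
\item At the essential centres $\alpha_0,\beta_0$, the push-forward measure $\nu_{\alpha_0,\beta_0}$ is supported (a.e.) in $[-\zeta,\zeta]$. This holds because $|t|=2\pi|x-\alpha_0||\xi-c\beta_0|\le2\pi\cdot\tfrac{D_A}{2}\cdot\tfrac{cD_B}{2}=\zeta$.
\end{itemize}
In the second case, take the polynomial $p_{N-1}$ of Lemma~\ref{lem:Chebyshev-Bessel-polynomial}. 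Integrating the uniform bound \eqref{eq:Chebyshev-Bessel-uniform-error} against $\nu$, which has total mass $m$, gives $\mathcal R_N(\alpha_0,\beta_0)^2\le m\chi_N(\zeta)^2$.

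One detail needs care here. Lemma~\ref{lem:Chebyshev-Bessel-polynomial} approximates $e^{-it}$, and the interaction in \eqref{eq:best-weighted-residual} is also $e^{-it}$, so the conventions match. I would check only that the support statement holds $\nu$-a.e., which follows since the bounds on $|u|,|v|$ hold a.e. on $A\times cB$. Taking the minimum of the two comparisons gives $\mathfrak R_N\le\widehat E_N$. The degenerate case $m=0$ is trivial, since then $K=0$.

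\emph{Counting.} For each $N$, Theorem~\ref{thm:best-polynomial-moment-envelope} supplies a rank-$\le N$ approximant $Q_p$ whose Hilbert--Schmidt residual equals $\mathcal R_N$ at the chosen centres. Choose the better of the two centres, so the residual is $\mathfrak R_N$. Applying \eqref{eq:sharp-direct-defect-master} (the direct entry) with $K_N=Q_p$ yields $\Lambda_\epsilon\le N+\kappa(\mathfrak R_N^2/\epsilon)$, and minimizing over $N$ gives \eqref{eq:exact-best-residual-energy-count}. The next two inequalities follow from the hierarchy together with monotonicity of $\kappa$: \eqref{eq:explicit-hybrid-residual-energy-count} uses $\mathfrak R_N\le\widehat E_N$, and \eqref{eq:Bessel-residual-energy-count} uses $\widehat E_N\le\sqrt m\chi_N$.

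\emph{Finiteness and comparison with $\mathcal N_{\mathrm{fac}}$.} Let $N_0=\mathcal N_{\mathrm{fac}}(m,\zeta,\epsilon)$. It is finite, because the factorial sequence tends to zero. At $N=N_0$ we have $m\chi_{N_0}^2/\epsilon\le m(\zeta^{N_0}/N_0!)^2/\epsilon\le1$, hence $\kappa(\cdot)=0$. So the last minimum is at most $N_0$, and the chain is bounded by $\mathcal N_{\mathrm{fac}}$. The only genuinely nonroutine point is the support/centre bookkeeping in the Chebyshev--Bessel comparison. Everything else is an assembly of earlier results.
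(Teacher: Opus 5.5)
Your proposal is correct and follows the paper's proof essentially step for step. You obtain $\mathfrak R_N\le\widehat E_N$ from the Taylor comparison at the moment-minimizing centres and the Chebyshev--Bessel comparison at the essential centres, and the counts from the residual-energy bound; you invoke it through the direct entry of Theorem~\ref{thm:sharp-residual-master}, which is equivalent to the paper's direct use of Lemma~\ref{lem:sharp-residual-energy} with $\tau=\sqrt\epsilon$. Finiteness and the comparison with $\mathcal N_{\mathrm{fac}}$ then follow by monotonicity of $\kappa$ and evaluation at $N=\mathcal N_{\mathrm{fac}}$.
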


\begin{proof}
If $m=0$, then $K_{A,B,c}=0$ and the choice $N=0$ makes every right-hand
side zero.  Assume henceforth that $m>0$.  Then both windows have positive
measure, hence positive essential diameter, and therefore $\zeta>0$.

\emph{Step 1: compare the exact and explicit residuals.}
At the centres chosen to minimize the moments, the Taylor construction in
\eqref{eq:moment-HS-residual} gives
\[
 \mathcal R_N(\alpha_N,\beta_N)\leq E_N^{\mathrm{mom}}.
\]
At the essential centres, one has almost everywhere
\begin{align*}
 |t_{\alpha_0,\beta_0}(x,\xi)|
 &\leq2\pi\frac{D_A}{2}\frac{cD_B}{2}
 =\zeta.
\end{align*}
Apply the polynomial from
Lemma~\ref{lem:Chebyshev-Bessel-polynomial}, square its uniform error, and
integrate over $A\times cB$.  Since that product set has measure $m$,
\[
 \mathcal R_N(\alpha_0,\beta_0)
 \leq\sqrt m\,\chi_N(\zeta).
\]
Taking the smaller of the two exact residuals proves the first inequality in
\eqref{eq:hybrid-residual-hierarchy}.  The second follows from
\eqref{eq:moment-dominates-diameter-Taylor} and from the Taylor entry in the
definition of $\chi_N$.  At $N=0$, all quantities equal $\sqrt m$, so the
same hierarchy remains valid.

\emph{Step 2: count with the exact residual energy.}
For each of the two centre choices in
\eqref{eq:exact-and-explicit-hybrid-residuals}, the minimum defining
$\mathcal R_N$ is attained and gives a rank-at-most-$N$ approximant by
Theorem~\ref{thm:best-polynomial-moment-envelope}.  Choose the centre pair
that realizes the smaller residual and denote the corresponding approximant
by $Q_N$.  Since $K_{A,B,c}$ is a contraction,
\[
 \Lambda_\epsilon(S_{A,B,c})
 \leq n(\sqrt\epsilon;K_{A,B,c}).
\]
Apply Lemma~\ref{lem:sharp-residual-energy} with
$T=K_{A,B,c}$, $T_N=Q_N$, and $\tau=\sqrt\epsilon$.  Because
$\|K_{A,B,c}-Q_N\|_{\mathcal S_2}=\mathfrak R_N$, it gives
\[
 \Lambda_\epsilon(S_{A,B,c})
 \leq N+\kappa(\mathfrak R_N^2/\epsilon)
\]
for every $N$.  Minimizing proves
\eqref{eq:exact-best-residual-energy-count}.

\emph{Step 3: pass to explicit upper envelopes.}
The function $\kappa$ is nondecreasing on $[0,\infty)$.  Therefore the
first inequality in \eqref{eq:hybrid-residual-hierarchy} gives
\eqref{eq:explicit-hybrid-residual-energy-count}.  Since
$\widehat E_N\leq\sqrt m\chi_N(\zeta)$, the same argument gives
\eqref{eq:Bessel-residual-energy-count}.

\emph{Step 4: verify finiteness and comparison with the old certificate.}
The factorial sequence
\[
 a_N:=\sqrt m\,\frac{\zeta^N}{N!}
\]
tends to zero.  Hence for all sufficiently large $N$, its square divided by
$\epsilon$ is at most one, so the corresponding $\kappa$ term vanishes.
Thus every displayed minimum has a finite candidate.

Let
\[
 N_{\mathrm{fac}}
 :=\mathcal N_{\mathrm{fac}}(m,\zeta,\epsilon).
\]
By its definition,
\[
 \sqrt m\,\frac{\zeta^{N_{\mathrm{fac}}}}
 {N_{\mathrm{fac}}!}\leq\sqrt\epsilon.
\]
The hierarchy \eqref{eq:hybrid-residual-hierarchy} therefore implies
\begin{equation}
 \frac{\mathfrak R_{N_{\mathrm{fac}}}^2}{\epsilon}\leq1,
 \qquad
 \frac{\widehat E_{N_{\mathrm{fac}}}^2}{\epsilon}\leq1,
 \qquad
 \frac{m\chi_{N_{\mathrm{fac}}}(\zeta)^2}{\epsilon}\leq1.
 \label{eq:factorial-order-new-residuals-small}
\end{equation}
Because $\kappa(r)=0$ on $[0,1]$, evaluating every new minimum at
$N=N_{\mathrm{fac}}$ shows that it is at most $N_{\mathrm{fac}}$.
This proves the final assertion.
\end{proof}

\begin{remark}[What the sharpening does and does not claim]
\label{rem:sharper-Fourier-scope}
The quantity $\mathcal R_N(\alpha,\beta)$ is the exact least-squares error
within the degree-$(N-1)$ polynomial-in-the-centred-product class.  It is not
claimed to be the unrestricted best rank-$N$ approximation error.  Likewise,
the factor $1/(2N+1)$ in \eqref{eq:interval-moment-improvement} sharpens the
diameter-based \emph{Taylor envelope}; it is not an assertion that the true
$(N+1)$st singular value has that exact size.  The Chebyshev--Bessel bound is
most effective after the truncation order passes the oscillatory scale.  The
minimum in \eqref{eq:Chebyshev-Bessel-chi} is retained because the Bessel tail
need not beat Taylor at every finite parameter value.  Finally, the strict
rounding in $\kappa$ comes from the open singular-value threshold; replacing
$\kappa(r)$ by $\lfloor r\rfloor$ would lose a genuine endpoint improvement
when $r$ is an integer.
\end{remark}

\subsection{Soft masks and an exact two-channel defect factorization}

Let $0\le u,v\le1$ be measurable.  Set
\[
 U:=M_u,
 \qquad V:=\mathcal F^{-1}M_v\mathcal F,
 \qquad K_{u,v}:=M_{\sqrt v}\mathcal F M_{\sqrt u},
 \qquad S_{u,v}:=K_{u,v}^*K_{u,v}=U^{1/2}VU^{1/2}.
\]
Thus hard cutoffs are recovered by $u=\mathbf1_A$ and
$v=\mathbf1_{cB}$.

\begin{theorem}[Soft bounded masks]
\label{thm:soft-bounded-masks}
Suppose $u,v\in L^1(\mathbb R)$ and their essential supports are bounded.
Choose centres $\alpha,\beta\in\mathbb R$ and radii $r_u,r_v<\infty$ such
that
\[
 \operatorname{ess\,supp}u\subset[\alpha-r_u,\alpha+r_u],
 \qquad
 \operatorname{ess\,supp}v\subset[\beta-r_v,\beta+r_v].
\]
Define
\[
 m_{u,v}:=\left(\int_{\mathbb R}u\right)
            \left(\int_{\mathbb R}v\right),
 \qquad
 \zeta_{u,v}:=2\pi r_ur_v.
\]
Then, for every $N\in\mathbb N_0$,
\begin{equation}
 s_{N+1}(K_{u,v})
 \le\sqrt{m_{u,v}}\,\frac{\zeta_{u,v}^N}{N!}.
 \label{eq:soft-bounded-envelope}
\end{equation}
Moreover, the column operator
\begin{equation}
 \mathcal T_{u,v}:=
 \begin{pmatrix}
 (I-U)^{1/2}VU^{1/2}\\[2mm]
 (V-V^2)^{1/2}U^{1/2}
 \end{pmatrix}:L^2(\mathbb R)\longrightarrow
 L^2(\mathbb R)\oplus L^2(\mathbb R)
 \label{eq:soft-defect-column}
\end{equation}
satisfies the exact identity
\begin{equation}
 \mathcal T_{u,v}^*\mathcal T_{u,v}=S_{u,v}-S_{u,v}^2.
 \label{eq:soft-defect-identity}
\end{equation}
It also obeys the same factorial envelope,
\begin{equation}
 s_{N+1}(\mathcal T_{u,v})
 \le\sqrt{m_{u,v}}\,\frac{\zeta_{u,v}^N}{N!}.
 \label{eq:soft-defect-envelope}
\end{equation}
Consequently, for $0<\epsilon<1/2$, the direct count uses the threshold
$\sqrt\epsilon$, whereas the exact defect count may use
$\sqrt{\epsilon(1-\epsilon)}$.  For this common envelope the direct threshold
is the stronger of the two.
\end{theorem}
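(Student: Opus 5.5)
The plan has three parts: the factorial envelope for $K_{u,v}$ by the centred Taylor argument of Theorem~\ref{thm:bounded-measurable-fourier}; the defect identity by direct operator algebra; and the envelope for $\mathcal T_{u,v}$ by writing $\mathcal T_{u,v}$ as a contraction composed with $K_{u,v}$.

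\emph{Envelope for $K_{u,v}$.} The kernel of $K_{u,v}$ is $\sqrt{v(\xi)}\,e^{-2\pi ix\xi}\sqrt{u(x)}$. Its squared modulus integrates to $\int u\int v=m_{u,v}$, so $K_{u,v}$ is Hilbert--Schmidt with $\|K_{u,v}\|_{\mathcal S_2}^2=m_{u,v}$. This settles $N=0$ with the zero approximant.
\begin{itemize}
\item For $N\ge1$, centre the phase exactly as in \eqref{eq:centered-phase-factorization}, with $u'=x-\alpha$ and $v'=\xi-\beta$. This splits off unimodular one-variable factors, which can be absorbed into the functions $\sqrt u$ and $\sqrt v$.
\item Replace $e^{-2\pi i u'v'}$ by its degree-$(N-1)$ Taylor polynomial. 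The resulting kernel is a sum of $N$ separated products, each lying in $L^2$ because $u,v$ are integrable with bounded supports. The approximant therefore has rank at most $N$.
\item On the support we have $2\pi|u'||v'|\le 2\pi r_ur_v=\zeta_{u,v}$. Lemma~\ref{lem:imaginary-taylor} then bounds the residual kernel pointwise by $\sqrt{u(x)v(\xi)}\,\zeta_{u,v}^N/N!$.
\item Integrating the square gives Hilbert--Schmidt error at most $\sqrt{m_{u,v}}\,\zeta_{u,v}^N/N!$. The approximation-number characterization of $s_{N+1}$ yields \eqref{eq:soft-bounded-envelope}.
\end{itemize}

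\emph{Defect identity.} This is pure algebra. Write $(V-V^2)^{1/2}=\mathcal F^{-1}M_{\sqrt{v(1-v)}}\mathcal F$, so that $\bigl((V-V^2)^{1/2}\bigr)^2=V-V^2$. Then
\[
\mathcal T_{u,v}^*\mathcal T_{u,v}=U^{1/2}V(I-U)VU^{1/2}+U^{1/2}(V-V^2)U^{1/2}=U^{1/2}(V-VUV)U^{1/2}.
\]
Since $S_{u,v}^2=U^{1/2}VUVU^{1/2}$, the right-hand side equals $S_{u,v}-S_{u,v}^2$, which is \eqref{eq:soft-defect-identity}.

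\emph{Envelope for $\mathcal T_{u,v}$.} Note the two factorizations
\[
VU^{1/2}=\mathcal F^{-1}M_{\sqrt v}K_{u,v},\qquad (V-V^2)^{1/2}U^{1/2}=\mathcal F^{-1}M_{\sqrt{1-v}}K_{u,v}.
\]
Hence $\mathcal T_{u,v}=YK_{u,v}$ with the column
\[
Y=\begin{pmatrix}(I-U)^{1/2}\mathcal F^{-1}M_{\sqrt v}\\ \mathcal F^{-1}M_{\sqrt{1-v}}\end{pmatrix}.
\]
Using $0\le I-U\le I$, we get
\[
Y^*Y=M_{\sqrt v}\mathcal F(I-U)\mathcal F^{-1}M_{\sqrt v}+M_{1-v}\le M_v+M_{1-v}=I,
\]
so $Y$ is a contraction. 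Then $s_{N+1}(\mathcal T_{u,v})\le\|Y\|\,s_{N+1}(K_{u,v})$, and \eqref{eq:soft-defect-envelope} follows from the first step.

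\emph{Counting consequences.} By \eqref{eq:soft-defect-identity}, $|\mathcal T_{u,v}|=D_{S_{u,v}}$. Theorem~\ref{thm:abstract-defect-transfer} therefore gives $\Lambda_\epsilon(S_{u,v})=n(t_\epsilon;\mathcal T_{u,v})$. Theorem~\ref{thm:abstract-contraction-factorization} gives the direct bound $\Lambda_\epsilon\le n(\sqrt\epsilon;K_{u,v})$, noting that $K_{u,v}$ is a contraction since $0\le u,v\le1$. Both operators obey the same envelope, and $\sqrt\epsilon>t_\epsilon$. So the direct route needs the envelope to fall only below the larger threshold, and it certifies a count no larger than the defect route.

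\emph{Expected obstacle.} The main care point is the contraction $Y$. One must verify the two factorization identities rigorously as bounded-operator identities, including the functional calculus for $V$ and the square root $(V-V^2)^{1/2}$. One must also confirm that the cross term is bounded by $M_v$ in the operator order, which holds because conjugating $0\le I-U\le I$ by the unitary $\mathcal F$ preserves the order.
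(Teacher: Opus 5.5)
Your proposal is correct and follows the paper's proof essentially step for step. You use the same centred Taylor approximant with the pointwise residual bound $\sqrt{u(x)v(\xi)}\,\zeta_{u,v}^N/N!$, the same direct algebraic verification of the defect identity, and the same contraction $Y$ (the paper's $W$) with $Y^*Y\le M_v+M_{1-v}=I$ and $\mathcal T_{u,v}=YK_{u,v}$. The only cosmetic difference is that you invoke $s_{N+1}(YK_{u,v})\le\|Y\|\,s_{N+1}(K_{u,v})$ directly, whereas the paper uses the rank-$N$ approximant $YK_N$ together with $\|Y(K_{u,v}-K_N)\|\le\|K_{u,v}-K_N\|$.
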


\begin{proof}
\emph{Step 1: approximate the weighted Fourier kernel.}
The integral kernel of $K_{u,v}$ is
\[
 k(\xi,x)=\sqrt{v(\xi)}e^{-2\pi ix\xi}\sqrt{u(x)}.
\]
Factor the phase around $(\alpha,\beta)$ exactly as in
\eqref{eq:centered-phase-factorization}, with $c\beta$ there replaced by
$\beta$ here.  Taylor expansion of the interaction
$e^{-2\pi i(x-\alpha)(\xi-\beta)}$ through degree $N-1$ produces an
operator $K_N$ of rank at most $N$.  Lemma~\ref{lem:imaginary-taylor} and the
support conditions give the pointwise bound
\[
 |k(\xi,x)-k_N(\xi,x)|
 \le \sqrt{v(\xi)u(x)}\frac{(2\pi r_ur_v)^N}{N!}.
\]
After squaring and integrating first in $x$ and then in $\xi$, we obtain
\[
 \|K_{u,v}-K_N\|_{\mathrm{HS}}^2
 \le m_{u,v}\left(\frac{\zeta_{u,v}^N}{N!}\right)^2.
\]
The approximation-number characterization proves
\eqref{eq:soft-bounded-envelope}, including $N=0$ by taking $K_0=0$.

\emph{Step 2: verify the defect identity.}
Both $U$ and $V$ are positive contractions.  Direct multiplication of the two
rows in \eqref{eq:soft-defect-column} gives
\begin{align*}
 \mathcal T_{u,v}^*\mathcal T_{u,v}
 &=U^{1/2}V(I-U)VU^{1/2}
   +U^{1/2}(V-V^2)U^{1/2}\\
 &=U^{1/2}(V^2-VUV+V-V^2)U^{1/2}\\
 &=U^{1/2}(V-VUV)U^{1/2}\\
 &=U^{1/2}VU^{1/2}
   -U^{1/2}VU VU^{1/2}\\
 &=S_{u,v}-S_{u,v}^2,
\end{align*}
which proves \eqref{eq:soft-defect-identity}.

\emph{Step 3: factor the defect column through $K_{u,v}$.}
Define $W:L^2(\mathbb R_\xi)\to L^2(\mathbb R_x)\oplus
L^2(\mathbb R_x)$ by
\[
 Wg=
 \begin{pmatrix}
 M_{\sqrt{1-u}}\mathcal F^{-1}M_{\sqrt v}g\\
 \mathcal F^{-1}M_{\sqrt{1-v}}g
 \end{pmatrix}.
\]
Using functional calculus for
$V=\mathcal F^{-1}M_v\mathcal F$, one checks row by row that
\[
 WK_{u,v}=
 \begin{pmatrix}
 (I-U)^{1/2}VU^{1/2}\\
 (V-V^2)^{1/2}U^{1/2}
 \end{pmatrix}
 =\mathcal T_{u,v}.
\]
Furthermore,
\begin{align*}
 W^*W
 &=M_{\sqrt v}\mathcal F M_{1-u}\mathcal F^{-1}M_{\sqrt v}
   +M_{1-v}\\
 &\le M_v+M_{1-v}=I.
\end{align*}
Thus $W$ is a contraction.  If $K_N$ is the rank-$N$ approximant constructed
in Step~1, then $WK_N$ has rank at most $N$ and
\[
 \|\mathcal T_{u,v}-WK_N\|
 =\|W(K_{u,v}-K_N)\|
 \le\|K_{u,v}-K_N\|.
\]
This proves \eqref{eq:soft-defect-envelope}.  Finally, the eigenvalues of
$S_{u,v}$ in $(\epsilon,1-\epsilon)$ correspond exactly to singular values of
$\mathcal T_{u,v}$ exceeding $\sqrt{\epsilon(1-\epsilon)}$, while they form a
subset of the eigenvalues of $S_{u,v}$ exceeding $\epsilon$, equivalently the
singular values of $K_{u,v}$ exceeding $\sqrt\epsilon$.  Since
$\sqrt{\epsilon(1-\epsilon)}<\sqrt\epsilon$, the last assertion follows.
\end{proof}

\subsection{Unbounded windows controlled by moments}

It is convenient to scale the frequency variable without changing the
Hilbert-space norm.  For $c>0$, define
\begin{equation}
 (\mathcal F_cf)(\eta):=\sqrt c
 \int_{\mathbb R}e^{-2\pi i c x\eta}f(x)\,dx.
 \label{eq:scaled-fourier}
\end{equation}
The change of variables $\xi=c\eta$ and Plancherel's theorem show that
$\mathcal F_c$ is unitary.  Moreover, $P_B\mathcal F_cP_A$ is unitarily
equivalent to $P_{cB}\mathcal FP_A$.

For a nonnegative weight $w\in L^1(\mathbb R)$ and a centre $a\in\mathbb R$,
write
\[
 M_N(w;a):=\int_{\mathbb R}|x-a|^{2N}w(x)\,dx.
\]

\begin{theorem}[Moment envelope for hard and soft unbounded windows]
\label{thm:moment-hard}
Let $N\in\mathbb N_0$, $c>0$, and let $0\le u,v\le1$ belong to
$L^1(\mathbb R)$.  Suppose that, for some $\alpha,\beta\in\mathbb R$,
\[
 M_N(u;\alpha)<\infty,
 \qquad M_N(v;\beta)<\infty.
\]
For
\[
 K_{u,v,c}:=M_{\sqrt v}\mathcal F_cM_{\sqrt u},
 \qquad S_{u,v,c}:=K_{u,v,c}^*K_{u,v,c},
\]
one has
\begin{equation}
 s_{N+1}(K_{u,v,c})
 \le
 \frac{(2\pi c)^N\sqrt c}{N!}
 \bigl(M_N(u;\alpha)M_N(v;\beta)\bigr)^{1/2}.
 \label{eq:soft-moment-envelope}
\end{equation}
In particular, if $A,B$ are arbitrary measurable sets of finite measure and
their displayed moments are finite, then
\begin{equation}
 s_{N+1}(P_{cB}\mathcal FP_A)
 \le
 \frac{(2\pi c)^N\sqrt c}{N!}
 \left(
   \int_A|x-\alpha|^{2N}\,dx
   \int_B|\eta-\beta|^{2N}\,d\eta
 \right)^{1/2}.
 \label{eq:hard-moment-envelope}
\end{equation}
For $0<\epsilon<1/2$, whenever the right side of
\eqref{eq:soft-moment-envelope} is at most $\sqrt\epsilon$, it follows that
$\Lambda_\epsilon(S_{u,v,c})\le N$.
\end{theorem}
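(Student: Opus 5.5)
The plan follows the proof of Theorem~\ref{thm:soft-bounded-masks}, with the Hilbert--Schmidt integral controlled by moments instead of support radii. The kernel of $K_{u,v,c}$ from $L^2(\mathbb R_x)$ to $L^2(\mathbb R_\eta)$ is
\[
 k(\eta,x)=\sqrt c\,\sqrt{v(\eta)}\,e^{-2\pi i c x\eta}\sqrt{u(x)}.
\]
The Hilbert--Schmidt norm of $K_{u,v,c}$ is $c\int u\int v<\infty$, so $K_{u,v,c}$ is Hilbert--Schmidt. For $N=0$ take $K_0=0$. Then $s_1\le\|K_{u,v,c}\|_{\mathrm{HS}}=\sqrt c\,(M_0(u)M_0(v))^{1/2}$, which is the claim with the convention $|x-a|^0=1$.

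For $N\ge1$, factor the phase around $(\alpha,\beta)$ as in \eqref{eq:centered-phase-factorization}:
\[
 cx\eta=c(x-\alpha)(\eta-\beta)+c\alpha\eta+c\beta x-c\alpha\beta .
\]
This gives unimodular one-variable factors times the interaction $e^{-2\pi i c(x-\alpha)(\eta-\beta)}$. Replace the interaction by its Taylor polynomial of degree $N-1$ in $t=2\pi c(x-\alpha)(\eta-\beta)$. The approximant $K_N$ then has kernel
\[
 \sum_{k<N} f_k(\eta)\,g_k(x),\qquad
 g_k(x)=\sqrt{u(x)}\,(x-\alpha)^k e^{-2\pi i c\beta x},
\]
with $f_k$ defined analogously from $\sqrt{v(\eta)}(\eta-\beta)^k$ and the remaining phases.

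\emph{Main obstacle.} Because the windows are unbounded, one must check that each $g_k$ lies in $L^2$, so that $K_N$ is a bounded operator of rank at most $N$. This needs
\[
 \int u\,|x-\alpha|^{2k}<\infty\quad\text{for }0\le k\le N-1,
\]
whereas only the $2N$th moment and $\int u<\infty$ are assumed. I would handle this by splitting $|x-\alpha|\le1$ from $|x-\alpha|>1$. On the first region $|x-\alpha|^{2k}\le1$, so the integral is bounded by $\int u$. On the second region $|x-\alpha|^{2k}\le|x-\alpha|^{2N}$, so the integral is bounded by $M_N(u;\alpha)$. The same argument applies to $v$.

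By Lemma~\ref{lem:imaginary-taylor}, the pointwise residual is at most
\[
 \sqrt c\,\sqrt{v(\eta)u(x)}\,\frac{(2\pi c)^N|x-\alpha|^N|\eta-\beta|^N}{N!}.
\]
Squaring and integrating with Tonelli's theorem gives
\[
 \|K_{u,v,c}-K_N\|_{\mathrm{HS}}^2\le\frac{c(2\pi c)^{2N}}{(N!)^2}\,M_N(u;\alpha)\,M_N(v;\beta).
\]
A Hilbert--Schmidt kernel defines a bounded operator, so $K_{u,v,c}=K_N+(K_{u,v,c}-K_N)$ holds as an identity of bounded operators. The approximation-number characterization of $s_{N+1}$ then yields \eqref{eq:soft-moment-envelope}.

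For \eqref{eq:hard-moment-envelope}, set $u=\mathbf1_A$ and $v=\mathbf1_B$. Both are integrable because the sets have finite measure. Since $P_B\mathcal F_cP_A$ is unitarily equivalent to $P_{cB}\mathcal FP_A$, singular values are preserved. Finally, if the right-hand side of \eqref{eq:soft-moment-envelope} is at most $\sqrt\epsilon$, then Lemma~\ref{lem:abstract-finite-rank-rule} applied to $K_{u,v,c}$ gives $n(\sqrt\epsilon;K_{u,v,c})\le N$. The direct count \eqref{eq:abstract-direct-count} of Theorem~\ref{thm:abstract-contraction-factorization} then gives $\Lambda_\epsilon(S_{u,v,c})\le N$, since $K_{u,v,c}$ is a contraction.
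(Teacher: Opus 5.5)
Your proof is correct and follows essentially the same route as the paper: centered phase factorization, degree-$(N-1)$ Taylor truncation of the interaction, the imaginary-axis remainder bound, Tonelli, and the approximation-number characterization, followed by unitary equivalence for the hard case and the direct count through the contraction property. The only differences are cosmetic. The paper gets finiteness of the lower moments by H\"older interpolation between $\int u$ and $M_N(u;\alpha)$ rather than by your split at $|x-\alpha|=1$ (both arguments work). Also, $c\int u\int v$ is the \emph{squared} Hilbert--Schmidt norm, not the norm itself.
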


\begin{proof}
The hard statement follows from the soft statement by setting
$u=\mathbf1_A$ and $v=\mathbf1_B$ and using the unitary equivalence following
\eqref{eq:scaled-fourier}.  We therefore prove the soft statement.

\emph{Step 1: lower moments are automatically finite.}
If $N\ge1$, the case $k=0$ is the assumed integrability of $u$.  For
$1\le k<N$, H\"older's inequality gives
\[
 \int |x-\alpha|^{2k}u(x)\,dx
 \le \left(\int u\right)^{1-k/N}
      M_N(u;\alpha)^{k/N}<\infty,
\]
and similarly for $v$.  Hence all separated functions appearing in the
degree-$(N-1)$ Taylor polynomial below lie in the required weighted
$L^2$ spaces.  For $N=0$ no such terms occur.

\emph{Step 2: remove one-variable phases and truncate the interaction.}
The integral kernel of $K_{u,v,c}$ is
\[
 \sqrt c\,\sqrt{v(\eta)}e^{-2\pi icx\eta}\sqrt{u(x)}.
\]
The identity
\[
 x\eta=(x-\alpha)(\eta-\beta)+\alpha\eta+\beta x-\alpha\beta
\]
separates all but the interaction term.  Replace
$e^{-2\pi ic(x-\alpha)(\eta-\beta)}$ by its Taylor polynomial of degree
$N-1$.  The resulting operator $K_N$ has rank at most $N$.
Lemma~\ref{lem:imaginary-taylor} gives, almost everywhere,
\[
 |k(\eta,x)-k_N(\eta,x)|
 \le
 \sqrt c\,\sqrt{v(\eta)u(x)}
 \frac{(2\pi c)^N
       |x-\alpha|^N|\eta-\beta|^N}{N!}.
\]

\emph{Step 3: integrate the product remainder.}
Squaring the last inequality and applying Tonelli's theorem yields
\begin{align*}
 \|K_{u,v,c}-K_N\|_{\mathrm{HS}}^2
 &\le \frac{c(2\pi c)^{2N}}{(N!)^2}
 \left(\int |x-\alpha|^{2N}u(x)\,dx\right)
 \left(\int |\eta-\beta|^{2N}v(\eta)\,d\eta\right)\\
 &=\frac{c(2\pi c)^{2N}}{(N!)^2}
   M_N(u;\alpha)M_N(v;\beta).
\end{align*}
Operator norm is no larger than Hilbert--Schmidt norm, and $K_N$ has rank at
most $N$.  The approximation-number formula proves
\eqref{eq:soft-moment-envelope}.  At $N=0$, the same argument means simply
that the zero approximant gives
$s_1(K_{u,v,c})\le\sqrt{c(\int u)(\int v)}$.

\emph{Step 4: count.}
Since $K_{u,v,c}$ is a contraction and
$S_{u,v,c}=K_{u,v,c}^*K_{u,v,c}$, every eigenvalue of $S_{u,v,c}$ larger
than $\epsilon$ is the square of a singular value of $K_{u,v,c}$ larger than
$\sqrt\epsilon$.  If the right side of
\eqref{eq:soft-moment-envelope} is at most $\sqrt\epsilon$, at most $N$
such singular values exist.  The plunge interval is a subset of
$(\epsilon,1]$, so $\Lambda_\epsilon(S_{u,v,c})\le N$.
\end{proof}

The next lemma combines bounded-core approximation with an explicit soft
tail.  Its main point is that the tail and the core approximation residual
are orthogonal in Hilbert--Schmidt space.  Pythagoras therefore replaces the
former triangle-inequality sum by a root-sum-square bound.  It is useful when
global high moments are inconvenient.

\begin{lemma}[Bounded core with an orthogonal tail and best polynomial residual]
\label{lem:soft-truncation-tail}
Let $u,v\in L^1(\mathbb R)$ take values in $[0,1]$, let $E,F$ be measurable,
and put
\[
 u_E=u\mathbf1_E,\qquad v_F=v\mathbf1_F,
 \qquad
 U=\int_{\mathbb R}u,\quad U_E=\int_Eu,
 \quad V=\int_{\mathbb R}v,\quad V_F=\int_Fv.
\]
Then the truncation error has the exact Hilbert--Schmidt norm
\begin{align}
 \left\|K_{u,v,c}-K_{u_E,v_F,c}\right\|_{\mathrm{HS}}^2
 &=c\{V(U-U_E)+(V-V_F)U_E\}\notag\\
 &=c(UV-U_EV_F).
 \label{eq:exact-soft-tail}
\end{align}
Thus the previous tail notation simplifies exactly to
\begin{equation}
 \delta_{E,F}^2=c(UV-U_EV_F).
 \label{eq:core-tail-delta-simplified}
\end{equation}

Suppose in addition that $E\cap\operatorname{ess\,supp}u$ and
$F\cap\operatorname{ess\,supp}v$ are essentially bounded.  If $U_EV_F>0$,
let $\alpha,\beta$ be essential centres of these two sets and put
\[
 \zeta_{E,F}:=\frac{\pi c}{2}
 \operatorname{diam}_{\mathrm{ess}}
   (E\cap\operatorname{ess\,supp}u)
 \operatorname{diam}_{\mathrm{ess}}
   (F\cap\operatorname{ess\,supp}v).
\]
Then, for every $N\in\mathbb N_0$,
\begin{equation}
 s_{N+1}(K_{u,v,c})
 \le
 \left\{
 c(UV-U_EV_F)
 +cU_EV_F\left(\frac{\zeta_{E,F}^N}{N!}\right)^2
 \right\}^{1/2}.
 \label{eq:core-tail-envelope}
\end{equation}
At $N=0$ the convention $\zeta_{E,F}^0=1$ is used.  If $U_EV_F=0$, the
core operator vanishes, the second term in
\eqref{eq:core-tail-envelope} is set equal to zero, and the resulting bound
is $s_1(K_{u,v,c})\leq\sqrt{cUV}$.

There is a systematically sharper best-polynomial version.  For $N\geq1$
and arbitrary centres $\alpha,\beta\in\mathbb R$, define
\begin{equation}
 \begin{aligned}
 \mathfrak e_N(\alpha,\beta)^2
 &:=\frac1{U_EV_F}\inf_{p\in\mathcal P_{N-1}}
 \int_E\!\int_F
 \left|e^{-2\pi ic(x-\alpha)(\eta-\beta)}\right.\\[-1mm]
 &\hspace{42mm}\left.
 -p\bigl((x-\alpha)(\eta-\beta)\bigr)\right|^2
 u(x)v(\eta)\,d\eta\,dx,
 \end{aligned}
 \label{eq:best-core-polynomial-error}
\end{equation}
where $\mathcal P_{N-1}$ is the space of complex polynomials of degree at
most $N-1$, and put $\mathfrak e_0=1$.  Then
\begin{equation}
 s_{N+1}(K_{u,v,c})
 \le
 \left\{
 c(UV-U_EV_F)+cU_EV_F\mathfrak e_N(\alpha,\beta)^2
 \right\}^{1/2}.
 \label{eq:best-polynomial-core-tail}
\end{equation}
For the essential centres used above,
\begin{equation}
 \mathfrak e_N(\alpha,\beta)
 \leq \frac{\zeta_{E,F}^N}{N!}.
 \label{eq:best-polynomial-beats-taylor}
\end{equation}
If $U_EV_F=0$, all terms containing $U_EV_F\mathfrak e_N^2$ are understood
to be zero.  Thus \eqref{eq:best-polynomial-core-tail} contains
\eqref{eq:core-tail-envelope} as a computable fallback.
\end{lemma}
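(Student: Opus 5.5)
The plan is to work throughout with integral kernels in Hilbert--Schmidt space, where $\|X\|_{\mathrm{HS}}^2$ is the $L^2$ norm of the kernel. The kernel of $K_{u,v,c}$ is
\[
 k(\eta,x)=\sqrt c\,\sqrt{v(\eta)u(x)}\,e^{-2\pi icx\eta},
\]
and $K_{u_E,v_F,c}$ has kernel $k\cdot\mathbf1_F(\eta)\mathbf1_E(x)$. The truncation difference therefore has kernel $k\cdot(1-\mathbf1_F(\eta)\mathbf1_E(x))$. This kernel is supported on the complement of $F\times E$, which is disjoint from the support of the core kernel.

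Squaring the modulus gives $c\,v(\eta)u(x)$ there. Integrating it over the complement of $F\times E$ yields $c(UV-U_EV_F)$. Splitting that complement as $(\mathbb R\times E^c)\cup(F^c\times E)$ gives the first form $c\{V(U-U_E)+(V-V_F)U_E\}$; expanding shows the two expressions agree. This proves \eqref{eq:exact-soft-tail} and \eqref{eq:core-tail-delta-simplified}.

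For the envelopes, I take any rank-at-most-$N$ approximant $Q$ of the core operator $K_{u_E,v_F,c}$ whose kernel is also supported in $F\times E$. The error $K_{u,v,c}-Q$ is then the sum of the tail kernel and the core residual kernel. These have disjoint supports, so they are orthogonal in $L^2$ of the product space. Pythagoras gives
\[
 \|K_{u,v,c}-Q\|_{\mathrm{HS}}^2=c(UV-U_EV_F)+\|K_{u_E,v_F,c}-Q\|_{\mathrm{HS}}^2.
\]
The bound on $s_{N+1}$ then follows from the approximation-number characterization together with $\|\cdot\|\le\|\cdot\|_{\mathrm{HS}}$.

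For the best-polynomial version, I factor the phase about $(\alpha,\beta)$ exactly as in the proof of Theorem~\ref{thm:moment-hard}. I then replace the interaction factor by $p((x-\alpha)(\eta-\beta))$, with $p\in\mathcal P_{N-1}$, multiplied by $\mathbf1_E\mathbf1_F$. This kernel is a sum of at most $N$ separated products, so the resulting operator has rank at most $N$. The separated functions lie in $L^2$ because $u_E$ and $v_F$ have essentially bounded support and are integrable. The squared core residual is
\[
 c\int_E\int_F|\cdots|^2\,u\,v,
\]
which equals $cU_EV_F$ times the normalized quantity in \eqref{eq:best-core-polynomial-error}.

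One point needs care here: the infimum over $p$ may not be attained as stated. I would handle this either by noting that $\mathcal P_{N-1}$ is finite-dimensional, as in Step~2 of Theorem~\ref{thm:best-polynomial-moment-envelope}, or by taking approximate minimizers and passing to the limit, since the singular-value bound is closed. At $N=0$ I take $Q=0$ and $\mathfrak e_0=1$, which gives exactly $cUV$. If $U_EV_F=0$, the core kernel vanishes almost everywhere, and $Q=0$ gives the stated fallback.

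It remains to prove \eqref{eq:best-polynomial-beats-taylor} at the essential centres. I take $p$ to be the Taylor polynomial of $e^{-2\pi ic s}$ in the variable $s=(x-\alpha)(\eta-\beta)$. By Lemma~\ref{lem:imaginary-taylor}, the pointwise error is at most $(2\pi c|s|)^N/N!$. The $u$-weighted and $v$-weighted integrals only see $E\cap\operatorname{ess\,supp}u$ and $F\cap\operatorname{ess\,supp}v$. On these sets, almost everywhere, $|x-\alpha|\le\frac12\operatorname{diam}_{\rm ess}$ and similarly for $\eta$, so $2\pi c|s|\le\zeta_{E,F}$. Integrating against $uv$ and normalizing by $U_EV_F$ gives $\mathfrak e_N\le\zeta_{E,F}^N/N!$. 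Substituting this into \eqref{eq:best-polynomial-core-tail} yields \eqref{eq:core-tail-envelope}.

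The only delicate point in the argument is the orthogonality step: the approximant must be multiplied by $\mathbf1_E\mathbf1_F$ so that it does not leak into the tail region. Beyond that, the work is bookkeeping of the degenerate cases.
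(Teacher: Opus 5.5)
Your proposal is correct and follows the paper's proof essentially step for step. The shared steps are the exact tail energy obtained by integrating $c\,u(x)v(\eta)$ over the complement of the core rectangle, the Pythagorean identity from disjoint kernel supports, separated polynomial approximants of rank at most $N$ confined to the core, attainment of the infimum by finite-dimensionality, and the Taylor polynomial as the competitor at the essential centres. The only difference is ordering: you obtain \eqref{eq:core-tail-envelope} as a corollary of \eqref{eq:best-polynomial-core-tail} and \eqref{eq:best-polynomial-beats-taylor}, whereas the paper proves the Taylor envelope first and then the best-polynomial refinement.
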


\begin{proof}
We give the support orthogonality, rank, and approximation steps explicitly.

\emph{Step 1: split the full kernel into its core and tail.}
The kernel of $K_{u,v,c}$, in the scaled frequency variable $\eta$, is
\[
 k(\eta,x)=\sqrt c\,\sqrt{v(\eta)}
 e^{-2\pi icx\eta}\sqrt{u(x)}.
\]
Since $\sqrt{u_E}=\sqrt u\,\mathbf1_E$ and similarly for $v_F$, the core
kernel is
\[
 k_{E,F}(\eta,x)=k(\eta,x)\mathbf1_E(x)\mathbf1_F(\eta).
\]
Equivalently, disjointness of $E$ and $E^c$ gives the operator decomposition
\begin{align}
 K_{u,v,c}-K_{u_E,v_F,c}
 &=M_{\sqrt v}\mathcal F_cM_{\sqrt{u-u_E}}
   +M_{\sqrt{v-v_F}}\mathcal F_cM_{\sqrt{u_E}}.
 \label{eq:tail-decomposition}
\end{align}
The two kernels on the right have disjoint $x$-supports.  More directly,
$k-k_{E,F}$ is supported, up to a null set, in $(E\times F)^c$.  Hence
\begin{align*}
 &\|K_{u,v,c}-K_{u_E,v_F,c}\|_{\mathrm{HS}}^2\\[-1mm]
 &\qquad=c\int_{(E\times F)^c}u(x)v(\eta)\,d\eta\,dx\\
 &\qquad=c\left(
       \int_{E^c}u(x)\,dx\int_{\mathbb R}v(\eta)\,d\eta
      +\int_Eu(x)\,dx\int_{F^c}v(\eta)\,d\eta
    \right)\\
 &\qquad=c\{V(U-U_E)+(V-V_F)U_E\}.
\end{align*}
Expanding the braces cancels the two terms $VU_E$:
\[
 VU-VU_E+VU_E-V_FU_E=UV-U_EV_F.
\]
This proves \eqref{eq:exact-soft-tail} and
\eqref{eq:core-tail-delta-simplified}.

\emph{Step 2: construct the rank-$N$ core Taylor approximant.}
Assume first that $U_EV_F>0$.  The identity
\[
 x\eta=(x-\alpha)(\eta-\beta)+\alpha\eta+\beta x-\alpha\beta
\]
separates the phase into three one-variable or constant unimodular factors
and the interaction $e^{-2\pi ic(x-\alpha)(\eta-\beta)}$.  Replace this
interaction by
\[
 q_{N-1}(x,\eta)
 =\sum_{j=0}^{N-1}
   \frac{(-2\pi ic)^j}{j!}(x-\alpha)^j(\eta-\beta)^j
\]
when $N\geq1$, and use the zero kernel when $N=0$.  Every summand is a
product of a function of $x$ and a function of $\eta$; after the separated
phase factors and weights are restored, the resulting operator $R_N$ has
rank at most $N$.

For the essential centres specified in the statement, almost every
$(x,\eta)\in E\times F$ with $u(x)v(\eta)>0$ satisfies
\[
 2\pi c|x-\alpha||\eta-\beta|\leq\zeta_{E,F}.
\]
Lemma~\ref{lem:imaginary-taylor} therefore gives
\begin{equation}
 \|K_{u_E,v_F,c}-R_N\|_{\mathrm{HS}}^2
 \leq cU_EV_F
       \left(\frac{\zeta_{E,F}^N}{N!}\right)^2.
 \label{eq:core-taylor-HS-square}
\end{equation}
For $N=0$, this is the exact identity
$\|K_{u_E,v_F,c}\|_{\mathrm{HS}}^2=cU_EV_F$.

\emph{Step 3: use Hilbert--Schmidt orthogonality rather than the triangle
inequality.}
Write
\[
 K_{u,v,c}-R_N
 =\bigl(K_{u,v,c}-K_{u_E,v_F,c}\bigr)
  +\bigl(K_{u_E,v_F,c}-R_N\bigr).
\]
The first kernel is supported in $(E\times F)^c$, whereas the second is
supported in $E\times F$.  Their Hilbert--Schmidt inner product is therefore
zero.  Pythagoras, \eqref{eq:exact-soft-tail}, and
\eqref{eq:core-taylor-HS-square} imply
\[
 \|K_{u,v,c}-R_N\|_{\mathrm{HS}}^2
 \leq c(UV-U_EV_F)
    +cU_EV_F\left(\frac{\zeta_{E,F}^N}{N!}\right)^2.
\]
Since $\|T\|\leq\|T\|_{\mathrm{HS}}$ and
\[
 s_{N+1}(K_{u,v,c})
 =\inf_{\operatorname{rank}R\leq N}\|K_{u,v,c}-R\|,
\]
the last display proves \eqref{eq:core-tail-envelope}.  If $U_EV_F=0$,
the core kernel is zero almost everywhere and the same argument reduces to
$s_1(K_{u,v,c})\leq\|K_{u,v,c}\|_{\mathrm{HS}}=\sqrt{cUV}$.

\emph{Step 4: replace Taylor's polynomial by the weighted best polynomial.}
Fix $N\geq1$, and let $p(z)=\sum_{j=0}^{N-1}a_jz^j$.  Restoring the separated
phase factors turns $p((x-\alpha)(\eta-\beta))$ into a kernel that is a sum
of at most $N$ separated products.  The associated operator $R_p$ therefore
has rank at most $N$.  By \eqref{eq:best-core-polynomial-error},
\[
 \inf_{p\in\mathcal P_{N-1}}
 \|K_{u_E,v_F,c}-R_p\|_{\mathrm{HS}}^2
 =cU_EV_F\mathfrak e_N(\alpha,\beta)^2.
\]
The infimum is attained because the polynomials of degree at most $N-1$
form a finite-dimensional, hence closed, subspace of the relevant weighted
$L^2$ space after null polynomials are factored out.  For a minimizing
polynomial, the same disjoint-support Pythagorean identity from Step~3 gives
\eqref{eq:best-polynomial-core-tail}.

Finally, the Taylor polynomial $q_{N-1}$ is one of the competitors in
\eqref{eq:best-core-polynomial-error}.  Its pointwise error on the centred
core is at most $\zeta_{E,F}^N/N!$.  Dividing its squared weighted error by
$U_EV_F$ proves \eqref{eq:best-polynomial-beats-taylor}.
\end{proof}

\begin{remark}[Finite moment-matrix evaluation of the best core residual]
\label{rem:core-moment-matrix}
The quantity in \eqref{eq:best-core-polynomial-error} is obtained from an
$N\times N$ positive semidefinite Gram matrix.  Put
$z=(x-\alpha)(\eta-\beta)$ and, for $0\leq j,k<N$, define
\begin{align*}
 G_{jk}
 &=\int_E\!\int_F z^{j+k}u(x)v(\eta)\,d\eta\,dx\\
 &=\left(\int_E(x-\alpha)^{j+k}u(x)\,dx\right)
   \left(\int_F(\eta-\beta)^{j+k}v(\eta)\,d\eta\right),\\
 b_j
 &=\int_E\!\int_F
 z^j e^{-2\pi icz}u(x)v(\eta)\,d\eta\,dx.
\end{align*}
Let $G^\dagger$ denote the Moore--Penrose pseudoinverse.  Then
\begin{equation}
 \mathfrak e_N(\alpha,\beta)^2
 =1-\frac{b^*G^\dagger b}{U_EV_F}.
 \label{eq:best-core-moment-matrix}
\end{equation}
Indeed, in
$L^2(E\times F,u(x)v(\eta)\,dx\,d\eta)$, $G$ is the Gram matrix of
$1,z,\ldots,z^{N-1}$ and $b$ is the vector of their inner products with
$e^{-2\pi icz}$.  The normal equations are $Ga=b$.  If $q\in\ker G$, then
$\sum_jq_jz^j=0$ almost everywhere and hence $q^*b=0$; consequently
$b\in\operatorname{ran}G$ even when $G$ is singular.  The orthogonal
projection coefficients may be taken as $a=G^\dagger b$, and the projection
identity gives
\[
 \inf_a\left\|e^{-2\pi icz}
              -\sum_{j=0}^{N-1}a_jz^j\right\|_{L^2(uv)}^2
 =U_EV_F-b^*G^\dagger b.
\]
This proves \eqref{eq:best-core-moment-matrix}.  Thus the centres and
truncation sets may be optimized numerically while retaining a rigorous
rank-$N$ certificate.
\end{remark}

\begin{remark}[Comparison with the triangle-inequality core--tail estimate]
Let
\[
 a=c(UV-U_EV_F),\qquad
 b=cU_EV_F\left(\frac{\zeta_{E,F}^N}{N!}\right)^2.
\]
The corresponding triangle-inequality estimate is $\sqrt a+\sqrt b$, whereas
\eqref{eq:core-tail-envelope} is $\sqrt{a+b}$.  Since
$\sqrt{a+b}\leq\sqrt a+\sqrt b$, the new estimate is never worse and is
strictly better whenever $a,b>0$.  At $N=0$ it recovers exactly the full
Hilbert--Schmidt bound $\sqrt{cUV}$.
\end{remark}

\begin{corollary}[Residual-energy core--tail count]
\label{cor:core-tail-residual-energy}
With the notation of Lemma~\ref{lem:soft-truncation-tail}, fix
$0<\epsilon<1/2$ and define
\[
 \mathcal E_{N,E,F}^2
 :=c(UV-U_EV_F)+cU_EV_F\mathfrak e_N(\alpha,\beta)^2.
\]
Then
\begin{equation}
 \Lambda_\epsilon(S_{u,v,c})
 \leq
 \min_{N\in\mathbb N_0}
 \left\{N+\kappa\!\left(
              \frac{\mathcal E_{N,E,F}^2}{\epsilon}
                         \right)\right\}.
 \label{eq:core-tail-residual-energy-count}
\end{equation}
In particular, when $\alpha,\beta$ are the essential centres specified in
Lemma~\ref{lem:soft-truncation-tail}, replacing $\mathfrak e_N$ by
$\zeta_{E,F}^N/N!$ gives a fully explicit bound.
\end{corollary}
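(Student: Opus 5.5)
The count will follow from Lemma~\ref{lem:sharp-residual-energy} applied to the operator $T=K_{u,v,c}$ at threshold $\tau=\sqrt\epsilon$. The rank-$N$ approximant is the best-polynomial core operator constructed in Step~4 of the proof of Lemma~\ref{lem:soft-truncation-tail}. First, $K_{u,v,c}$ is a contraction, since multiplication by $\sqrt u,\sqrt v$ with $0\le u,v\le1$ and the unitary $\mathcal F_c$ are contractions. Hence Theorem~\ref{thm:abstract-contraction-factorization} gives
\[
 \Lambda_\epsilon(S_{u,v,c})\le n(\sqrt\epsilon;K_{u,v,c}).
\]
So it suffices to bound the singular-value count for each fixed $N$ and then minimize over $N$.

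Fix $N$. For $N\ge1$ with $U_EV_F>0$, take $R_N:=R_p$ for a polynomial $p$ attaining the infimum in \eqref{eq:best-core-polynomial-error}. This operator has rank at most $N$. By the disjoint-support Pythagorean identity from Step~3 of that proof,
\[
 \|K_{u,v,c}-R_N\|_{\mathcal S_2}^2
 =c(UV-U_EV_F)+cU_EV_F\,\mathfrak e_N(\alpha,\beta)^2
 =\mathcal E_{N,E,F}^2 .
\]
The first term is the exact tail energy \eqref{eq:exact-soft-tail}, and the second is the core residual. For $N=0$, or when $U_EV_F=0$, take $R_N=0$; the residual energy is then $cUV$, which equals $\mathcal E_{N,E,F}^2$ under the stated conventions. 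In every case the residual is Hilbert--Schmidt, because $\|K_{u,v,c}\|_{\mathcal S_2}^2=cUV<\infty$ and $R_N$ has finite rank.

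Lemma~\ref{lem:sharp-residual-energy} with $T_N=R_N$ and $\tau^2=\epsilon$ then yields
\[
 n(\sqrt\epsilon;K_{u,v,c})\le N+\kappa\bigl(\mathcal E_{N,E,F}^2/\epsilon\bigr).
\]
Since $N$ is arbitrary, taking the minimum over $N$ proves \eqref{eq:core-tail-residual-energy-count}. For the explicit version, use the essential centres. Then \eqref{eq:best-polynomial-beats-taylor} gives $\mathfrak e_N\le\zeta_{E,F}^N/N!$, and $\kappa$ is nondecreasing, so replacing $\mathfrak e_N$ by $\zeta_{E,F}^N/N!$ preserves the inequality.

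The only delicate point is making sure the infimum in $\mathfrak e_N$ is attained, so that the residual identity holds with equality for an actual rank-$N$ operator. This is the closedness argument already given in Step~4. Alternatively, one can avoid attainment: apply the lemma with a near-minimizing $p$, giving energy at most $\mathcal E^2+\delta$. Because $\kappa$ is right-continuous in the needed sense ($\kappa(r+\delta)=\kappa(r)$ for small $\delta$ when $r$ is not an integer), a strictness check at integer values of $r$ would then be required. Using attainment avoids that check entirely, so I would rely on it.
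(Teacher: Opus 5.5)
Your proposal is correct and follows essentially the paper's argument. The paper likewise takes the rank-$N$ core approximant built in the proof of Lemma~\ref{lem:soft-truncation-tail}, notes that $K_{u,v,c}$ is a contraction, applies the direct branch of Theorem~\ref{thm:sharp-residual-master} (that is, Lemma~\ref{lem:sharp-residual-energy} at $\tau=\sqrt\epsilon$), and minimizes over $N$; your explicit treatment of the cases $N=0$ and $U_EV_F=0$, and of the monotonicity of $\kappa$ for the Taylor replacement, only fills in details the paper leaves implicit.
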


\begin{proof}
The proof of Lemma~\ref{lem:soft-truncation-tail} constructs a rank-$N$
operator $R_N$ satisfying
\[
 \|K_{u,v,c}-R_N\|_{\mathrm{HS}}^2
 \leq\mathcal E_{N,E,F}^2.
\]
The operator $K_{u,v,c}$ is a contraction because the two multiplication
operators are contractions and $\mathcal F_c$ is unitary.  Apply the direct
branch of Theorem~\ref{thm:sharp-residual-master} and minimize over $N$.
If $\mathcal E_{N,E,F}^2\leq\epsilon$, then
$\kappa(\mathcal E_{N,E,F}^2/\epsilon)=0$, so the corollary recovers the
rank-$N$ threshold rule.  It can improve that rule by permitting residual
energy above $\epsilon$ and charging only the exact strict-count correction
$\kappa$.
\end{proof}

\subsection{An unbounded soft example with a sublogarithmic certificate}

The following example lies genuinely outside every bounded-support theorem.

\begin{example}[Super-Gaussian soft masks]
\label{ex:supergaussian}
Let
\[
 u(x)=v(x)=e^{-x^4},
 \qquad K_c=M_{e^{-x^4/2}}\mathcal F_cM_{e^{-x^4/2}},
 \qquad S_c=K_c^*K_c.
\]
For every $N\in\mathbb N_0$,
\begin{equation}
 s_{N+1}(K_c)
 \le \sqrt c\,\frac{(2\pi c)^N}{N!}
       \frac12\Gamma\!\left(\frac N2+\frac14\right).
 \label{eq:supergaussian-singular}
\end{equation}
For each fixed $c>0$ this implies
\begin{equation}
 \log s_{N+1}(K_c)
 \le-\frac12N\log N+O_c(N),
 \label{eq:supergaussian-log}
\end{equation}
and, as $\epsilon\downarrow0$,
\begin{equation}
 \Lambda_\epsilon(S_c)
 \le (1+o(1))
 \frac{\log(1/\epsilon)}{\log\log(1/\epsilon)}.
 \label{eq:supergaussian-count}
\end{equation}
Here the last display is an upper certificate; it does not claim a matching
lower bound.
\end{example}

\begin{proof}
By symmetry and the substitution $t=x^4$,
\begin{align*}
 M_N(u;0)
 &=2\int_0^\infty x^{2N}e^{-x^4}\,dx\\
 &=\frac12\int_0^\infty t^{N/2-3/4}e^{-t}\,dt
 =\frac12\Gamma\!\left(\frac N2+\frac14\right).
\end{align*}
The two moments in \eqref{eq:soft-moment-envelope} are equal, so their
geometric mean is this same number.  This proves
\eqref{eq:supergaussian-singular}.

Stirling's formula gives
\[
 \log\Gamma\!\left(\frac N2+\frac14\right)
 =\frac N2\log N+O(N),
 \qquad
 \log(N!)=N\log N-N+O(\log N).
\]
Taking logarithms in \eqref{eq:supergaussian-singular}, and absorbing
$N\log(2\pi c)$ and all lower-order terms into $O_c(N)$, proves
\eqref{eq:supergaussian-log}.

Finally set $L=\log(1/\epsilon)$.  The meaning of
\eqref{eq:supergaussian-log} is that there are constants $C_c$ and $N_c$ such
that
\[
 \log s_{N+1}(K_c)\le-\frac12N\log N+C_cN
 \qquad(N\ge N_c).
\]
Choose
\[
 \delta_L:=\frac{3(\log\log L+2C_c+1)}{\log L},
 \qquad
 N=\left\lceil(1+\delta_L)\frac{L}{\log L}\right\rceil.
\]
Here $\delta_L\to0$ and
$\log N=\log L-\log\log L+o(\log L)$.  Directly dividing the next left-hand
side by $L$ shows that
\[
 \frac{N\log N-2C_cN}{L}
 \ge (1+\delta_L)
 \left(1-\frac{\log\log L+2C_c+o(1)}{\log L}\right)>1
\]
for all sufficiently large $L$.  Thus
$s_{N+1}(K_c)\le e^{-L/2}=\sqrt\epsilon$.  The direct singular-value count
gives $\Lambda_\epsilon(S_c)\le N$.  Because $\delta_L\to0$, this is
\eqref{eq:supergaussian-count}.
\end{proof}

\begin{remark}[What changes, and what does not]
The measurable-window result removes interval regularity but still needs a
bounded essential hull.  The moment result removes boundedness but asks for a
finite moment at the approximation order being used.  Neither step changes the
spectral thresholds: the direct route still uses $\sqrt\epsilon$, and the
exact defect route still uses $\sqrt{\epsilon(1-\epsilon)}$.  What improves is
the available singular-value envelope, and therefore the boundary function
obtained after solving for $N$.
\end{remark}

\section{Finite partitions, local geometry, and threshold allocation}
\label{sec:partitions}

The one-piece Taylor estimate can be assembled without first replacing a
measurable set by its enclosing interval.  The correct assembly norm is the
operator norm of a finite scalar error matrix.  This gives a direct way to
retain local diameters, local measures, and empty gaps.  A second, independent
assembly uses the defect operator and a weighted Ky--Fan inequality.  We give
both arguments in detail because they encode different losses, and we retain
both certificates.

Throughout this section, singular values are numbered in nonincreasing order,
and
\[
 n(\tau;L):=\#\{k:s_k(L)>\tau\},\qquad \tau>0,
\]
uses a strict inequality.  In every Fourier-localization statement below,
$c>0$.  If either global window has measure zero, then
$K_{A,B,c}=0$ and every associated plunge count and partition envelope is
understood to be zero.  Otherwise, null pieces in measurable partitions are
discarded.

\subsection{A scalar error matrix for finite operator partitions}

We begin with a Hilbert-space statement that is not specific to the Fourier
kernel.

\begin{theorem}[Finite block approximation]
\label{thm:finite-block-matrix}
Let
\[
 \mathcal H=\bigoplus_{i=1}^{M}\mathcal H_i,
 \qquad
 \mathcal G=\bigoplus_{j=1}^{K}\mathcal G_j,
\]
and let $L:\mathcal H\to\mathcal G$ be a compact operator with blocks
$L_{ji}:\mathcal H_i\to\mathcal G_j$.  Suppose that, for every $(j,i)$,
there is an operator $R_{ji}$ such that
\[
 \operatorname{rank}R_{ji}\leq r_{ji},
 \qquad
 \|L_{ji}-R_{ji}\|\leq E_{ji},
\]
where $r_{ji}\in\mathbb N_0$ and $E_{ji}\geq0$.  Let
\(
 \mathbf E=(E_{ji})_{1\leq j\leq K,\,1\leq i\leq M}
\)
act from $\ell^2_M$ to $\ell^2_K$.  Then
\begin{equation}
 \label{eq:block-matrix-sv}
 s_{1+\sum_{j,i}r_{ji}}(L)
 \leq \|\mathbf E\|_{\ell^2_M\to\ell^2_K}.
\end{equation}
Equivalently, if $\|\mathbf E\|\leq\tau$, then
\begin{equation}
 \label{eq:block-matrix-count}
 n(\tau;L)\leq\sum_{j,i}r_{ji}.
\end{equation}
\end{theorem}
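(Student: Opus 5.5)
The plan is to build the global approximant blockwise and control the global error by the scalar matrix. Define $R:\mathcal H\to\mathcal G$ as the block operator with entries $R_{ji}$. Its range is contained in the sum of the ranges of the embedded blocks $\iota_jR_{ji}\pi_i$, where $\pi_i$ is the coordinate projection onto $\mathcal H_i$ and $\iota_j$ the inclusion of $\mathcal G_j$. Hence $\operatorname{rank}R\le\sum_{j,i}r_{ji}$. Once we show $\|L-R\|\le\|\mathbf E\|$, both assertions follow. The approximation-number characterization $s_{r+1}(L)=\inf_{\operatorname{rank}R'\le r}\|L-R'\|$ gives \eqref{eq:block-matrix-sv}, and Lemma~\ref{lem:abstract-finite-rank-rule} with $E=\|\mathbf E\|\le\tau$ gives \eqref{eq:block-matrix-count}.

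The key step, which I expect to carry the real content, is the norm comparison. Put $D=L-R$ with blocks $D_{ji}=L_{ji}-R_{ji}$, so that $\|D_{ji}\|\le E_{ji}$. For $f=(f_i)\in\mathcal H$ the $j$th component of $Df$ is $\sum_iD_{ji}f_i$. By the triangle inequality,
\[
 \Bigl\|\sum_iD_{ji}f_i\Bigr\|\le\sum_iE_{ji}\|f_i\|=(\mathbf E x)_j,
 \qquad x:=(\|f_1\|,\ldots,\|f_M\|)\in\ell^2_M .
\]
Because the entries $E_{ji}$ and the $x_i$ are nonnegative, squaring and summing over $j$ gives
\[
 \|Df\|^2\le\sum_j(\mathbf Ex)_j^2=\|\mathbf Ex\|^2\le\|\mathbf E\|^2\|x\|^2=\|\mathbf E\|^2\|f\|^2 .
\]
This holds because $\|x\|_{\ell^2_M}=\|f\|_{\mathcal H}$ by orthogonality of the direct sum. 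The only care needed is that the triangle-inequality step produces nonnegative scalars, so that the passage to $\|\mathbf Ex\|$ is legitimate.

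Finally, I would note that compactness of $L$ is used only to identify approximation numbers with singular values. If some $r_{ji}$ or $E_{ji}$ vanish, nothing changes; the case $\sum r_{ji}=0$ reduces to $s_1(L)=\|L\|\le\|\mathbf E\|$.
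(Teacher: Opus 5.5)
Your proposal is correct and follows essentially the same route as the paper: blockwise assembly of $R$ with rank subadditivity, the componentwise triangle-inequality bound $\|((L-R)f)_j\|\le(\mathbf E x)_j$ with $x_i=\|f_i\|$, and then the approximation-number characterization. The only cosmetic difference is that you obtain the count via Lemma~\ref{lem:abstract-finite-rank-rule}, whereas the paper reads it off directly from $s_{r+1}(L)\le\tau$ and the strict definition of $n$; the two are equivalent.
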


\begin{proof}
Form the block operator $R:\mathcal H\to\mathcal G$ whose $(j,i)$ block is
$R_{ji}$.  Each embedded block
\(
 \iota_jR_{ji}\pi_i
\)
has rank at most $r_{ji}$, where $\pi_i$ is the $i$th coordinate projection
and $\iota_j$ is the $j$th coordinate inclusion.  Since
\[
 R=\sum_{j=1}^{K}\sum_{i=1}^{M}\iota_jR_{ji}\pi_i,
\]
subadditivity of rank gives
\begin{equation}
 \label{eq:block-rank-sum}
 \operatorname{rank}R\leq\sum_{j,i}r_{ji}=:r.
\end{equation}

It remains to estimate $L-R$.  Take
$f=(f_1,\ldots,f_M)\in\mathcal H$ and set
\(
 x_i=\|f_i\|_{\mathcal H_i}.
\)
The $j$th component of $(L-R)f$ is
\[
 ((L-R)f)_j=\sum_{i=1}^{M}(L_{ji}-R_{ji})f_i.
\]
The triangle inequality and the assumed block estimates imply
\begin{equation}
 \label{eq:block-coordinate-bound}
 \|((L-R)f)_j\|_{\mathcal G_j}
 \leq\sum_{i=1}^{M}E_{ji}x_i=(\mathbf E x)_j.
\end{equation}
After squaring, summing over $j$, and using
$\|x\|_{\ell^2_M}=\|f\|_{\mathcal H}$, we obtain
\[
 \|(L-R)f\|_{\mathcal G}^2
 \leq\|\mathbf E x\|_{\ell^2_K}^2
 \leq\|\mathbf E\|^2\|f\|_{\mathcal H}^2.
\]
Thus $\|L-R\|\leq\|\mathbf E\|$.  The approximation-number
characterization of singular values now yields
\[
 s_{r+1}(L)
 =\inf_{\operatorname{rank}G\leq r}\|L-G\|
 \leq\|L-R\|
 \leq\|\mathbf E\|,
\]
which is \eqref{eq:block-matrix-sv}.  If $\|\mathbf E\|\leq\tau$, then
$s_{r+1}(L)\leq\tau$; because $n(\tau;L)$ counts only singular values
strictly larger than $\tau$, at most the first $r$ singular values can be
counted.  This proves \eqref{eq:block-matrix-count}.
\end{proof}

We apply the theorem to finite measurable partitions
\[
 A=\mathop{\dot\bigcup}_{i=1}^{M}A_i,
 \qquad
 B=\mathop{\dot\bigcup}_{j=1}^{K}B_j
\]
of bounded sets of finite measure.  Write
\[
 K_{A,B,c}=P_{cB}\mathcal F P_A:L^2(A)\longrightarrow L^2(cB).
\]
The orthogonal decompositions of its domain and range identify its
$(j,i)$ block with
\[
 K_{ji}=P_{cB_j}\mathcal F P_{A_i}.
\]
Let
\[
 d_i=\operatorname{diam}_{\mathrm{ess}}(A_i),
 \qquad
 e_j=\operatorname{diam}_{\mathrm{ess}}(B_j),
 \qquad
 \zeta_{ji}=\frac{\pi c}{2}d_i e_j.
\]

\begin{corollary}[Locally centered Taylor assembly]
\label{cor:local-taylor-matrix}
For arbitrary integers $N_{ji}\geq0$, define the scalar matrix
\begin{equation}
 \label{eq:local-error-matrix}
 \mathbf E(\mathbf N)_{ji}
 :=\sqrt{c\,|A_i|\,|B_j|}\,
   \frac{\zeta_{ji}^{N_{ji}}}{N_{ji}!}.
\end{equation}
At order zero we use the convention $\zeta_{ji}^0=1$.
Then
\begin{equation}
 \label{eq:local-taylor-global-sv}
 s_{1+\sum_{j,i}N_{ji}}(K_{A,B,c})
 \leq \|\mathbf E(\mathbf N)\|_{\ell^2_M\to\ell^2_K}.
\end{equation}
Consequently, for $0<\epsilon<1/2$,
\begin{equation}
 \label{eq:matrix-plunge-bound}
 \|\mathbf E(\mathbf N)\|\leq\sqrt\epsilon
 \quad\Longrightarrow\quad
 \Lambda_\epsilon(A,cB)\leq\sum_{j,i}N_{ji}.
\end{equation}
\end{corollary}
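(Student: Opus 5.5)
The plan is to apply Theorem~\ref{thm:finite-block-matrix} to $L=K_{A,B,c}$ with $\mathcal H_i=L^2(A_i)$ and $\mathcal G_j=L^2(cB_j)$. The orthogonal decompositions $L^2(A)=\bigoplus_iL^2(A_i)$ and $L^2(cB)=\bigoplus_jL^2(cB_j)$ identify the $(j,i)$ block of $K_{A,B,c}$ with $K_{ji}=P_{cB_j}\mathcal FP_{A_i}$. By Lemma~\ref{lem:intro-finite-measure}, $K_{A,B,c}$ is Hilbert--Schmidt and therefore compact, so the hypotheses of Theorem~\ref{thm:finite-block-matrix} concern only the blocks. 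Null pieces have already been discarded, and if a piece were null its block would vanish; with $E_{ji}=0$ such a block would only help the bound.

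For each block I will invoke Theorem~\ref{thm:bounded-measurable-fourier}, or more precisely its proof, with $(A,B)$ replaced by $(A_i,B_j)$ and $N=N_{ji}$. This requires an actual rank-$N_{ji}$ approximant, not just the singular-value bound. Steps~1--3 of that proof give exactly this: centre at essential centres of $A_i$ and $B_j$ and truncate the interaction Taylor series at degree $N_{ji}-1$, or take the zero operator when $N_{ji}=0$. The result is $R_{ji}$ with $\operatorname{rank}R_{ji}\le N_{ji}$ and
\[
\|K_{ji}-R_{ji}\|\le\|K_{ji}-R_{ji}\|_{\mathrm{HS}}\le\sqrt{c|A_i||B_j|}\,\frac{\zeta_{ji}^{N_{ji}}}{N_{ji}!}=\mathbf E(\mathbf N)_{ji}.
\]
Alternatively, one can use the approximation-number identity $s_{N+1}(K_{ji})=\inf_{\operatorname{rank}R\le N}\|K_{ji}-R\|$. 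Here the infimum is attained for compact operators by the truncated Schmidt expansion, so an approximant achieving the bound \eqref{eq:bounded-fourier-envelope} exists in any case. Each $A_i,B_j$ is bounded with finite measure, being a subset of a bounded set, so that theorem applies. Since $\zeta_{ji}$ uses the local diameters $d_i,e_j$, the local constants match \eqref{eq:local-error-matrix} exactly. The $N=0$ convention $\zeta^0=1$ agrees with $\|K_{ji}\|\le\|K_{ji}\|_{\mathrm{HS}}=\sqrt{c|A_i||B_j|}$.

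With $r_{ji}=N_{ji}$ and $E_{ji}=\mathbf E(\mathbf N)_{ji}$, \eqref{eq:block-matrix-sv} gives \eqref{eq:local-taylor-global-sv} directly. For \eqref{eq:matrix-plunge-bound}, suppose $\|\mathbf E(\mathbf N)\|\le\sqrt\epsilon$. Then \eqref{eq:block-matrix-count} gives $n(\sqrt\epsilon;K_{A,B,c})\le\sum N_{ji}$, and the direct bound \eqref{eq:abstract-direct-count} of Theorem~\ref{thm:abstract-contraction-factorization} yields $\Lambda_\epsilon(A,cB)\le n(\sqrt\epsilon;K_{A,B,c})$, since $K_{A,B,c}$ is a contraction.

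The only step needing care is the identification of blocks together with the existence of genuine approximants rather than mere singular-value bounds, and Step~2 of the earlier proof supplies these explicitly. The degenerate case where a global window is null is handled by the stated convention.
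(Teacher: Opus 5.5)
Your proof is correct and follows essentially the same route as the paper. The paper likewise builds, for each block, the locally centred Taylor approximant of rank at most $N_{ji}$ with Hilbert--Schmidt error at most $\mathbf E(\mathbf N)_{ji}$; it writes out the phase splitting and remainder bound directly instead of citing Steps~1--3 of the proof of Theorem~\ref{thm:bounded-measurable-fourier}. It then applies Theorem~\ref{thm:finite-block-matrix} and finishes with $\Lambda_\epsilon(A,cB)\le n(\sqrt\epsilon;K_{A,B,c})$, exactly as you do.
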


\begin{proof}
Fix $(j,i)$.  Let $\alpha_i$ and $\beta_j$ be the midpoints of the essential
convex hulls of $A_i$ and $B_j$, respectively.  For
$x\in A_i$ and $\xi\in cB_j$, modulo null sets,
\[
 |x-\alpha_i|\leq\frac{d_i}{2},
 \qquad
 |\xi-c\beta_j|\leq\frac{ce_j}{2}.
\]
The identity
\begin{equation}
 \label{eq:block-phase-splitting}
 x\xi
 =\alpha_i\xi+c\beta_jx-\alpha_i c\beta_j
  +(x-\alpha_i)(\xi-c\beta_j)
\end{equation}
separates all dependence on the absolute locations of the two pieces into
one-variable unimodular factors.  Expand only the last interaction:
\begin{equation}
 \label{eq:block-taylor-kernel}
 e^{-2\pi i(x-\alpha_i)(\xi-c\beta_j)}
 =\sum_{k=0}^{N_{ji}-1}
   \frac{(-2\pi i)^k}{k!}(x-\alpha_i)^k(\xi-c\beta_j)^k
  +\mathcal R_{ji}(x,\xi).
\end{equation}
The sum is empty when $N_{ji}=0$.  Each displayed summand is a product of a
function of $x$ and a function of $\xi$; multiplication by the separated
phases from \eqref{eq:block-phase-splitting} does not change that fact.
Therefore the corresponding integral operator $R_{ji}$ has rank at most
$N_{ji}$.

The integral form of the exponential Taylor remainder gives, for real $t$,
\(
 |e^{it}-\sum_{k=0}^{N-1}(it)^k/k!|\leq |t|^N/N!
\)
when $N\geq1$; for $N=0$ the same assertion reads $|e^{it}|\leq1$.
Here
\[
 |2\pi(x-\alpha_i)(\xi-c\beta_j)|
 \leq 2\pi\frac{d_i}{2}\frac{ce_j}{2}
 =\zeta_{ji}.
\]
Hence the remainder kernel is bounded pointwise by
$\zeta_{ji}^{N_{ji}}/N_{ji}!$.  Its Hilbert--Schmidt norm is consequently at
most
\[
 \bigl(|A_i|\,|cB_j|\bigr)^{1/2}
 \frac{\zeta_{ji}^{N_{ji}}}{N_{ji}!}
 =\sqrt{c|A_i||B_j|}\,
 \frac{\zeta_{ji}^{N_{ji}}}{N_{ji}!}
 =\mathbf E(\mathbf N)_{ji}.
\]
The operator norm is bounded by the Hilbert--Schmidt norm, so
Theorem~\ref{thm:finite-block-matrix} proves
\eqref{eq:local-taylor-global-sv}.

Finally let $S=K_{A,B,c}^*K_{A,B,c}$.  If an eigenvalue $\lambda$ of $S$
belongs to $(\epsilon,1-\epsilon)$, then the corresponding singular value of
$K_{A,B,c}$ is $\sqrt\lambda>\sqrt\epsilon$.  Thus
\[
 \Lambda_\epsilon(A,cB)
 \leq n(\sqrt\epsilon;K_{A,B,c}).
\]
The last assertion now follows from
\eqref{eq:local-taylor-global-sv} and the strict definition of $n$.
\end{proof}

Corollary~\ref{cor:local-taylor-matrix} naturally produces the integer-valued
partition envelope
\begin{equation}
 \label{eq:partition-matrix-envelope}
 \mathfrak B_{\mathrm{mat}}(\epsilon;A,B,c)
 :=\inf_{\substack{
   A=\dot\bigcup_i A_i,\ B=\dot\bigcup_j B_j\\
   \text{finite measurable partitions}\\
   N_{ji}\in\mathbb N_0,\
   \|\mathbf E(\mathbf N)\|\leq\sqrt\epsilon}}
   \sum_{j,i}N_{ji}.
\end{equation}
The trivial one-piece partitions are admissible, so this optimization can
never be worse than the corresponding one-piece Taylor certificate.
Refinement is optional rather than automatically advantageous: smaller local
diameters reduce the entries of $\mathbf E$, while the sum of the local ranks
can increase.  The infimum in \eqref{eq:partition-matrix-envelope} makes this
tradeoff explicit.

\subsection{A gap-stable consequence}

The phase splitting in \eqref{eq:block-phase-splitting} makes the next useful
feature immediate.

\begin{corollary}[Separated copies]
\label{cor:gap-stable-copies}
Let $E,F\subset\mathbb R$ be bounded measurable sets with positive measures
$a=|E|$ and $b=|F|$, and essential diameters $d_E$ and $d_F$.  If
$R>d_E$, put, modulo null sets,
\[
 A_R=E\,\dot\cup\,(R+E),
 \qquad B=F,
 \qquad
 \zeta_0=\frac{\pi c}{2}d_Ed_F.
\]
Then, for every $N\in\mathbb N_0$,
\begin{equation}
 \label{eq:gap-stable-sv}
 s_{2N+1}(P_{cF}\mathcal F P_{A_R})
 \leq \sqrt{2cab}\,\frac{\zeta_0^N}{N!}.
\end{equation}
In particular, for $0<\epsilon<1/2$, the condition
\begin{equation}
 \label{eq:gap-stable-condition}
 \sqrt{2cab}\,\frac{\zeta_0^N}{N!}\leq\sqrt\epsilon
\end{equation}
implies
\(
 \Lambda_\epsilon(A_R,cF)\leq2N,
\)
uniformly in the separation $R$.
\end{corollary}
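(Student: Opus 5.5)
The plan is to apply Corollary~\ref{cor:local-taylor-matrix} to the two-piece partition $A_R=A_1\,\dot\cup\,A_2$, where $A_1=E$ and $A_2=R+E$, together with the trivial one-piece partition $B=F$. The hypothesis $R>d_E$ is used only to make the two pieces essentially disjoint. Translate $E$ first, modulo a null set, so that $\operatorname{ess\,inf}E=0$ and $\operatorname{ess\,sup}E=d_E$. Then $R+E$ lies essentially in $[R,R+d_E]$, which misses $[0,d_E]$ when $R>d_E$. For a general $E$, translation of $A_R$ only conjugates $K$ by a modulation, which does not change singular values. In any case, essential disjointness up to a null set is all the partition needs. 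Translation invariance of measure and of essential diameter then gives $|A_i|=a$ and $d_i=d_E$ for $i=1,2$, and $e_1=d_F$.

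We take $N_{11}=N_{12}=N$. Both entries of the $1\times 2$ error matrix are then equal to
\[
 \sqrt{cab}\,\frac{\zeta_0^N}{N!},
\]
because $\zeta_{1i}=\frac{\pi c}{2}d_Ed_F=\zeta_0$. A $1\times2$ matrix $(x,x)$ acting from $\ell^2_2$ to $\ell^2_1$ has operator norm $\sqrt2\,|x|$, since it is a row vector. Hence
\[
 \|\mathbf E(\mathbf N)\|=\sqrt{2cab}\,\zeta_0^N/N! .
\]
Corollary~\ref{cor:local-taylor-matrix} with $\sum N_{ji}=2N$ then yields $s_{2N+1}\le\sqrt{2cab}\,\zeta_0^N/N!$, which is \eqref{eq:gap-stable-sv}. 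Under \eqref{eq:gap-stable-condition}, the implication \eqref{eq:matrix-plunge-bound} gives $\Lambda_\epsilon(A_R,cF)\le 2N$.

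The key observation is that the local centring in \eqref{eq:block-phase-splitting} absorbs the translation $R$ entirely into unimodular one-variable phases. So $R$ never enters any entry of $\mathbf E$, and the bound is uniform in $R$. I expect no real obstacle. The only points needing care are the essential disjointness of the two copies under $R>d_E$ and the convention $\zeta_0^0=1$ at $N=0$. At $N=0$ the bound reduces to the Hilbert--Schmidt estimate $\sqrt{2cab}=\sqrt{c|A_R||F|}$, which is consistent.
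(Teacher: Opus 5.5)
Your proposal is correct and follows essentially the same route as the paper. The paper also takes the spatial pieces $E$ and $R+E$ with the single frequency piece $F$, applies the local Taylor construction at order $N$ to each block, and feeds the one-row matrix $(e_N\ e_N)$, whose norm is $\sqrt2\,e_N$, into Theorem~\ref{thm:finite-block-matrix}; you invoke the packaged form Corollary~\ref{cor:local-taylor-matrix}, which amounts to the same argument.
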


\begin{proof}
Use the two spatial pieces $E$ and $R+E$ and the single frequency piece $F$.
After changing $E$ on a null set, it lies in an interval of length $d_E$;
therefore $R>d_E$ makes these two spatial pieces disjoint.  Translation does
not change measure or essential diameter.  Applying the
local Taylor construction at order $N$ to either block therefore gives the
same error
\[
 e_N=\sqrt{cab}\,\frac{\zeta_0^N}{N!}
\]
and rank at most $N$.  The scalar error matrix is the one-row matrix
$\mathbf E=(e_N\ e_N)$, whose $\ell^2_2\to\ell^2_1$ norm is
$\sqrt2e_N$.  Formula \eqref{eq:gap-stable-sv} follows from
Theorem~\ref{thm:finite-block-matrix}; the plunge-count statement follows as
in the last step of Corollary~\ref{cor:local-taylor-matrix}.
\end{proof}

If one instead uses the convex hull of $A_R$ as a single piece, its diameter
grows like $R$ and the resulting Taylor parameter grows with the empty gap.
Corollary~\ref{cor:gap-stable-copies} avoids that loss.  Componentwise
interval-union Schatten estimates are already gap-stable when the pieces are
intervals.  The point here is different: the same stability now holds for
arbitrary bounded measurable components and simultaneously retains their
local filling measures $a$ and $b$.

\subsection{Sets with the same hull need not have the same plunge count}

The following elementary comparison isolates why measure sensitivity is not
cosmetic.  It is an exact finite-parameter distinction, not merely a change in
an asymptotic constant.

\begin{proposition}[A same-hull separation]
\label{prop:same-hull-separation}
Let $I=J=[-1/2,1/2]$, $c=1/10$, and $\epsilon=1/100$.  There exists a compact
fat Cantor set $C\subset I$ such that
\[
 |C|=\frac1{10},
 \qquad
 \operatorname*{ess\,inf}C=-\frac12,
 \qquad
 \operatorname*{ess\,sup}C=\frac12,
\]
and
\begin{equation}
 \label{eq:same-hull-counts}
 \Lambda_\epsilon(C,cC)=0,
 \qquad
 \Lambda_\epsilon(I,cJ)\geq1.
\end{equation}
Thus the two problems have the same spatial and frequency hulls, but their
plunge counts are already different at the displayed parameters.
\end{proposition}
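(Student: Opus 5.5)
The plan has three steps: construct $C$, show the fat-Cantor count vanishes by the zero-rank trace criterion, and show the interval count is positive by a trace comparison.

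\emph{Construction of $C$.} I would use a standard Smith--Volterra--Cantor construction inside $I=[-1/2,1/2]$. At stage $k$, remove open middle intervals from each remaining closed interval, with total removed length chosen so that the total removed over all stages is $9/10$. For example, at stage $k$ remove $2^{k-1}$ intervals of length $\frac{9}{10}\cdot 2^{-2k+1}\cdot\frac12$, scaled so the geometric series sums to $9/10$; one checks that each removed gap fits inside the current piece. The endpoints $\pm1/2$ are never removed. Every relative neighbourhood of $\pm1/2$ in $C$ has positive measure, because each stage leaves an interval adjacent to the endpoint and the remaining Cantor construction inside it has positive measure. Hence $\operatorname*{ess\,inf}C=-1/2$ and $\operatorname*{ess\,sup}C=1/2$. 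I expect the main obstacle to be this essential-hull bookkeeping: one must verify both the measure $1/10$ and that the endpoints are essential, not merely topological, endpoints.

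\emph{Zero count for $C$.} By Lemma~\ref{lem:intro-finite-measure}, $m=c|C||C|=\frac1{10}\cdot\frac1{100}=10^{-3}<\epsilon=10^{-2}$. Theorem~\ref{thm:bounded-measurable-fourier}, through the criterion ``$m\le\epsilon$ implies $\Lambda_\epsilon=0$'' (equivalently, \eqref{eq:sharp-zero-rank-two-counts}), then gives $\Lambda_\epsilon(C,cC)=0$. Indeed $\|K\|\le\|K\|_{\mathcal S_2}=\sqrt m<\sqrt\epsilon$, so $S$ has no eigenvalue above $\epsilon$.

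\emph{Positive count for the intervals.} Here $w=c|I||J|=1/10$, and Lemma~\ref{lem:sch-one-interval} reduces the problem to the prolate operator with bandwidth parameter $w$. I would argue via trace. First, $\operatorname{tr}S=1/10$. Second, the Hilbert--Schmidt norm satisfies
\[
\operatorname{tr}S^2=\int_I\int_I \frac{\sin^2(\pi c(x-y))}{\pi^2(x-y)^2}\,dx\,dy .
\]
Since the kernel is $c^2\operatorname{sinc}^2$ with argument at most $\pi/10$, we have $\operatorname{tr}S^2\le c^2=1/100$. In fact it is close to $c^2(1-O(c^2))$, so $\operatorname{tr}(S-S^2)\ge 0.1-0.01=0.09$.

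All eigenvalues lie in $[0,1]$ and sum to $0.1$, so none is $\ge1-\epsilon=0.99$. It therefore suffices to show some eigenvalue exceeds $\epsilon=0.01$. I would use the top eigenvalue: $\lambda_1\ge\langle S\mathbf 1_I,\mathbf 1_I\rangle=\int_I\int_I c\,\operatorname{sinc}(\pi c(x-y))\,dx\,dy$, and since $\operatorname{sinc}\ge 1-(\pi/10)^2/6>0.98$ on the relevant range, $\lambda_1\ge 0.098>0.01$. Hence $\lambda_1\in(\epsilon,1-\epsilon)$, and so $\Lambda_\epsilon(I,cJ)\ge1$.

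The distinction arises because $C$ and $I$ share the same hulls, and hence the same $\zeta$, but differ in $m$.
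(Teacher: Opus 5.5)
Your proof is correct and takes the paper's route: the same fat Cantor construction, the mass bound $c|C|^2=10^{-3}<\epsilon$ to get the zero count, and for the interval pair the Rayleigh quotient of $\mathbf 1_I$ combined with $\operatorname{tr}S=1/10<1-\epsilon$. Your spatial-side bound $\langle S\mathbf 1_I,\mathbf 1_I\rangle\ge 0.098$ is the same quantity the paper evaluates on the Fourier side, and the $\operatorname{tr}S^2$ detour is not needed. One arithmetic slip: the displayed removal length $\frac{9}{10}2^{-2k+1}\cdot\frac12$ removes only $9/20$ in total, so drop the extra factor $\frac12$ (giving the paper's $0.9/2^{2k-1}$) to obtain $|C|=1/10$.
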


\begin{proof}
We first construct the set on $[0,1]$.  Start with the interval $C_0=[0,1]$.
At stage $n\geq1$, remove from the centre of each of the $2^{n-1}$ intervals
surviving stage $n-1$ an open interval of length
\[
 a_n:=\frac{0.9}{2^{2n-1}}.
\]
The total length removed at stage $n$ is
$2^{n-1}a_n=0.9/2^n$; hence the total removed length is
$\sum_{n\geq1}0.9/2^n=0.9$.  If $L_n$ denotes the common length of a
stage-$n$ surviving interval, then
\[
 2^nL_n=1-0.9\sum_{k=1}^n2^{-k}
 =0.1+0.9\,2^{-n},
 \qquad
 L_n=\frac{0.1}{2^n}+\frac{0.9}{4^n}\longrightarrow0.
\]
The nested intersection $C_0^*$ of the surviving compact sets is compact,
has measure $0.1$, has no isolated points, and is nowhere dense.  Symmetry of
the construction shows that each stage-$n$ interval contains exactly
$0.1/2^n$ measure of $C_0^*$.  Thus every relative neighborhood of $0$ and
of $1$ meets $C_0^*$ in positive measure.  For $C=C_0^*-1/2$, it follows that
$\operatorname*{ess\,inf}C=-1/2$,
$\operatorname*{ess\,sup}C=1/2$, and $|C|=1/10$.

For arbitrary finite-measure sets $A,B$, the restricted Fourier operator
$K_{A,B,c}=P_{cB}\mathcal F P_A$ has kernel of modulus one on
$cB\times A$.  Hence
\begin{equation}
 \label{eq:trace-mass-identity}
 \operatorname{tr}(K_{A,B,c}^*K_{A,B,c})
 =\|K_{A,B,c}\|_{\mathcal S_2}^2
 =c|A||B|.
\end{equation}
For $A=B=C$, the right-hand side is
\(
 (1/10)(1/10)^2=1/1000<\epsilon.
\)
All eigenvalues are nonnegative, so no individual eigenvalue can exceed their
sum.  In particular no eigenvalue lies in $(\epsilon,1-\epsilon)$, proving
$\Lambda_\epsilon(C,cC)=0$.

For $A=I$ and $B=J$, use the unit vector $f=\mathbf1_I$ in $L^2(I)$.  With
the Fourier normalization used in this paper,
\[
 \widehat{\mathbf1_I}(\xi)=\frac{\sin(\pi\xi)}{\pi\xi},
\]
with the value at zero understood by continuity.  The Rayleigh principle gives
\begin{equation}
 \label{eq:interval-rayleigh}
 \lambda_1(P_IQ_{cJ}P_I)
 \geq\int_{-1/20}^{1/20}
   \left(\frac{\sin(\pi\xi)}{\pi\xi}\right)^2d\xi.
\end{equation}
For $|\xi|\leq1/20$, the alternating Taylor bound
$\sin u/u\geq1-u^2/6$ gives
\[
 \left(\frac{\sin(\pi\xi)}{\pi\xi}\right)^2
 \geq \left(1-\frac{\pi^2\xi^2}{6}\right)^2
 \geq 1-\frac{\pi^2\xi^2}{3}.
\]
Consequently,
\begin{align*}
 \lambda_1(P_IQ_{cJ}P_I)
 &\geq \frac1{10}-\frac{\pi^2}{3}
       \int_{-1/20}^{1/20}\xi^2\,d\xi \\
 &=\frac1{10}-\frac{\pi^2}{36000}
 >0.099>\epsilon.
\end{align*}
On the other hand, \eqref{eq:trace-mass-identity} gives
\(
 \lambda_1\leq\operatorname{tr}(P_IQ_{cJ}P_I)=1/10<1-\epsilon.
\)
Thus $\lambda_1\in(\epsilon,1-\epsilon)$, and the interval problem has at
least one plunge eigenvalue.
\end{proof}

Proposition~\ref{prop:same-hull-separation} does not assert optimality of the
new upper bounds.  It proves the narrower and necessary point that any bound
depending only on the two enclosing intervals discards data that can decide
even whether the plunge count vanishes.

\subsection{Weighted Ky--Fan allocation for measurable partitions}

The matrix assembly above approximates the restricted Fourier operator
directly.  We now instead decompose its defect operator.  The following simple
strict-count form of the Ky--Fan inequality will be used.

\begin{lemma}[Finite strict-count Ky--Fan inequality]
\label{lem:strict-kyfan}
If $X_1,\ldots,X_q$ are compact operators and $t_1,\ldots,t_q>0$, then
\begin{equation}
 \label{eq:strict-kyfan}
 n\!\left(\sum_{\nu=1}^{q}t_\nu;\sum_{\nu=1}^{q}X_\nu\right)
 \leq\sum_{\nu=1}^{q}n(t_\nu;X_\nu).
\end{equation}
\end{lemma}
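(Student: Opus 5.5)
We argue by a single rank/dimension argument, in the spirit of Lemma~\ref{lem:abstract-finite-rank-rule}, reducing the strict-count inequality to the approximation-number form of Ky--Fan. If some $n(t_\nu;X_\nu)=\infty$ there is nothing to prove, so assume all counts are finite and put $r_\nu:=n(t_\nu;X_\nu)$ and $r:=\sum_\nu r_\nu$.

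\emph{Step 1: a rank-$r_\nu$ approximant for each summand.} Since $X_\nu$ is compact, write its Schmidt decomposition $X_\nu=\sum_k s_k(X_\nu)\langle\cdot,\phi_k\rangle\psi_k$. Let $F_\nu$ be the truncation to the first $r_\nu$ terms. Then $\operatorname{rank}F_\nu\le r_\nu$ and $\|X_\nu-F_\nu\|=s_{r_\nu+1}(X_\nu)$. By the definition of $r_\nu$ as a strict count, $s_{r_\nu+1}(X_\nu)\le t_\nu$. This is the only place the strict inequality enters: singular values equal to $t_\nu$ are not counted, so they fall into the remainder, whose norm is still at most $t_\nu$.

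\emph{Step 2: assemble.} Set $F:=\sum_\nu F_\nu$. Subadditivity of rank gives $\operatorname{rank}F\le r$. The triangle inequality gives
\[
 \Bigl\|\sum_\nu X_\nu-F\Bigr\|\le\sum_\nu t_\nu=:t .
\]

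\emph{Step 3: convert to a count.} Apply Lemma~\ref{lem:abstract-finite-rank-rule} to $A=\sum_\nu X_\nu$, with $A_0=F$, $E=t$ and $\tau=t$. This yields $n(t;\sum_\nu X_\nu)\le r$, which is \eqref{eq:strict-kyfan}.

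The only delicate point is the endpoint bookkeeping in Step~1, namely checking that $\|X_\nu-F_\nu\|\le t_\nu$ still holds when singular values equal $t_\nu$ are excluded from $r_\nu$. Lemma~\ref{lem:abstract-finite-rank-rule} already permits the non-strict hypothesis $E\le\tau$, so this step goes through. An alternative route is to iterate \eqref{eq:sch-kyfan} with $m=r_1$ and $n=r_2$ and then induct on $q$; this gives the same bound.
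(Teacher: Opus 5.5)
Your proof is correct and is essentially the paper's argument. The paper treats two summands and then inducts, using $\delta$-near-optimal approximants from the approximation-number characterization and letting $\delta\downarrow0$; you treat all $q$ summands at once with exact Schmidt truncations and finish with Lemma~\ref{lem:abstract-finite-rank-rule}. Your endpoint bookkeeping, $s_{r_\nu+1}(X_\nu)\le t_\nu$ together with the non-strict hypothesis $E\le\tau$, is handled correctly.
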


\begin{proof}
It is enough to prove the two-operator case and iterate it.  Put
$p=n(a;X)$ and $q=n(b;Y)$.  By the strict definition of $n$,
\(
 s_{p+1}(X)\leq a
\)
and
\(
 s_{q+1}(Y)\leq b.
\)
For every $\delta>0$, the approximation-number characterization gives
operators $X_p,Y_q$ of ranks at most $p,q$ such that
\[
 \|X-X_p\|\leq a+\delta,
 \qquad
 \|Y-Y_q\|\leq b+\delta.
\]
The rank of $X_p+Y_q$ is at most $p+q$, while
\[
 \|(X+Y)-(X_p+Y_q)\|\leq a+b+2\delta.
\]
Letting $\delta\downarrow0$ yields
$s_{p+q+1}(X+Y)\leq a+b$.  Therefore at most $p+q$ singular values of
$X+Y$ are strictly larger than $a+b$.  This is the desired two-operator
inequality, and induction proves \eqref{eq:strict-kyfan}.
\end{proof}

\begin{theorem}[Weighted measurable-partition reduction]
\label{thm:weighted-measurable-partition}
Let $A,B\subset\mathbb R$ have finite measure and let
\[
 A=\mathop{\dot\bigcup}_{i=1}^{M}A_i,
 \qquad
 B=\mathop{\dot\bigcup}_{j=1}^{K}B_j
\]
be arbitrary finite measurable partitions, with null pieces discarded.  Fix
$0<\epsilon<1/2$ and set
\(
 t=\sqrt{\epsilon(1-\epsilon)}.
\)
For any local thresholds $0<\epsilon_{ij}<1/2$ satisfying
\begin{equation}
 \label{eq:weighted-threshold-budget}
 \sum_{i=1}^{M}\sum_{j=1}^{K}
 \sqrt{\epsilon_{ij}(1-\epsilon_{ij})}
 \leq t,
\end{equation}
one has
\begin{equation}
 \label{eq:weighted-measurable-bound}
 \Lambda_\epsilon(A,cB)
 \leq\sum_{i=1}^{M}\sum_{j=1}^{K}
 \Lambda_{\epsilon_{ij}}(A_i,cB_j).
\end{equation}
\end{theorem}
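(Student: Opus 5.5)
We prove \eqref{eq:weighted-measurable-bound} through the defect channel, by writing the global defect operator as a sum of block defects. By Theorem~\ref{thm:abstract-contraction-factorization} and \eqref{eq:sch-defect-count},
\[
 \Lambda_\epsilon(A,cB)=n(t;T),\qquad T=P_{A^c}Q_{cB}P_A .
\]
Disjointness of the pieces gives $P_A=\sum_iP_{A_i}$ and $Q_{cB}=\sum_jQ_{cB_j}$. Hence
\[
 T=\sum_{i,j}T_{ij},\qquad T_{ij}:=P_{A^c}Q_{cB_j}P_{A_i}.
\]
Each $T_{ij}$ is Hilbert--Schmidt by Lemma~\ref{lem:intro-finite-measure}, since $Q_{cB_j}P_{A_i}$ has HS norm $\sqrt{c|A_i||B_j|}$. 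In particular each $T_{ij}$ is compact. Applying Lemma~\ref{lem:strict-kyfan} with thresholds $t_{ij}:=\sqrt{\epsilon_{ij}(1-\epsilon_{ij})}$ gives
\[
 n\Bigl(\sum_{i,j}t_{ij};T\Bigr)\le\sum_{i,j}n(t_{ij};T_{ij}).
\]
The budget \eqref{eq:weighted-threshold-budget} says $\sum t_{ij}\le t$. Since $n(\cdot;T)$ is nonincreasing in the threshold, this yields $n(t;T)\le n(\sum t_{ij};T)$.

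It remains to identify each $n(t_{ij};T_{ij})$ with the local plunge count. We have $A^c\subset A_i^c$, so
\[
 T_{ij}=P_{A^c}\widetilde T_{ij},\qquad \widetilde T_{ij}:=P_{A_i^c}Q_{cB_j}P_{A_i},
\]
exactly as in Step~2 of the proof of Theorem~\ref{thm:sch-componentwise}. The projection $P_{A^c}$ is a contraction, so $s_k(T_{ij})\le s_k(\widetilde T_{ij})$ for every $k$. Therefore $n(t_{ij};T_{ij})\le n(t_{ij};\widetilde T_{ij})$.

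We now show that $\widetilde T_{ij}$ is the defect operator of the local problem. Put $S_{ij}:=P_{A_i}Q_{cB_j}P_{A_i}$. Using $Q^2=Q$ for the projection $Q=Q_{cB_j}$,
\[
 \widetilde T_{ij}^*\widetilde T_{ij}=P_{A_i}QP_{A_i^c}QP_{A_i}
 =P_{A_i}QP_{A_i}-(P_{A_i}QP_{A_i})^2=S_{ij}-S_{ij}^2 .
\]
Hence $|\widetilde T_{ij}|=D_{S_{ij}}$ on $L^2(A_i)$. Theorem~\ref{thm:abstract-defect-transfer} then gives
\[
 n(t_{ij};\widetilde T_{ij})=\Lambda_{\epsilon_{ij}}(A_i,cB_j).
\]
We must check that the local count is the same whether computed on $L^2(A_i)$ or on $L^2(\mathbb R)$. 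On the complement of $L^2(A_i)$ the operator $S_{ij}$ vanishes, contributing only the eigenvalue $0\notin(\epsilon_{ij},1-\epsilon_{ij})$, so the count is unaffected.

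Chaining the inequalities gives
\[
 \Lambda_\epsilon(A,cB)=n(t;T)\le n\Bigl(\sum t_{ij};T\Bigr)\le\sum_{i,j}n(t_{ij};T_{ij})\le\sum_{i,j}\Lambda_{\epsilon_{ij}}(A_i,cB_j),
\]
which is \eqref{eq:weighted-measurable-bound}.

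The step requiring the most care is the block decomposition of the defect. The naive choice of block, $P_{A_i^c}Q_{cB_j}P_{A_i}$, does not sum to $T$, because its output is compressed to a different set for each $i$. The proof avoids this by defining $T_{ij}$ with the common output projection $P_{A^c}$, so that the blocks sum exactly to $T$, and only afterwards enlarging the output to $A_i^c$ via the contraction inequality for singular values. A second point to keep straight is that the local threshold $t_{ij}$ enters through the local defect transfer, not through the global one. All counts are strict, and the Ky--Fan step combined with monotonicity in the threshold respects this.
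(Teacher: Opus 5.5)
Your proposal is correct and follows essentially the same route as the paper. Both arguments use the exact global defect count $\Lambda_\epsilon(A,cB)=n(t;T)$, split $T$ into the blocks $P_{A^c}Q_{cB_j}P_{A_i}$ with a common output projection, apply the strict Ky--Fan inequality together with monotonicity in the threshold, and then enlarge each output to $A_i^c$, where $\widetilde T_{ij}^*\widetilde T_{ij}=S_{ij}-S_{ij}^2$ identifies the local count. The only difference is cosmetic: the paper verifies $T^*T=S-S^2$ directly for general finite-measure $A,B$, whereas you cite the defect identity from the interval-union section, whose justification is the same and holds in general.
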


\begin{proof}
Let $P=P_A$, $Q=Q_{cB}$, and
\[
 S=PQP,
 \qquad
 T=P_{A^c}QP_A=(I-P)QP.
\]
Since $P$ and $Q$ are orthogonal projections,
\begin{align}
 T^*T
 &=PQ(I-P)QP \notag\\
 &=PQP-PQPQP
 =S-S^2.                                     \label{eq:global-defect-factor}
\end{align}
If $\lambda\in[0,1]$, then
\[
 \lambda(1-\lambda)>\epsilon(1-\epsilon)
 \quad\Longleftrightarrow\quad
 \epsilon<\lambda<1-\epsilon.
\]
Functional calculus and \eqref{eq:global-defect-factor} therefore give the
exact identity
\begin{equation}
 \label{eq:global-defect-count-partition}
 \Lambda_\epsilon(A,cB)=n(t;T).
\end{equation}

Because the frequency pieces are disjoint,
$Q_{cB}=\sum_jQ_{cB_j}$, and because $P_A=\sum_iP_{A_i}$, we have the finite
decomposition
\begin{equation}
 \label{eq:global-T-block-sum}
 T=\sum_{i=1}^{M}\sum_{j=1}^{K}
    P_{A^c}Q_{cB_j}P_{A_i}.
\end{equation}
Every block in this sum is compact.  Indeed,
$P_{cB_j}\mathcal FP_{A_i}$ is Hilbert--Schmidt with squared norm
$c|A_i||B_j|$, and applying the unitary $\mathcal F^{-1}$ on the output
identifies it with $Q_{cB_j}P_{A_i}$.  Left compression by $P_{A^c}$
preserves compactness.
For each pair define
\[
 \widetilde T_{ij}:=P_{A_i^c}Q_{cB_j}P_{A_i},
 \qquad
 S_{ij}:=P_{A_i}Q_{cB_j}P_{A_i}.
\]
Since $A^c\subset A_i^c$ modulo null sets,
\begin{equation}
 \label{eq:block-compression}
 P_{A^c}Q_{cB_j}P_{A_i}
 =P_{A^c}\widetilde T_{ij}.
\end{equation}
Left multiplication by the projection $P_{A^c}$ cannot increase singular
values.  Moreover, the same projection computation as above gives
\[
 \widetilde T_{ij}^*\widetilde T_{ij}
 =S_{ij}-S_{ij}^2.
\]
Writing
$t_{ij}=\sqrt{\epsilon_{ij}(1-\epsilon_{ij})}$, we consequently have
\begin{equation}
 \label{eq:local-defect-count-partition}
 n(t_{ij};P_{A^c}Q_{cB_j}P_{A_i})
 \leq n(t_{ij};\widetilde T_{ij})
 =\Lambda_{\epsilon_{ij}}(A_i,cB_j).
\end{equation}

Let $t_\Sigma=\sum_{i,j}t_{ij}$.  The budget
\eqref{eq:weighted-threshold-budget} says $t_\Sigma\leq t$, and strict
singular-value counts decrease as the threshold increases.  Combining
\eqref{eq:global-defect-count-partition},
\eqref{eq:global-T-block-sum}, Lemma~\ref{lem:strict-kyfan}, and
\eqref{eq:local-defect-count-partition} gives
\begin{align*}
 \Lambda_\epsilon(A,cB)
 &=n(t;T)
 \leq n(t_\Sigma;T)\\
 &\leq\sum_{i,j}
 n(t_{ij};P_{A^c}Q_{cB_j}P_{A_i})\\
 &\leq\sum_{i,j}\Lambda_{\epsilon_{ij}}(A_i,cB_j),
\end{align*}
which proves the theorem.
\end{proof}

The theorem can be written as an explicit weighted allocation.  Define
\[
 \eta(u)=\frac{1-\sqrt{1-4u^2}}{2},
 \qquad 0<u<\frac12;
\]
then $\sqrt{\eta(u)(1-\eta(u))}=u$.  If weights
$\theta_{ij}>0$ satisfy $\sum_{i,j}\theta_{ij}\leq1$, then
\begin{equation}
 \label{eq:weighted-theta-form}
 \Lambda_\epsilon(A,cB)
 \leq\sum_{i,j}
 \Lambda_{\eta(\theta_{ij}t)}(A_i,cB_j).
\end{equation}
Equal weights $\theta_{ij}=1/(MK)$ are one admissible choice, but optimizing
the weights allows difficult blocks to receive a larger share of the defect
threshold.  This Ky--Fan envelope and the matrix envelope
\eqref{eq:partition-matrix-envelope} should be retained in parallel.  The
former combines exact local defect counts but divides a global threshold
budget; the latter combines direct finite-rank errors through a scalar spectral
norm.  Either can be smaller on a particular configuration.

\subsection{Optimized moments and best-polynomial block matrices}
\label{subsec:optimized-local-polynomials}

The midpoint--diameter entry in \eqref{eq:local-error-matrix} is convenient,
but it makes two avoidable choices: it centers every piece at the midpoint of
its essential hull, and it fixes the coefficients of the approximating
polynomial to be the Taylor coefficients.  Both choices can be optimized
without changing the rank.  We record the resulting hierarchy in a form that
can be inserted directly into the scalar block-matrix argument.

For a measurable set $E\subset\mathbb R$ of finite measure and
$N\in\mathbb N_0$, put
\begin{equation}
 \label{eq:optimized-central-moment}
 \mu_N(E):=\inf_{a\in\mathbb R}\int_E |x-a|^{2N}\,dx.
\end{equation}
Thus $\mu_0(E)=|E|$.  We use the convention
$\mathcal P_{-1}=\{0\}$, while $\mathcal P_{N-1}$ denotes the complex
polynomials of degree at most $N-1$ when $N\geq1$.

Fix finite measurable partitions
\[
 A=\mathop{\dot\bigcup}_{i=1}^{M}A_i,
 \qquad
 B=\mathop{\dot\bigcup}_{j=1}^{K}B_j,
\]
discarding null pieces, and assume in this subsection that all pieces are
bounded.  Given an integer array $\mathbf N=(N_{ji})$ with
$N_{ji}\in\mathbb N_0$, define three nonnegative $K\times M$ matrices.  The
optimized-moment matrix is
\begin{equation}
 \label{eq:optimized-moment-matrix}
 \mathbf E^{\rm mom}(\mathbf N)_{ji}
 :=\frac{(2\pi c)^{N_{ji}}\sqrt c}{N_{ji}!}
 \bigl(\mu_{N_{ji}}(A_i)\mu_{N_{ji}}(B_j)\bigr)^{1/2}.
\end{equation}
The best-polynomial matrix is
\begin{align}
 \mathbf E^{\rm best}(\mathbf N)_{ji}
 :=\sqrt c\inf_{\substack{\alpha,\beta\in\mathbb R\\
                           p\in\mathcal P_{N_{ji}-1}}}
 \biggl(&\int_{A_i}\int_{B_j}
 \left|e^{-2\pi i c(x-\alpha)(\eta-\beta)}
       -p\bigl((x-\alpha)(\eta-\beta)\bigr)\right|^2
 \,d\eta\,dx\biggr)^{1/2}.                 \label{eq:best-poly-matrix}
\end{align}
Finally, $\mathbf E^{\rm diam}(\mathbf N)$ denotes the old midpoint matrix,
\begin{equation}
 \label{eq:diameter-matrix-recalled}
 \mathbf E^{\rm diam}(\mathbf N)_{ji}
 :=\sqrt{c|A_i||B_j|}\,
   \frac{\bigl(\frac{\pi c}{2}d_i e_j\bigr)^{N_{ji}}}{N_{ji}!},
 \qquad
 d_i=\operatorname{diam}_{\rm ess}(A_i),\quad
 e_j=\operatorname{diam}_{\rm ess}(B_j).
\end{equation}
All three definitions give $\sqrt{c|A_i||B_j|}$ at order zero.

\begin{theorem}[Optimized local block-error hierarchy]
\label{thm:optimized-local-block-hierarchy}
With the preceding notation,
\begin{equation}
 \label{eq:optimized-block-sv}
 s_{1+\sum_{j,i}N_{ji}}(K_{A,B,c})
 \leq
 \|\mathbf E^{\rm best}(\mathbf N)\|_{\ell^2_M\to\ell^2_K}.
\end{equation}
Moreover, entrywise,
\begin{equation}
 \label{eq:entrywise-error-hierarchy}
 0\leq \mathbf E^{\rm best}(\mathbf N)_{ji}
 \leq \mathbf E^{\rm mom}(\mathbf N)_{ji}
 \leq \mathbf E^{\rm diam}(\mathbf N)_{ji},
\end{equation}
and hence
\begin{equation}
 \label{eq:norm-error-hierarchy}
 \|\mathbf E^{\rm best}(\mathbf N)\|
 \leq\|\mathbf E^{\rm mom}(\mathbf N)\|
 \leq\|\mathbf E^{\rm diam}(\mathbf N)\|.
\end{equation}
Consequently, for $0<\epsilon<1/2$, any one of the conditions
\[
 \|\mathbf E^{\rm best}(\mathbf N)\|\leq\sqrt\epsilon,
 \qquad
 \|\mathbf E^{\rm mom}(\mathbf N)\|\leq\sqrt\epsilon,
 \qquad
 \|\mathbf E^{\rm diam}(\mathbf N)\|\leq\sqrt\epsilon
\]
implies, respectively and with no change in the rank cost,
\begin{equation}
 \label{eq:optimized-block-plunge}
 \Lambda_\epsilon(A,cB)\leq\sum_{j,i}N_{ji}.
\end{equation}
In particular, replacing the diameter matrix by either optimized matrix can
never worsen the certificate.
\end{theorem}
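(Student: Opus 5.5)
The plan is to feed each block into the scalar-matrix assembly of Theorem~\ref{thm:finite-block-matrix}. I first identify $K_{A,B,c}$ with $P_B\mathcal F_cP_A$ via the unitary scaling following \eqref{eq:scaled-fourier}, so each block is $K_{ji}=P_{B_j}\mathcal F_cP_{A_i}$. The kernel of $K_{ji}$ is $\sqrt c\,e^{-2\pi icx\eta}$ on $B_j\times A_i$.

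\textbf{Step 1: best-polynomial blocks.} Fix $(j,i)$, centres $\alpha,\beta$, and $p\in\mathcal P_{N_{ji}-1}$. The splitting $x\eta=(x-\alpha)(\eta-\beta)+\alpha\eta+\beta x-\alpha\beta$ factors out unimodular one-variable phases. Replacing the interaction by $p((x-\alpha)(\eta-\beta))$ gives a kernel that is a sum of at most $N_{ji}$ separated products, as in Step~1 of Theorem~\ref{thm:best-polynomial-moment-envelope}. The separated functions lie in $L^2$ because the pieces are bounded. The resulting operator $R_{ji}$ thus has rank at most $N_{ji}$. Its Hilbert--Schmidt residual is exactly $\sqrt c$ times the integral in \eqref{eq:best-poly-matrix}, and operator norm is at most Hilbert--Schmidt norm.

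The infimum over $(\alpha,\beta,p)$ need not be attained. I therefore take, for each block, a near-minimizer with error at most $\mathbf E^{\rm best}_{ji}+\delta$. Theorem~\ref{thm:finite-block-matrix} then gives $s_{1+\sum N_{ji}}(K)\le\|\mathbf E^{\rm best}+\delta\mathbf 1\|$, where $\mathbf 1$ is the all-ones matrix. Letting $\delta\downarrow0$ and using continuity of the matrix norm proves \eqref{eq:optimized-block-sv}. At order zero the only competitor is $p=0$, which gives $\sqrt{c|A_i||B_j|}$.

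\textbf{Step 2: entrywise hierarchy.} The Taylor polynomial of degree $N_{ji}-1$ at moment-minimizing centres is one admissible competitor. These centres exist by the coercivity argument of Step~4 of Theorem~\ref{thm:best-polynomial-moment-envelope}. Lemma~\ref{lem:imaginary-taylor} and Tonelli then give exactly the bound $\mathbf E^{\rm mom}_{ji}$, repeating \eqref{eq:moment-HS-residual}; this proves $\mathbf E^{\rm best}_{ji}\le\mathbf E^{\rm mom}_{ji}$. Next, evaluating the moment infimum at the midpoint of the essential hull gives $\mu_N(A_i)\le|A_i|(d_i/2)^{2N}$, and likewise for $B_j$. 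Substituting reproduces $\mathbf E^{\rm diam}_{ji}$, exactly as in Step~6 of that theorem. Both comparisons are trivial equalities at $N_{ji}=0$.

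\textbf{Step 3: norm monotonicity and counting.} The operator norm is monotone on entrywise nonnegative matrices. For $0\le E\le E'$ entrywise, $\|Ex\|\le\|E|x|\|\le\|E'|x|\|\le\|E'\|\|x\|$. This gives \eqref{eq:norm-error-hierarchy}.

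Each norm condition then forces $s_{1+\sum N_{ji}}(K)\le\sqrt\epsilon$. Since $\Lambda_\epsilon(A,cB)\le n(\sqrt\epsilon;K)$ by \eqref{eq:abstract-direct-count}, and $n$ uses the strict threshold, this yields \eqref{eq:optimized-block-plunge}.

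The only delicate point is the non-attainment of the infimum in \eqref{eq:best-poly-matrix} over the noncompact centre set. The $\delta$-approximation handles it, so attainment is never needed.
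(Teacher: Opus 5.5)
Your proposal is correct and follows the paper's proof essentially step for step. Both pass to the dilated kernel $\sqrt c\,e^{-2\pi icx\eta}$ and build rank-$N_{ji}$ polynomial-in-the-interaction block approximants. Both handle non-attainment with $\delta$-near-minimizers and an all-ones perturbation in Theorem~\ref{thm:finite-block-matrix}. Both then compare via Taylor and midpoint centres and finish with entrywise norm monotonicity and the strict direct count.

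The only small difference is in the comparison $\mathbf E^{\rm best}_{ji}\le\mathbf E^{\rm mom}_{ji}$. You evaluate the Taylor competitor at moment-minimizing centres, which needs the coercivity existence argument; that argument is valid here because the pieces are bounded and nonnull. The paper instead bounds $\mathbf E^{\rm best}_{ji}$ by the moment integrals at arbitrary centres $\alpha,\beta$ and then takes the two infima independently, so no attainment is needed.
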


\begin{proof}
\emph{Step 1: put a local block on the unscaled frequency piece.}
The unitary dilation
\[
 V_c:L^2(cB_j)\longrightarrow L^2(B_j),
 \qquad (V_cg)(\eta)=\sqrt c\,g(c\eta),
\]
turns the block $P_{cB_j}\mathcal F P_{A_i}$ into the operator from
$L^2(A_i)$ to $L^2(B_j)$ with kernel
\[
 \sqrt c\,e^{-2\pi i c x\eta}.
\]
For arbitrary centers $\alpha,\beta\in\mathbb R$, the identity
\[
 x\eta=(x-\alpha)(\eta-\beta)+\alpha\eta+\beta x-\alpha\beta
\]
shows that the factors not containing the interaction
$(x-\alpha)(\eta-\beta)$ are one-variable unimodular factors.  Multiplying
the input and output by them is unitary and does not affect either rank or
singular values.

\emph{Step 2: a polynomial in the interaction has the advertised rank.}
Let $N=N_{ji}$ and write
\[
 p(z)=\sum_{k=0}^{N-1}a_kz^k
\]
when $N\geq1$; at $N=0$ the sum is empty.  After restoring the separated
unimodular factors, the kernel associated with $p((x-\alpha)(\eta-\beta))$
is a sum of at most $N$ products of a function of $x$ and a function of
$\eta$.  The corresponding operator therefore has rank at most $N$.
The Hilbert--Schmidt norm of the residual is exactly
\[
 \sqrt c\left(\int_{A_i}\int_{B_j}
 \left|e^{-2\pi i c(x-\alpha)(\eta-\beta)}
       -p((x-\alpha)(\eta-\beta))\right|^2
 \,d\eta\,dx\right)^{1/2}.
\]
Thus the infimum in \eqref{eq:best-poly-matrix} is the best
Hilbert--Schmidt residual among this locally centered, rank-at-most-$N$
polynomial class.

The infimum over the centers and coefficients need not be assumed to be
attained.  For every $\delta>0$ and every $(j,i)$, choose an approximant of
rank at most $N_{ji}$ whose residual norm is at most
$\mathbf E^{\rm best}_{ji}+\delta$.  There are only finitely many blocks, so
Theorem~\ref{thm:finite-block-matrix}, applied to the scalar matrix
$\mathbf E^{\rm best}+\delta\mathbf J$, where $\mathbf J$ is the
$K\times M$ all-ones matrix, gives
\[
 s_{1+\sum N_{ji}}(K_{A,B,c})
 \leq\|\mathbf E^{\rm best}+\delta\mathbf J\|.
\]
Letting $\delta\downarrow0$ proves \eqref{eq:optimized-block-sv}.

\emph{Step 3: compare the best polynomial with optimized Taylor moments.}
For fixed $\alpha$ and $\beta$ and $N\geq1$, choose the Taylor polynomial
\[
 p_{N-1}(z)=\sum_{k=0}^{N-1}\frac{(-2\pi i c z)^k}{k!}.
\]
The sharp imaginary-axis remainder estimate gives
\[
 |e^{-2\pi i c z}-p_{N-1}(z)|
 \leq\frac{(2\pi c)^N|z|^N}{N!}.
\]
After squaring and integrating with
$z=(x-\alpha)(\eta-\beta)$, the product structure yields
\begin{align*}
 \mathbf E^{\rm best}_{ji}
 &\leq \frac{(2\pi c)^N\sqrt c}{N!}
 \left(
   \int_{A_i}|x-\alpha|^{2N}\,dx
   \int_{B_j}|\eta-\beta|^{2N}\,d\eta
 \right)^{1/2}.
\end{align*}
Taking the two independent infima proves
$\mathbf E^{\rm best}_{ji}\leq\mathbf E^{\rm mom}_{ji}$.
At $N=0$, the approximating polynomial is zero, the exponential has modulus
one, and both entries equal $\sqrt{c|A_i||B_j|}$, so the same comparison holds
without invoking the remainder lemma at order zero.

\emph{Step 4: compare optimized moments with diameters.}
Choose $\alpha$ and $\beta$ to be the midpoints of the essential convex
hulls of $A_i$ and $B_j$.  Up to null sets,
\[
 |x-\alpha|\leq d_i/2,
 \qquad |\eta-\beta|\leq e_j/2.
\]
Therefore
\[
 \mu_N(A_i)\leq |A_i|(d_i/2)^{2N},
 \qquad
 \mu_N(B_j)\leq |B_j|(e_j/2)^{2N}.
\]
Substitution into \eqref{eq:optimized-moment-matrix} gives precisely
$\mathbf E^{\rm mom}_{ji}\leq\mathbf E^{\rm diam}_{ji}$.

\emph{Step 5: pass from entries to the scalar spectral norm and then count.}
If $0\leq C_{ji}\leq D_{ji}$ entrywise, then, for every vector $x$,
\[
 |Cx|\leq C|x|\leq D|x|
\]
entrywise.  It follows that
$\|C\|_{\ell^2\to\ell^2}\leq\|D\|_{\ell^2\to\ell^2}$.
This proves \eqref{eq:norm-error-hierarchy}.  Finally, an eigenvalue of
$K_{A,B,c}^*K_{A,B,c}$ in $(\epsilon,1-\epsilon)$ produces a singular value
of $K_{A,B,c}$ strictly larger than $\sqrt\epsilon$.  The strict-count
conclusion of Theorem~\ref{thm:finite-block-matrix} now proves
\eqref{eq:optimized-block-plunge}, including the case of equality in any of
the displayed norm constraints.
\end{proof}

The best-polynomial entry is computationally finite-dimensional.  To make
this explicit, fix $N\geq1$, a block $(A_i,B_j)$, and centers
$\alpha,\beta$.  Put $z=(x-\alpha)(\eta-\beta)$ and define, for
$0\leq k,\ell<N$,
\begin{align}
 G_{k\ell}
 &:=\int_{A_i}\int_{B_j}z^{k+\ell}\,d\eta\,dx
   =\left(\int_{A_i}(x-\alpha)^{k+\ell}\,dx\right)
    \left(\int_{B_j}(\eta-\beta)^{k+\ell}\,d\eta\right),
                                                        \label{eq:poly-gram}\\
 d_k&:=\int_{A_i}\int_{B_j}
       z^k e^{-2\pi i c z}\,d\eta\,dx.                 \label{eq:poly-data}
\end{align}
The Gram matrix $G$ is positive definite.  Indeed, if a polynomial $p$ of
degree less than $N$ satisfied
$\int\!\int|p((x-\alpha)(\eta-\beta))|^2=0$, then Fubini's theorem gives,
for almost every $\eta\in B_j\setminus\{\beta\}$, that the polynomial
$x\mapsto p((x-\alpha)(\eta-\beta))$ would vanish on the positive-measure
set $A_i$.  It would therefore be the zero polynomial, and hence $p=0$.
For a coefficient vector $a=(a_0,\ldots,a_{N-1})^T$, the squared residual is
\begin{equation}
 |A_i||B_j|-2\operatorname{Re}(a^*d)+a^*Ga.
 \label{eq:poly-quadratic}
\end{equation}
The unique minimizing coefficients solve $Ga=d$, and the fixed-center best
squared residual is
\begin{equation}
 |A_i||B_j|-d^*G^{-1}d.                    \label{eq:poly-best-residual}
\end{equation}
Restoring the dilation factor in \eqref{eq:best-poly-matrix} gives the
explicit block identity
\begin{equation}
 \mathbf E^{\rm best}(\mathbf N)_{ji}^{2}
 =c\inf_{\alpha,\beta\in\mathbb R}
 \left[|A_i||B_j|-d(\alpha,\beta)^*
 G(\alpha,\beta)^{-1}d(\alpha,\beta)\right].
 \label{eq:poly-best-residual-scaled}
\end{equation}
Thus \eqref{eq:best-poly-matrix} requires only a finite Gram system at each
chosen pair of centers, followed, if desired, by a two-variable optimization
in $(\alpha,\beta)$.  Formula \eqref{eq:poly-best-residual} is nonnegative
because it is the squared distance to a finite-dimensional subspace.

\begin{corollary}[Exact moment gain for interval blocks]
\label{cor:interval-moment-factor}
Suppose that $A_i$ and $B_j$ are intervals, modulo null sets, of lengths
$d_i$ and $e_j$.  Then for every $N=N_{ji}\in\mathbb N_0$,
\begin{equation}
 \label{eq:interval-exact-moment}
 \mu_N(A_i)=\frac{|A_i|(d_i/2)^{2N}}{2N+1},
 \qquad
 \mu_N(B_j)=\frac{|B_j|(e_j/2)^{2N}}{2N+1},
\end{equation}
and consequently
\begin{equation}
 \label{eq:interval-block-factor}
 \boxed{\quad
 \mathbf E^{\rm mom}(\mathbf N)_{ji}
 =\frac{1}{2N_{ji}+1}\,
  \mathbf E^{\rm diam}(\mathbf N)_{ji}.
 \quad}
\end{equation}
Thus $1/(2N+1)$ is the exact factor between the moment-based and
diameter-based certified Hilbert--Schmidt remainder bounds for a full
interval--interval block, not merely an asymptotic comparison.  The actual
best-polynomial residual may be smaller still.
\end{corollary}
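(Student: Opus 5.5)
The plan is to compute the optimized central moment of an interval directly, and then substitute the result into the definitions \eqref{eq:optimized-moment-matrix} and \eqref{eq:diameter-matrix-recalled}. Throughout, an interval of length $d$ means an interval $[a_0-d/2,a_0+d/2]$ modulo null sets. Its essential hull is the interval itself, so $\operatorname{diam}_{\rm ess}$ equals the length $d$ and $|A_i|=d_i$. This identification justifies writing $d_i,e_j$ both as lengths and as the essential diameters that enter $\mathbf E^{\rm diam}$.

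\emph{Step 1: the minimizing centre.} For $N=0$ every centre gives $\mu_0=|A_i|=d_i$, which agrees with \eqref{eq:interval-exact-moment} since the factor $1/(2N+1)$ equals one. For $N\geq1$, set $\phi(a)=\int_{a_0-d/2}^{a_0+d/2}|x-a|^{2N}\,dx$. This function is convex in $a$, because it is an integral of the convex functions $a\mapsto|x-a|^{2N}$. It is also symmetric under the reflection $a\mapsto 2a_0-a$, because the interval is symmetric about $a_0$. A convex function symmetric about $a_0$ attains its minimum at $a_0$: indeed $\phi(a_0)\le\tfrac12(\phi(a)+\phi(2a_0-a))=\phi(a)$. (This is the same argument used in Step~7 of Theorem~\ref{thm:best-polynomial-moment-envelope}, and \eqref{eq:interval-exact-even-moment} may simply be cited.)

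\emph{Step 2: evaluate.} At the midpoint we have $\int_{-d/2}^{d/2}|x|^{2N}dx=\dfrac{2(d/2)^{2N+1}}{2N+1}=\dfrac{d\,(d/2)^{2N}}{2N+1}$. With $|A_i|=d_i$, this is exactly \eqref{eq:interval-exact-moment}. The same computation applies to $B_j$.

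\emph{Step 3: compare the matrices.} Insert these values into \eqref{eq:optimized-moment-matrix}:
\[
\mathbf E^{\rm mom}_{ji}=\frac{(2\pi c)^{N}\sqrt c}{N!}\,\frac{\sqrt{|A_i||B_j|}\,(d_ie_j/4)^{N}}{2N+1}
=\sqrt{c|A_i||B_j|}\,\frac{(\tfrac{\pi c}{2}d_ie_j)^{N}}{N!\,(2N+1)}.
\]
The right-hand side equals $\mathbf E^{\rm diam}_{ji}/(2N+1)$, which proves \eqref{eq:interval-block-factor}. The closing remark that the actual best-polynomial residual may be smaller follows from the entrywise hierarchy \eqref{eq:entrywise-error-hierarchy}, which gives $\mathbf E^{\rm best}\le\mathbf E^{\rm mom}$.

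The only step needing care is Step~1, namely the exact identification of the infimum in \eqref{eq:optimized-central-moment}, since it is an equality rather than just an upper bound. The convexity-plus-symmetry argument settles it. There is no real obstacle beyond bookkeeping, in particular matching the factor $4^{N}$ in the moment product with the $(1/2)^{N}$ hidden in $\zeta_{ji}$.
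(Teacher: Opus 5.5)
Your proposal is correct and follows the paper's proof essentially step for step: convexity plus reflection symmetry places the minimizing centre at the midpoint, then direct integration, then the observation that each interval moment contributes a factor $(2N+1)^{-1/2}$. The only difference is that the paper invokes strict convexity to get a unique minimizer, whereas your midpoint inequality $\phi(a_0)\le\tfrac12(\phi(a)+\phi(2a_0-a))$ gives the value of the infimum, which is all that is needed; your matching of $(2\pi c)^N(d_ie_j/4)^N$ with $(\tfrac{\pi c}{2}d_ie_j)^N$ in Step~3 is also right.
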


\begin{proof}
Let $I=[m-d/2,m+d/2]$.  For $N\geq1$, the function
$a\mapsto\int_I|x-a|^{2N}\,dx$ is strictly convex and is symmetric about
$m$, so its unique minimizer is $a=m$.  Direct integration gives
\[
 \int_I|x-m|^{2N}\,dx
 =2\int_0^{d/2}u^{2N}\,du
 =\frac{d^{2N+1}}{2^{2N}(2N+1)}
 =\frac{|I|(d/2)^{2N}}{2N+1}.
\]
For $N=0$, the integral equals $|I|$ for every center, and the same formula
holds.  Applying this calculation to both intervals in
\eqref{eq:optimized-moment-matrix} gives one factor
$(2N+1)^{-1/2}$ from each moment; their product is $(2N+1)^{-1}$, which proves
\eqref{eq:interval-block-factor}.
\end{proof}

Define $\mathfrak B_{\rm best}$, $\mathfrak B_{\rm mom}$, and
$\mathfrak B_{\rm diam}$ by the same infimum as
in \eqref{eq:partition-matrix-envelope}, replacing the constraint matrix by
$\mathbf E^{\rm best}$, $\mathbf E^{\rm mom}$, and
$\mathbf E^{\rm diam}$, respectively.  Theorem
\ref{thm:optimized-local-block-hierarchy} proves the formal dominance chain
\begin{equation}
 \label{eq:optimized-envelope-chain}
 \Lambda_\epsilon(A,cB)
 \leq\mathfrak B_{\rm best}
 \leq\mathfrak B_{\rm mom}
 \leq\mathfrak B_{\rm diam}.
\end{equation}
The last quantity is the original matrix envelope
$\mathfrak B_{\rm mat}$.  No assertion of strictness is needed for this
chain; Corollary~\ref{cor:interval-moment-factor} identifies a concrete
blockwise strict gain whenever $N_{ji}\geq1$ on an interval--interval block.

\subsection{Two-level orthogonal-input threshold allocation}
\label{subsec:two-level-allocation}

The flat budget in \eqref{eq:weighted-threshold-budget} uses the triangle
inequality across every pair $(i,j)$.  The pieces with different spatial
input indices $i$, however, have orthogonal initial spaces.  Their residual
errors therefore admit an outer $\ell^2$ budget.  Only the frequency sum
inside a fixed input piece requires an $\ell^1$ budget.

\begin{lemma}[Strict count with orthogonal initial spaces]
\label{lem:orthogonal-input-strict-count}
Let $P_1,\ldots,P_M$ be mutually orthogonal projections, and let
$X_i$ be compact operators satisfying $X_i=X_iP_i$.  For arbitrary
$\tau_i>0$,
\begin{equation}
 \label{eq:orthogonal-input-count}
 n\!\left(\Bigl(\sum_{i=1}^{M}\tau_i^2\Bigr)^{1/2};
            \sum_{i=1}^{M}X_i\right)
 \leq\sum_{i=1}^{M}n(\tau_i;X_i).
\end{equation}
\end{lemma}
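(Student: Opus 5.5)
I would prove this in the same way as the strict-count Ky--Fan lemma, Lemma~\ref{lem:strict-kyfan}: approximate each summand by an operator of controlled rank, then estimate the assembled residual. The difference is that the orthogonality of the initial spaces lets the residuals combine in $\ell^2$ instead of $\ell^1$.

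Put $p_i=n(\tau_i;X_i)$. This is finite because $X_i$ is compact and $\tau_i>0$. By the strict definition of $n$, $s_{p_i+1}(X_i)\le\tau_i$. For $\delta>0$, the approximation-number characterization gives operators $Y_i$ of rank at most $p_i$ with $\|X_i-Y_i\|\le\tau_i+\delta$. Replace $Y_i$ by $Y_iP_i$. This keeps $\operatorname{rank}(Y_iP_i)\le p_i$, and since $X_i=X_iP_i$,
\[
 \|X_i-Y_iP_i\|=\|(X_i-Y_i)P_i\|\le\tau_i+\delta.
\]
Set $Y=\sum_iY_iP_i$. Then $\operatorname{rank}Y\le\sum_ip_i$, and $X-Y=\sum_i(X_i-Y_iP_i)P_i$, where $X=\sum_iX_i$.

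The key estimate treats the operator $(X_1-Y_1P_1,\ldots,X_M-Y_MP_M)$ as a row acting on orthogonal inputs. For any $f$, put $f_i=P_if$. The $f_i$ are orthogonal, so $\sum_i\|f_i\|^2\le\|f\|^2$. By the triangle inequality and Cauchy--Schwarz,
\[
 \|(X-Y)f\|\le\sum_i(\tau_i+\delta)\|f_i\|\le\Bigl(\sum_i(\tau_i+\delta)^2\Bigr)^{1/2}\|f\|.
\]
This is the one-row case of the scalar matrix argument in Theorem~\ref{thm:finite-block-matrix}. Hence
\[
 s_{1+\sum p_i}(X)\le\Bigl(\sum_i(\tau_i+\delta)^2\Bigr)^{1/2}.
\]
Letting $\delta\downarrow0$ gives $s_{1+\sum p_i}(X)\le(\sum_i\tau_i^2)^{1/2}$. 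By the strict definition of $n$, at most $\sum_ip_i$ singular values of $X$ exceed this threshold, which is \eqref{eq:orthogonal-input-count}. The sum $X$ is compact as a finite sum of compact operators.

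The main obstacle is making sure the orthogonality is actually exploited. The orthogonality is only in the initial spaces; the outputs $(X_i-Y_iP_i)f_i$ need not be orthogonal. So the output norms must be combined by the triangle inequality, and the $\ell^2$ gain comes from Cauchy--Schwarz applied on the input side. For this to work, each approximant must be compressed by $P_i$, so that every residual sees only its own input component $f_i$.
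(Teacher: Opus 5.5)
Your proposal is correct and matches the paper's proof step for step. Like the paper, you approximate each $X_i$ to within $\tau_i+\delta$ by an operator of rank at most $n(\tau_i;X_i)$, right-compress it by $P_i$, and bound the assembled residual by the triangle inequality followed by Cauchy--Schwarz over the orthogonal input components $P_if$. You then let $\delta\downarrow0$ and use the strict definition of $n$ to reach the conclusion.
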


\begin{proof}
Put $r_i=n(\tau_i;X_i)$.  For every $\delta>0$, the
approximation-number characterization supplies an operator $R_i'$ of rank at
most $r_i$ such that
\[
 \|X_i-R_i'\|\leq\tau_i+\delta.
\]
Replace $R_i'$ by $R_i=R_i'P_i$.  Its rank does not increase and, because
$X_i=X_iP_i$,
\[
 \|X_i-R_i\|
 =\|(X_i-R_i')P_i\|\leq\tau_i+\delta.
\]
Set $E_i=X_i-R_i$ and $R=\sum_iR_i$.  Then
$\operatorname{rank}R\leq\sum_i r_i$.  For any vector $f$,
orthogonality of the $P_i$ and Cauchy--Schwarz give
\begin{align*}
 \left\|\sum_iE_if\right\|
 &=\left\|\sum_iE_iP_if\right\|
 \leq\sum_i(\tau_i+\delta)\|P_if\|\\
 &\leq
 \left(\sum_i(\tau_i+\delta)^2\right)^{1/2}
 \left(\sum_i\|P_if\|^2\right)^{1/2}\\
 &\leq
 \left(\sum_i(\tau_i+\delta)^2\right)^{1/2}\|f\|.
\end{align*}
Hence
\[
 s_{1+\sum_i r_i}\!\left(\sum_iX_i\right)
 \leq\left(\sum_i(\tau_i+\delta)^2\right)^{1/2}.
\]
Letting $\delta\downarrow0$ yields a non-strict upper bound by
$(\sum_i\tau_i^2)^{1/2}$.  Since $n$ counts singular values strictly above
its threshold, \eqref{eq:orthogonal-input-count} follows, including equality
at the threshold.
\end{proof}

Recall
\begin{equation}
 \label{eq:eta-recalled-two-level}
 \eta(u):=\frac{1-\sqrt{1-4u^2}}{2},
 \qquad 0<u<\frac12,
\end{equation}
so that $\eta(u)\in(0,1/2)$ and
$\sqrt{\eta(u)(1-\eta(u))}=u$.

\begin{theorem}[Two-level row and Fourier-dual column allocations]
\label{thm:two-level-partition-allocation}
Let $A,B\subset\mathbb R$ have finite measure, let
\[
 A=\mathop{\dot\bigcup}_{i=1}^{M}A_i,
 \qquad
 B=\mathop{\dot\bigcup}_{j=1}^{K}B_j
\]
be finite measurable partitions with null pieces discarded, and fix
$0<\epsilon<1/2$.  Set
\[
 t=\sqrt{\epsilon(1-\epsilon)}.
\]

\emph{Spatial-input (row) allocation.}  Suppose that positive numbers
$\tau_i$ and $\tau_{ij}$ satisfy
\begin{equation}
 \label{eq:two-level-row-budget}
 \sum_{i=1}^{M}\tau_i^2\leq t^2,
 \qquad
 \sum_{j=1}^{K}\tau_{ij}\leq\tau_i
 \quad(1\leq i\leq M).
\end{equation}
Then
\begin{equation}
 \label{eq:two-level-row-bound}
 \Lambda_\epsilon(A,cB)
 \leq\sum_{i=1}^{M}\sum_{j=1}^{K}
 \Lambda_{\eta(\tau_{ij})}(A_i,cB_j).
\end{equation}

\emph{Fourier-dual (column) allocation.}  Suppose instead that positive
numbers $\sigma_j$ and $\sigma_{ij}$ satisfy
\begin{equation}
 \label{eq:two-level-column-budget}
 \sum_{j=1}^{K}\sigma_j^2\leq t^2,
 \qquad
 \sum_{i=1}^{M}\sigma_{ij}\leq\sigma_j
 \quad(1\leq j\leq K).
\end{equation}
Then
\begin{equation}
 \label{eq:two-level-column-bound}
 \Lambda_\epsilon(A,cB)
 \leq\sum_{j=1}^{K}\sum_{i=1}^{M}
 \Lambda_{\eta(\sigma_{ij})}(A_i,cB_j).
\end{equation}
In both statements the budget inequalities are non-strict, whereas all
individual thresholds are positive.  The hypotheses automatically imply
$0<\tau_{ij},\sigma_{ij}<1/2$, so \eqref{eq:eta-recalled-two-level} is always
evaluated in its stated domain.
\end{theorem}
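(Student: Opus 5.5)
The row allocation will be a two-stage version of the proof of Theorem~\ref{thm:weighted-measurable-partition}. Inside each spatial piece the frequency blocks are combined with the strict Ky--Fan count of Lemma~\ref{lem:strict-kyfan}. Across spatial pieces, whose input spaces are orthogonal, the partial sums are combined with Lemma~\ref{lem:orthogonal-input-strict-count}. The column allocation will follow from the same argument on the Fourier side, after exchanging the roles of $P_A$ and $Q_{cB}$.

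\emph{Row allocation.} Put $P=P_A$, $Q=Q_{cB}$ and $T=(I-P)QP$. By \eqref{eq:global-defect-count-partition}, $\Lambda_\epsilon(A,cB)=n(t;T)$. Write
\[
 T=\sum_{i=1}^M X_i,\qquad X_i:=\sum_{j=1}^K P_{A^c}Q_{cB_j}P_{A_i}.
\]
Then $X_i=X_iP_{A_i}$, and the projections $P_{A_i}$ are mutually orthogonal. Each summand is compact, as noted in the proof of Theorem~\ref{thm:weighted-measurable-partition}. The steps are as follows.
\begin{enumerate}
\item Since $\sum_j\tau_{ij}\le\tau_i$ and $n$ is nonincreasing in the threshold, Lemma~\ref{lem:strict-kyfan} gives $n(\tau_i;X_i)\le\sum_j n(\tau_{ij};P_{A^c}Q_{cB_j}P_{A_i})$.
\item By \eqref{eq:local-defect-count-partition}, each term on the right is at most $\Lambda_{\eta(\tau_{ij})}(A_i,cB_j)$. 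This uses $\sqrt{\eta(u)(1-\eta(u))}=u$.
\item Apply Lemma~\ref{lem:orthogonal-input-strict-count} to the $X_i$. Since $(\sum_i\tau_i^2)^{1/2}\le t$, monotonicity gives $n(t;T)\le\sum_i n(\tau_i;X_i)$, which yields \eqref{eq:two-level-row-bound}.
\item For the domain claim: $\tau_{ij}\le\tau_i\le t$, and $t=\sqrt{\epsilon(1-\epsilon)}<1/2$ for $0<\epsilon<1/2$. Hence $0<\tau_{ij}<1/2$.
\end{enumerate}

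\emph{Column allocation.} Here the orthogonal decomposition must sit on the Fourier side. I will use the dual defect $T':=(I-Q)PQ$, for which $T'^*T'=QPQ-(QPQ)^2$. The operators $PQP=(QP)^*(QP)$ and $QPQ=(QP)(QP)^*$ have the same nonzero eigenvalues with multiplicities, and $(\epsilon,1-\epsilon)$ excludes $0$. Therefore $\Lambda_\epsilon(A,cB)=\operatorname{rank}\one_{(\epsilon,1-\epsilon)}(QPQ)=n(t;T')$, by Theorem~\ref{thm:abstract-defect-transfer}.

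Now decompose
\[
 T'=\sum_{j=1}^K Y_j,\qquad Y_j:=\sum_{i=1}^M (I-Q)P_{A_i}Q_{cB_j}.
\]
Then $Y_j=Y_jQ_{cB_j}$, and the projections $Q_{cB_j}$ are mutually orthogonal. Since $I-Q_{cB}\le I-Q_{cB_j}$, each block is a left compression:
\[
 (I-Q)P_{A_i}Q_{cB_j}=(I-Q)\widetilde Y_{ij},\qquad \widetilde Y_{ij}:=(I-Q_{cB_j})P_{A_i}Q_{cB_j}.
\]
The operator $\widetilde Y_{ij}$ is the defect of $Q_{cB_j}P_{A_i}Q_{cB_j}$. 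By the same duality, the nonzero eigenvalues of $Q_{cB_j}P_{A_i}Q_{cB_j}$ are those of $S_{ij}$. Hence
\[
 n(\sigma_{ij};(I-Q)P_{A_i}Q_{cB_j})\le n(\sigma_{ij};\widetilde Y_{ij})=\Lambda_{\eta(\sigma_{ij})}(A_i,cB_j).
\]
Applying Lemma~\ref{lem:strict-kyfan} inside each $Y_j$, and then Lemma~\ref{lem:orthogonal-input-strict-count} across $j$, gives \eqref{eq:two-level-column-bound}.

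\emph{Main obstacle.} I expect the column case to need the most care. The essential points are that the spectral transfer between $PQP$ and $QPQ$ holds with multiplicities on the open window, and that the compression inequality goes in the right direction on the frequency side. Once the column case is recast as the row argument applied to $T'$, the rest is bookkeeping.
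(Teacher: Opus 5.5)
Your proposal is correct and follows the paper's proof essentially step for step: the exact global defect count, Lemma~\ref{lem:orthogonal-input-strict-count} across the mutually orthogonal pieces, Lemma~\ref{lem:strict-kyfan} inside each piece, and then left compression together with the local defect identity through $\eta$. The only difference is cosmetic. In the column case you work on the spatial side with $T'=(I-Q_{cB})P_AQ_{cB}$, whereas the paper works on the frequency side with $U=(I-P_{cB})\mathcal F P_A\mathcal F^{-1}P_{cB}$. Since $\mathcal F T'\mathcal F^{-1}=U$, the two arguments are unitarily equivalent and coincide.
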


\begin{proof}
We prove the two allocations separately.

\emph{Step 1: the exact global spatial-side defect.}
Write
\[
 P=P_A,
 \qquad Q=Q_{cB}=\mathcal F^{-1}P_{cB}\mathcal F,
 \qquad S=PQP,
 \qquad T=(I-P)QP.
\]
Since $P$ and $Q$ are orthogonal projections,
\[
 T^*T=PQ(I-P)QP=PQP-PQPQP=S-S^2.
\]
For $\lambda\in[0,1]$,
\[
 \lambda(1-\lambda)>t^2
 \quad\Longleftrightarrow\quad
 \epsilon<\lambda<1-\epsilon.
\]
Functional calculus therefore gives the exact strict-count identity
\begin{equation}
 \label{eq:two-level-global-defect-count}
 \Lambda_\epsilon(A,cB)=n(t;T).
\end{equation}

\emph{Step 2: group the defect by its orthogonal spatial input pieces.}
Put $P_i=P_{A_i}$, $Q_j=Q_{cB_j}$, and
\[
 T_{ij}:=(I-P)Q_jP_i,
 \qquad
 T_i:=\sum_{j=1}^{K}T_{ij}=(I-P)QP_i.
\]
Then $T=\sum_iT_i$ and $T_i=T_iP_i$.  The projections $P_i$ are mutually
orthogonal.  Every $T_{ij}$ is compact: the operator
$P_{cB_j}\mathcal FP_{A_i}$ is Hilbert--Schmidt with squared norm
$c|A_i||B_j|$, and unitary conjugation and compression preserve compactness.

Let
$\rho=(\sum_i\tau_i^2)^{1/2}\leq t$.  Monotonicity in the strict threshold,
Lemma~\ref{lem:orthogonal-input-strict-count}, and then the ordinary strict
Ky--Fan inequality inside each fixed $i$ give
\begin{align}
 n(t;T)
 &\leq n(\rho;T)
 \leq\sum_i n(\tau_i;T_i)                                      \notag\\
 &\leq\sum_i n\!\left(\sum_j\tau_{ij};T_i\right)
 \leq\sum_{i,j}n(\tau_{ij};T_{ij}).             \label{eq:row-two-level-count}
\end{align}
Here the second line uses
$\sum_j\tau_{ij}\leq\tau_i$: increasing a threshold can only decrease a
strict singular-value count.  The final inequality is
Lemma~\ref{lem:strict-kyfan} applied to $T_i=\sum_jT_{ij}$.

\emph{Step 3: compare each compressed block with its exact local defect.}
Define
\[
 \widetilde T_{ij}:=(I-P_i)Q_jP_i,
 \qquad S_{ij}:=P_iQ_jP_i.
\]
Because $A^c\subset A_i^c$ modulo null sets,
$T_{ij}=(I-P)\widetilde T_{ij}$.  Hence left compression gives
\begin{equation}
 n(u;T_{ij})\leq n(u;\widetilde T_{ij}),
 \qquad u>0.                                      \label{eq:row-local-compression}
\end{equation}
The projection identity yields
\[
 \widetilde T_{ij}^*\widetilde T_{ij}
 =P_iQ_j(I-P_i)Q_jP_i=S_{ij}-S_{ij}^2.
\]
For $0<u<1/2$, the roots in $[0,1]$ of
$\lambda(1-\lambda)=u^2$ are $\eta(u)$ and $1-\eta(u)$.  Therefore
\begin{equation}
 n(u;\widetilde T_{ij})
 =\Lambda_{\eta(u)}(A_i,cB_j).                  \label{eq:row-local-exact}
\end{equation}
The budget implies
$\tau_{ij}\leq\tau_i\leq t<1/2$, so
\eqref{eq:row-local-exact} applies with $u=\tau_{ij}$.  Combining
\eqref{eq:two-level-global-defect-count}--\eqref{eq:row-local-exact} proves
\eqref{eq:two-level-row-bound}.

\emph{Step 4: move to the Fourier-side defect for the column allocation.}
Let
\[
 P'=P_{cB},
 \qquad R=\mathcal FP_A\mathcal F^{-1},
 \qquad S'=P'RP',
 \qquad U=(I-P')RP'.
\]
If $K=P_{cB}\mathcal FP_A$, then $S=K^*K$ and $S'=KK^*$.  These two compact
positive operators have the same nonzero eigenvalues, with multiplicity.
The plunge interval does not contain zero, and hence
\[
 \Lambda_\epsilon(A,cB)=\Lambda_\epsilon(S').
\]
Also
\[
 U^*U=P'R(I-P')RP'=S'-(S')^2,
\]
so the Fourier-side analogue of \eqref{eq:two-level-global-defect-count} is
\begin{equation}
 \label{eq:dual-global-defect-count}
 \Lambda_\epsilon(A,cB)=n(t;U).
\end{equation}

Put $P'_j=P_{cB_j}$ and
$R_i=\mathcal FP_{A_i}\mathcal F^{-1}$.  Define
\[
 U_{ij}:=(I-P')R_iP'_j,
 \qquad U_j:=\sum_{i=1}^{M}U_{ij}=(I-P')RP'_j.
\]
Then $U=\sum_jU_j$, $U_j=U_jP'_j$, and the $P'_j$ are mutually
orthogonal.  Applying Lemma~\ref{lem:orthogonal-input-strict-count} across
$j$ and Lemma~\ref{lem:strict-kyfan} across $i$ within each fixed $j$ gives,
with $\rho'=(\sum_j\sigma_j^2)^{1/2}\leq t$,
\begin{equation}
 n(t;U)
 \leq n(\rho';U)
 \leq\sum_jn(\sigma_j;U_j)
 \leq\sum_j n\!\left(\sum_i\sigma_{ij};U_j\right)
 \leq\sum_{j,i}n(\sigma_{ij};U_{ij}).             \label{eq:column-two-level-count}
\end{equation}

\emph{Step 5: identify the local Fourier-side defects.}
For each pair define
\[
 \widetilde U_{ij}:=(I-P'_j)R_iP'_j,
 \qquad S'_{ij}:=P'_jR_iP'_j.
\]
Since $(cB)^c\subset(cB_j)^c$ modulo null sets,
$U_{ij}=(I-P')\widetilde U_{ij}$, and therefore
\[
 n(u;U_{ij})\leq n(u;\widetilde U_{ij}).
\]
Furthermore,
\[
 \widetilde U_{ij}^*\widetilde U_{ij}
 =S'_{ij}-(S'_{ij})^2.
\]
The nonzero spectrum of $S'_{ij}$ is the same as that of
$P_{A_i}Q_{cB_j}P_{A_i}$, because these are respectively the two products
$K_{ij}K_{ij}^*$ and $K_{ij}^*K_{ij}$ for
$K_{ij}=P_{cB_j}\mathcal FP_{A_i}$.  Thus, for $0<u<1/2$,
\begin{equation}
 n(u;\widetilde U_{ij})
 =\Lambda_{\eta(u)}(A_i,cB_j).                   \label{eq:column-local-exact}
\end{equation}
The column budget gives
$0<\sigma_{ij}\leq\sigma_j\leq t<1/2$.  Substituting
\eqref{eq:column-local-exact} into \eqref{eq:column-two-level-count} and
using \eqref{eq:dual-global-defect-count} proves
\eqref{eq:two-level-column-bound}.

All count functions used strict inequalities.  Consequently, equality in
either global budget and equality at any local singular-value threshold cause
no endpoint loss: an eigenvalue satisfying
$\lambda(1-\lambda)=u^2$ lies at an endpoint and is excluded from both
strict counts.
\end{proof}

The row allocation has a useful normalized form.  Choose $a_i>0$ and
$b_{ij}>0$ such that
\[
 \sum_i a_i^2\leq1,
 \qquad
 \sum_jb_{ij}\leq1\quad\text{for each }i,
\]
and set $\tau_i=ta_i$, $\tau_{ij}=ta_ib_{ij}$.  Then
\begin{equation}
 \label{eq:normalized-row-allocation}
 \Lambda_\epsilon(A,cB)
 \leq\sum_{i,j}\Lambda_{\eta(ta_ib_{ij})}(A_i,cB_j).
\end{equation}
The dual form uses $a_j>0$ with $\sum_ja_j^2\leq1$ and
$b_{ij}>0$ with $\sum_ib_{ij}\leq1$ for each $j$.

Finally, the new allocation formally dominates the flat $\ell^1$ budget.
Indeed, if positive $u_{ij}$ satisfy $\sum_{i,j}u_{ij}\leq t$, set
$\tau_{ij}=u_{ij}$ and $\tau_i=\sum_ju_{ij}$.  Then
\[
 \left(\sum_i\tau_i^2\right)^{1/2}
 \leq\sum_i\tau_i
 =\sum_{i,j}u_{ij}
 \leq t,
\]
so every allocation admitted by \eqref{eq:weighted-threshold-budget} is
admitted by \eqref{eq:two-level-row-budget}, with exactly the same local
right-hand side.  The inclusion of feasible allocations is generally strict:
with two spatial pieces and one frequency piece, the choice
$\tau_{11}=\tau_{21}=t/\sqrt2$ exhausts the outer $\ell^2$ budget but has
flat sum $\sqrt2\,t>t$.  The Fourier-dual column envelope supplies a second,
independently optimized certificate, and the minimum of the row and column
envelopes retains whichever orthogonality is more favorable for the given
partition.

\section{A finite-measure window outside the cited regularity hypotheses}
\label{sec:rough-example}

The moment theorem is not merely a different proof for bounded intervals.  It
also applies to finite-measure windows that lie outside every positive-order
translation-perimeter class.  The following example makes that distinction
explicit.  For a measurable set $E\subset\R$ and $0<\gamma\leq 1$, write
\begin{equation}\label{eq:translation-perimeter}
 \operatorname{Per}_{\gamma}(E)
 :=\sup_{h\in\R\setminus\{0\}}
   \frac{|E\mathbin{\triangle}(E+h)|}{|h|^{\gamma}}.
\end{equation}
This is the translation-modulus version of fractional perimeter used in rough
window estimates.  Replacing the supremum by one over $0<|h|\leq 1$ would not
change any conclusion below, because all of the test translations tend to
zero.

\begin{theorem}[An unbounded window with infinite fractional perimeter]
\label{thm:rough-window}
For $n\geq 1$, set
\begin{equation}\label{eq:rough-data}
 m_n=e^{-n^4},\qquad
 K_n=\bigl\lceil e^{n^6}\bigr\rceil,\qquad
 \ell_n=\frac{m_n}{K_n},
\end{equation}
and form the alternating microinterval cluster
\begin{equation}\label{eq:rough-cluster}
 C_n
 =\bigcup_{k=0}^{K_n-1}
   [n+2k\ell_n,\,n+(2k+1)\ell_n).
\end{equation}
Let
\begin{equation}\label{eq:rough-window}
 A_*:=\bigcup_{n=1}^{\infty}C_n.
\end{equation}
Then the following assertions hold.
\begin{enumerate}
 \item $A_*$ is unbounded but has finite positive measure, and
       $\int_{A_*}|x|^q\,dx<\infty$ for every $q\geq0$.
 \item $\operatorname{Per}_{\gamma}(A_*)=\infty$ for every
       $0<\gamma\leq1$.
 \item If
       \(M_N=\int_{A_*}x^{2N}\,dx\), then
       \begin{equation}\label{eq:rough-moment-asymptotic}
        \log M_N=\frac{N}{2}\log N+O(N)
        \qquad(N\longrightarrow\infty).
       \end{equation}
 \item For fixed $c>0$, put
       \begin{equation}\label{eq:rough-KS}
        K_c=P_{cA_*}\F P_{A_*},
        \qquad S_c=K_c^*K_c=P_{A_*}Q_{cA_*}P_{A_*}.
       \end{equation}
       Then
       \begin{equation}\label{eq:rough-singular-decay}
        \log s_{N+1}(K_c)
        \leq-\frac{N}{2}\log N+O_c(N),
       \end{equation}
       and, as $\eps\downarrow0$,
       \begin{equation}\label{eq:rough-count}
        \operatorname{Tr}\one_{(\eps,1-\eps)}(S_c)
        =O_c\!\left(
          \frac{\log(1/\eps)}{\log\log(1/\eps)}
        \right).
       \end{equation}
\end{enumerate}
\end{theorem}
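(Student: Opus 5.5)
We verify the four assertions in order; the last is a direct application of Theorem~\ref{thm:moment-hard}.

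\emph{Assertion 1.} The intervals in $C_n$ are disjoint and lie in $[n,n+2m_n)\subset[n,n+1)$, so distinct clusters are disjoint. Hence $|C_n|=K_n\ell_n=m_n$ and $|A_*|=\sum_n e^{-n^4}\in(0,\infty)$. Unboundedness holds since $C_n\subset[n,n+1)$ is nonempty for every $n$. For the moments, $\int_{C_n}|x|^q\,dx\le(n+1)^q e^{-n^4}$, which is summable for every $q\ge0$.

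\emph{Assertion 2.} We test the supremum with the translation $h_n=\ell_n$. Inside $C_n$ the shift $C_n+\ell_n$ fills exactly the gaps between consecutive microintervals, so $|C_n\triangle(C_n+\ell_n)|\ge 2m_n-2\ell_n$; the boundary correction is at most $2\ell_n$. The other clusters sit at distance about $1$ from $C_n$ and only reduce $A_*\triangle(A_*+h_n)$ by a nonnegative amount localized away from $[n-1/2,n+3/2]$. Restricting the symmetric difference to a neighbourhood of $C_n$ that meets no other cluster, we get $|A_*\triangle(A_*+h_n)|\ge m_n$ for large $n$. Therefore
\[
\frac{|A_*\triangle(A_*+h_n)|}{h_n^\gamma}\ge m_n^{1-\gamma}K_n^{\gamma}\ge \exp\bigl(\gamma n^6-(1-\gamma)n^4\bigr)\to\infty,
\]
and since $h_n\to0$ this also covers the restricted-supremum variant mentioned before the theorem. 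The only point requiring care is the localization. Every element of $C_n\triangle(C_n+\ell_n)$ lies in $[n,n+1)$, and there membership in $A_*$ agrees with membership in $C_n$ because $C_{n\pm1}$ lie at distance at least $1-2m_n$. The same agreement holds for $A_*+h_n$ because $h_n$ is tiny.

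\emph{Assertion 3.} This is the main computational step. For the upper bound,
\[
M_N\le\sum_n (n+1)^{2N}e^{-n^4}\le C\,2^{2N}\sum_n n^{2N}e^{-n^4}\quad(\text{using }n+1\le2n).
\]
Compare the sum with $\int_0^\infty x^{2N}e^{-x^4}\,dx=\tfrac14\Gamma(\tfrac N2+\tfrac14)$, as in Example~\ref{ex:supergaussian}. The function $x^{2N}e^{-x^4}$ is unimodal, so the sum is at most the integral plus the maximum value. The maximum is $(N/2)^{N/2}e^{-N/2}$, which is itself $\exp(\tfrac N2\log N+O(N))$. Stirling's formula then gives $\log M_N\le\tfrac N2\log N+O(N)$. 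For the lower bound, keep only the term $n_N=\lceil (N/2)^{1/4}\rceil$. Then $x^{2N}\ge n^{2N}$ on $C_n$, so $M_N\ge n_N^{2N}e^{-n_N^4}$. Since $n_N^4=N/2+O(N^{3/4})$ and $2N\log n_N=\tfrac N2\log N+O(N)$, we get $\log M_N\ge\tfrac N2\log N-O(N)$.

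\emph{Assertion 4.} Apply \eqref{eq:hard-moment-envelope} with $A=B=A_*$ and $\alpha=\beta=0$. This gives
\[
s_{N+1}(K_c)\le \frac{\sqrt c\,(2\pi c)^N}{N!}\,M_N.
\]
Combining assertion 3 with $\log N!=N\log N+O(N)$ yields \eqref{eq:rough-singular-decay}. For the count, follow the proof of \eqref{eq:supergaussian-count} verbatim. Set $L=\log(1/\epsilon)$ and choose $N=\lceil(1+\delta_L)L/\log L\rceil$ with $\delta_L$ as in Example~\ref{ex:supergaussian}. This choice forces $s_{N+1}(K_c)\le\sqrt\epsilon$, and Theorem~\ref{thm:abstract-contraction-factorization} then gives $\Lambda_\epsilon\le N$, which is $O_c(L/\log L)$.
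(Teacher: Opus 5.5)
Your proof is correct and follows the paper's argument step for step. It uses the same test shift $h_n=\ell_n$ localized to $[n,n+1)$, the same moment sandwich with a single-term lower bound, and the same application of Theorem~\ref{thm:moment-hard} at zero centres followed by the direct transfer. The differences are cosmetic. For the upper moment bound you compare the sum with $\tfrac14\Gamma(\tfrac N2+\tfrac14)$ plus the maximal term, where the paper writes $e^{-n^4}=e^{-n^4/2}e^{-n^4/2}$ and takes a supremum. For the count you reuse the super-Gaussian choice $N=\lceil(1+\delta_L)L/\log L\rceil$, which gives the slightly sharper $(1+o(1))L/\log L$, where the paper simply takes $N_\epsilon=\lceil 4L/\log L\rceil$.
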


\begin{proof}
We verify the four claims separately.

\smallskip
\noindent\emph{Step 1: location and measure of the clusters.}
Since $K_n\ell_n=m_n$, every point of $C_n$ lies in
\[
 [n,n+2K_n\ell_n]=[n,n+2m_n].
\]
Moreover, $2m_n\leq2e^{-1}<1$.  Thus
\begin{equation}\label{eq:cluster-contained}
 C_n\subset[n,n+1),
\end{equation}
so the clusters are pairwise disjoint.  The $K_n$ half-open intervals in
\eqref{eq:rough-cluster} are also pairwise disjoint and each has length
$\ell_n$.  Consequently
\begin{equation}\label{eq:cluster-measure}
 |C_n|=K_n\ell_n=m_n=e^{-n^4}
\end{equation}
and hence
\begin{equation}\label{eq:rough-total-measure}
 0<|A_*|=\sum_{n=1}^{\infty}e^{-n^4}<\infty.
\end{equation}
Because $C_n$ has positive measure and lies to the right of $n$, the set
$A_*$ is not essentially bounded; in particular,
$\operatorname{diam}_{\mathrm{ess}}(A_*)=\infty$.

For $q\geq0$, inclusion \eqref{eq:cluster-contained} and
\eqref{eq:cluster-measure} give
\begin{equation}\label{eq:all-moments-first}
 \int_{A_*}|x|^q\,dx
 =\sum_{n=1}^{\infty}\int_{C_n}x^q\,dx
 \leq\sum_{n=1}^{\infty}(n+1)^q e^{-n^4}<\infty.
\end{equation}
The final series converges because the exponential $e^{-n^4}$ dominates
every fixed polynomial.  This proves the first assertion.

\smallskip
\noindent\emph{Step 2: every positive fractional perimeter is infinite.}
Fix $0<\gamma\leq1$ and translate by
\begin{equation}\label{eq:test-shift}
 h_n:=\ell_n.
\end{equation}
Translation by $h_n$ moves each occupied microinterval in $C_n$ into the
adjacent gap:
\[
 C_n+h_n
 =\bigcup_{k=0}^{K_n-1}
 [n+(2k+1)\ell_n,\,n+(2k+2)\ell_n).
\]
Up to endpoints, this translated cluster is disjoint from $C_n$.  It is
contained in $[n,n+2m_n]$, which by \eqref{eq:cluster-contained} meets no
other cluster.  Therefore
\begin{equation}\label{eq:symmetric-difference-lower}
 |A_*\mathbin{\triangle}(A_*+h_n)|
 \geq |(C_n+h_n)\setminus A_*|
 =m_n.
\end{equation}
Using $h_n=m_n/K_n$ in \eqref{eq:translation-perimeter} yields
\begin{align}
 \operatorname{Per}_{\gamma}(A_*)
 &\geq \frac{m_n}{\ell_n^{\gamma}}
   =m_n^{1-\gamma}K_n^{\gamma} \notag\\
 &\geq
   \exp\!\bigl(\gamma n^6-(1-\gamma)n^4\bigr).
 \label{eq:perimeter-blowup}
\end{align}
For every fixed $\gamma>0$, the exponent on the right tends to $+\infty$.
Thus the supremum in \eqref{eq:translation-perimeter} is infinite.  Notice
also that $h_n\to0$, so the same argument applies to the local version of
the translation modulus.

\smallskip
\noindent\emph{Step 3: sharp logarithmic order of the moments.}
For every integer $N\geq1$, the location of $C_n$ gives the two-sided
comparison
\begin{equation}\label{eq:moment-sandwich}
 \sum_{n=1}^{\infty}e^{-n^4}n^{2N}
 \leq M_N
 \leq\sum_{n=1}^{\infty}e^{-n^4}(n+1)^{2N}.
\end{equation}
For the lower bound, let
\[
 r_N=(N/2)^{1/4},\qquad j_N=\lfloor r_N\rfloor.
\]
When $N$ is large, $j_N\geq r_N/2$.  Keeping only the $j_N$th term in the
left side of \eqref{eq:moment-sandwich}, and using
$j_N^4\leq r_N^4=N/2$, gives
\begin{align}
 \log M_N
 &\geq-j_N^4+2N\log j_N \\
 &\geq-\frac N2+2N\log(r_N/2) \notag\\
 &=\frac N2\log N-N\!\left(
     \frac12+\frac12\log2+2\log2
   \right).
 \label{eq:moment-lower}
\end{align}
Thus $\log M_N\geq (N/2)\log N-O(N)$.

For the upper bound, $n+1\leq2n$ for $n\geq1$, and hence
\begin{align}
 M_N
 &\leq4^N\sum_{n=1}^{\infty}e^{-n^4}n^{2N} \notag\\
 &=4^N\sum_{n=1}^{\infty}e^{-n^4/2}
              \bigl(e^{-n^4/2}n^{2N}\bigr) \\
 &\leq4^N\left(\sum_{n=1}^{\infty}e^{-n^4/2}\right)
       \sup_{x>0}\bigl(e^{-x^4/2}x^{2N}\bigr).
 \label{eq:moment-upper-sup}
\end{align}
The logarithm of the expression in the supremum is
$2N\log x-x^4/2$.  Its derivative is $2N/x-2x^3$, so its unique maximum
occurs at $x=N^{1/4}$ and equals
\begin{equation}\label{eq:continuous-maximum}
 \frac N2\log N-\frac N2.
\end{equation}
The series in parentheses in \eqref{eq:moment-upper-sup} is an absolute
constant.  Taking logarithms proves
\[
 \log M_N\leq\frac N2\log N+O(N).
\]
Together with \eqref{eq:moment-lower}, this is
\eqref{eq:rough-moment-asymptotic}.

\smallskip
\noindent\emph{Step 4: singular values and the plunge count.}
Apply Theorem~\ref{thm:moment-hard} with $A=B=A_*$ and with both centering
parameters equal to zero.  The two moment factors are both $M_N$, so for
every $N\geq0$,
\begin{equation}\label{eq:rough-moment-sv}
 s_{N+1}(K_c)
 \leq\frac{\sqrt c\,(2\pi c)^N}{N!}\,M_N.
\end{equation}
For $N\geq1$, the elementary Stirling bound $N!\geq(N/e)^N$ and
\eqref{eq:rough-moment-asymptotic} imply
\begin{align}
 \log s_{N+1}(K_c)
 &\leq \frac12\log c+N\log(2\pi c)
       +\log M_N-\log N! \notag\\
 &\leq-\frac N2\log N+O_c(N),
 \label{eq:rough-sv-log-proof}
\end{align}
which is \eqref{eq:rough-singular-decay}.

To obtain the stated integer count with explicit threshold constants, put
\begin{equation}\label{eq:rough-L}
 L=\log(1/\eps).
\end{equation}
Equation \eqref{eq:rough-sv-log-proof} means that there are constants
$C_c>0$ and $N_c$ such that
\begin{equation}\label{eq:rough-sv-explicit}
 \log s_{N+1}(K_c)
 \leq-\frac N2\log N+C_cN
 \qquad(N\geq N_c).
\end{equation}
For all sufficiently large $L$, choose
\begin{equation}\label{eq:rough-choice-N}
 N_\eps=\left\lceil\frac{4L}{\log L}\right\rceil.
\end{equation}
Increasing the lower threshold for $L$ if necessary ensures
$N_\eps\geq N_c$, $\log N_\eps\geq\frac12\log L$, and
$C_c\leq\frac14\log N_\eps$.  Substitution in
\eqref{eq:rough-sv-explicit} then gives
\begin{align}
 \log s_{N_\eps+1}(K_c)
 &\leq-\frac14N_\eps\log N_\eps \\
 &\leq-\frac14\frac{4L}{\log L}\frac12\log L
 =-\frac L2.
 \label{eq:rough-threshold}
\end{align}
Hence $s_{N_\eps+1}(K_c)\leq e^{-L/2}=\sqrt\eps$.  The direct spectral
transfer inequality gives
\[
 \operatorname{Tr}\one_{(\eps,1-\eps)}(S_c)
 \leq n(\sqrt\eps;K_c)\leq N_\eps.
\]
Finally, $\log L=\log\log(1/\eps)$, so
\eqref{eq:rough-choice-N} proves \eqref{eq:rough-count}.  This completes all
four assertions.
\end{proof}

\begin{remark}[Exact noncoverage, and what the example does not claim]
\label{rem:rough-noncoverage}
The point of Theorem~\ref{thm:rough-window} is hypothesis-level noncoverage,
not a claim that every earlier conclusion is false for $A_*$.  The failures
can be checked one by one.
\begin{itemize}
 \item A theorem restricted to a finite union of bounded intervals cannot be
       invoked: $A_*$ is unbounded and contains infinitely many interval
       components.  In particular, the finite-union component theorem based
       on the one-sided block estimate in \cite{AzimifardIndependent} does not
       cover this window.
 \item Any bounded-window or finite-diameter theorem cannot be invoked because
       $\operatorname{diam}_{\mathrm{ess}}(A_*)=\infty$.  Thus a
diameter-based Taylor
       parameter is infinite even though every polynomial moment is finite.
 \item Any hypothesis requiring finitely many boundary points fails.  Indeed,
       $n$ is a boundary point of $A_*$ for every integer $n\geq1$, so
       $\partial A_*$ is unbounded and infinite.  More strongly, for each
       $0<\delta<1/4$, the $\delta$-neighborhood of $\partial A_*$ contains
       the pairwise disjoint intervals $(n-\delta,n+\delta)$ for all $n$.
       Its Lebesgue measure is therefore infinite.  Consequently no global
       finite boundary-tube or upper-Minkowski-content parameter is available
       in bounded-domain results of that kind, including their formulations
       in \cite{KulikovLarsen}.
 \item A rough-window theorem requiring
       $\operatorname{Per}_{\gamma}(A)<\infty$ for at least one
       $\gamma>0$, such as the corresponding quantitative regime in
       \cite{HughesIsraelMayeli}, cannot be invoked because
       \eqref{eq:perimeter-blowup} excludes every
       $\gamma\in(0,1]$.
\end{itemize}
On the other hand, $|A_*|<\infty$, so the localization operator is still
Hilbert--Schmidt before squaring and trace class after squaring.  In
particular, the generic estimate
\[
 n(\sqrt\eps;K_c)
 \leq\frac{\|K_c\|_{\mathcal S_2}^2}{\eps}
 =\frac{c|A_*|^2}{\eps}
\]
and qualitative theorems valid for arbitrary finite-measure windows remain
applicable.  The new information is that the moment method gives the much
smaller logarithmic-over-double-logarithmic certificate
\eqref{eq:rough-count} despite the simultaneous failure of boundedness,
finite-component geometry, finite boundary content, and every positive
fractional translation perimeter.
\end{remark}

\section{Beyond Fourier: kernels with finitely many bounded separated interactions}
\label{sec:analytic-kernels}

The Fourier kernel is special, but the finite-rank approximation argument does
not require a linear phase.  What it requires is that, after removing
one-variable phases, the remaining interaction be a finite sum of separated
products.  The following theorem makes that statement precise.  It is an
operator theorem, rather than a change of notation for the Fourier transform.
Related separated representations occur in numerical work on Fourier integral
operators \cite{CandesDemanetYing}; the estimates below are proved directly and
concern singular values and plunge counts rather than algorithmic complexity.
No analyticity assumption is used: the hypotheses are the displayed
measurability, bounded centered features, and finite weighted $L^2$ amplitude
norm.

Let $(X,\mu)$ and $(Y,\nu)$ be $\sigma$-finite measure spaces, and fix
integers $p,r\ge1$.  Let
$w_X,a_1,\ldots,a_p$ be measurable functions on $X$ and let
$w_Y,b_1,\ldots,b_p$ be measurable functions on $Y$, with
\[
  w_Xa_q\in L^2(X),\qquad w_Yb_q\in L^2(Y),
  \qquad 1\le q\le p.
\]
Set
\[
  \mathfrak a(x,y):=\sum_{q=1}^{p}a_q(x)b_q(y).
\]
Thus the amplitude has separated rank at most $p$.  Let
$\phi:X\to\mathbb R$, $\psi:Y\to\mathbb R$, and let
$X_\ell:X\to\mathbb R$, $Y_\ell:Y\to\mathbb R$ be measurable for
$1\le \ell\le r$.  Consider
\begin{equation}
  (Kf)(y)
  =w_Y(y)\int_X w_X(x)\mathfrak a(x,y)
       \exp\!\left(i\Phi(x,y)\right)f(x)\,d\mu(x),
  \label{eq:finite-interaction-operator}
\end{equation}
where
\begin{equation}
  \Phi(x,y)
  =\phi(x)+\psi(y)+\sum_{\ell=1}^{r}X_\ell(x)Y_\ell(y).
  \label{eq:finite-interaction-phase}
\end{equation}

Choose real centers $\xi_\ell,\eta_\ell$ and put
\[
  R_\ell:=\operatorname*{ess\,sup}_{x\in X}
      |X_\ell(x)-\xi_\ell|,
  \qquad
  S_\ell:=\operatorname*{ess\,sup}_{y\in Y}
      |Y_\ell(y)-\eta_\ell|.
\]
Throughout this section we assume that these radii are finite, and we set
\begin{equation}
  \zeta:=\sum_{\ell=1}^{r}R_\ell S_\ell,
  \qquad
  C:=\left\|w_X(x)w_Y(y)\mathfrak a(x,y)
       \right\|_{L^2(X\times Y)}.
  \label{eq:analytic-C-zeta}
\end{equation}
The hypotheses on the separated factors imply $C<\infty$ because
\[
 C\leq\sum_{q=1}^p
 \|w_Xa_q\|_{L^2(X)}\,\|w_Yb_q\|_{L^2(Y)}<\infty.
\]

For $N\ge1$ define
\[
  R_N:=p\binom{N+r-1}{r},
  \qquad R_0:=0.
\]

\begin{theorem}[Finite separated-interaction envelope]
\label{thm:finite-interaction-envelope}
The operator $K$ in \eqref{eq:finite-interaction-operator} is
Hilbert--Schmidt.  For every integer $N\ge0$ there is an operator $K_N$ of
rank at most $R_N$ such that
\begin{equation}
  \|K-K_N\|
  \le \|K-K_N\|_{\mathrm{HS}}
  \le C\frac{\zeta^N}{N!}.
  \label{eq:finite-interaction-error}
\end{equation}
For $N=0$ the right-hand side is understood as $C$, including when
$\zeta=0$.  Consequently,
\begin{equation}
  s_{R_N+1}(K)\le C\frac{\zeta^N}{N!}.
  \label{eq:finite-interaction-singular}
\end{equation}

If, in addition, $\|K\|\le1$, $S:=K^*K$, and
$0<\epsilon<1/2$, define
\begin{equation}
  N_{\mathrm{an}}(\epsilon)
  :=\min\left\{N\in\mathbb N_0:
       C\frac{\zeta^N}{N!}\le\sqrt\epsilon\right\}.
  \label{eq:analytic-exact-order}
\end{equation}
The admissible set is nonempty because $\zeta^N/N!\to0$.
Then
\begin{equation}
  \Lambda_\epsilon(S)
  \le p\binom{N_{\mathrm{an}}(\epsilon)+r-1}{r},
  \label{eq:finite-interaction-count}
\end{equation}
where the binomial coefficient on the right is interpreted as zero when
$N_{\mathrm{an}}(\epsilon)=0$.
\end{theorem}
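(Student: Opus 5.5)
The plan is to repeat the centred-phase Taylor argument of Theorem~\ref{thm:bounded-measurable-fourier}, this time in $r$ interaction variables, and to count the resulting separated terms by a multi-index argument.

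\emph{Centring the phase.} Put $u_\ell(x):=X_\ell(x)-\xi_\ell$ and $v_\ell(y):=Y_\ell(y)-\eta_\ell$. Expanding each product gives
\[
X_\ell(x)Y_\ell(y)=u_\ell(x)v_\ell(y)+\eta_\ell X_\ell(x)+\xi_\ell Y_\ell(y)-\xi_\ell\eta_\ell .
\]
Hence $e^{i\Phi}=e^{i\tilde\phi(x)}e^{i\tilde\psi(y)}e^{it(x,y)}$, where $t(x,y):=\sum_\ell u_\ell(x)v_\ell(y)$ and the unimodular one-variable factors are
\[
\tilde\phi=\phi+\sum_\ell\eta_\ell X_\ell,\qquad
\tilde\psi=\psi+\sum_\ell\xi_\ell Y_\ell-\sum_\ell\xi_\ell\eta_\ell .
\]
The ess-sup bounds $|u_\ell|\le R_\ell$ off a $\mu$-null set and $|v_\ell|\le S_\ell$ off a $\nu$-null set give $|t|\le\zeta$ off a $\mu\otimes\nu$-null set. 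This uses that $(Z\times Y)\cup(X\times Z')$ is null in the product whenever $Z$ and $Z'$ are null.

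\emph{Hilbert--Schmidt property.} The kernel of $K$ has modulus $|w_X(x)w_Y(y)\mathfrak a(x,y)|$, so $\|K\|_{\mathrm{HS}}=C<\infty$. This also settles the case $N=0$ with $K_0=0$.

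\emph{Rank-$R_N$ approximant.} For $N\ge1$, replace $e^{it}$ by its Taylor polynomial $\sum_{k<N}(it)^k/k!$. By the multinomial theorem,
\[
t^k=\sum_{|\alpha|=k}\binom{k}{\alpha}u^\alpha(x)v^\alpha(y),\qquad u^\alpha=\prod_\ell u_\ell^{\alpha_\ell},\quad v^\alpha=\prod_\ell v_\ell^{\alpha_\ell}.
\]
So the truncated interaction is a linear combination of separated products $u^\alpha(x)v^\alpha(y)$ over multi-indices $\alpha\in\mathbb N_0^r$ with $|\alpha|\le N-1$. There are exactly $\binom{N-1+r}{r}=\binom{N+r-1}{r}$ such multi-indices (stars and bars, with a slack variable). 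Multiplying by the amplitude $\sum_{q\le p}a_q(x)b_q(y)$ and restoring the weights and phases, the kernel of $K_N$ becomes a sum of at most $p\binom{N+r-1}{r}=R_N$ products $F(x)G(y)$. Each such product has the form
\[
F=w_Xa_qe^{i\tilde\phi}u^\alpha,\qquad G=w_Yb_qe^{i\tilde\psi}v^\alpha .
\]
These factors lie in $L^2$ because $u^\alpha$ and $v^\alpha$ are essentially bounded and $w_Xa_q$, $w_Yb_q$ are in $L^2$. Each product is therefore a rank-one operator, and $\operatorname{rank}K_N\le R_N$.

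\emph{Error bound.} Lemma~\ref{lem:imaginary-taylor} gives, almost everywhere,
\[
|e^{it}-\textstyle\sum_{k<N}(it)^k/k!|\le |t|^N/N!\le\zeta^N/N!,
\]
so the kernel of $K-K_N$ is pointwise at most $|w_Xw_Y\mathfrak a|\,\zeta^N/N!$. Integrating the square over $X\times Y$ yields $\|K-K_N\|_{\mathrm{HS}}\le C\zeta^N/N!$. Together with $\|\cdot\|\le\|\cdot\|_{\mathrm{HS}}$, this proves \eqref{eq:finite-interaction-error}. The approximation-number characterization of singular values then gives \eqref{eq:finite-interaction-singular}.

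\emph{Count.} If $\|K\|\le1$, the admissible set in \eqref{eq:analytic-exact-order} is nonempty because $\zeta^N/N!\to0$. At $N=N_{\mathrm{an}}(\epsilon)$, the direct bound \eqref{eq:abstract-direct-count} and Lemma~\ref{lem:abstract-finite-rank-rule} with $\tau=\sqrt\epsilon\ge E$ give $\Lambda_\epsilon(S)\le n(\sqrt\epsilon;K)\le R_{N_{\mathrm{an}}}$, which is \eqref{eq:finite-interaction-count}.

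The step needing the most care is the rank count: the multi-index enumeration of the Taylor terms must come out exactly as $\binom{N+r-1}{r}$, and each separated factor must be verified to lie in $L^2$ once the weights, amplitude factors and phases are restored. The analytic estimate is routine once the centring is in place.
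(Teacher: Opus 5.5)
Your proposal is correct and follows essentially the same route as the paper. You centre each interaction, expand $e^{i\theta}$ by its degree-$(N-1)$ Taylor polynomial and count $p\binom{N+r-1}{r}$ separated terms via multi-indices, bound the remainder pointwise by $|t|^N/N!$ with no exponential loss, and transfer to singular values and the plunge count through approximation numbers and the direct bound. The only differences are cosmetic: you place the constant phase $-\sum_\ell\xi_\ell\eta_\ell$ in the $y$-factor rather than the $x$-factor, and you cite Lemma~\ref{lem:imaginary-taylor} rather than rederiving the integral remainder.
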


\begin{proof}
We give the construction and the rank count explicitly.

\emph{Step 1: center every interaction.}
Write
\[
  \widetilde X_\ell(x):=X_\ell(x)-\xi_\ell,
  \qquad
  \widetilde Y_\ell(y):=Y_\ell(y)-\eta_\ell.
\]
For each $\ell$ the elementary identity
\[
  X_\ell Y_\ell
  =\widetilde X_\ell\widetilde Y_\ell
   +\eta_\ell X_\ell+\xi_\ell Y_\ell-\xi_\ell\eta_\ell
\]
separates all terms except
$\widetilde X_\ell\widetilde Y_\ell$.  Hence, with
\[
  \phi_0(x):=\phi(x)+\sum_{\ell=1}^{r}\eta_\ell X_\ell(x)
                 -\sum_{\ell=1}^{r}\xi_\ell\eta_\ell,
  \qquad
  \psi_0(y):=\psi(y)+\sum_{\ell=1}^{r}\xi_\ell Y_\ell(y),
\]
we have
\begin{equation}
  e^{i\Phi(x,y)}
  =e^{i\phi_0(x)}e^{i\psi_0(y)}e^{i\theta(x,y)},
  \qquad
  \theta(x,y):=
  \sum_{\ell=1}^{r}\widetilde X_\ell(x)\widetilde Y_\ell(y).
  \label{eq:centered-interaction}
\end{equation}
The first two factors in \eqref{eq:centered-interaction} have modulus one
and depend on one variable only.  Moreover,
\begin{equation}
  |\theta(x,y)|
  \le\sum_{\ell=1}^{r}
       |\widetilde X_\ell(x)|\,|\widetilde Y_\ell(y)|
  \le\sum_{\ell=1}^{r}R_\ell S_\ell=\zeta
  \label{eq:theta-zeta}
\end{equation}
for almost every $(x,y)$.

\emph{Step 2: expand the coupled exponential.}
For $N\ge1$, use the Taylor polynomial of degree $N-1$:
\[
  P_{N-1}(t):=\sum_{k=0}^{N-1}\frac{(it)^k}{k!}.
\]
The multinomial identity gives
\begin{align}
  P_{N-1}(\theta(x,y))
  &=\sum_{k=0}^{N-1}\frac{i^k}{k!}
      \left(\sum_{\ell=1}^{r}
       \widetilde X_\ell(x)\widetilde Y_\ell(y)\right)^k \notag\\
  &=\sum_{|\boldsymbol\nu|<N}
       \frac{i^{|\boldsymbol\nu|}}{\boldsymbol\nu!}
       \prod_{\ell=1}^{r}\widetilde X_\ell(x)^{\nu_\ell}
       \prod_{\ell=1}^{r}\widetilde Y_\ell(y)^{\nu_\ell},
  \label{eq:multindex-taylor}
\end{align}
where $\boldsymbol\nu=(\nu_1,\ldots,\nu_r)\in\mathbb N_0^r$,
$|\boldsymbol\nu|=\nu_1+\cdots+\nu_r$, and
$\boldsymbol\nu!=\nu_1!\cdots\nu_r!$.

Replace $e^{i\theta}$ in the kernel of $K$ by
$P_{N-1}(\theta)$ and call the resulting operator $K_N$.  After inserting
$\mathfrak a=\sum_{q=1}^{p}a_qb_q$ into
\eqref{eq:multindex-taylor}, every pair
$(q,\boldsymbol\nu)$ contributes a kernel of the form
\[
  \bigg[w_Y(y)b_q(y)e^{i\psi_0(y)}
    \prod_{\ell=1}^{r}\widetilde Y_\ell(y)^{\nu_\ell}\bigg]
  \bigg[w_X(x)a_q(x)e^{i\phi_0(x)}
    \prod_{\ell=1}^{r}\widetilde X_\ell(x)^{\nu_\ell}\bigg]
\]
times a scalar.  This is a rank-one kernel.  The factors are in $L^2$
because the centered features are essentially bounded.  The number of
multi-indices with $|\boldsymbol\nu|<N$ is
\[
  \sum_{k=0}^{N-1}\binom{k+r-1}{r-1}
  =\binom{N+r-1}{r}.
\]
It follows that
\[
  \operatorname{rank}K_N
  \le p\binom{N+r-1}{r}=R_N.
\]
For $N=0$ we set $K_0=0$, which has rank $R_0=0$.

\emph{Step 3: estimate the remainder without an extraneous exponential
factor.}
For real $t$ and $N\ge1$, the integral form of Taylor's remainder is
\[
  e^{it}-P_{N-1}(t)
  =\frac{(it)^N}{(N-1)!}
       \int_0^1(1-s)^{N-1}e^{ist}\,ds.
\]
Taking absolute values and using $|e^{ist}|=1$ yields
\begin{equation}
  |e^{it}-P_{N-1}(t)|
  \le\frac{|t|^N}{(N-1)!}
       \int_0^1(1-s)^{N-1}\,ds
  =\frac{|t|^N}{N!}.
  \label{eq:analytic-imaginary-remainder}
\end{equation}
Combining \eqref{eq:theta-zeta} and
\eqref{eq:analytic-imaginary-remainder}, the residual kernel is bounded
pointwise by
\[
  |w_X(x)w_Y(y)\mathfrak a(x,y)|\frac{\zeta^N}{N!}.
\]
Its $L^2(X\times Y)$ norm is therefore at most
$C\zeta^N/N!$.  The $L^2$ norm of a kernel is the
Hilbert--Schmidt norm of the associated operator, and operator norm is at
most Hilbert--Schmidt norm.  This proves
\eqref{eq:finite-interaction-error}.  It also proves that $K$ itself is
Hilbert--Schmidt.  The case $N=0$ follows directly from $K_0=0$ and
$\|K\|_{\mathrm{HS}}=C$.

\emph{Step 4: pass from approximation rank to singular values.}
The approximation-number characterization of singular values gives
\[
  s_{R_N+1}(K)
  =\inf_{\operatorname{rank}L\le R_N}\|K-L\|
  \le\|K-K_N\|.
\]
Together with Step 3 this is
\eqref{eq:finite-interaction-singular}.

\emph{Step 5: transfer the estimate to the plunge count.}
Suppose now that $K$ is a contraction.  If
$C\zeta^N/N!\le\sqrt\epsilon$, then Step 4 and the strict singular-value
count imply
\[
  n(\sqrt\epsilon;K)\le R_N.
\]
Every eigenvalue $\lambda$ of $S=K^*K$ in
$(\epsilon,1-\epsilon)$ produces a singular value
$\sqrt\lambda>\sqrt\epsilon$ of $K$.  Thus
\[
  \Lambda_\epsilon(S)\le n(\sqrt\epsilon;K)\le R_N.
\]
Choosing the least admissible integer $N$ gives
\eqref{eq:finite-interaction-count}.
\end{proof}

The integer factorial minimization in \eqref{eq:analytic-exact-order} is the
strongest certificate furnished by this approximation envelope.  A closed
Lambert--$W$ certificate follows as in the Fourier case.

\begin{corollary}[Lambert--$W$ order for separated interactions]
\label{cor:analytic-lambert}
Assume the hypotheses of Theorem~\ref{thm:finite-interaction-envelope},
$\|K\|\le1$, and $0<\epsilon<1/2$.  If $C\le\sqrt\epsilon$, then
$\Lambda_\epsilon(K^*K)=0$.  If $C>\sqrt\epsilon$ and $\zeta=0$, then
$\Lambda_\epsilon(K^*K)\le p$.  If $C>\sqrt\epsilon$ and $\zeta>0$, put
\[
  \rho:=\log\frac{C}{\sqrt\epsilon},
  \qquad
  N_W:=\left\lceil
     \frac{\rho}{W_0\!\left(\rho/(e\zeta)\right)}
  \right\rceil.
\]
Then
\[
  \Lambda_\epsilon(K^*K)
  \le p\binom{N_W+r-1}{r}.
\]
\end{corollary}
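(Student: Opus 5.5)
The plan is to reduce everything to Theorem~\ref{thm:finite-interaction-envelope} and then run the discrete inversion of Proposition~\ref{prop:abstract-factorial-inversion}, keeping track of the rank function $N\mapsto R_N=p\binom{N+r-1}{r}$ in place of $N$. Since $\|K\|\le1$, Theorem~\ref{thm:abstract-contraction-factorization} gives $\Lambda_\epsilon(K^*K)\le n(\sqrt\epsilon;K)$. For every $N$ the envelope supplies $K_N$ of rank at most $R_N$ with $\|K-K_N\|\le C\zeta^N/N!$. Lemma~\ref{lem:abstract-finite-rank-rule} therefore shows that any $N$ with $C\zeta^N/N!\le\sqrt\epsilon$ yields $\Lambda_\epsilon(K^*K)\le R_N$. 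It then suffices to exhibit one admissible $N$ in each of the three cases.

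\emph{Case $C\le\sqrt\epsilon$.} Take $N=0$. The error is $C\le\sqrt\epsilon$ and $R_0=0$, so the count vanishes.

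\emph{Case $C>\sqrt\epsilon$, $\zeta=0$.} Take $N=1$. The error is $C\cdot0^1/1!=0\le\sqrt\epsilon$, and $R_1=p\binom{r}{r}=p$. Here I use the paper's convention $\zeta^0=1$ only at $N=0$; at $N\ge1$ the power genuinely vanishes. This is exactly item~2 of Proposition~\ref{prop:abstract-factorial-inversion}, with the rank $1$ replaced by $R_1$.

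\emph{Case $C>\sqrt\epsilon$, $\zeta>0$.} Apply item~3 of Proposition~\ref{prop:abstract-factorial-inversion} with $\tau=\sqrt\epsilon$. Its Steps~3--5 show that $N_W=\lceil N_*\rceil$, where $N_*=\rho/W_0(\rho/(e\zeta))$, satisfies $C\zeta^{N_W}/N_W!\le C(e\zeta/N_W)^{N_W}\le Ce^{-\rho}=\sqrt\epsilon$. These steps use only the scalar envelope, so they transfer verbatim. In particular $N_W\ge1$, because $N_*>e\zeta>0$. Hence $N_W$ is admissible, and the finite-rank rule gives $\Lambda_\epsilon\le R_{N_W}=p\binom{N_W+r-1}{r}$.

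The main obstacle is that the Proposition is stated for rank-$N$ approximants, whereas here the ranks are $R_N$. I will avoid this by not quoting its count conclusion, only its scalar inequality $E_{N_W}\le\tau$, and then applying the rank-$R_N$ version as in Corollary~\ref{cor:abstract-other-laws}. No monotonicity of $R_N$ is needed, since a single admissible $N$ is used. Alternatively, one can note $N_{\mathrm{an}}(\epsilon)\le N_W$ and invoke monotonicity of $N\mapsto\binom{N+r-1}{r}$ with \eqref{eq:finite-interaction-count}.
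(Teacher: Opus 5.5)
Your proposal is correct and follows essentially the same route as the paper: the order-$0$ and order-$1$ approximants of Theorem~\ref{thm:finite-interaction-envelope} settle the first two cases, and the third case uses the Lambert inversion at $N_W$ (namely $N_*\log(N_*/(e\zeta))=\rho$, monotonicity of $x\mapsto x\log(x/(e\zeta))$, $N_W\ge1$, and $N!\ge(N/e)^N$) to obtain $C\zeta^{N_W}/N_W!\le\sqrt\epsilon$. The paper repeats that inversion inline rather than citing Steps~3--5 of Proposition~\ref{prop:abstract-factorial-inversion}. It also leaves the rank-$R_N$ bookkeeping implicit, whereas you make it explicit, either by applying the rank rule directly at $N_W$ or through $N_{\mathrm{an}}(\epsilon)\le N_W$ and monotonicity of the binomial coefficient.
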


\begin{proof}
The first two cases follow from the order-$0$ and order-$1$ approximants in
Theorem~\ref{thm:finite-interaction-envelope}.  In the remaining case, let
\[
  N_*:=\frac{\rho}{W_0(\rho/(e\zeta))}.
\]
The identity $W_0(u)e^{W_0(u)}=u$ shows, after substitution, that
\[
  N_*\log\frac{N_*}{e\zeta}=\rho.
\]
In particular $N_*>e\zeta$.  The function
$x\mapsto x\log(x/(e\zeta))$ is increasing on $[e\zeta,\infty)$.  Moreover,
$N_W\geq N_*>0$ and $N_W$ is an integer, so $N_W\geq1$.  Thus
$N_W\ge N_*$ gives
\[
  N_W\log\frac{N_W}{e\zeta}\ge\rho.
\]
Using $N_W!\ge(N_W/e)^{N_W}$, we obtain
\[
  C\frac{\zeta^{N_W}}{N_W!}
  \le C\left(\frac{e\zeta}{N_W}\right)^{N_W}
  \le Ce^{-\rho}=\sqrt\epsilon.
\]
Theorem~\ref{thm:finite-interaction-envelope} now gives the asserted count.
\end{proof}

\subsection{A nonlinear separated-interaction example}

For real parameters $\alpha,\beta,\gamma$, define
\begin{equation}
  (K_{\alpha,\beta,\gamma}f)(y)
  :=\gamma\int_{-1}^{1}
       \exp\!\left(i\bigl[\alpha xy+\beta e^xe^y\bigr]\right)
       f(x)\,dx,
  \qquad -1\le y\le1.
  \label{eq:nonlinear-kernel-example}
\end{equation}

\begin{proposition}[A two-interaction nonlinear kernel]
\label{prop:nonlinear-kernel-example}
Set
\[
  \zeta_{\alpha,\beta}
  :=|\alpha|+|\beta|\sinh^2(1).
\]
For every $N\ge0$,
\begin{equation}
  s_{\binom{N+1}{2}+1}(K_{\alpha,\beta,\gamma})
  \le 2|\gamma|\frac{\zeta_{\alpha,\beta}^{N}}{N!},
  \label{eq:nonlinear-kernel-sv}
\end{equation}
with the same $N=0$ convention as above.  If $|\gamma|\le1/2$, then
$K_{\alpha,\beta,\gamma}$ is a contraction.  In that case, for
$0<\epsilon<1/2$ and
$S_{\alpha,\beta,\gamma}:=K_{\alpha,\beta,\gamma}^*
K_{\alpha,\beta,\gamma}$,
\begin{equation}
  \Lambda_\epsilon(S_{\alpha,\beta,\gamma})
  \le\binom{N_\epsilon+1}{2},
  \qquad
  N_\epsilon:=\min\left\{N\ge0:
     2|\gamma|\frac{\zeta_{\alpha,\beta}^{N}}{N!}
     \le\sqrt\epsilon\right\}.
  \label{eq:nonlinear-kernel-count}
\end{equation}
\end{proposition}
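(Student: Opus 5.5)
The plan is to read $K_{\alpha,\beta,\gamma}$ as a special case of \eqref{eq:finite-interaction-operator} and then apply Theorem~\ref{thm:finite-interaction-envelope} directly, with $p=1$ and $r=2$.

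\emph{Setting up the data.} Take $X=Y=[-1,1]$ with Lebesgue measure. Put $w_X\equiv1$, $w_Y\equiv\gamma$, $a_1\equiv b_1\equiv1$, so that $\mathfrak a\equiv1$, and take $\phi=\psi=0$. For the first interaction use $X_1(x)=x$ and $Y_1(y)=\alpha y$, centered at $\xi_1=\eta_1=0$; this gives $R_1=1$ and $S_1=|\alpha|$. For the second use $X_2(x)=e^x$ and $Y_2(y)=\beta e^y$. On $[-1,1]$ the range of $e^x$ is $[e^{-1},e]$. Its midpoint is $\cosh(1)$ and its half-length is $\sinh(1)$. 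So centering at $\xi_2=\cosh 1$ and $\eta_2=\beta\cosh 1$ gives $R_2=\sinh 1$ and $S_2=|\beta|\sinh 1$. Then \eqref{eq:analytic-C-zeta} gives $\zeta=|\alpha|+|\beta|\sinh^2(1)=\zeta_{\alpha,\beta}$, and $C=\|\gamma\|_{L^2([-1,1]^2)}=2|\gamma|$. The phase is $\Phi(x,y)=\alpha xy+\beta e^xe^y$, as required.

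\emph{Singular values.} The rank count is $R_N=p\binom{N+r-1}{r}=\binom{N+1}{2}$, and this also matches $R_0=0$ at $N=0$. Inequality \eqref{eq:finite-interaction-singular} is then exactly \eqref{eq:nonlinear-kernel-sv}, including the $N=0$ convention.

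\emph{Contraction and count.} The kernel has modulus $|\gamma|$ on $[-1,1]^2$. Hence $\|K\|\le\|K\|_{\mathrm{HS}}=2|\gamma|\le1$ when $|\gamma|\le1/2$. Under this hypothesis $N_{\mathrm{an}}(\epsilon)$ from \eqref{eq:analytic-exact-order} coincides with $N_\epsilon$, and \eqref{eq:finite-interaction-count} yields \eqref{eq:nonlinear-kernel-count}.

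The only step that needs care is choosing the centers for $e^x$ and $e^y$ so that the radii multiply to $\sinh^2(1)$ rather than to something cruder like $e^2$. Taking the midpoints of the essential ranges settles this.
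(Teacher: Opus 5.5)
Your proposal is correct and matches the paper's proof essentially line for line. It uses the same two interactions with the same centers ($0$ for $xy$, and $\cosh 1$, $\beta\cosh 1$ for $e^xe^y$), takes $p=1$, $r=2$, $C=2|\gamma|$, and gets contractivity from the Hilbert--Schmidt norm $2|\gamma|$. Putting $\gamma$ into $w_Y$ instead of into the separated amplitude $\mathfrak a$ is only a bookkeeping difference and changes nothing.
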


\begin{proof}
The amplitude in \eqref{eq:nonlinear-kernel-example} is the single
separated term $\gamma\cdot1$, so $p=1$.  There are two interactions.  For
the first choose
\[
  X_1(x)=x,\qquad Y_1(y)=\alpha y,
  \qquad \xi_1=\eta_1=0.
\]
The corresponding radii are $R_1=1$ and $S_1=|\alpha|$.  For the second
choose
\[
  X_2(x)=e^x,\qquad Y_2(y)=\beta e^y,
  \qquad \xi_2=\cosh(1),\quad \eta_2=\beta\cosh(1).
\]
Because the midpoint and half-width of $[e^{-1},e]$ are respectively
$\cosh(1)$ and $\sinh(1)$, we have
\[
  R_2=\sinh(1),
  \qquad S_2=|\beta|\sinh(1).
\]
Therefore
\[
  \zeta=R_1S_1+R_2S_2
  =|\alpha|+|\beta|\sinh^2(1).
\]
The $L^2([-1,1]^2)$ norm of the amplitude is
\[
  C=\left(\int_{-1}^{1}\int_{-1}^{1}|\gamma|^2\,dx\,dy\right)^{1/2}
   =2|\gamma|.
\]
With $r=2$, the rank in Theorem~\ref{thm:finite-interaction-envelope} is
$\binom{N+1}{2}$, which proves \eqref{eq:nonlinear-kernel-sv}.
Furthermore,
\[
  \|K_{\alpha,\beta,\gamma}\|
  \le\|K_{\alpha,\beta,\gamma}\|_{\mathrm{HS}}
  =2|\gamma|.
\]
Thus $|\gamma|\le1/2$ is a sufficient contraction condition, and
\eqref{eq:nonlinear-kernel-count} follows from the direct count in the
theorem.
\end{proof}

\subsection{Centered best-polynomial and anisotropic certificates}
\label{sec:analytic-sharp-certificates}

The factorial envelope in
Theorem~\ref{thm:finite-interaction-envelope} uses only the rectangular
estimate $|\theta|\leq\zeta$.  Three refinements are available without
changing the finite-interaction hypothesis.  First, a constant phase can be
removed after the interactions have been centered, leaving the exact
half-oscillation of the joint interaction.  Second, the scalar polynomial can
be chosen by weighted $L^2$ projection rather than by Taylor truncation.
Third, when the interactions have very different sizes, they can be assigned
different polynomial degrees.  We give all three variants, including the
actual monomial-span rank rather than only the cardinality of the displayed
separated expansion.

Retain the hypotheses and notation of
Theorem~\ref{thm:finite-interaction-envelope}.  In particular, for a fixed
choice of feature centers $\xi_\ell,\eta_\ell$, write
\begin{align}
 \widetilde X_\ell&:=X_\ell-\xi_\ell,
 &\widetilde Y_\ell&:=Y_\ell-\eta_\ell,\notag\\
 z_\ell(x,y)&:=\widetilde X_\ell(x)\widetilde Y_\ell(y),
 &\theta(x,y)&:=\sum_{\ell=1}^r z_\ell(x,y),
 \label{eq:sharp-analytic-z-theta}
\end{align}
and let $\phi_0,\psi_0$ be the one-variable phases in
\eqref{eq:centered-interaction}.  Put
\begin{equation}
 H(x,y):=w_X(x)w_Y(y)\mathfrak a(x,y),
 \qquad \|H\|_{L^2(X\times Y)}=C.
 \label{eq:sharp-analytic-H}
\end{equation}
If $(\mu\times\nu)(X\times Y)=0$, then $C=0$ and $K=0$, so all
remaining approximation and counting conclusions are immediate.  When joint
essential extrema are used below, we therefore assume
$(\mu\times\nu)(X\times Y)>0$.

\begin{lemma}[Exact scalar centering of the joint interaction]
\label{lem:sharp-analytic-half-oscillation}
Define
\begin{equation}
 \theta_-:=\operatorname*{ess\,inf}_{X\times Y}\theta,
 \qquad
 \theta_+:=\operatorname*{ess\,sup}_{X\times Y}\theta,
 \qquad
 \theta_0:=\frac{\theta_++\theta_-}{2},
 \qquad
 M:=\frac{\theta_+-\theta_-}{2}.
 \label{eq:sharp-analytic-theta0-M}
\end{equation}
Then $0\leq M\leq\zeta$, and
\begin{equation}
 M=\inf_{c\in\mathbb R}\|\theta-c\|_{L^\infty(X\times Y)}.
 \label{eq:sharp-analytic-best-scalar-center}
\end{equation}
Consequently, with $u:=\theta-\theta_0$,
\begin{equation}
 |u|\leq M\quad\hbox{a.e.},
 \qquad
 e^{i\Phi(x,y)}
 =e^{i\phi_0(x)}e^{i\psi_0(y)}e^{i\theta_0}e^{iu(x,y)}.
 \label{eq:sharp-analytic-centered-factorization}
\end{equation}
Thus $[-M,M]$ is a centered uniform scalar approximation interval, and it is
never larger than $[-\zeta,\zeta]$.  If the amplitude vanishes on an
extremal set, its weighted support may permit a smaller interval still.
\end{lemma}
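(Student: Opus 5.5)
The plan is to argue directly from the definitions of essential infimum and supremum, together with the almost-everywhere bound \eqref{eq:theta-zeta}. The standing assumption $(\mu\times\nu)(X\times Y)>0$ guarantees that $\theta_\pm$ are not the degenerate values $\pm\infty$ attached to a null set.

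First I will show that $\theta_\pm$ are finite and that $0\le M\le\zeta$. By \eqref{eq:theta-zeta}, $-\zeta\le\theta\le\zeta$ almost everywhere. Since the product measure is positive, this gives $-\zeta\le\theta_-\le\theta_+\le\zeta$. Here $\theta_-\le\theta_+$ holds because $\theta_-\le\theta\le\theta_+$ a.e. on a set of positive measure. Consequently $M=(\theta_+-\theta_-)/2\in[0,\zeta]$.

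Next I will prove the best-constant identity \eqref{eq:sharp-analytic-best-scalar-center} by two inequalities.
\begin{itemize}
\item For the upper bound, note that a.e. $\theta_-\le\theta\le\theta_+$. This gives $|\theta-\theta_0|\le M$ a.e., so $\|\theta-\theta_0\|_\infty\le M$.
\item For the lower bound, fix $c\in\mathbb R$ and $\delta>0$. By definition of the essential extrema, the sets $\{\theta>\theta_+-\delta\}$ and $\{\theta<\theta_-+\delta\}$ both have positive measure. Hence $\|\theta-c\|_\infty\ge\max\{\theta_+-\delta-c,\;c-\theta_--\delta\}$. This maximum is at least the average of the two terms, namely $M-\delta$. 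Letting $\delta\downarrow0$ and taking the infimum over $c$ gives the reverse inequality.
\end{itemize}
The infimum is therefore attained at $c=\theta_0$, and $|u|\le M$ a.e. follows.

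For the factorization \eqref{eq:sharp-analytic-centered-factorization}, I start from \eqref{eq:centered-interaction}, $e^{i\Phi}=e^{i\phi_0}e^{i\psi_0}e^{i\theta}$. Writing $\theta=\theta_0+u$ splits off the constant unimodular factor $e^{i\theta_0}$. The concluding remarks then follow immediately: $[-M,M]\subset[-\zeta,\zeta]$ because $M\le\zeta$. The remark about amplitude support is an observation that the same construction can be repeated with essential extrema taken only over $\{H\ne0\}$; it needs no proof beyond noting that the residual kernel vanishes wherever $H$ does.

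The only delicate point is the lower bound in the best-constant identity. There one must use positivity of the measure of the near-extremal sets, not pointwise extrema. That is exactly why the nondegeneracy assumption on $X\times Y$ is needed.
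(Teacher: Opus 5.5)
Your proposal is correct and follows the paper's argument. Both proofs bound $\theta_\pm$ via $|\theta|\le\zeta$, use the fact that the larger of $\theta_+-c$ and $c-\theta_-$ is at least their average $M$, attain equality at $c=\theta_0$, and split off the constant phase $e^{i\theta_0}$ from \eqref{eq:centered-interaction}. The only cosmetic difference is that the paper quotes the identity $\|\theta-c\|_{L^\infty}=\max\{\theta_+-c,\,c-\theta_-\}$ directly, whereas you verify the needed lower bound using near-extremal sets of positive measure.
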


\begin{proof}
The pointwise estimate $|\theta|\leq\zeta$ from
\eqref{eq:theta-zeta} implies
$-\zeta\leq\theta_-\leq\theta_+\leq\zeta$.  Hence $M$ is finite and
$M\leq\zeta$.

Let $c\in\mathbb R$.  By the definitions of essential infimum and
essential supremum,
\begin{equation}
 \|\theta-c\|_{L^\infty}
 =\max\{\theta_+-c,c-\theta_-\}.
 \label{eq:sharp-analytic-center-max}
\end{equation}
The maximum of two real numbers is at least their average, so
\[
 \max\{\theta_+-c,c-\theta_-\}
 \geq \frac{(\theta_+-c)+(c-\theta_-)}2=M.
\]
At $c=\theta_0$, the two entries of the maximum are both equal to $M$.
This proves \eqref{eq:sharp-analytic-best-scalar-center} and
$|\theta-\theta_0|\leq M$ almost everywhere.  Finally,
$e^{i\theta}=e^{i\theta_0}e^{i(\theta-\theta_0)}$ in
\eqref{eq:centered-interaction}; the factor $e^{i\theta_0}$ is a harmless scalar
phase.  This gives \eqref{eq:sharp-analytic-centered-factorization}.
\end{proof}

We next record the rank that is actually supplied by the monomial spans.
For $N\geq1$, set
\begin{align}
 \mathcal U_N
 &:={\rm span}\left\{
 w_Xa_qe^{i\phi_0}
 \prod_{\ell=1}^r\widetilde X_\ell^{\nu_\ell}:
 1\leq q\leq p,\ |\boldsymbol\nu|<N
 \right\}\subset L^2(X),\notag\\
 \mathcal V_N
 &:={\rm span}\left\{
 w_Yb_qe^{i\psi_0}
 \prod_{\ell=1}^r\widetilde Y_\ell^{\nu_\ell}:
 1\leq q\leq p,\ |\boldsymbol\nu|<N
 \right\}\subset L^2(Y),
 \label{eq:sharp-analytic-total-degree-spans}
\end{align}
and define
\begin{equation}
 d_N:=\min\{\dim\mathcal U_N,\dim\mathcal V_N\},
 \qquad d_0:=0.
 \label{eq:sharp-analytic-dN}
\end{equation}
Since there are $\binom{N+r-1}{r}$ multi-indices of total degree less
than $N$,
\begin{equation}
 d_N\leq p\binom{N+r-1}{r}=R_N.
 \label{eq:sharp-analytic-span-versus-cardinality}
\end{equation}
The inequality can be strict when feature monomials or amplitude factors are
linearly dependent in the relevant $L^2$ spaces.

\begin{theorem}[Weighted best scalar polynomial and span-compressed rank]
\label{thm:sharp-analytic-best-polynomial}
Let $u=\theta-\theta_0$ and $M$ be as in
Lemma~\ref{lem:sharp-analytic-half-oscillation}.  Define the finite positive
measure $\sigma$ on $[-M,M]$ by pushforward:
\begin{equation}
 \sigma(B)
 :=\int_{X\times Y}\mathbf1_B(u(x,y))|H(x,y)|^2
 \,d\mu(x)d\nu(y).
 \label{eq:sharp-analytic-pushforward}
\end{equation}
Thus $\sigma([-M,M])=C^2$.  For $N\geq1$, put
\begin{equation}
 \mathcal E_N^{\rm best}
 :=\min_{P\in\mathbb C[t],\ \deg P<N}
 \left(\int_{-M}^{M}|e^{it}-P(t)|^2\,d\sigma(t)\right)^{1/2},
 \qquad
 \mathcal E_0^{\rm best}:=C.
 \label{eq:sharp-analytic-best-error}
\end{equation}
Then the minimum is attained.  For every $N\geq0$ there is an operator
$K_N^{\rm best}$ such that
\begin{equation}
 \operatorname{rank}K_N^{\rm best}\leq d_N,
 \qquad
 \|K-K_N^{\rm best}\|
 \leq\|K-K_N^{\rm best}\|_{\rm HS}
 =\mathcal E_N^{\rm best}.
 \label{eq:sharp-analytic-best-operator}
\end{equation}
Consequently,
\begin{equation}
 s_{d_N+1}(K)\leq\mathcal E_N^{\rm best}
 \leq C\frac{M^N}{N!}
 \leq C\frac{\zeta^N}{N!},
 \qquad N\geq1.
 \label{eq:sharp-analytic-best-chain}
\end{equation}
For every $N\in\mathbb N_0$, if $K$ is a contraction and
$0<\epsilon<1/2$, then
\begin{equation}
 \mathcal E_N^{\rm best}\leq\sqrt\epsilon
 \quad\Longrightarrow\quad
 \Lambda_\epsilon(K^*K)\leq d_N.
 \label{eq:sharp-analytic-best-count}
\end{equation}

For $N\geq1$, more explicitly, let
\begin{equation}
 G^{(N)}_{jk}:=\int_{-M}^{M}t^{j+k}\,d\sigma(t),
 \qquad
 b^{(N)}_j:=\int_{-M}^{M}t^je^{it}\,d\sigma(t),
 \qquad 0\leq j,k<N.
 \label{eq:sharp-analytic-normal-data}
\end{equation}
Let $G^{(N)\dagger}$ denote the Moore--Penrose inverse.  One minimizing
coefficient vector is
\begin{equation}
 c=G^{(N)\dagger}b^{(N)},
 \label{eq:sharp-analytic-best-coefficients}
\end{equation}
and
\begin{equation}
 (\mathcal E_N^{\rm best})^2
 =C^2-(b^{(N)})^*G^{(N)\dagger}b^{(N)}.
 \label{eq:sharp-analytic-best-matrix-formula}
\end{equation}
This formula remains valid when $G^{(N)}$ is singular.
\end{theorem}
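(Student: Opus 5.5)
The plan is to transfer the Fourier least-squares argument of Theorem~\ref{thm:best-polynomial-moment-envelope} to the separated-interaction setting, using the centred factorization of Lemma~\ref{lem:sharp-analytic-half-oscillation}. By \eqref{eq:sharp-analytic-centered-factorization}, the kernel of $K$ equals $e^{i\theta_0}e^{i\phi_0(x)}e^{i\psi_0(y)}H(x,y)e^{iu(x,y)}$. For a scalar polynomial $P(t)=\sum_{k<N}c_kt^k$, let $K_P$ be the operator whose kernel is the same expression with $e^{iu}$ replaced by $e^{-i\theta_0}P(\theta)$. This is again a polynomial of degree less than $N$ in $\theta=u+\theta_0$. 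Since unimodular factors do not change the modulus, the residual kernel has modulus $|H||e^{iu}-P(u)|$. Its squared Hilbert--Schmidt norm is therefore exactly $\int|e^{it}-P(t)|^2\,d\sigma(t)$, by the definition of the pushforward $\sigma$. Since $|u|\le M$ a.e., $\sigma$ lives on $[-M,M]$ and has total mass $C^2$.

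\textbf{Step 1: attainment and the Gram formula.} The minimum over $P$ exists because $\{P:\deg P<N\}$ modulo $\sigma$-null polynomials is a finite-dimensional, hence closed, subspace of $L^2(\sigma)$. The normal equations, the consistency argument $b\perp\ker G$, and the identity $C^2-b^*G^\dagger b$ repeat Step~3 of the proof of Theorem~\ref{thm:best-polynomial-moment-envelope} word for word, with $m$ replaced by $C^2$. The needed moments $\int t^j\,d\sigma$ are finite because $\sigma$ is a finite measure on a compact interval. At $N=0$ we take $K_0^{\rm best}=0$, whose residual is $\|K\|_{\rm HS}=C$.

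\textbf{Step 2: the rank bound.} This is the one genuinely new point, and the main obstacle: the bound must be $d_N$, not merely $R_N$. Expanding $\theta^k$ by the multinomial identity \eqref{eq:multindex-taylor} and inserting $\mathfrak a=\sum_q a_qb_q$ writes the kernel of $K_P$ as
\[
 \sum_{q,|\boldsymbol\nu|<N}\gamma_{q,\boldsymbol\nu}\,g_{q,\boldsymbol\nu}(y)\,f_{q,\boldsymbol\nu}(x),
\]
where $f_{q,\boldsymbol\nu}\in\mathcal U_N$ and $g_{q,\boldsymbol\nu}\in\mathcal V_N$ are exactly the spanning functions in \eqref{eq:sharp-analytic-total-degree-spans}. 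These lie in $L^2$ because the centred features are essentially bounded. Hence $K_Pf=\sum\gamma\langle f,\bar f_{q,\boldsymbol\nu}\rangle g_{q,\boldsymbol\nu}$, so $\operatorname{Ran}K_P\subset\mathcal V_N$ and $\operatorname{rank}K_P\le\dim\mathcal V_N$. Applying the same argument to $K_P^*$ shows that $K_P$ vanishes on the orthogonal complement of $\overline{\mathcal U_N}$ (complex conjugates span a space of the same dimension), so $\operatorname{rank}K_P\le\dim\mathcal U_N$. Together these give $\operatorname{rank}K_P\le d_N$. I will check carefully that the conjugation does not alter dimensions.

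\textbf{Step 3: the chain and the count.} The approximation-number characterization gives $s_{d_N+1}(K)\le\mathcal E_N^{\rm best}$. The Taylor polynomial $P(t)=\sum_{k<N}(it)^k/k!$ is one competitor. By \eqref{eq:analytic-imaginary-remainder} it satisfies $|e^{it}-P(t)|\le M^N/N!$ on $[-M,M]$, so $\mathcal E_N^{\rm best}\le CM^N/N!$. Lemma~\ref{lem:sharp-analytic-half-oscillation} gives $M\le\zeta$, which completes \eqref{eq:sharp-analytic-best-chain}. Finally, Lemma~\ref{lem:abstract-finite-rank-rule} with $\tau=\sqrt\epsilon$, combined with the direct bound \eqref{eq:abstract-direct-count}, yields \eqref{eq:sharp-analytic-best-count} for every $N\in\mathbb N_0$. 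At $N=0$ the hypothesis reads $C\le\sqrt\epsilon$ and the count is zero. The degenerate case of a null product space is already handled in the preamble.
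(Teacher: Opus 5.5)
Your proposal is correct and follows the paper's proof essentially step for step. It uses the pushforward identity for the Hilbert--Schmidt residual, attainment in a closed finite-dimensional subspace together with the Moore--Penrose normal equations, and the multinomial expansion placing the range in $\mathcal V_N$ and the adjoint range in the conjugate of $\mathcal U_N$; it then finishes with the Taylor competitor, $M\le\zeta$, and the finite-rank rule with the direct transfer.

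One slip to fix: the replacement for $e^{iu}$ should be $P(u)=P(\theta-\theta_0)$, a polynomial of degree $<N$ in $\theta$, not $e^{-i\theta_0}P(\theta)$. The latter gives residual modulus $|H|\,|e^{i\theta}-P(\theta)|$ rather than $|H|\,|e^{iu}-P(u)|$. This is harmless after reparametrizing $P$ by $t\mapsto e^{-i\theta_0}P(t+\theta_0)$, but it is inconsistent as written.
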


\begin{proof}
We separate the projection, rank, and spectral steps.

\emph{Step 1: existence of the weighted best polynomial.}
The restrictions to $[-M,M]$ of polynomials of degree less than $N$ form a
finite-dimensional subspace of $L^2(\sigma)$.  Every finite-dimensional
subspace of a Hilbert space is closed.  Therefore the orthogonal projection
of $e^{it}$ onto that subspace exists.  Any polynomial representing this
projection is a minimizer in \eqref{eq:sharp-analytic-best-error}.  If
$\sigma$ is supported on fewer than $N$ points, the representing coefficient
vector need not be unique, but the projected function and the minimum error
are unique.

\emph{Step 2: a polynomial in $u$ has the same total-degree separated
structure.}
Write a polynomial of degree less than $N$ as
$P(t)=\sum_{k=0}^{N-1}c_kt^k$.  Since
$u=\sum_{\ell=1}^rz_\ell-\theta_0$, the multinomial theorem gives
\begin{equation}
 P(u)=\sum_{|\boldsymbol\nu|<N}
 d_{\boldsymbol\nu}(P)
 \prod_{\ell=1}^rz_\ell^{\nu_\ell},
 \label{eq:sharp-analytic-general-polynomial-expansion}
\end{equation}
where, for $|\boldsymbol\nu|<N$,
\begin{equation}
 d_{\boldsymbol\nu}(P)
 =\frac{|\boldsymbol\nu|!}{\boldsymbol\nu!}
 \sum_{k=|\boldsymbol\nu|}^{N-1}
 c_k\binom{k}{|\boldsymbol\nu|}(-\theta_0)^{k-|\boldsymbol\nu|}.
 \label{eq:sharp-analytic-general-polynomial-coefficients}
\end{equation}
Indeed, one first chooses $|\boldsymbol\nu|$ copies of
$\sum_\ell z_\ell$ from $(\sum_\ell z_\ell-\theta_0)^k$ and then uses the
multinomial coefficient $|\boldsymbol\nu|!/\boldsymbol\nu!$.

Replace $e^{iu}$ in
\eqref{eq:sharp-analytic-centered-factorization} by $P(u)$.  After inserting
$\mathfrak a=\sum_{q=1}^pa_qb_q$ and
$z_\ell^{\nu_\ell}=\widetilde X_\ell^{\nu_\ell}
\widetilde Y_\ell^{\nu_\ell}$, the resulting kernel is a finite sum indexed
by $(q,\boldsymbol\nu)$ with $|\boldsymbol\nu|<N$.  Its range lies in
$\mathcal V_N$.  The range of its adjoint lies, up to complex conjugation,
in $\mathcal U_N$.  Hence its rank is at most both span dimensions and
therefore at most $d_N$.

This also gives a polynomial-specific refinement.  If
\begin{equation}
 I(P):=\{\boldsymbol\nu:d_{\boldsymbol\nu}(P)\neq0\},
 \label{eq:sharp-analytic-active-index-set}
\end{equation}
then one may replace $\mathcal U_N,\mathcal V_N$ by the spans in
\eqref{eq:sharp-analytic-total-degree-spans} restricted to
$\boldsymbol\nu\in I(P)$.  Thus zero coefficients and all $L^2$ linear
dependencies are legitimate rank savings; merely counting the written
rank-one summands is not necessary.

\emph{Step 3: the weighted residual is exactly the Hilbert--Schmidt
residual.}
All removed phase factors have modulus one.  Therefore the squared
Hilbert--Schmidt norm of the residual associated with $P$ is
\begin{align}
 \int_{X\times Y}|H(x,y)|^2
 |e^{iu(x,y)}-P(u(x,y))|^2\,d\mu(x)d\nu(y)
 &=\int_{-M}^{M}|e^{it}-P(t)|^2\,d\sigma(t),
 \label{eq:sharp-analytic-pushforward-error}
\end{align}
where the equality is precisely the pushforward identity defining
$\sigma$.  Choosing a minimizing polynomial proves
\eqref{eq:sharp-analytic-best-operator}.  The case $N=0$ is obtained from
the zero operator.

\emph{Step 4: compute the minimizer from a finite matrix.}
For $P(t)=\sum_{k=0}^{N-1}c_kt^k$, expansion of the squared norm gives
\begin{equation}
 \int|e^{it}-P(t)|^2\,d\sigma(t)
 =C^2-2\operatorname{Re}(c^*b^{(N)})+c^*G^{(N)}c.
 \label{eq:sharp-analytic-quadratic-form}
\end{equation}
The normal equations are $G^{(N)}c=b^{(N)}$.  They are consistent even if
$G^{(N)}$ is singular: if $a\in\ker G^{(N)}$, then
$\sum_ja_jt^j=0$ in $L^2(\sigma)$, and consequently
$a^*b^{(N)}=0$.  Hence $b^{(N)}$ lies in the range of $G^{(N)}$.  The
Moore--Penrose choice $c=G^{(N)\dagger}b^{(N)}$ solves the normal equations.
Substitution into \eqref{eq:sharp-analytic-quadratic-form} gives
\eqref{eq:sharp-analytic-best-matrix-formula}.

\emph{Step 5: compare with centered Taylor approximation.}
For $N\geq1$, the polynomial
$T_{N-1}(t)=\sum_{k=0}^{N-1}(it)^k/k!$ satisfies, by
\eqref{eq:analytic-imaginary-remainder},
\[
 |e^{it}-T_{N-1}(t)|\leq\frac{|t|^N}{N!}
 \leq\frac{M^N}{N!}
 \quad (|t|\leq M).
\]
It is an admissible competitor in
\eqref{eq:sharp-analytic-best-error}.  Since $\sigma([-M,M])=C^2$, this
proves $\mathcal E_N^{\rm best}\leq CM^N/N!$.  The inequality
$M\leq\zeta$ proves the last comparison in
\eqref{eq:sharp-analytic-best-chain}.

\emph{Step 6: singular values and the plunge count.}
The finite-rank approximation rule applied to
\eqref{eq:sharp-analytic-best-operator} gives
$s_{d_N+1}(K)\leq\mathcal E_N^{\rm best}$.  If $K$ is a contraction and
$\mathcal E_N^{\rm best}\leq\sqrt\epsilon$, then
$n(\sqrt\epsilon;K)\leq d_N$.  The direct transfer
\eqref{eq:abstract-direct-count} yields
$\Lambda_\epsilon(K^*K)\leq d_N$, proving
\eqref{eq:sharp-analytic-best-count}.
\end{proof}

The preceding theorem is data-sensitive but requires the weighted
pushforward measure.  A fully explicit uniform alternative is furnished by
Chebyshev polynomials and Bessel coefficients.

\begin{corollary}[Chebyshev--Bessel envelope with unchanged monomial rank]
\label{cor:sharp-analytic-chebyshev-bessel}
Let $J_k$ denote the Bessel function of the first kind and let $T_k$ denote
the $k$th Chebyshev polynomial.  For $N\geq0$ and $M\geq0$, define
\begin{equation}
 \chi_0(M):=1,
 \qquad
 \chi_N(0):=0\quad(N\geq1),
 \label{eq:sharp-analytic-chi-edge}
\end{equation}
and, when $M>0$ and $N\geq1$, define
\begin{equation}
 \chi_N(M):=\min\left\{
 1,\ \frac{M^N}{N!},\ 2\sum_{k=N}^{\infty}|J_k(M)|
 \right\}.
 \label{eq:sharp-analytic-chi}
\end{equation}
Then
\begin{equation}
 s_{d_N+1}(K)\leq C\chi_N(M),
 \label{eq:sharp-analytic-chebyshev-sv}
\end{equation}
and, if $K$ is a contraction and $0<\epsilon<1/2$,
\begin{equation}
 C\chi_N(M)\leq\sqrt\epsilon
 \quad\Longrightarrow\quad
 \Lambda_\epsilon(K^*K)\leq d_N
 \leq p\binom{N+r-1}{r}.
 \label{eq:sharp-analytic-chebyshev-count}
\end{equation}
For $M>0$ and $N\geq1$, the polynomial producing the Bessel branch is
\begin{equation}
 Q_{N-1,M}(t)
 :=J_0(M)+2\sum_{k=1}^{N-1}i^kJ_k(M)T_k(t/M),
 \label{eq:sharp-analytic-chebyshev-polynomial}
\end{equation}
which has degree at most $N-1$.  Thus the Chebyshev improvement does not
increase the combinatorial rank.

Moreover, whenever $N+1>M/2$,
\begin{equation}
 2\sum_{k=N}^{\infty}|J_k(M)|
 \leq
 \frac{2(M/2)^N}
 {N!\left(1-\dfrac{M}{2(N+1)}\right)}.
 \label{eq:sharp-analytic-explicit-bessel-tail}
\end{equation}
\end{corollary}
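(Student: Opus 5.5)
The plan is to reduce everything to Theorem~\ref{thm:sharp-analytic-best-polynomial} by exhibiting, for each admissible $N$, a polynomial $P$ of degree less than $N$ with $\sup_{|t|\le M}|e^{it}-P(t)|\le\chi_N(M)$. Squaring that uniform error and integrating against $\sigma$, which is supported in $[-M,M]$ and has total mass $C^2$, gives $\mathcal E_N^{\rm best}\le C\chi_N(M)$. The chain \eqref{eq:sharp-analytic-best-chain} then yields \eqref{eq:sharp-analytic-chebyshev-sv}. The count \eqref{eq:sharp-analytic-chebyshev-count} follows from \eqref{eq:sharp-analytic-best-count} together with \eqref{eq:sharp-analytic-span-versus-cardinality}. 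Since the theorem already minimizes over all polynomials of degree less than $N$, the rank $d_N$ is untouched; this is the remark that the Chebyshev choice does not increase the combinatorial rank.

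The edge cases come first. For $N=0$, we have $\chi_0=1$ and $\mathcal E_0^{\rm best}=C$. For $M=0$ and $N\ge1$, $u=0$ almost everywhere on the support of $|H|^2$, so $\sigma$ is a point mass at $0$. The constant $P\equiv1$ then gives zero residual, matching $\chi_N(0)=0$.

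For $M>0$ and $N\ge1$, the three branches of the minimum correspond to three competitors:
\begin{itemize}
\item the zero polynomial, with error $1$;
\item the Taylor polynomial, with error at most $M^N/N!$ by \eqref{eq:analytic-imaginary-remainder};
\item the truncated Chebyshev--Bessel polynomial $Q_{N-1,M}$.
\end{itemize}
For the third, I would reuse Lemma~\ref{lem:Chebyshev-Bessel-polynomial} rather than redo the Jacobi--Anger derivation. That lemma approximates $e^{-it}$, and the present setting needs $e^{+it}$. Taking complex conjugates of \eqref{eq:Chebyshev-Bessel-expansion} uses that $J_k(M)$ and $T_k$ are real, and produces
\[
e^{iMs}=J_0(M)+2\sum_{k\ge1}i^kJ_k(M)T_k(s),\qquad |s|\le1,
\]
which converges absolutely by \eqref{eq:Bessel-Poisson-bound}. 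Substituting $s=t/M$ and truncating at $k=N-1$ gives exactly $Q_{N-1,M}$, which has degree at most $N-1$. Since $|T_k|\le1$ on $[-1,1]$, the tail error is at most $2\sum_{k\ge N}|J_k(M)|$. Choosing the best of the three competitors proves the uniform bound.

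The explicit tail \eqref{eq:sharp-analytic-explicit-bessel-tail} is literally \eqref{eq:Bessel-geometric-tail} with $\zeta$ replaced by $M$. So it follows from the Poisson bound and the geometric series with ratio $(M/2)/(N+1)<1$, as in Step~4 of that lemma.

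I expect no real obstacle. The only points needing care are the sign and conjugation bookkeeping that turns $(-i)^k$ into $i^k$, and the degenerate $M=0$ case, where $T_k(t/M)$ is undefined. The latter is handled separately above, so we never divide by $M$.
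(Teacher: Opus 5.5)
Your proposal is correct and follows the paper's proof essentially step for step: the three competitors (zero, Taylor, truncated Chebyshev--Bessel) are fed into the weighted best-polynomial theorem, $N=0$ and $M=0$ are handled separately, and the tail uses the Poisson bound plus a geometric series. The only difference is that you obtain the $e^{+it}$ Jacobi--Anger expansion by conjugating the earlier lemma's expansion for $e^{-i\zeta s}$, whereas the paper re-derives it from the generating function with $z=ie^{i\alpha}$ and re-proves the Poisson bound. Your shortcut is a harmless economy.
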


\begin{proof}
Assume first that $M>0$ and $N\geq1$, and put $s=t/M$.  To recall the special-function
identity used below, start from the Bessel generating series
\[
 \exp\!\left(\frac M2(z-z^{-1})\right)
 =\sum_{k\in\mathbb Z}J_k(M)z^k.
\]
Substitution of $z=ie^{i\alpha}$ and pairing the $k$ and $-k$ terms, using
$J_{-k}(M)=(-1)^kJ_k(M)$, gives
$e^{iM\cos\alpha}=J_0(M)+2\sum_{k\geq1}i^kJ_k(M)\cos(k\alpha)$.
Since $T_k(\cos\alpha)=\cos(k\alpha)$, the Jacobi--Anger expansion on
$[-1,1]$ is therefore
\begin{equation}
 e^{iMs}=J_0(M)+2\sum_{k=1}^{\infty}i^kJ_k(M)T_k(s).
 \label{eq:sharp-analytic-jacobi-anger}
\end{equation}
Since $|T_k(s)|\leq1$ for $|s|\leq1$, truncation after $k=N-1$ gives
\begin{equation}
 \sup_{|t|\leq M}|e^{it}-Q_{N-1,M}(t)|
 \leq2\sum_{k=N}^{\infty}|J_k(M)|.
 \label{eq:sharp-analytic-bessel-uniform-error}
\end{equation}
The series is absolutely and uniformly convergent.  Indeed, the Poisson
integral representation reads
\begin{equation}
 J_k(M)=\frac{(M/2)^k}{\sqrt\pi\,\Gamma(k+1/2)}
 \int_{-1}^{1}e^{iMs}(1-s^2)^{k-1/2}\,ds.
 \label{eq:sharp-analytic-bessel-poisson}
\end{equation}
Taking absolute values, we remove the unimodular factor from the
integrand.  The remaining beta integral is
\begin{align}
 \int_{-1}^{1}(1-s^2)^{k-1/2}\,ds
 &=\int_0^1v^{-1/2}(1-v)^{k-1/2}\,dv\notag\\
 &=B\!\left(\frac12,k+\frac12\right)
 =\frac{\sqrt\pi\,\Gamma(k+1/2)}{\Gamma(k+1)}.
 \label{eq:sharp-analytic-bessel-beta}
\end{align}
Here the first equality follows from evenness followed by $v=s^2$.
Substitution in \eqref{eq:sharp-analytic-bessel-poisson} and
$\Gamma(k+1)=k!$ gives, for every integer $k\geq0$,
\begin{equation}
 |J_k(M)|\leq\frac{(M/2)^k}{k!}.
 \label{eq:sharp-analytic-bessel-elementary-bound}
\end{equation}
The factorial majorant is summable, so it also justifies the pairing and
uniform truncation of the generating series above.

There are two other admissible polynomials of degree less than $N$: the
centered Taylor polynomial has uniform error at most $M^N/N!$, and the zero
polynomial has uniform error one.  Choosing the best of these three
competitors proves that the weighted best error is at most $C\chi_N(M)$.
Equation \eqref{eq:sharp-analytic-chebyshev-sv} and the count
\eqref{eq:sharp-analytic-chebyshev-count} now follow from
Theorem~\ref{thm:sharp-analytic-best-polynomial}.  If $M=0$, then $u=0$
almost everywhere and the degree-zero polynomial $1$ is exact for every
$N\geq1$.  The case $N=0$ uses the zero operator.

It remains to prove the closed tail estimate.  Set
$a_k=(M/2)^k/k!$.  For $k\geq N$,
\[
 \frac{a_{k+1}}{a_k}=\frac{M}{2(k+1)}
 \leq\frac{M}{2(N+1)}=:q<1.
\]
Hence \eqref{eq:sharp-analytic-bessel-elementary-bound} and the geometric
series give
\[
 2\sum_{k=N}^{\infty}|J_k(M)|
 \leq2\sum_{k=N}^{\infty}a_k
 \leq\frac{2a_N}{1-q},
\]
which is exactly
\eqref{eq:sharp-analytic-explicit-bessel-tail}.
\end{proof}

The total-degree approximation treats $u=\sum_\ell z_\ell-\theta_0$ as one scalar
variable.  The following alternative assigns a separate degree to every
interaction.  It is useful when a few interactions are strong and the
others are weak.

For each $\ell$, define its exact essential midpoint and half-oscillation by
\begin{align}
 z_{\ell,-}&:=\operatorname*{ess\,inf}_{X\times Y}z_\ell,
 &z_{\ell,+}&:=\operatorname*{ess\,sup}_{X\times Y}z_\ell,\notag\\
 m_\ell&:=\frac{z_{\ell,+}+z_{\ell,-}}2,
 &M_\ell&:=\frac{z_{\ell,+}-z_{\ell,-}}2,
 &t_\ell&:=z_\ell-m_\ell.
 \label{eq:sharp-analytic-individual-centers}
\end{align}
Thus $|t_\ell|\leq M_\ell$ almost everywhere and
\begin{equation}
 e^{i\theta}=e^{i\sum_{\ell=1}^rm_\ell}
 \prod_{\ell=1}^re^{it_\ell}.
 \label{eq:sharp-analytic-product-factorization}
\end{equation}

For a degree vector
$\boldsymbol N=(N_1,\ldots,N_r)\in\mathbb N^r$, set
\begin{align}
 \mathcal U_{\boldsymbol N}^{\Box}
 &:={\rm span}\left\{
 w_Xa_qe^{i\phi_0}
 \prod_{\ell=1}^r\widetilde X_\ell^{\nu_\ell}:
 1\leq q\leq p,\ 0\leq\nu_\ell<N_\ell
 \right\},\notag\\
 \mathcal V_{\boldsymbol N}^{\Box}
 &:={\rm span}\left\{
 w_Yb_qe^{i\psi_0}
 \prod_{\ell=1}^r\widetilde Y_\ell^{\nu_\ell}:
 1\leq q\leq p,\ 0\leq\nu_\ell<N_\ell
 \right\},\notag\\
 d_{\boldsymbol N}^{\Box}
 &:=\min\left\{\dim\mathcal U_{\boldsymbol N}^{\Box},
                  \dim\mathcal V_{\boldsymbol N}^{\Box}\right\}
 \leq p\prod_{\ell=1}^rN_\ell.
 \label{eq:sharp-analytic-box-spans}
\end{align}

\begin{theorem}[Anisotropic product approximation]
\label{thm:sharp-analytic-anisotropic-product}
For each $\ell$, let $P_\ell$ be a complex polynomial of degree less than
$N_\ell$ and define
\begin{equation}
 \delta_\ell:=\sup_{|t|\leq M_\ell}|e^{it}-P_\ell(t)|,
 \qquad
 q_\ell:=\sup_{|t|\leq M_\ell}|P_\ell(t)|.
 \label{eq:sharp-analytic-delta-q}
\end{equation}
For a permutation $\pi$ of $\{1,\ldots,r\}$, put
\begin{equation}
 \Delta_\pi
 :=\sum_{j=1}^r\delta_{\pi(j)}
       \prod_{h<j}q_{\pi(h)},
 \qquad
 \Delta:=\min_\pi\Delta_\pi.
 \label{eq:sharp-analytic-telescoping-delta}
\end{equation}
There is an operator $K_{\boldsymbol N,\boldsymbol P}^{\Box}$ of rank at
most $d_{\boldsymbol N}^{\Box}$ such that
\begin{equation}
 \|K-K_{\boldsymbol N,\boldsymbol P}^{\Box}\|
 \leq\|K-K_{\boldsymbol N,\boldsymbol P}^{\Box}\|_{\rm HS}
 \leq C\Delta.
 \label{eq:sharp-analytic-anisotropic-error}
\end{equation}
Since $q_\ell\leq1+\delta_\ell$, one always has the symmetric bound
\begin{equation}
 \Delta\leq\prod_{\ell=1}^r(1+\delta_\ell)-1.
 \label{eq:sharp-analytic-correct-error-product}
\end{equation}
One may also compare with the zero operator; hence there is an approximant
of rank at most $d_{\boldsymbol N}^{\Box}$ and error at most
\begin{equation}
 C\min\left\{1,\prod_{\ell=1}^r(1+\delta_\ell)-1\right\}.
 \label{eq:sharp-analytic-product-zero-minimum}
\end{equation}
If $K$ is a contraction, $0<\epsilon<1/2$, and the right-hand side of
\eqref{eq:sharp-analytic-product-zero-minimum} is at most
$\sqrt\epsilon$, then
\begin{equation}
 \Lambda_\epsilon(K^*K)\leq d_{\boldsymbol N}^{\Box}
 \leq p\prod_{\ell=1}^rN_\ell.
 \label{eq:sharp-analytic-anisotropic-count}
\end{equation}
\end{theorem}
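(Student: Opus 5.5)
The plan is to replace $e^{i\theta}$, after the scalar phase $e^{i\sum_\ell m_\ell}$ from \eqref{eq:sharp-analytic-product-factorization} has been removed, by the product $\prod_{\ell=1}^rP_\ell(t_\ell)$. Each $P_\ell(t_\ell)$ is a polynomial of degree less than $N_\ell$ in $t_\ell=\widetilde X_\ell\widetilde Y_\ell-m_\ell$, so expanding it gives a polynomial in $z_\ell=\widetilde X_\ell\widetilde Y_\ell$ of degree less than $N_\ell$. The product is therefore a linear combination of $\prod_\ell \widetilde X_\ell^{\nu_\ell}\widetilde Y_\ell^{\nu_\ell}$ with $0\le\nu_\ell<N_\ell$. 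We reinsert the amplitude $\mathfrak a=\sum_q a_qb_q$, the weights, the one-variable phases $e^{i\phi_0},e^{i\psi_0}$, and the unimodular constant. The resulting kernel then has range in $\mathcal V^{\Box}_{\boldsymbol N}$, and its adjoint has range in $\overline{\mathcal U^{\Box}_{\boldsymbol N}}$. As in Step~2 of Theorem~\ref{thm:sharp-analytic-best-polynomial}, its rank is at most $d^{\Box}_{\boldsymbol N}$. All factors lie in $L^2$ because the centered features are essentially bounded and $w_Xa_q,w_Yb_q\in L^2$.

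For the error, all removed phases are unimodular, so the squared Hilbert--Schmidt norm of the residual is $\int|H|^2\,|\prod_\ell e^{it_\ell}-\prod_\ell P_\ell(t_\ell)|^2$. Fix a permutation $\pi$. The telescoping identity
\[
 \prod_{j}P_{\pi(j)}-\prod_j e^{it_{\pi(j)}}
 =\sum_{j=1}^r\Bigl(\prod_{h<j}P_{\pi(h)}\Bigr)\bigl(P_{\pi(j)}-e^{it_{\pi(j)}}\bigr)\prod_{h>j}e^{it_{\pi(h)}}
\]
holds pointwise. Since $|t_\ell|\le M_\ell$ a.e., we have $|P_\ell(t_\ell)|\le q_\ell$ and $|e^{it_\ell}-P_\ell(t_\ell)|\le\delta_\ell$, while the trailing exponentials have modulus one. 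The pointwise residual is therefore at most $\Delta_\pi$ a.e. Integrating against $|H|^2$ gives HS error at most $C\Delta_\pi$, and we choose the minimizing $\pi$ (the polynomials do not depend on $\pi$, only the bound does). Operator norm is bounded by the HS norm, which gives \eqref{eq:sharp-analytic-anisotropic-error}.

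For \eqref{eq:sharp-analytic-correct-error-product}, we use $q_\ell\le 1+\delta_\ell$ by the triangle inequality with $|e^{it}|=1$. Then $\Delta_\pi\le\sum_j\delta_{\pi(j)}\prod_{h<j}(1+\delta_{\pi(h)})$, which telescopes exactly to $\prod_\ell(1+\delta_\ell)-1$ by induction on $r$. The zero operator has rank $0\le d^{\Box}_{\boldsymbol N}$ and HS error $C$, so taking the better of the two approximants gives \eqref{eq:sharp-analytic-product-zero-minimum}. Finally, when that quantity is at most $\sqrt\epsilon$, Lemma~\ref{lem:abstract-finite-rank-rule} gives $n(\sqrt\epsilon;K)\le d^{\Box}_{\boldsymbol N}$, and the direct transfer \eqref{eq:abstract-direct-count} yields \eqref{eq:sharp-analytic-anisotropic-count}. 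The bound $d^{\Box}_{\boldsymbol N}\le p\prod_\ell N_\ell$ is the count of spanning vectors.

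The main care point is the rank step: we must check that shifting by $m_\ell$ keeps the degree in each $z_\ell$ below $N_\ell$, so that no monomial outside the box appears. The binomial expansion of $(z_\ell-m_\ell)^k$ with $k<N_\ell$ only produces powers $z_\ell^j$ with $j\le k<N_\ell$, which settles this. If $C=0$, everything is trivial.
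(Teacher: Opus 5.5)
Your proposal is correct and follows essentially the same route as the paper. Both use the tensor-degree approximant $e^{i\sum_\ell m_\ell}\prod_\ell P_\ell(t_\ell)$ with the box-span rank bound, the same telescoping identity (yours is written with the opposite sign, but it is equivalent), integration against $|H|^2$, the zero-operator comparison, and then the finite-rank rule combined with the direct transfer to obtain the count.
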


\begin{proof}
\emph{Step 1: construct the tensor-degree polynomial.}
Replace the product in
\eqref{eq:sharp-analytic-product-factorization} by
\begin{equation}
 e^{i\sum_\ell m_\ell}\prod_{\ell=1}^rP_\ell(t_\ell).
 \label{eq:sharp-analytic-product-approximant}
\end{equation}
Because $t_\ell=z_\ell-m_\ell$ and $\deg P_\ell<N_\ell$, expansion of
$P_\ell(z_\ell-m_\ell)$ involves only
$1,z_\ell,\ldots,z_\ell^{N_\ell-1}$.  Multiplying these expansions over
$\ell$ gives only multi-indices satisfying
$0\leq\nu_\ell<N_\ell$.  After inserting the separated amplitude, the
range and adjoint range lie in the two spaces in
\eqref{eq:sharp-analytic-box-spans}.  Thus the approximating operator has
rank at most $d_{\boldsymbol N}^{\Box}$, not merely the raw term count
$p\prod_\ell N_\ell$.

\emph{Step 2: telescope the product error.}
For any chosen ordering of the factors, write
$E_\ell=e^{it_\ell}$.  The exact identity
\begin{equation}
 \prod_{j=1}^rE_{\pi(j)}-\prod_{j=1}^rP_{\pi(j)}
 =\sum_{j=1}^r
 \left(\prod_{h<j}P_{\pi(h)}\right)
 (E_{\pi(j)}-P_{\pi(j)})
 \left(\prod_{h>j}E_{\pi(h)}\right)
 \label{eq:sharp-analytic-product-telescoping}
\end{equation}
is obtained by replacing the factors one at a time.  Since
$|E_\ell|=1$, taking absolute values gives the bound $\Delta_\pi$ in
\eqref{eq:sharp-analytic-telescoping-delta}.  We may choose the best
ordering, which gives $\Delta$.

Multiplication by $|H|$ and integration over $X\times Y$ yield
\eqref{eq:sharp-analytic-anisotropic-error}.  Moreover,
$|P_\ell|\leq|e^{it}|+|P_\ell-e^{it}|\leq1+\delta_\ell$.  Substitution into
the telescoping sum gives
\[
 \sum_{j=1}^r\delta_{\pi(j)}
 \prod_{h<j}(1+\delta_{\pi(h)})
 =\prod_{\ell=1}^r(1+\delta_\ell)-1,
\]
proving \eqref{eq:sharp-analytic-correct-error-product}.  The zero operator
has rank zero and Hilbert--Schmidt error $C$, proving
\eqref{eq:sharp-analytic-product-zero-minimum}.

\emph{Step 3: transfer to the spectrum.}
If the error in \eqref{eq:sharp-analytic-product-zero-minimum} is at most
$\sqrt\epsilon$, the finite-rank approximation rule gives
$n(\sqrt\epsilon;K)\leq d_{\boldsymbol N}^{\Box}$.  The direct transfer
\eqref{eq:abstract-direct-count} proves
\eqref{eq:sharp-analytic-anisotropic-count}.
\end{proof}

The next corollary removes the unspecified polynomials from the anisotropic
theorem.

\begin{corollary}[Explicit anisotropic Taylor--Bessel certificate]
\label{cor:sharp-analytic-anisotropic-explicit}
For $N\geq1$ and $M\geq0$, define
\begin{equation}
 \widehat\chi_N(M):=
 \begin{cases}
 0,&M=0,\\[2mm]
 \displaystyle
 \min\left\{\dfrac{M^N}{N!},
       2\sum_{k=N}^{\infty}|J_k(M)|\right\},&M>0.
 \end{cases}
 \label{eq:sharp-analytic-widehat-chi}
\end{equation}
For $\boldsymbol N\in\mathbb N^r$, set
\begin{equation}
 \eta_{\boldsymbol N}
 :=\min\left\{1,
 \prod_{\ell=1}^r
 \left(1+\widehat\chi_{N_\ell}(M_\ell)\right)-1
 \right\}.
 \label{eq:sharp-analytic-anisotropic-eta}
\end{equation}
Then
\begin{equation}
 s_{d_{\boldsymbol N}^{\Box}+1}(K)
 \leq C\eta_{\boldsymbol N}.
 \label{eq:sharp-analytic-anisotropic-explicit-sv}
\end{equation}
If $K$ is a contraction and $0<\epsilon<1/2$, then
\begin{equation}
 C\eta_{\boldsymbol N}\leq\sqrt\epsilon
 \quad\Longrightarrow\quad
 \Lambda_\epsilon(K^*K)
 \leq d_{\boldsymbol N}^{\Box}
 \leq p\prod_{\ell=1}^rN_\ell.
 \label{eq:sharp-analytic-anisotropic-explicit-count}
\end{equation}
\end{corollary}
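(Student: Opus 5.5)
The plan is to specialize Theorem~\ref{thm:sharp-analytic-anisotropic-product} by choosing, for each interaction $\ell$, one concrete polynomial $P_\ell$ of degree less than $N_\ell$ whose uniform error on $[-M_\ell,M_\ell]$ is at most $\widehat\chi_{N_\ell}(M_\ell)$. Then \eqref{eq:sharp-analytic-product-zero-minimum} immediately gives an approximant of rank at most $d_{\boldsymbol N}^{\Box}$ with Hilbert--Schmidt error at most $C\eta_{\boldsymbol N}$. The bound \eqref{eq:sharp-analytic-product-zero-minimum} only needs some admissible $\delta_\ell$, and the map $\delta\mapsto\prod(1+\delta_\ell)-1$ is nondecreasing in each entry. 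So it suffices to produce, for each $\ell$, a polynomial with $\delta_\ell\le\widehat\chi_{N_\ell}(M_\ell)$.

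The choice of $P_\ell$ splits into cases. If $M_\ell=0$, then $t_\ell=0$ almost everywhere, and the constant polynomial $P_\ell=1$ (degree $0<N_\ell$, since $N_\ell\ge1$) gives $\delta_\ell=0$. The supremum in \eqref{eq:sharp-analytic-delta-q} is then over $\{0\}$, so this is exact. If $M_\ell>0$, take either the Taylor polynomial $T_{N_\ell-1}(t)=\sum_{k<N_\ell}(it)^k/k!$ or the Chebyshev--Bessel polynomial $Q_{N_\ell-1,M_\ell}$ of \eqref{eq:sharp-analytic-chebyshev-polynomial}, whichever has the smaller certified error. The Taylor polynomial has error at most $M_\ell^{N_\ell}/N_\ell!$ by \eqref{eq:analytic-imaginary-remainder}. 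The Chebyshev--Bessel polynomial has error at most $2\sum_{k\ge N_\ell}|J_k(M_\ell)|$ by \eqref{eq:sharp-analytic-bessel-uniform-error}, proved in Corollary~\ref{cor:sharp-analytic-chebyshev-bessel}. Both have degree at most $N_\ell-1$, so the rank bookkeeping of Step~1 of Theorem~\ref{thm:sharp-analytic-anisotropic-product} applies unchanged.

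Having fixed these polynomials, I would apply Theorem~\ref{thm:sharp-analytic-anisotropic-product}. It yields $K^{\Box}_{\boldsymbol N,\boldsymbol P}$ of rank at most $d_{\boldsymbol N}^{\Box}$ and error at most $C(\prod(1+\widehat\chi_{N_\ell}(M_\ell))-1)$. The zero operator gives error $C$. Taking the better of the two gives error at most $C\eta_{\boldsymbol N}$ in operator norm. The approximation-number characterization (equivalently Lemma~\ref{lem:abstract-finite-rank-rule}) then gives \eqref{eq:sharp-analytic-anisotropic-explicit-sv}. For the count, if $K$ is a contraction and $C\eta_{\boldsymbol N}\le\sqrt\epsilon$, the strict count gives $n(\sqrt\epsilon;K)\le d_{\boldsymbol N}^{\Box}$. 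The direct transfer \eqref{eq:abstract-direct-count} then gives $\Lambda_\epsilon(K^*K)\le d_{\boldsymbol N}^{\Box}$, and \eqref{eq:sharp-analytic-box-spans} supplies the final bound $p\prod N_\ell$.

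There is no real obstacle here; the only delicate points are bookkeeping. The $M_\ell=0$ branch must be handled without dividing by $M_\ell$ in $T_k(t/M_\ell)$. The case $C=0$ (null product space) is trivial, since then $K=0$. Finally, one must note that the per-factor choice may differ across $\ell$, which is allowed because Theorem~\ref{thm:sharp-analytic-anisotropic-product} permits arbitrary $P_\ell$.
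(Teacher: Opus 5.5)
Your proposal is correct and matches the paper's proof: take $P_\ell\equiv1$ when $M_\ell=0$, and otherwise the better of the Taylor and Chebyshev--Bessel polynomials, so that $\delta_\ell\le\widehat\chi_{N_\ell}(M_\ell)$. Then invoke \eqref{eq:sharp-analytic-product-zero-minimum} and the singular-value and count conclusions of Theorem~\ref{thm:sharp-analytic-anisotropic-product}; your remark that $\prod_\ell(1+\delta_\ell)-1$ is nondecreasing in each $\delta_\ell\ge0$ makes explicit a step the paper leaves implicit.
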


\begin{proof}
If $M_\ell=0$, take $P_\ell\equiv1$, which is exact.  If $M_\ell>0$,
choose whichever of the degree-$(N_\ell-1)$ Taylor polynomial and the
Chebyshev--Bessel polynomial
\eqref{eq:sharp-analytic-chebyshev-polynomial} gives the smaller displayed
error.  Then
$\delta_\ell\leq\widehat\chi_{N_\ell}(M_\ell)$.  Apply
\eqref{eq:sharp-analytic-product-zero-minimum} and then the singular-value
and plunge-count conclusions of
Theorem~\ref{thm:sharp-analytic-anisotropic-product}.
\end{proof}

\begin{remark}[What the sharpenings do and do not assert]
\label{rem:sharp-analytic-caveats}
The preceding bounds should be interpreted with the following qualifications.
\begin{enumerate}
\item The number $M$ is the exact half-oscillation for the fixed centered
interaction $\theta$.  Different admissible feature centers, or a different
finite separated representation of the same phase, can change $M$ and the
weighted measure $\sigma$.  Every such choice gives a valid certificate, so
one may minimize the resulting upper bounds.  The word ``exact'' in
Lemma~\ref{lem:sharp-analytic-half-oscillation} refers to optimal subtraction
of a scalar from that fixed $\theta$, not to an optimization over all possible
representations of the phase.

\item The dimensions in
\eqref{eq:sharp-analytic-total-degree-spans} and
\eqref{eq:sharp-analytic-box-spans} are ranks of finite Gram matrices and are
therefore computable whenever the corresponding inner products are known.
For example, if $r=2$ and
$\widetilde X_1=\widetilde X_2$ almost everywhere, the input monomials of
total degree less than $N$ span at most $N$ functions per amplitude term,
not $\binom{N+1}{2}$.  Further cancellation between coefficients can make
the rank of a particular approximant smaller still; the displayed span
dimensions are safe upper bounds, not claims of equality.

\item The weighted best-polynomial error is never larger than the centered
Taylor error, and the minimum in
\eqref{eq:sharp-analytic-chi} is never larger than either its Taylor or
Chebyshev--Bessel entry.  The Bessel entry by itself is not uniformly
superior for every $M$ and $N$.  Estimate
\eqref{eq:sharp-analytic-explicit-bessel-tail} shows its effective
$(M/2)^N/N!$ scale only once the stated denominator is positive.

\item The correct universal error for a product of separately approximated
factors is
$\prod_\ell(1+\delta_\ell)-1$, or the sharper ordered telescoping quantity
$\Delta$.  It is generally \emph{not} $\prod_\ell\delta_\ell$: that false
expression would vanish if one factor were exact even though another factor
still had a nonzero error.

\item Neither degree geometry dominates the other.  When all tensor degrees
are set equal to the total degree, $N_\ell=N$, the total-degree method has the
smaller raw rank $p\binom{N+r-1}{r}$ and can exploit cancellation in the
global half-oscillation $M$.  The tensor-degree method has raw rank
$p\prod_\ell N_\ell$, but it can assign very small degrees to weak or exact
interactions.  The final certificate should take the minimum of the original
factorial, centered best-polynomial, Chebyshev--Bessel, and anisotropic
product bounds.
\end{enumerate}
\end{remark}

\begin{remark}[What is, and is not, obtained by Fourier equivalence]
\label{rem:fourier-equivalent-transforms}
Some linear canonical and fractional Fourier transforms have kernels with a
quadratic phase and constant amplitude.  On parameter ranges where chirp
multiplications, reflections, and dilations give an exact unitary
factorization through the ordinary Fourier transform, any singular-value or
plunge estimate transfers by unitary equivalence, with the corresponding
rescaling of the windows.  That observation is useful, but it is a corollary
of the Fourier theorem rather than a new transform theorem.

By contrast, the nonlinear interaction $e^xe^y$ in
\eqref{eq:nonlinear-kernel-example} is not reduced by the chirp, reflection,
and dilation equivalences just described;
Proposition~\ref{prop:nonlinear-kernel-example} follows from the
finite-interaction theorem itself.  Hankel, Laplace, and Bargmann-type
transforms are not automatically covered either: their amplitudes, measures,
and often unbounded domains require separate approximation estimates.  We
therefore make no blanket claim for those transforms here.
\end{remark}

\section{Exact defect energy and high-cluster subtraction}
\label{sec:sharp-defect-energy}

This section strengthens the approximation certificates by retaining the
exact Hilbert--Schmidt energy of the defect.  For hard Fourier localization,
that energy has equivalent spatial and frequency translation formulas; it
also certifies an upper spectral cluster that can be subtracted from every
direct count.

\subsection{The exact defect trace for hard Fourier localization}
\label{sec:sharp-fourier-defect-trace}

The scalar $\mathcal D$ in \eqref{eq:sharp-m-D-definition} has several
equivalent exact geometric representations.  The spatial cross-boundary
identity below is the one-dimensional specialization of
\cite[Proposition~1.2]{HughesIsraelMayeli} under the Fourier normalization
used here.  We include the proof because it also yields the frequency-dual,
symmetric-difference, and rescaled forms used later.  These identities are
valid for arbitrary finite-measure windows; boundedness, interval structure,
and boundary regularity are not needed.

\begin{proposition}[Exact cross-boundary and symmetric-difference formulas]
\label{prop:sharp-fourier-defect-trace}
Let $A,B\subset\mathbb R$ be measurable with $|A|,|B|<\infty$, let $c>0$,
and set
\[
 C:=cB,
 \qquad
 K:=P_C\mathcal F P_A,
 \qquad
 S:=K^*K,
 \qquad
 m:=c|A||B|.
\]
Use the Fourier-transform convention
\[
 \widehat{\one_E}(u):=\int_Ee^{-2\pi iux}\,dx.
\]
Here and below $E-u:=\{x-u:x\in E\}$, with all sets understood modulo
null sets.
Then $K$ is Hilbert--Schmidt, $S$ is a positive trace-class contraction,
$\operatorname{Tr}S=m$, and
\begin{align}
 \mathcal D
 :=\operatorname{Tr}(S-S^2)
 &=\int_A\int_{A^c}
   \left|\int_Ce^{2\pi i(x-y)\xi}\,d\xi\right|^2dy\,dx
 \label{eq:sharp-defect-spatial-cross}\\
 &=\int_C\int_{C^c}
   \left|\widehat{\one_A}(\xi-\eta)\right|^2d\eta\,d\xi
 \label{eq:sharp-defect-frequency-cross}\\
 &=\frac12\int_{\mathbb R}
   |\widehat{\one_C}(u)|^2
   |A\mathbin{\triangle}(A-u)|\,du
 \label{eq:sharp-defect-A-symdiff}\\
 &=\frac12\int_{\mathbb R}
   |\widehat{\one_A}(v)|^2
   |C\mathbin{\triangle}(C-v)|\,dv.
 \label{eq:sharp-defect-C-symdiff}
\end{align}
The two correctly rescaled forms in terms of the undilated set $B$ are
\begin{align}
 \mathcal D
 &=\frac c2\int_{\mathbb R}
   |\widehat{\one_B}(z)|^2
   \left|A\mathbin{\triangle}
          \left(A-\frac zc\right)\right|\,dz,
 \label{eq:sharp-defect-A-scaled}\\
 \mathcal D
 &=\frac c2\int_{\mathbb R}
   |\widehat{\one_A}(v)|^2
   \left|B\mathbin{\triangle}
          \left(B-\frac vc\right)\right|\,dv.
 \label{eq:sharp-defect-B-scaled}
\end{align}
All identities are unchanged by null-set modifications of $A$ or $B$.
\end{proposition}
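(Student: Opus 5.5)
The plan is to reduce everything to the Hilbert--Schmidt norm of the spatial defect operator, and then rewrite that norm by Fubini, Plancherel and changes of variables. By Lemma~\ref{lem:intro-finite-measure}, $K$ is Hilbert--Schmidt with $\operatorname{Tr}S=m$, so $S$ is a positive trace-class contraction and $S-S^2$ is trace class. The projection computation used in Theorem~\ref{thm:weighted-measurable-partition} gives $T^*T=S-S^2$ for $T:=P_{A^c}Q_CP_A$. Hence $\mathcal D=\|T\|_{\mathcal S_2}^2$.

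For \eqref{eq:sharp-defect-spatial-cross}, I will write $Q_C$ as the integral operator with kernel $k_C(y,x)=\int_Ce^{2\pi i(y-x)\xi}\,d\xi=\widehat{\one_C}(x-y)$. For $x\in A$ the function $y\mapsto k_C(y,x)$ lies in $L^2$ with norm $|C|^{1/2}$, so $Q_CP_A$ is Hilbert--Schmidt. The kernel of $T$ is then $\one_{A^c}(y)k_C(y,x)\one_A(x)$. Squaring and integrating gives the first formula; the modulus makes the sign convention of the exponent irrelevant.

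For \eqref{eq:sharp-defect-frequency-cross}, I will apply the same argument to $KK^*=P_CRP_C$ with $R=\mathcal FP_A\mathcal F^{-1}$. Its nonzero spectrum agrees with that of $S$, so $\operatorname{Tr}(S-S^2)=\operatorname{Tr}(S'-S'^2)$ by the trace identity for $K^*K$ versus $KK^*$. The defect is $U=(I-P_C)RP_C$, and the kernel of $R$ is $\widehat{\one_A}(\xi-\eta)$ up to conjugation.

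For the symmetric-difference forms, I start from \eqref{eq:sharp-defect-spatial-cross} and substitute $y=x-u$. Tonelli gives
\[
\int_{\mathbb R}|\widehat{\one_C}(u)|^2\,|A\cap(A^c+u)|\,du .
\]
Since $|A\cap(A^c+u)|=|A|-|A\cap(A+u)|=\tfrac12|A\triangle(A+u)|$, the symmetry $u\mapsto -u$ (using $|\widehat{\one_C}(-u)|=|\widehat{\one_C}(u)|$ and $|A\triangle(A+u)|=|A\triangle(A-u)|$) yields \eqref{eq:sharp-defect-A-symdiff}. The same computation applied to \eqref{eq:sharp-defect-frequency-cross} gives \eqref{eq:sharp-defect-C-symdiff}.

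The scaled forms follow from $\widehat{\one_{cB}}(u)=c\,\widehat{\one_B}(cu)$ together with the substitution $z=cu$, and from $cB\triangle(cB-v)=c\bigl(B\triangle(B-v/c)\bigr)$, whose measure is $c|B\triangle(B-v/c)|$. Null-set invariance is immediate, because only measures and the Fourier transforms of indicators appear.

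The main obstacle I expect is rigor in identifying the operator kernels for merely finite-measure sets, where Fubini must be justified and $\widehat{\one_C}$ is only an $L^2\cap L^\infty$ function. I will handle this by viewing $Q_CP_A f=\mathcal F^{-1}(\one_C\,\mathcal F(\one_Af))$ with $\one_Af\in L^1\cap L^2$. Since $\one_C\in L^1$, $\mathcal F^{-1}$ acts as an absolutely convergent integral, and Fubini then applies directly to the resulting double integral; this gives the kernel formula without any approximation argument.
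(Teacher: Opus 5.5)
Your proposal is correct and follows essentially the paper's route: a Hilbert--Schmidt kernel computation, the substitution $y=x-u$ combined with the equal-measure identity turning a one-sided difference into $\tfrac12|A\triangle(A-u)|$, the $K^*K$ versus $KK^*$ duality for the frequency-side forms, and the dilation identities for the scaled forms. The one variation is that you get \eqref{eq:sharp-defect-spatial-cross} directly as $\|P_{A^c}Q_CP_A\|_{\mathcal S_2}^2$ via $T^*T=S-S^2$, whereas the paper computes $m-\|S\|_{\mathcal S_2}^2$ and subtracts the on-diagonal part of the kernel integral; this is the same kernel split, organized differently.
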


\begin{proof}
\emph{Step 1: Hilbert--Schmidt and trace-class facts.}
As an operator on $L^2(\mathbb R)$, $K$ has integral kernel
\[
 k(\xi,x)=\one_C(\xi)e^{-2\pi ix\xi}\one_A(x).
\]
Therefore Tonelli's theorem gives
\[
 \|K\|_{\mathcal S_2}^2
 =\int_C\int_A|e^{-2\pi ix\xi}|^2dx\,d\xi
 =|A||C|=c|A||B|=m.
\]
The Fourier transform and both projections are contractions, so $K$ is a
contraction.  Since $K$ is Hilbert--Schmidt, $S=K^*K$ is positive and trace
class, with $\operatorname{Tr}S=\|K\|_{\mathcal S_2}^2=m$.

\emph{Step 2: compute the spatial cross-boundary formula.}
Put
\[
 g_C(t):=\int_Ce^{2\pi it\xi}\,d\xi.
\]
The operator $S=P_A\mathcal F^{-1}P_C\mathcal F P_A$ has kernel
\[
 s(x,y)=\one_A(x)\one_A(y)g_C(x-y).
\]
Because $S$ is positive and Hilbert--Schmidt,
\begin{equation}
 \operatorname{Tr}(S^2)=\|S\|_{\mathcal S_2}^2
 =\int_A\int_A|g_C(x-y)|^2dy\,dx.
 \label{eq:sharp-trace-S-square}
\end{equation}
Plancherel's theorem gives
$\int_{\mathbb R}|g_C(t)|^2dt=|C|$.  Translation invariance then yields
\[
 m=|A||C|
 =\int_A\int_{\mathbb R}|g_C(x-y)|^2dy\,dx.
\]
Subtracting \eqref{eq:sharp-trace-S-square} from this identity, and using
nonnegativity to justify every decomposition by Tonelli, proves
\eqref{eq:sharp-defect-spatial-cross}.

\emph{Step 3: rewrite the spatial crossing as a symmetric difference.}
In \eqref{eq:sharp-defect-spatial-cross}, set $u=y-x$.  Since
$g_C(-u)=\widehat{\one_C}(u)$, Tonelli gives
\begin{align*}
 \mathcal D
 &=\int_{\mathbb R}|\widehat{\one_C}(u)|^2
   \left(\int_{\mathbb R}
     \one_A(x)\one_{A^c}(x+u)\,dx\right)du\\
 &=\int_{\mathbb R}|\widehat{\one_C}(u)|^2
   |A\setminus(A-u)|\,du.
\end{align*}
For any finite-measure set $E$, translation invariance gives
$|E|=|E-u|$, and hence
\[
 |E\setminus(E-u)|=|(E-u)\setminus E|
 =\frac12|E\mathbin{\triangle}(E-u)|.
\]
Applying this identity with $E=A$ proves
\eqref{eq:sharp-defect-A-symdiff}.

\emph{Step 4: repeat the computation on the frequency side.}
Let $\widetilde S:=KK^*$.  The nonzero eigenvalues of $K^*K$ and $KK^*$
agree, with multiplicity, so
\[
 \operatorname{Tr}(\widetilde S-\widetilde S^2)
 =\operatorname{Tr}(S-S^2)=\mathcal D.
\]
The kernel of
$\widetilde S=P_C\mathcal F P_A\mathcal F^{-1}P_C$ is
\[
 \widetilde s(\xi,\eta)
 =\one_C(\xi)\one_C(\eta)
   \widehat{\one_A}(\xi-\eta).
\]
Repeating Step 2 with $A$ and $C$ interchanged proves
\eqref{eq:sharp-defect-frequency-cross}.  Setting $v=\eta-\xi$ and using
$|\widehat{\one_A}(-v)|=|\widehat{\one_A}(v)|$ gives
\[
 \mathcal D
 =\int_{\mathbb R}|\widehat{\one_A}(v)|^2
   |C\setminus(C-v)|\,dv.
\]
The same equal-measure argument as in Step 3 proves
\eqref{eq:sharp-defect-C-symdiff}.

\emph{Step 5: rescale from $C$ to $B$.}
For $C=cB$ and $c>0$, a change of variables gives
\begin{equation}
 \widehat{\one_C}(u)=c\widehat{\one_B}(cu),
 \qquad
 |C\mathbin{\triangle}(C-v)|
 =c\left|B\mathbin{\triangle}
          \left(B-\frac vc\right)\right|.
 \label{eq:sharp-two-scaling-identities}
\end{equation}
Insert the first identity into
\eqref{eq:sharp-defect-A-symdiff}, then set $z=cu$; the factor $c^2$ from
the squared Fourier transform and the Jacobian $du=dz/c$ leave the factor
$c$ in \eqref{eq:sharp-defect-A-scaled}.  Inserting the second identity
directly into \eqref{eq:sharp-defect-C-symdiff} proves
\eqref{eq:sharp-defect-B-scaled}.  This completes all claimed identities.
\end{proof}

Combining Proposition~\ref{prop:sharp-fourier-defect-trace} with
\eqref{eq:sharp-zero-rank-two-counts} gives, without replacing translations
by a boundary-content majorant,
\begin{equation}
 \Lambda_\epsilon(A,cB)
 \leq
 \kappa\!\left(
   \frac{\mathcal D(A,cB)}{\epsilon(1-\epsilon)}
 \right),
 \label{eq:sharp-exact-Fourier-defect-count}
\end{equation}
where $\mathcal D(A,cB)$ may be evaluated by any of
\eqref{eq:sharp-defect-spatial-cross}--\eqref{eq:sharp-defect-B-scaled};
here $\mathcal D(A,cB)$ denotes the scalar $\mathcal D$ in
Proposition~\ref{prop:sharp-fourier-defect-trace}.

\subsection{Subtracting a certified high spectral cluster}
\label{sec:sharp-high-cluster}

The direct count $Q_\epsilon(S)$ includes both the transition spectrum and
the cluster at the upper endpoint.  Trace and defect information can certify
part of that upper cluster and subtract it from any direct upper bound.

\begin{theorem}[High-cluster subtraction]
\label{thm:sharp-high-cluster-subtraction}
Let $S$ be a positive trace-class contraction and put
\[
 m:=\operatorname{Tr}S,
 \qquad
 \mathcal D:=\operatorname{Tr}(S-S^2).
\]
For $0<\epsilon<1/2$, define
\begin{align}
 Q_\epsilon
 &:=\operatorname{rank}\one_{(\epsilon,\infty)}(S),
 &
 H_\epsilon
 &:=\operatorname{rank}\one_{[1-\epsilon,1]}(S),
 \label{eq:sharp-Q-H-definition}\\
 L_\epsilon
 &:=\left\lceil
       \left(m-\frac{\mathcal D}{\epsilon}\right)_+
      \right\rceil,
 &
 L_\epsilon^{\circ}
 &:=\max\left\{0,
       \left\lfloor m-\frac{\mathcal D}{\epsilon}\right\rfloor+1
      \right\}.
 \label{eq:sharp-high-cluster-lower-certificate}
\end{align}
Here $x_+:=\max\{x,0\}$.
Then
\begin{equation}
 H_\epsilon\geq L_\epsilon,
 \qquad
 \Lambda_\epsilon(S)=Q_\epsilon-H_\epsilon.
 \label{eq:sharp-cluster-two-facts}
\end{equation}
Consequently, every integer upper bound
$Q_\epsilon\leq\overline Q_\epsilon$ yields
\begin{equation}
 \Lambda_\epsilon(S)
 \leq\overline Q_\epsilon-L_\epsilon.
 \label{eq:sharp-high-cluster-subtraction}
\end{equation}
The open transition window gives the sharper unconditional endpoint rule
\begin{equation}
 \Lambda_\epsilon(S)
 \leq
 \max\{0,\overline Q_\epsilon-L_\epsilon^{\circ}\}
 \leq\overline Q_\epsilon-L_\epsilon.
 \label{eq:sharp-strict-high-cluster-subtraction}
\end{equation}
If $S=K^*K$ with $K$ Hilbert--Schmidt and $K_N$ is as in
Theorem~\ref{thm:sharp-residual-master}, then
\begin{equation}
 \Lambda_\epsilon(S)
 \leq
 \min\left\{
 N+\kappa\!\left(
       \frac{d_N^2}{\epsilon(1-\epsilon)}\right),
 \max\left\{0,
 N+\kappa\!\left(\frac{e_N^2}{\epsilon}\right)
 -L_\epsilon^{\circ}\right\}
 \right\}.
 \label{eq:sharp-master-with-subtraction}
\end{equation}
In particular, the zero approximant gives the fully trace-based certificate
\begin{equation}
 \Lambda_\epsilon(S)
 \leq
 \min\left\{
 \kappa\!\left(
       \frac{\mathcal D}{\epsilon(1-\epsilon)}\right),
 \max\left\{0,
 \kappa\!\left(\frac m\epsilon\right)-L_\epsilon^{\circ}
 \right\}
 \right\}.
 \label{eq:sharp-trace-only-hybrid}
\end{equation}
The outer maxima are necessary: $L_\epsilon^{\circ}$ is a conditional strict
lower certificate for the high cluster when the transition count is positive,
not an unconditional lower bound for $H_\epsilon$.
\end{theorem}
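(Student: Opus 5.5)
The plan is to argue entirely through the eigenvalue list of $S$. Since $S$ is a positive trace-class contraction, its nonzero spectrum is a sequence $\lambda_k\in(0,1]$, repeated with multiplicity. It satisfies $m=\sum_k\lambda_k$ and $\mathcal D=\sum_k\lambda_k(1-\lambda_k)$, with both series convergent. All the rank functions in \eqref{eq:sharp-Q-H-definition} are then counts of indices. Since $0<\epsilon<1/2<1-\epsilon$ and $S\le I$, the set $(\epsilon,\infty)\cap\sigma(S)$ splits disjointly into $(\epsilon,1-\epsilon)$ and $[1-\epsilon,1]$. Because $Q_\epsilon$ is finite (trace class), this gives $\Lambda_\epsilon(S)=Q_\epsilon-H_\epsilon$ immediately.

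\emph{The high-cluster lower certificate.} For every eigenvalue with $\lambda_k<1-\epsilon$ one has $1-\lambda_k>\epsilon$, hence $\lambda_k(1-\lambda_k)\ge\epsilon\lambda_k$. Summing over these indices, and discarding the nonnegative defect terms of the remaining eigenvalues, gives $\sum_{\lambda_k<1-\epsilon}\lambda_k\le\mathcal D/\epsilon$. Therefore
\[
 \sum_{\lambda_k\ge1-\epsilon}\lambda_k\ \ge\ m-\frac{\mathcal D}{\epsilon}.
\]
Each term on the left is at most $1$, so the left side is at most $H_\epsilon$. Since $H_\epsilon$ is a nonnegative integer, $H_\epsilon\ge L_\epsilon$. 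The subtraction \eqref{eq:sharp-high-cluster-subtraction} then follows from $\Lambda_\epsilon=Q_\epsilon-H_\epsilon\le\overline Q_\epsilon-L_\epsilon$.

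\emph{The strict endpoint rule.} This is the only delicate step, and it is where the outer maximum enters. I split into two cases.
\begin{itemize}
\item If $\Lambda_\epsilon(S)=0$, the bound $\max\{0,\cdot\}$ is trivial.
\item If $\Lambda_\epsilon(S)>0$, there is an eigenvalue $\lambda_{k_0}\in(\epsilon,1-\epsilon)$. For it, $\lambda_{k_0}(1-\lambda_{k_0})>\epsilon\lambda_{k_0}$ strictly, since $\lambda_{k_0}>0$ and $1-\lambda_{k_0}>\epsilon$. The summed inequality therefore becomes strict, giving $H_\epsilon>m-\mathcal D/\epsilon$. Consequently $H_\epsilon\ge\lfloor m-\mathcal D/\epsilon\rfloor+1$, and also $H_\epsilon\ge0$, so $H_\epsilon\ge L_\epsilon^\circ$. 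Hence $\Lambda_\epsilon\le\overline Q_\epsilon-L^\circ_\epsilon$.
\end{itemize}
In both cases the first inequality of \eqref{eq:sharp-strict-high-cluster-subtraction} holds.

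For the second inequality there, I compare the two certificates. First, $L^\circ_\epsilon\ge L_\epsilon$, because $\lfloor x\rfloor+1\ge\lceil x\rceil$ and both sides are truncated at $0$ in the same way. Second, $\overline Q_\epsilon-L_\epsilon\ge Q_\epsilon-H_\epsilon\ge0$. Together these give $\max\{0,\overline Q_\epsilon-L^\circ_\epsilon\}\le\overline Q_\epsilon-L_\epsilon$.

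\emph{The master bounds.} For \eqref{eq:sharp-master-with-subtraction} I take $\overline Q_\epsilon=N+\kappa(e_N^2/\epsilon)$, which is admissible by \eqref{eq:sharp-direct-Q-bound}. I insert it into the strict rule and take the minimum with the defect entry of Theorem~\ref{thm:sharp-residual-master}. Setting $K_N=0$ gives $e_0^2=m$ and $d_0^2=\mathcal D$ by \eqref{eq:sharp-m-D-definition}, which yields \eqref{eq:sharp-trace-only-hybrid}. The necessity remark is just the observation that the strict inequality above was only available when $\Lambda_\epsilon>0$.
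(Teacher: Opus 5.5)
Your proposal is correct and follows essentially the same route as the paper. The paper packages your split sum as the single pointwise inequality $\lambda\le\lambda(1-\lambda)/\epsilon+\one_{[1-\epsilon,1]}(\lambda)$ on $[0,1]$ summed over the spectrum, and otherwise uses the same steps: the endpoint partition $(\epsilon,1]=(\epsilon,1-\epsilon)\,\dot\cup\,[1-\epsilon,1]$, the strictness argument when $\Lambda_\epsilon(S)>0$, the comparison $L_\epsilon^{\circ}\ge L_\epsilon$, and the substitution $\overline Q_\epsilon=N+\kappa(e_N^2/\epsilon)$ with $K_0=0$.
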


\begin{proof}
\emph{Step 1: prove a pointwise scalar inequality.}
For every $\lambda\in[0,1]$,
\begin{equation}
 \lambda
 \leq
 \frac{\lambda(1-\lambda)}{\epsilon}
 +\one_{[1-\epsilon,1]}(\lambda).
 \label{eq:sharp-pointwise-high-cluster}
\end{equation}
Indeed, if $0<\lambda<1-\epsilon$, then
$1-\lambda>\epsilon$, so
$\lambda(1-\lambda)/\epsilon>\lambda$.  At $\lambda=0$ both sides are
zero.  If $\lambda\in[1-\epsilon,1]$, the indicator is one and the right
side is at least one, hence at least $\lambda$.  Notice that
$\lambda=1-\epsilon$ belongs to the high cluster; this is required because
the transition interval is open at its upper endpoint.

\emph{Step 2: sum over the spectrum.}
Let $(\lambda_j)$ be the positive eigenvalues of $S$, repeated according to
multiplicity.  All relevant sums converge because $S$ is trace class and
$0\leq S-S^2\leq S$.  Summing
\eqref{eq:sharp-pointwise-high-cluster} gives
\[
 m=\sum_j\lambda_j
 \leq\frac1\epsilon\sum_j\lambda_j(1-\lambda_j)
      +H_\epsilon
 =\frac{\mathcal D}{\epsilon}+H_\epsilon.
\]
Therefore $H_\epsilon\geq m-\mathcal D/\epsilon$.  Since
$H_\epsilon$ is a nonnegative integer, it follows that
$H_\epsilon\geq L_\epsilon$.

\emph{Step 3: partition the direct spectral set with the correct endpoints.}
Because $0<\epsilon<1/2$,
\[
 (\epsilon,1]
 =(\epsilon,1-\epsilon)\,\dot\cup\,[1-\epsilon,1].
\]
As the spectrum of a contraction lies in $[0,1]$, the corresponding
spectral projections are orthogonal and add.  Taking their finite ranks
gives
$Q_\epsilon=\Lambda_\epsilon(S)+H_\epsilon$, which proves
\eqref{eq:sharp-cluster-two-facts}.  If
$Q_\epsilon\leq\overline Q_\epsilon$, subtracting
$H_\epsilon\geq L_\epsilon$ proves
\eqref{eq:sharp-high-cluster-subtraction}.

\emph{Step 4: exploit strictness when a transition eigenvalue exists.}
Put $r_\epsilon=m-\mathcal D/\epsilon$.  If
$\Lambda_\epsilon(S)>0$, then at least one eigenvalue lies in
$(\epsilon,1-\epsilon)$.  For that eigenvalue the scalar inequality
\eqref{eq:sharp-pointwise-high-cluster} is strict, because
$1-\lambda>\epsilon$ and hence
\[
 \lambda<\frac{\lambda(1-\lambda)}{\epsilon}.
\]
All other scalar gaps in \eqref{eq:sharp-pointwise-high-cluster} are
nonnegative.  Summing therefore gives the strict trace inequality
\[
 m<\frac{\mathcal D}{\epsilon}+H_\epsilon,
 \qquad\text{so}\qquad H_\epsilon>r_\epsilon.
\]
Since $H_\epsilon$ is a nonnegative integer,
$H_\epsilon\geq L_\epsilon^{\circ}$.  Thus, in the case
$\Lambda_\epsilon(S)>0$,
\[
 \Lambda_\epsilon(S)
 =Q_\epsilon-H_\epsilon
 \leq\overline Q_\epsilon-L_\epsilon^{\circ}.
\]
If instead $\Lambda_\epsilon(S)=0$, the same conclusion holds after the
right side is truncated below by zero.  This proves the first inequality in
\eqref{eq:sharp-strict-high-cluster-subtraction}.

To compare the two integer corrections, if $r_\epsilon<0$, then both
$L_\epsilon$ and $L_\epsilon^{\circ}$ are zero.  If
$r_\epsilon\geq0$ is nonintegral, both equal
$\lceil r_\epsilon\rceil$.  If $r_\epsilon$ is a nonnegative integer, then
$L_\epsilon^{\circ}=L_\epsilon+1$.  Hence
$L_\epsilon^{\circ}\geq L_\epsilon$.  Since
$\overline Q_\epsilon-L_\epsilon\geq0$, the second inequality in
\eqref{eq:sharp-strict-high-cluster-subtraction} follows.

\emph{Step 5: insert the residual-energy bounds.}
Equation \eqref{eq:sharp-direct-Q-bound} provides the integer upper bound
\[
 \overline Q_\epsilon
 =N+\kappa(e_N^2/\epsilon).
\]
Substitution into \eqref{eq:sharp-strict-high-cluster-subtraction} gives the
second entry of \eqref{eq:sharp-master-with-subtraction}; the first is the
weighted defect entry of \eqref{eq:sharp-direct-defect-master}.  Taking
$N=0$ and $K_0=0$ gives $e_0^2=m$ and $d_0^2=\mathcal D$, proving
\eqref{eq:sharp-trace-only-hybrid}.
\end{proof}

\subsection{Higher defect moments}
\label{sec:sharp-higher-defect-moments}

\begin{proposition}[Higher defect-moment certificates]
\label{prop:sharp-higher-defect-moments}
Let $S$ be a positive contraction, let $q>0$, and assume that
$(S-S^2)^q$ is trace class.  Define
\[
 \mathcal D_q:=\operatorname{Tr}\!\left((S-S^2)^q\right).
\]
Then, for every $0<\epsilon<1/2$,
\begin{equation}
 \Lambda_\epsilon(S)
 \leq
 \kappa\!\left(
   \frac{\mathcal D_q}{[\epsilon(1-\epsilon)]^q}
 \right).
 \label{eq:sharp-higher-defect-count}
\end{equation}
If $S$ is trace class, the hypothesis is automatic for every $q\geq1$, and
\begin{equation}
 \mathcal D_q\leq4^{1-q}\mathcal D.
 \label{eq:sharp-higher-defect-finite}
\end{equation}
Thus the family \eqref{eq:sharp-higher-defect-count} may be optimized over
all defect moments that are known or computable; no monotone improvement in
$q$ is asserted without further spectral information.
\end{proposition}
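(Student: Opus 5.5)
The plan is to reduce everything to the spectral theorem for the positive contraction $D_S^2=S-S^2$ and then apply a strict Chebyshev count, mirroring Step~4 of Lemma~\ref{lem:sharp-residual-energy}.

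First, Theorem~\ref{thm:abstract-defect-transfer} gives $\Lambda_\epsilon(S)=n(t_\epsilon;D_S)$ with $t_\epsilon=\sqrt{\epsilon(1-\epsilon)}$. Since $(S-S^2)^q$ is trace class and $D_S^{2q}=(S-S^2)^q$, the operator $D_S$ is compact: a positive operator with a trace-class power is compact. So $D_S$ has eigenvalues $s_k\ge0$, and $\mathcal D_q=\sum_k s_k^{2q}$. Let $m=n(t_\epsilon;D_S)$. If $m=0$ there is nothing to prove. Otherwise each of the $m$ counted eigenvalues satisfies $s_k>t_\epsilon$, so
\[
 m\,[\epsilon(1-\epsilon)]^q<\sum_{s_k>t_\epsilon}s_k^{2q}\le\mathcal D_q .
\]
The inequality is strict because $m\ge1$ and each term strictly exceeds $t_\epsilon^{2q}$. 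Thus the positive integer $m$ is strictly smaller than $r=\mathcal D_q/[\epsilon(1-\epsilon)]^q$, and the integer-rounding argument of Lemma~\ref{lem:sharp-residual-energy} gives $m\le\kappa(r)$. This proves \eqref{eq:sharp-higher-defect-count}.

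For the trace-class case with $q\ge1$, the eigenvalues $\lambda_j$ of $S$ lie in $[0,1]$, so $\lambda_j(1-\lambda_j)\in[0,1/4]$. Hence
\[
 (\lambda_j(1-\lambda_j))^q\le 4^{1-q}\lambda_j(1-\lambda_j).
\]
Since $S$ is trace class, its eigenvectors together with $\ker S$ diagonalize $S-S^2$, and the kernel contributes zero. Summing therefore shows that $(S-S^2)^q$ is trace class and that $\mathcal D_q\le4^{1-q}\mathcal D$, with $\mathcal D\le\operatorname{Tr}S<\infty$.

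The only delicate point is the compactness and diagonalization step when $S$ itself is not compact, since $S$ may have a spectral component at $1$, as in \eqref{eq:abstract-noncompact-example}. I would handle it by working solely with the positive operator $D_S$ through the Borel functional calculus, where the spectral projection identity \eqref{eq:abstract-projection-identity} is available, rather than with eigenvalues of $S$.
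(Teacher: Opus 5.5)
Your proof is correct and is essentially the paper's argument: a strict Chebyshev count of the defect moment $\mathcal D_q$ followed by the $\kappa$ rounding, and then the scalar bound $x^q\le 4^{1-q}x$ on $[0,1/4]$ in the trace-class case. The only difference is in how finiteness is obtained: you use the transfer theorem together with compactness of $D_S$, whereas the paper compresses $(S-S^2)^q$ by $E=\mathbf 1_{(\epsilon,1-\epsilon)}(S)$ and reads off finite rank of $E$ from $(S-S^2)^qE\ge[\epsilon(1-\epsilon)]^qE$.
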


\begin{proof}
Let $t^2:=\epsilon(1-\epsilon)$ and put
\[
 E:=\one_{(\epsilon,1-\epsilon)}(S),
 \qquad
 A_q:=(S-S^2)^q.
\]
Functional calculus gives $A_qE\geq[t^2]^qE$.  Since $A_q$ is trace class,
so is its positive compression $A_qE$; the preceding lower bound therefore
forces $E$ to have finite rank.  Write
$L:=\operatorname{rank}E=\Lambda_\epsilon(S)$.

If $L>0$, the restriction of $S$ to $\operatorname{Ran}E$ is a
finite-dimensional positive contraction with eigenvalues
$\lambda_1,\ldots,\lambda_L\in(\epsilon,1-\epsilon)$.  Each satisfies
$[\lambda_j(1-\lambda_j)]^q>[t^2]^q$, and hence
\[
 L[t^2]^q
 <\sum_{j=1}^{L}[\lambda_j(1-\lambda_j)]^q
 =\operatorname{Tr}(A_qE)
 \leq\operatorname{Tr}(A_q)=\mathcal D_q.
\]
Thus the nonnegative integer $L$ is strictly smaller than
$\mathcal D_q/[t^2]^q$, and the definition
\eqref{eq:sharp-kappa-definition} gives
\eqref{eq:sharp-higher-defect-count}.  If $L=0$, the same conclusion is
immediate.

Finally, $0\leq S-S^2\leq\tfrac14I$.  For $q\geq1$, scalar functional
calculus gives
\[
 (S-S^2)^q\leq\left(\frac14\right)^{q-1}(S-S^2).
\]
If $S$ is trace class, then $0\leq S-S^2\leq S$ is trace class.  Taking
traces in the preceding inequality proves both the asserted automatic
trace-class property and \eqref{eq:sharp-higher-defect-finite}.
\end{proof}

\section{Synthesis: hybrid certificates, comparison, and limitations}
\label{sec:synthesis}

Several upper bounds are retained because they encode different information.
This section collects them in a single statement and then identifies the
precise extension beyond the regular-domain literature.

\subsection{A hybrid envelope}

For a null measurable set we use the convention
$\operatorname{diam}_{\rm ess}(E)=0$.  For bounded measurable $A,B$ and
$c>0$, define
\[
 m=c|A||B|,
 \qquad
 \zeta=\frac{\pi c}{2}
       \operatorname{diam}_{\rm ess}(A)
       \operatorname{diam}_{\rm ess}(B),
\]
and let
\begin{equation}
 \mathfrak B_{\rm fac}(\epsilon;A,B,c)
 :=\min\left\{N\in\mathbb N_0:
       \sqrt m\frac{\zeta^N}{N!}\le\sqrt\epsilon\right\}.
 \label{eq:synthesis-Bfac}
\end{equation}
At order $N=0$ we use the convention $\zeta^0=1$, including when
$\zeta=0$.  The partition envelope $\mathfrak B_{\rm mat}$ was defined in
\eqref{eq:partition-matrix-envelope}.  For brevity in this section write
\[
 \mathfrak B_{\rm fac}:=\mathfrak B_{\rm fac}(\epsilon;A,B,c),
 \qquad
 \mathfrak B_{\rm mat}:=\mathfrak B_{\rm mat}(\epsilon;A,B,c).
\]
When
$A=\dot\bigcup_{i=1}^M I_i$ and
$B=\dot\bigcup_{j=1}^K J_j$ are nonnull finite unions of bounded intervals
of positive length, put
\begin{equation}
 \mathfrak B_{\rm Sch}
 :=2\inf_{0<p\le1}\frac{[\epsilon(1-\epsilon)]^{-p/2}}p
   \sum_{i=1}^M\sum_{j=1}^K
   \left[8+\Csharp+G_*(c|I_i||J_j|p)\right]
 \label{eq:synthesis-Bsch}
\end{equation}
and, for this fixed component partition, define
\begin{equation}
 \mathfrak B_{\rm KF}
 :=\inf_{\substack{0<\epsilon_{ij}<1/2\\
       \sum_{i,j}\sqrt{\epsilon_{ij}(1-\epsilon_{ij})}
       \le\sqrt{\epsilon(1-\epsilon)}}}
       \sum_{i,j}\Lambda_{\epsilon_{ij}}(I_i,cJ_j).
 \label{eq:synthesis-BKF}
\end{equation}
Any explicit proved local upper bound may replace the exact local counts in
\eqref{eq:synthesis-BKF}, producing a possibly larger explicit certificate.

\begin{theorem}[Hybrid upper envelope]
\label{thm:hybrid-envelope}
For bounded measurable $A,B$, $c>0$, and $0<\epsilon<1/2$,
\begin{equation}
 \Lambda_\epsilon(A,cB)
 \le \mathfrak B_{\rm mat}
 \le \mathfrak B_{\rm fac}.
 \label{eq:synthesis-measurable-min}
\end{equation}
If $A$ and $B$ are the finite interval unions above, then also
\begin{equation}
 \Lambda_\epsilon(A,cB)
 \le
 \min\left\{
   \left\lfloor\mathfrak B_{\rm Sch}\right\rfloor,
   \mathfrak B_{\rm mat},
   \mathfrak B_{\rm KF}
 \right\}.
 \label{eq:synthesis-hybrid-min}
\end{equation}
\end{theorem}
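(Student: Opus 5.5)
The plan is to assemble the theorem from results already proved, checking the degenerate cases and the integer rounding separately.

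\emph{First chain \eqref{eq:synthesis-measurable-min}.}  If either window is null, then $K_{A,B,c}=0$ and every quantity is zero by the stated conventions, so we assume $|A||B|>0$.  The inequality $\Lambda_\epsilon(A,cB)\le\mathfrak B_{\rm mat}$ follows from Corollary~\ref{cor:local-taylor-matrix}.  Indeed, for every admissible finite partition and integer array $\mathbf N$ with $\|\mathbf E(\mathbf N)\|\le\sqrt\epsilon$, that corollary gives $\Lambda_\epsilon(A,cB)\le\sum N_{ji}$.  The count is an integer bounding every element of a set of nonnegative integers, so it bounds the infimum, which is attained.  For $\mathfrak B_{\rm mat}\le\mathfrak B_{\rm fac}$, we use the trivial one-piece partitions $M=K=1$ with $N_{11}=\mathfrak B_{\rm fac}$.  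The $1\times1$ matrix $\mathbf E$ then equals $\sqrt m\,\zeta^{N}/N!$, and this is at most $\sqrt\epsilon$ by the definition \eqref{eq:synthesis-Bfac}.  So this choice is admissible in \eqref{eq:partition-matrix-envelope}, and the infimum is no larger.  The minimum defining $\mathfrak B_{\rm fac}$ exists because the factorial sequence tends to zero, as in Proposition~\ref{prop:abstract-factorial-inversion}.

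\emph{Interval-union case \eqref{eq:synthesis-hybrid-min}.}  The $\mathfrak B_{\rm mat}$ entry is already handled.  For the Schatten entry, note that $[\epsilon(1-\epsilon)]^{-p/2}=t_\epsilon^{-p}$, so \eqref{eq:synthesis-Bsch} is exactly the right-hand side of \eqref{eq:sch-componentwise}.  Theorem~\ref{thm:sch-componentwise} therefore gives $\Lambda_\epsilon\le\mathfrak B_{\rm Sch}$.  Since the left side is an integer, it is at most $\lfloor\mathfrak B_{\rm Sch}\rfloor$; this holds even if the infimum over $p$ is not attained, because the count bounds every fixed-$p$ value.

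For the Ky--Fan entry we apply Theorem~\ref{thm:weighted-measurable-partition} to the component partition $A_i=I_i$, $B_j=J_j$.  The intervals are pairwise disjoint and of positive length, so the partition has no null pieces.  For every feasible family $(\epsilon_{ij})$ in \eqref{eq:synthesis-BKF}, the theorem gives $\Lambda_\epsilon\le\sum\Lambda_{\epsilon_{ij}}(I_i,cJ_j)$.  Passing to the infimum of these integers gives $\Lambda_\epsilon\le\mathfrak B_{\rm KF}$.  The feasible set is nonempty: equal allocation $\sqrt{\epsilon_{ij}(1-\epsilon_{ij})}=t/(MK)$ with $\epsilon_{ij}=\eta(t/(MK))\in(0,1/2)$ satisfies the budget.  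Each local count is finite by Lemma~\ref{lem:intro-finite-measure} and Theorem~\ref{thm:abstract-defect-transfer}.  Taking the minimum of the three entries finishes the argument.

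\emph{Main obstacle.}  No deep step is expected.  The main care lies in the bookkeeping: confirming that the one-piece partition is admissible in $\mathfrak B_{\rm mat}$ (the order-zero convention $\zeta^0=1$ must be checked when $\zeta=0$), that floor rounding of $\mathfrak B_{\rm Sch}$ is legitimate, and that infima over non-attained parameters still dominate the integer count.
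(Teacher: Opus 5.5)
Your proposal is correct and follows the paper's proof essentially step for step. You bound the count by the infimum via Corollary~\ref{cor:local-taylor-matrix}, use the one-piece partition at order $\mathfrak B_{\rm fac}$ to get $\mathfrak B_{\rm mat}\le\mathfrak B_{\rm fac}$, and combine Theorem~\ref{thm:sch-componentwise} (with integer rounding) and Theorem~\ref{thm:weighted-measurable-partition} (with an infimum over feasible allocations) for the interval-union entries. The extra checks you add (the null-window convention, nonemptiness of the allocation set via $\eta$, and the fact that non-attained infima still dominate the integer count) are harmless refinements of the same argument.
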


\begin{proof}
Each entry follows from an independently valid estimate.

First, Corollary~\ref{cor:local-taylor-matrix} bounds the plunge count by
$\sum_{i,j}N_{ji}$ for every finite measurable partition and every integer
array satisfying the matrix-error constraint.  Taking the infimum over all
such choices proves the first inequality in
\eqref{eq:synthesis-measurable-min}.

Second, take the one-piece partitions $A_1=A$ and $B_1=B$.  At a common order
$N$, the scalar error matrix has one entry,
\[
 \mathbf E_{11}=\sqrt m\frac{\zeta^N}{N!}.
\]
The order $N=\mathfrak B_{\rm fac}$ is therefore feasible in the partition
optimization.  This proves
$\mathfrak B_{\rm mat}\le\mathfrak B_{\rm fac}$.

For interval unions, Theorem~\ref{thm:sch-componentwise} gives
$\Lambda_\epsilon(A,cB)\le\mathfrak B_{\rm Sch}$.  Since the left side is an
integer, it is at most $\lfloor\mathfrak B_{\rm Sch}\rfloor$.
The matrix entry was already proved.  Finally,
Theorem~\ref{thm:weighted-measurable-partition} bounds the count by the sum in
\eqref{eq:synthesis-BKF} for every feasible threshold array.  Taking its
infimum gives the third entry.  Since all three inequalities hold
simultaneously, their minimum is also a valid bound.
\end{proof}

The first inequality in \eqref{eq:synthesis-measurable-min} is a formal
dominance statement over the one-piece \emph{factorial certificate}; it is not
a statement that partitions improve every known estimate.  Likewise,
$\mathfrak B_{\rm KF}$ preserves exact local counts but remains computationally
implicit until those counts are bounded.  Formula
\eqref{eq:synthesis-hybrid-min} records simultaneous certified bounds; it is
not a universal optimizer.

\subsection{The strengthened hybrid envelope}
\label{sec:synthesis-strengthened-hybrid}

The factorial and matrix entries in the preceding envelope used a hard
operator-norm cutoff.  Residual energy, the exact Fourier defect trace,
optimized local polynomials, and the orthogonal two-level partition budgets
give additional independently valid entries.  We assemble them and verify
entry by entry that the result formally improves the certificates in
Theorem~\ref{thm:hybrid-envelope}.

For a finite bounded measurable partition and an order array
$\mathbf N=(N_{ji})$, set
\begin{equation}
 R(\mathbf N):=\sum_{j,i}N_{ji},
 \qquad
 H_{\rm best}(\mathbf N)
 :=\|\mathbf E^{\rm best}(\mathbf N)\|_{\rm F}^{2}
 =\sum_{j,i}\mathbf E^{\rm best}(\mathbf N)_{ji}^{2},
 \label{eq:synthesis-energy-partition-data}
\end{equation}
where $\|\cdot\|_{\rm F}$ is the scalar Frobenius norm and
$\mathbf E^{\rm best}$ is defined in \eqref{eq:best-poly-matrix}.  Define
the partition residual-energy envelope
\begin{equation}
 \mathfrak B_{\rm en}
 :=\inf_{\substack{A=\dot\bigcup_i A_i,\ B=\dot\bigcup_j B_j\\
                    \text{finite measurable partitions}\\
                    N_{ji}\in\mathbb N_0}}
 \left[
   R(\mathbf N)
   +\kappa\!\left(
       \frac{H_{\rm best}(\mathbf N)}{\epsilon}
     \right)
 \right].
 \label{eq:synthesis-energy-partition-envelope}
\end{equation}
The trivial one-piece partition is included in this infimum.

Next fix finite partitions $A=\dot\bigcup_iA_i$ and
$B=\dot\bigcup_jB_j$.  In the interval-union comparison below, these are
the component partitions fixed in \eqref{eq:synthesis-Bsch} and
\eqref{eq:synthesis-BKF}.  Put
$t_\epsilon=\sqrt{\epsilon(1-\epsilon)}$, and use the function $\eta$ from
\eqref{eq:eta-recalled-two-level}.  Let $\mathfrak B_{\rm row,2}$ be the
infimum of
\[
 \sum_{i,j}\Lambda_{\eta(\tau_{ij})}(A_i,cB_j)
\]
over positive arrays satisfying
\[
 \sum_i\tau_i^2\leq t_\epsilon^2,
 \qquad
 \sum_j\tau_{ij}\leq\tau_i\quad\text{for every }i.
\]
Define $\mathfrak B_{\rm col,2}$ by the Fourier-dual column constraints in
\eqref{eq:two-level-column-budget}, and put
\begin{equation}
 \mathfrak B_{\rm KF,2}
 :=\min\{\mathfrak B_{\rm row,2},\mathfrak B_{\rm col,2}\}.
 \label{eq:synthesis-BKF2}
\end{equation}
As with $\mathfrak B_{\rm KF}$, explicit proved local upper bounds may
replace the exact local counts, at the cost of a possibly larger certificate.

In the interval-union case, the real-valued Schatten expression admits a
strict integer correction:
\begin{equation}
 \mathfrak B_{\rm Sch}^{\#}
 :=\kappa(\mathfrak B_{\rm Sch})
 =\max\{0,\lceil\mathfrak B_{\rm Sch}\rceil-1\}.
 \label{eq:synthesis-Bsch-sharp}
\end{equation}

Finally, put $K=K_{A,B,c}$, $S=K^*K$, and
\[
 m=\operatorname{Tr}S=c|A||B|,
 \qquad
 \mathcal D=\operatorname{Tr}(S-S^2),
 \qquad
 W=(I-KK^*)^{1/2}.
\]
Let $\mathcal A$ be any nonempty collection of pairs $(N,K_N)$ such that
$N\in\mathbb N_0$, $\operatorname{rank}K_N\leq N$, and
$K-K_N\in\mathcal S_2$.  We require that $(0,0)\in\mathcal A$.  For such a
family define
\begin{align}
 \mathfrak B_{\rm dir}(\mathcal A)
 &:=\inf_{(N,K_N)\in\mathcal A}
 \left[N+\kappa\!\left(
   \frac{\|K-K_N\|_{\mathcal S_2}^2}{\epsilon}
 \right)\right],
 \label{eq:synthesis-Bdir-family}\\
 \mathfrak B_{\rm wdef}(\mathcal A)
 &:=\inf_{(N,K_N)\in\mathcal A}
 \left[N+\kappa\!\left(
   \frac{\|W(K-K_N)\|_{\mathcal S_2}^2}
        {\epsilon(1-\epsilon)}
 \right)\right].
 \label{eq:synthesis-Bwdef-family}
\end{align}
The zero approximant makes both infima finite.  Each is an infimum of
nonnegative integers and therefore is itself a nonnegative integer.

Set
\begin{align}
 \mathfrak B_{\rm def}
 &:=\kappa\!\left(
       \frac{\mathcal D}{\epsilon(1-\epsilon)}
    \right),
 &
 L_\epsilon^{\circ}
 &:=\max\left\{0,
       \left\lfloor m-\frac{\mathcal D}{\epsilon}\right\rfloor+1
    \right\},
 \label{eq:synthesis-def-strict-L}\\
 \overline Q_\epsilon^{\star}(\mathcal A)
 &:=\min\left\{
   \mathfrak B_{\rm dir}(\mathcal A),
   \mathfrak B_{\rm en},
   \mathfrak B_{\rm best}
 \right\},
 &
 \mathfrak B_{\rm bulk}^{\star}(\mathcal A)
 &:=\max\left\{0,
   \overline Q_\epsilon^{\star}(\mathcal A)-L_\epsilon^{\circ}
 \right\}.
 \label{eq:synthesis-Qstar-Bbulkstar}
\end{align}
The family $\mathcal A$ can contain whichever approximants are actually
available: Taylor, least-squares product polynomials, singular-value
truncations, core--tail approximants, or their union.  Enlarging
$\mathcal A$ can only improve the two family envelopes.  The weighted norm
in \eqref{eq:synthesis-Bwdef-family} is global; no blockwise orthogonality is
assumed after applying $W$.

\begin{proposition}[Strict integer Schatten correction]
\label{prop:synthesis-strict-schatten}
Let $c>0$, $0<\epsilon<1/2$, and let $A,B$ be nonnull finite unions of
bounded intervals of positive length.  Then
\begin{equation}
 \Lambda_\epsilon(A,cB)
 \leq\mathfrak B_{\rm Sch}^{\#}
 \leq\lfloor\mathfrak B_{\rm Sch}\rfloor.
 \label{eq:synthesis-strict-schatten-dominance}
\end{equation}
The second inequality is strict exactly when
$\mathfrak B_{\rm Sch}$ is an integer.
\end{proposition}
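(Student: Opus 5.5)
The first inequality needs the strict Schatten count, obtained by revisiting Step~5 of the proof of Theorem~\ref{thm:sch-componentwise} with strict inequalities. The second inequality is elementary arithmetic on $\kappa$.

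\emph{First inequality.} Fix $p\in(0,1]$ and set $X=T=P_{A^c}Q_{cB}P_A$ and $t=t_\epsilon$. If $q:=n(t;T)>0$, then each of the $q$ singular values exceeding $t$ satisfies $s_k(T)^p>t^p$. This gives the strict chain
\[
 q\,t^p<\sum_{s_k(T)>t}s_k(T)^p\leq\|T\|_p^p
 \leq\frac2p\sum_{i,j}\bigl[8+\Csharp+G_*(w_{ij}p)\bigr]=:\Phi(p),
\]
where the last bound is exactly the assembled estimate proved in that theorem. Hence $q<t^{-p}\Phi(p)$ for every $p$. The ordering of the argument matters here. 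A strict bound for each fixed $p$ does not survive an infimum: it only gives $q\le\mathfrak B_{\rm Sch}$. The fix is integrality. Since $q$ is an integer, $q<t^{-p}\Phi(p)$ is equivalent to $q\le\kappa(t^{-p}\Phi(p))$. Because $\kappa$ is nondecreasing, this already gives $q\le\kappa(x)$ for every value $x$ in the set $\{t^{-p}\Phi(p)\}$ whose infimum is $\mathfrak B_{\rm Sch}$.

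We then pass to the infimum. Since $\kappa$ is a left-continuous step function, $\kappa(\mathfrak B_{\rm Sch})$ equals $\kappa(x)$ for $x$ slightly above $\mathfrak B_{\rm Sch}$ whenever $\mathfrak B_{\rm Sch}$ is not an integer. When $\mathfrak B_{\rm Sch}$ is an integer $n$, values slightly above $n$ only give $\kappa=n$. This endpoint case is the main obstacle: if the infimum is not attained, a strict inequality seems to be lost.

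To handle it, I would show the infimum over $p\in(0,1]$ is attained. The function $p\mapsto t^{-p}\Phi(p)$ is continuous on $(0,1]$, because $G_*$ is continuous by Lemma~\ref{lem:sch-Gstar}. It tends to $+\infty$ as $p\downarrow0$, because $\Phi(p)\ge 2MK(8+\Csharp)/p$ and $t^{-p}\ge1$. So a minimizer $p_*$ exists, and applying the strict fixed-$p$ inequality at $p=p_*$ gives $q<\mathfrak B_{\rm Sch}$. This yields $q\le\kappa(\mathfrak B_{\rm Sch})=\mathfrak B_{\rm Sch}^{\#}$. The case $q=0$ is trivial. The reduction $\Lambda_\epsilon(A,cB)=n(t_\epsilon;T)$ is \eqref{eq:sch-defect-count}.

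\emph{Second inequality and strictness.} Note $\mathfrak B_{\rm Sch}>0$, since each summand is at least $8+\Csharp$. For a real $r>0$, the definition of $\kappa$ gives $\kappa(r)=\lceil r\rceil-1$. If $r$ is not an integer, this equals $\lfloor r\rfloor$. If $r$ is an integer, it equals $r-1<r=\lfloor r\rfloor$. Therefore $\mathfrak B_{\rm Sch}^{\#}\le\lfloor\mathfrak B_{\rm Sch}\rfloor$, with strict inequality exactly when $\mathfrak B_{\rm Sch}$ is an integer.
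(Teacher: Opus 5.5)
Your proposal is correct and follows essentially the same route as the paper. You obtain attainment of the infimum over $p\in(0,1]$ from continuity of the two-branch $G_*$ and blow-up as $p\downarrow0$, apply the strict Chebyshev count at the minimizer to get $q<\mathfrak B_{\rm Sch}$, and finish with the same $\kappa(x)=\lceil x\rceil-1$ versus $\lfloor x\rfloor$ arithmetic. Your remark that the integer-valued case is exactly where attainment is needed correctly explains why the paper takes that step.
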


\begin{proof}
For fixed $p\in(0,1]$, denote the right side of the fixed-$p$ estimate in
Theorem~\ref{thm:sch-componentwise} by
\[
 \Phi(p):=
  \frac{2t_\epsilon^{-p}}p
  \sum_{i,j}\bigl[8+\Csharp+G_*(w_{ij}p)\bigr].
\]
The explicit two-branch formula \eqref{eq:sch-Gstar-formula} shows that
$\Phi$ is continuous on $(0,1]$.  Every summand is positive and the factor
$1/p$ diverges, so $\Phi(p)\to\infty$ as $p\downarrow0$.  Hence there is a
$\delta>0$ such that the infimum over $(0,1]$ equals the minimum of the
continuous function $\Phi$ on $[\delta,1]$.  Choose a minimizer $p_*$.  By
definition, $\Phi(p_*)=\mathfrak B_{\rm Sch}$.

Let $T=P_{A^c}Q_{cB}P_A$ and
$q=\Lambda_\epsilon(A,cB)=n(t_\epsilon;T)$.  If $q>0$, every counted
singular value is strictly larger than $t_\epsilon$, and therefore
\[
 q\,t_\epsilon^{p_*}
 <\sum_{s_k(T)>t_\epsilon}s_k(T)^{p_*}
 \leq\|T\|_{p_*}^{p_*}.
\]
The fixed-$p_*$ estimate in the proof of
Theorem~\ref{thm:sch-componentwise} now gives
$q<\Phi(p_*)=\mathfrak B_{\rm Sch}$.  The largest nonnegative integer
strictly smaller than a positive real number $x$ is
$\lceil x\rceil-1$; the case $q=0$ is immediate.  Thus
$q\leq\kappa(\mathfrak B_{\rm Sch})$.  Finally,
$\lceil x\rceil-1=\lfloor x\rfloor$ unless $x$ is an integer, in which case
it equals $\lfloor x\rfloor-1$.  This proves all assertions.
\end{proof}

\begin{theorem}[Strengthened hybrid upper envelope]
\label{thm:synthesis-strengthened-hybrid}
Let $A,B$ be bounded measurable sets, let $c>0$, and let
$0<\epsilon<1/2$.  For every approximant family $\mathcal A$ described
above,
\begin{equation}
 \boxed{
 \Lambda_\epsilon(A,cB)
 \leq
 \min\left\{
   \mathfrak B_{\rm wdef}(\mathcal A),
   \mathfrak B_{\rm bulk}^{\star}(\mathcal A)
 \right\}.}
 \label{eq:synthesis-strengthened-measurable}
\end{equation}
If $A$ and $B$ are the nonnull finite interval unions used in
\eqref{eq:synthesis-Bsch} and \eqref{eq:synthesis-BKF}, one additionally has
\begin{equation}
 \boxed{
 \Lambda_\epsilon(A,cB)
 \leq
 \min\left\{
   \mathfrak B_{\rm wdef}(\mathcal A),
   \mathfrak B_{\rm bulk}^{\star}(\mathcal A),
   \mathfrak B_{\rm KF,2},
   \mathfrak B_{\rm Sch}^{\#}
 \right\}.}
 \label{eq:synthesis-strengthened-interval}
\end{equation}
Moreover, the comparisons below involving only the bounded measurable
quantities hold in general; the two comparisons involving
$\mathfrak B_{\rm KF}$ or $\mathfrak B_{\rm Sch}$ apply in the
interval-union case.
\begin{align}
 \mathfrak B_{\rm bulk}^{\star}(\mathcal A)
 &\leq\overline Q_\epsilon^{\star}(\mathcal A)
 \leq\mathfrak B_{\rm best}
 \leq\mathfrak B_{\rm mat},
 &
 \overline Q_\epsilon^{\star}(\mathcal A)
 &\leq\mathfrak B_{\rm en}
 \leq\mathfrak B_{\rm fac},
 \notag\\
 \mathfrak B_{\rm wdef}(\mathcal A)
 &\leq\mathfrak B_{\rm def},
 &
 \mathfrak B_{\rm KF,2}
 &\leq\mathfrak B_{\rm KF},
 \notag\\
 \mathfrak B_{\rm Sch}^{\#}
 &\leq\lfloor\mathfrak B_{\rm Sch}\rfloor.
 \label{eq:synthesis-four-formal-dominances}
\end{align}
Consequently the strengthened bounded-window certificate is no larger than
\eqref{eq:synthesis-measurable-min}, and the strengthened interval-union
certificate is no larger than \eqref{eq:synthesis-hybrid-min}.
\end{theorem}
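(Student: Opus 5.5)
Each entry of the two boxed minima will be certified separately by an earlier result; the comparisons in \eqref{eq:synthesis-four-formal-dominances} then follow by exhibiting admissible choices. Note that $\mathfrak B_{\rm best}$ here is the best-polynomial partition envelope from \eqref{eq:optimized-envelope-chain}.

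\textbf{Upper bounds.} For the weighted-defect entry, apply the second branch of \eqref{eq:sharp-direct-defect-master} in Theorem~\ref{thm:sharp-residual-master} to each $(N,K_N)\in\mathcal A$ and take the infimum; since this is an infimum of integers, it is attained. For the bulk entry, I will show that $\overline Q^\star_\epsilon(\mathcal A)$ is an integer upper bound for $Q_\epsilon=n(\sqrt\epsilon;K)$, and then invoke \eqref{eq:sharp-strict-high-cluster-subtraction} from Theorem~\ref{thm:sharp-high-cluster-subtraction}. The three sub-bounds are as follows.
\begin{enumerate}
\item $\mathfrak B_{\rm dir}(\mathcal A)$ bounds $Q_\epsilon$ by \eqref{eq:sharp-direct-Q-bound}.
\item $\mathfrak B_{\rm best}$ bounds $Q_\epsilon$ because the proof of Theorem~\ref{thm:optimized-local-block-hierarchy}, through Theorem~\ref{thm:finite-block-matrix}, actually bounds $s_{1+R}(K)$, hence $n(\sqrt\epsilon;K)$, and not only $\Lambda_\epsilon$.
\item $\mathfrak B_{\rm en}$ bounds $Q_\epsilon$ once we have a rank-$R(\mathbf N)$ approximant whose Hilbert--Schmidt residual squared is at most $H_{\rm best}(\mathbf N)+\delta$. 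I build it blockwise: a block-diagonal assembly of near-optimal local best-polynomial approximants gives $\|K-Q\|_{\mathcal S_2}^2=\sum_{j,i}\|K_{ji}-Q_{ji}\|_{\mathcal S_2}^2$, because the Hilbert--Schmidt norm is additive over blocks. Lemma~\ref{lem:sharp-residual-energy} then applies.
\end{enumerate}

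\textbf{Main obstacle.} The infimum over $\delta$ in item 3 is the delicate point. The quantity $\kappa$ is only upper semicontinuous in the correct direction when the strict inequality $(q-R)\epsilon<\|E\|^2_{\mathcal S_2}\le H_{\rm best}+\delta$ is used. Letting $\delta\downarrow0$ gives $(q-R)\epsilon\le H_{\rm best}$, which loses one unit exactly when $H_{\rm best}/\epsilon$ is an integer. I will first try to show that the local infima defining $\mathbf E^{\rm best}$ are attained; then $\delta=0$ and there is no loss. For fixed centres this follows from the Gram formula \eqref{eq:poly-best-residual-scaled}. For the centres, the objective is continuous and coercive: for $|\alpha|$ or $|\beta|$ large, the best residual tends to $|A_i||B_j|$ from below, so a minimizer exists or the value is already $c|A_i||B_j|$. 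If this fails, I will instead use the fact that the infimum over the countable family of admissible values is attained, since the values are integers; this requires only that some sequence of approximants with $\delta\to0$ realises each candidate integer.

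\textbf{Interval case.} For interval unions, add $\mathfrak B_{\rm KF,2}$ from the row and column allocations of Theorem~\ref{thm:two-level-partition-allocation}, and add $\mathfrak B^\#_{\rm Sch}$ from Proposition~\ref{prop:synthesis-strict-schatten}.

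\textbf{Comparisons.}
\begin{enumerate}
\item $\mathfrak B^\star_{\rm bulk}\le\overline Q^\star$ holds since $L^\circ_\epsilon\ge0$.
\item $\overline Q^\star\le\mathfrak B_{\rm best},\mathfrak B_{\rm en}$ holds by the definition of the minimum.
\item $\mathfrak B_{\rm best}\le\mathfrak B_{\rm mat}$ follows from \eqref{eq:norm-error-hierarchy}.
\item $\mathfrak B_{\rm en}\le\mathfrak B_{\rm fac}$: take the one-piece partition with $N=\mathfrak B_{\rm fac}$. Then $H_{\rm best}\le m\zeta^{2N}/N!^2\le\epsilon$, so $\kappa=0$.
\item $\mathfrak B_{\rm wdef}\le\mathfrak B_{\rm def}$: use $(0,0)\in\mathcal A$, for which $\|WK\|^2_{\mathcal S_2}=\mathcal D$.
\item $\mathfrak B_{\rm KF,2}\le\mathfrak B_{\rm KF}$: this is the feasible-set inclusion recorded after Theorem~\ref{thm:two-level-partition-allocation}, with the substitution $\epsilon_{ij}=\eta(u_{ij})$.
\item $\mathfrak B^\#_{\rm Sch}\le\lfloor\mathfrak B_{\rm Sch}\rfloor$ is Proposition~\ref{prop:synthesis-strict-schatten}.
\end{enumerate}
Chaining these with Theorem~\ref{thm:hybrid-envelope} gives the final sentence of the statement.
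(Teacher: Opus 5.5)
Apart from one step, your plan coincides with the paper's proof. The weighted-defect and direct entries come from Theorem~\ref{thm:sharp-residual-master}, $Q_\epsilon\le\mathfrak B_{\rm best}$ comes from the singular-value form \eqref{eq:optimized-block-sv}, and the bulk entry comes from strict high-cluster subtraction. The interval entries come from the two-level allocation and Proposition~\ref{prop:synthesis-strict-schatten}, and the comparisons use the same admissible choices.

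The gap is exactly the step you flag, the $\mathfrak B_{\rm en}$ entry, and neither of your remedies closes it as written.

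\textbf{Your primary route.} It needs that, for each block with $N_{ji}\ge1$, the fixed-centre residual $F(\alpha,\beta)=c\bigl[|A_i||B_j|-d^*G^{-1}d\bigr]$ tends to $c|A_i||B_j|$ as $|(\alpha,\beta)|\to\infty$. This is not coercivity, since $F$ is bounded above by $c|A_i||B_j|$. It is also not automatic, because both the target $e^{-2\pi ic(x-\alpha)(\eta-\beta)}$ and the span of $\{z^k\}$ move with the centres. A proof would go as follows:
\begin{enumerate}
\item Subtract the constant $\alpha\beta$ from $z$ and rescale by $s=|(\alpha,\beta)|$.
\item Check that along a direction $(\alpha,\beta)/s\to(a,b)$ the rescaled basis converges to powers of $bx+a\eta$, whose Gram matrix on $A_i\times B_j$ is nonsingular.
\item Use a two-dimensional Riemann--Lebesgue argument to show that the projection of the target onto the span tends to zero.
\end{enumerate}
The claim is true, so this route can be completed, but your proposal only asserts it.

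\textbf{Your fallback.} It is circular. Integrality of the outer infimum is irrelevant: for the partition and array $\mathbf N$ that attain it, you still need a single approximant whose squared residual is at most $H_{\rm best}(\mathbf N)$ itself. Near-optimal approximants only give $R(\mathbf N)+\kappa\bigl((H_{\rm best}(\mathbf N)+\delta')/\epsilon\bigr)$. This is one larger than the target bound whenever $H_{\rm best}(\mathbf N)/\epsilon$ is a positive integer, which is the attainment question again.

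\textbf{How the paper avoids this.} It never needs attainment. Let $K_R$ be the singular-value truncation of $K$ at rank $R=R(\mathbf N)$. By Eckart--Young, $\|K-K_R\|_{\mathcal S_2}^2=\sum_{k>R}s_k(K)^2$ is at most $\|K-Q_\delta\|_{\mathcal S_2}^2$ for every assembled near-optimal approximant $Q_\delta$ of rank at most $R$. Letting $\delta\downarrow0$ gives $\sum_{k>R}s_k(K)^2\le H_{\rm best}(\mathbf N)$ with no loss. Lemma~\ref{lem:sharp-residual-energy} applied to $K_R$, together with monotonicity of $\kappa$, then gives $Q_\epsilon\le R+\kappa(H_{\rm best}(\mathbf N)/\epsilon)$ exactly.

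A small correction: your assembly should run over all blocks $(j,i)$, not be block-diagonal. Additivity of the Hilbert--Schmidt norm still holds because the block kernels live on the disjoint rectangles $cB_j\times A_i$.
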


\begin{proof}
\emph{Step 1: prove the residual-energy partition entry.}
For every block $(j,i)$ and every $\delta>0$, the definition of
$\mathbf E^{\rm best}$ supplies a rank-at-most-$N_{ji}$ block approximant
whose Hilbert--Schmidt residual norm is at most
$\mathbf E^{\rm best}_{ji}+\delta$.  The block kernels are supported on the
pairwise disjoint rectangles $cB_j\times A_i$ in output--input coordinates.
Their Hilbert--Schmidt inner
products therefore vanish across distinct pairs.  On assembling the local
approximants, the total rank is at most $R(\mathbf N)$ and the squared
Hilbert--Schmidt residual tends, as $\delta\downarrow0$, to
$H_{\rm best}(\mathbf N)$.

Equivalently, the Hilbert--Schmidt Eckart--Young identity gives
\[
 \sum_{k>R(\mathbf N)}s_k(K_{A,B,c})^2
 \leq H_{\rm best}(\mathbf N).
\]
Apply Lemma~\ref{lem:sharp-residual-energy} to a best rank-$R(\mathbf N)$
singular-value truncation of $K_{A,B,c}$ at threshold $\sqrt\epsilon$,
and use the monotonicity of $\kappa$.  It follows that both
$Q_\epsilon(S)$ and $\Lambda_\epsilon(S)$ are at most
\[
 R(\mathbf N)+
 \kappa\!\left(H_{\rm best}(\mathbf N)/\epsilon\right).
\]
Taking the infimum proves the $\mathfrak B_{\rm en}$ direct-count entry and
justifies its inclusion in
$\overline Q_\epsilon^{\star}(\mathcal A)$ in
\eqref{eq:synthesis-Qstar-Bbulkstar}.

\emph{Step 2: prove the optimized matrix and two-level entries.}
For every array feasible in the definition of
$\mathfrak B_{\rm best}$, \eqref{eq:optimized-block-sv} gives
$s_{R(\mathbf N)+1}(K)\leq\sqrt\epsilon$.  Thus
$Q_\epsilon(S)=n(\sqrt\epsilon;K)\leq R(\mathbf N)$.  Taking the integer
infimum gives
\[
 Q_\epsilon(S)\leq\mathfrak B_{\rm best},
\]
which is stronger than the transition-only conclusion in
\eqref{eq:optimized-envelope-chain}.  The row and column parts of
Theorem~\ref{thm:two-level-partition-allocation} separately give
$\Lambda_\epsilon\leq\mathfrak B_{\rm row,2}$ and
$\Lambda_\epsilon\leq\mathfrak B_{\rm col,2}$.  Their minimum is therefore
valid.

\emph{Step 3: prove the weighted-defect and strict bulk entries.}
For each $(N,K_N)\in\mathcal A$, Theorem
\ref{thm:sharp-residual-master} gives
\begin{align*}
 Q_\epsilon(S)
 &\leq N+\kappa\!\left(
   \frac{\|K-K_N\|_{\mathcal S_2}^2}{\epsilon}\right),\\
 \Lambda_\epsilon(S)
 &\leq N+\kappa\!\left(
   \frac{\|W(K-K_N)\|_{\mathcal S_2}^2}
        {\epsilon(1-\epsilon)}\right).
\end{align*}
Taking the two infima proves
$Q_\epsilon(S)\leq\mathfrak B_{\rm dir}(\mathcal A)$ and
$\Lambda_\epsilon(S)\leq\mathfrak B_{\rm wdef}(\mathcal A)$.  Because
$(0,0)\in\mathcal A$ and
$\|WK\|_{\mathcal S_2}^2=\mathcal D$, the latter also proves
$\mathfrak B_{\rm wdef}(\mathcal A)\leq\mathfrak B_{\rm def}$.

Step~1 gives $Q_\epsilon(S)\leq\mathfrak B_{\rm en}$, and Step~2 gives
$Q_\epsilon(S)\leq\mathfrak B_{\rm best}$.  Hence
$Q_\epsilon(S)\leq\overline Q_\epsilon^{\star}(\mathcal A)$.
The three entries defining $\overline Q_\epsilon^{\star}(\mathcal A)$ are
finite nonnegative integers.  This was noted for
$\mathfrak B_{\rm dir}(\mathcal A)$; the other two are infima of nonempty
subsets of $\mathbb N_0$, since the one-piece Taylor construction makes their
defining sets nonempty.  Thus
$\overline Q_\epsilon^{\star}(\mathcal A)$ is an integer direct-count bound.
Apply the strict high-cluster subtraction
\eqref{eq:sharp-strict-high-cluster-subtraction} to obtain
\[
 \Lambda_\epsilon(S)
 \leq\mathfrak B_{\rm bulk}^{\star}(\mathcal A).
\]
Proposition~\ref{prop:synthesis-strict-schatten} gives the last
interval-union entry.  Since every listed inequality holds simultaneously,
their minimum is valid.

\emph{Step 4: prove formal dominance over the old envelope.}
The inequalities
$\mathfrak B_{\rm bulk}^{\star}\leq
 \overline Q_\epsilon^{\star}\leq
 \mathfrak B_{\rm best}\leq\mathfrak B_{\rm mat}$ and
$\overline Q_\epsilon^{\star}\leq\mathfrak B_{\rm en}$ follow from the
definitions, from $L_\epsilon^{\circ}\geq0$, and from
\eqref{eq:optimized-envelope-chain}.  The weighted-defect comparison follows
from the zero-approximant argument in Step~3.  For the Ky--Fan comparison,
start with any flat allocation feasible in \eqref{eq:synthesis-BKF}, set
$u_{ij}=\sqrt{\epsilon_{ij}(1-\epsilon_{ij})}$ and
$\tau_i=\sum_j u_{ij}$, $\tau_{ij}=u_{ij}$.  Then
\[
 \left(\sum_i\tau_i^2\right)^{1/2}
 \leq\sum_i\tau_i
 =\sum_{i,j}u_{ij}
 \leq t_\epsilon.
\]
Since $0<\epsilon_{ij}<1/2$, the inverse relation
\eqref{eq:eta-recalled-two-level} gives
$\eta(u_{ij})=\epsilon_{ij}$.  Thus the same exact local right side is
feasible for the row envelope.  Taking infima gives
$\mathfrak B_{\rm KF,2}\leq\mathfrak B_{\rm KF}$.  The comparison remains
valid if the same local upper surrogates are substituted on both sides; it
need not hold if only one side is enlarged.  The Schatten inequality is
Proposition~\ref{prop:synthesis-strict-schatten}.

It remains to prove
$\mathfrak B_{\rm en}\leq\mathfrak B_{\rm fac}$.  Choose the trivial
one-piece partition and the order $N=\mathfrak B_{\rm fac}$.  The Taylor polynomial is an admissible
local polynomial, so
\[
 H_{\rm best}(N)
 \leq m\left(\frac{\zeta^N}{N!}\right)^2
 \leq\epsilon.
\]
Because $\kappa(r)=0$ for $0\leq r\leq1$, this choice gives
$\mathfrak B_{\rm en}\leq N=\mathfrak B_{\rm fac}$.  Thus each old entry
has a no-larger new counterpart.  Adding further independently valid entries
can only decrease the minimum, completing the proof.
\end{proof}

The comparisons in \eqref{eq:synthesis-four-formal-dominances} are internal,
theorem-to-theorem
dominance statements.  They do not assert numerical dominance over every
regular-domain estimate in the literature.  The exact defect trace and the
high-cluster subtraction add information of a different kind, so neither is
universally ordered against the polynomial, partition, or Schatten entries.

\subsection{What happens to the boundary functions}

The spectral boundaries do not change: the exact defect route always uses
$\sqrt{\epsilon(1-\epsilon)}$, and the direct route always uses
$\sqrt\epsilon$.  What changes is the function that bounds the count after a
kernel approximation is inverted.

\begin{itemize}
 \item The mass bound has a hard zero regime.  If
       $c|A||B|\le\epsilon$, then
       $\Lambda_\epsilon(A,cB)=0$.  This follows already from the order-zero
       factorial certificate and is consistent with
       $\|S\|\leq\operatorname{Tr}S=c|A||B|$.

 \item For a factorial error $C\zeta^N/N!$, the exact integer boundary is the
       least admissible $N$.  When $C>\tau$ and $\zeta>0$, its closed
       surrogate is
       \[
          N_W=\left\lceil
          \frac{\log(C/\tau)}
          {W_0(\log(C/\tau)/(e\zeta))}
          \right\rceil.
       \]
       If $C\le\tau$, order zero suffices; if $C>\tau$ and $\zeta=0$, order
       one suffices.
       With fixed positive $C$ and $\zeta$ and
       $\tau=\sqrt\epsilon$, this is
       $O(\log(1/\epsilon)/\log\log(1/\epsilon))$, rather than a bare linear
       function of $\log(1/\epsilon)$ for this factorial certificate.

 \item For moment-controlled unbounded windows, the boundary is determined by
       the growth of the moments.  The rough hard window $A_*$ and the
       super-Gaussian soft mask both have
       $\log s_{N+1}\le-\tfrac12N\log N+O(N)$ and hence the same
       logarithmic-over-double-logarithmic certificate.  Other moment growth
       laws lead to other functions; no universal moment rate is asserted.

 \item For a contraction in the finite-interaction class with $r$ bounded
       separated interactions, a total-degree order $N$ has rank
       $p\binom{N+r-1}{r}$.  For fixed $p,r,C,\zeta>0$ and
       $\epsilon\downarrow0$, the Lambert inversion gives
       $N=O(\log(1/\epsilon)/\log\log(1/\epsilon))$ and therefore
       \begin{equation}
        \Lambda_\epsilon(K^*K)
        =O\!\left[
          \left(\frac{\log(1/\epsilon)}
                     {\log\log(1/\epsilon)}\right)^r
        \right].
        \label{eq:synthesis-analytic-rate}
       \end{equation}
       The power $r$ arises from counting multivariate interaction
       monomials; for the single Fourier interaction one has $r=1$.

 \item If the approximation error is exponential, stretched exponential, or
       algebraic rather than factorial, Corollary~\ref{cor:abstract-other-laws}
       gives, respectively, logarithmic, a power of a logarithm, or a power of
       $1/\tau$.  The spectral transfer is unchanged, while the resulting
       boundary function depends on the approximation class.
\end{itemize}

To justify the fixed-parameter order used above, let
$L=\log(1/\epsilon)$.  Then
$\rho=\log(C/\sqrt\epsilon)=L/2+O(1)$ and
$W_0(\rho/(e\zeta))\sim\log L$.  Hence
$N_W=O(L/\log L)$.  Finally,
$\binom{N+r-1}{r}=O_r(N^r)$, which proves
\eqref{eq:synthesis-analytic-rate}.

\subsection{Position relative to the literature}

The interval/prolate theory provides sharper structural and asymptotic
information in its natural regimes
\cite{LandauWidom,KarnikRombergDavenport,BonamiJamingKaroui,
AzimifardDeterminant}.  The one-dimensional componentwise Schatten result here
uses the block estimate from \cite{AzimifardIndependent}; that input is
cited, not relabeled as a new proof.  Kulikov and Dam Larsen obtain sharp
uniform bounds when one window is a finite union of parallelepipeds and the
other has finite upper Minkowski boundary content, together with a bound
containing one additional logarithmic factor when both boundaries satisfy
that condition \cite{KulikovLarsen}.  Hughes, Israel, and Mayeli
give the cross-boundary defect-trace identity used in
Proposition~\ref{prop:sharp-fourier-defect-trace} and derive trace bounds for
rough domains with finite positive-order translation perimeters
\cite{HughesIsraelMayeli}.  Section~\ref{sec:sharp-fourier-defect-trace}
records the one-dimensional frequency-dual, symmetric-difference, and scaled
forms needed here.  In dimensions $d\ge2$, Marceca,
Romero, and Speckbacher treat compact domains under maximally Ahlfors-regular
boundary assumptions \cite{MarcecaRomeroSpeckbacher}.  Structural work on
truncated Fourier operators provides additional background
\cite{KatsnelsonMachlufI}.

The set $A_*$ in Theorem~\ref{thm:rough-window} is unbounded, has infinitely
many interval components, has no finite global boundary-tube parameter, and
has infinite $\operatorname{Per}_\gamma$ for every $0<\gamma\le1$.  Hence it
is outside (i) the finite-interval-union theorem used here, (ii) the bounded
finite-upper-Minkowski-content regime of \cite{KulikovLarsen}, and (iii) the
finite-positive-order-perimeter regime of \cite{HughesIsraelMayeli}.
Nevertheless, all its polynomial moments are finite, so
Theorem~\ref{thm:moment-hard} gives an explicit deep-threshold rate.  This
establishes noncoverage at the level of the stated hypotheses for these three
regimes; it does not imply that no earlier conclusion applies.
Hilbert--Schmidt counting and qualitative results for arbitrary finite-measure
windows remain relevant.

The finite-interaction theorem is similarly precise about replacing Fourier.
It covers phases with finitely many bounded separated interactions and
finite-separated-rank amplitudes.  Separated representations also appear in
the numerical analysis of Fourier integral operators
\cite{CandesDemanetYing}, but that literature is not being cited as a prior
plunge-count theorem.  On parameter ranges where a linear canonical or
fractional Fourier transform admits an exact factorization into chirps,
reflections, dilations, and the ordinary Fourier transform, the estimates
transfer by unitary equivalence with rescaled windows.  Hankel, Laplace,
Bargmann, and arbitrary oscillatory transforms require their own amplitude and
tail analysis.

\subsection{Limitations and conclusion}

All principal results are upper bounds.  We do not prove matching lower bounds
for rough windows, optimal constants for finite partitions, or a universal
best partition.  The componentwise Schatten theorem remains restricted to
finite interval unions because its imported one-sided estimate is assembled
through interval tails.  The diameter theorem needs bounded essential hulls;
the moment theorem removes that assumption only when sufficiently high moments
are finite.  The partition results are finite: a countable decomposition would
require additional summability and convergence hypotheses.  Soft-mask
estimates require integrability; unbounded-mask estimates additionally require
the stated moment or truncation hypotheses.  The finite-interaction
approximation theorem requires essentially bounded centered features, finite
separated amplitude rank, and finite weighted $L^2$
amplitude norm; its plunge-count conclusions additionally require the operator
to be a contraction.

Within those limits, the framework separates three questions cleanly: the
spectral transfer, the approximation law, and the geometric assembly.  This
separation is what permits the same proof architecture to retain component
sizes, exploit sparse measurable windows, treat unbounded moment-controlled
sets, and pass to a stated nonlinear class beyond Fourier without overstating
its reach.

\section*{AI use disclosure}
Generative AI tools were used during manuscript preparation for language
editing, copy-editing, bibliographic cross-checking, and limited symbolic and
numerical consistency checks. Selected algebraic identities and
finite-dimensional calculations were also checked in Lean~4. These checks
covered only local statements and were used as supporting verification, not as
substitutes for mathematical proof; they do not constitute a formal
verification of the main theorems or the analytic arguments. The author
reviewed the resulting manuscript and accepts full responsibility for all
statements, derivations, citations, and conclusions.

\bibliographystyle{plain}
\bibliography{references}

@misc{AzimifardIndependent,
  author        = {Azimifard, A.},
  title         = {Scale-free {Hankel} factorization and explicit one-dimensional plunge bounds for time-frequency localization operators},
  year          = {2026},
  eprint        = {2607.23016v3},
  archiveprefix = {arXiv},
  primaryclass  = {math.FA},
  note          = {arXiv:2607.23016v3 [math.FA]},
  doi           = {10.48550/arXiv.2607.23016}
}

@misc{AzimifardDeterminant,
  author        = {Azimifard, A.},
  title         = {Uniform sine-kernel determinant asymptotics, tail-side quantiles, and prolate eigenvalue bounds},
  year          = {2026},
  eprint        = {2608.15808v2},
  archiveprefix = {arXiv},
  primaryclass  = {math.FA},
  note          = {arXiv:2608.15808v2 [math.FA]},
  doi           = {10.48550/arXiv.2608.15808}
}

@book{BhatiaMatrix,
  author    = {Bhatia, Rajendra},
  title     = {Matrix Analysis},
  series    = {Graduate Texts in Mathematics},
  volume    = {169},
  publisher = {Springer},
  address   = {New York},
  year      = {1997},
  doi       = {10.1007/978-1-4612-0653-8}
}

@article{BonamiJamingKaroui,
  author  = {Bonami, Aline and Jaming, Philippe and Karoui, Abderrazek},
  title   = {Non-asymptotic behavior of the spectrum of the sinc-kernel operator and related applications},
  journal = {Journal of Mathematical Physics},
  volume  = {62},
  number  = {3},
  year    = {2021},
  pages   = {033511},
  doi     = {10.1063/1.5140496},
  eprint  = {1804.01257}
}

@article{CandesDemanetYing,
  author  = {Cand{\`e}s, Emmanuel J. and Demanet, Laurent and Ying, Lexing},
  title   = {Fast Computation of {Fourier} Integral Operators},
  journal = {SIAM Journal on Scientific Computing},
  volume  = {29},
  number  = {6},
  year    = {2007},
  pages   = {2464--2493},
  doi     = {10.1137/060671139},
  eprint  = {math/0610051}
}

@misc{HughesIsraelMayeli,
  author        = {Hughes, Kevin and Israel, Arie and Mayeli, Azita},
  title         = {Trace bounds for limiting operators on rough domains},
  year          = {2026},
  eprint        = {2607.02996},
  archiveprefix = {arXiv},
  primaryclass  = {math.CA},
  note          = {arXiv:2607.02996 [math.CA]},
  doi           = {10.48550/arXiv.2607.02996}
}

@article{KarnikRombergDavenport,
  author  = {Karnik, Santhosh and Romberg, Justin and Davenport, Mark A.},
  title   = {Improved bounds for the eigenvalues of prolate spheroidal wave functions and discrete prolate spheroidal sequences},
  journal = {Applied and Computational Harmonic Analysis},
  volume  = {55},
  year    = {2021},
  pages   = {97--128},
  doi     = {10.1016/j.acha.2021.04.002},
  eprint  = {2006.00427}
}

@misc{KatsnelsonMachlufI,
  author        = {Katsnelson, Victor and Machluf, Ronny},
  title         = {The truncated {Fourier} operator. {I}},
  year          = {2009},
  eprint        = {0901.2555},
  archiveprefix = {arXiv},
  primaryclass  = {math.CA},
  note          = {arXiv:0901.2555 [math.CA]},
  doi           = {10.48550/arXiv.0901.2555}
}

@misc{KulikovLarsen,
  author        = {Kulikov, Aleksei and Larsen, Martin Dam},
  title         = {Sharp estimates for eigenvalues of localization operators with applications to area laws},
  year          = {2026},
  eprint        = {2603.23832v1},
  archiveprefix = {arXiv},
  primaryclass  = {math.SP},
  note          = {arXiv:2603.23832v1 [math.SP]},
  doi           = {10.48550/arXiv.2603.23832}
}

@article{LandauWidom,
  author  = {Landau, H. J. and Widom, H.},
  title   = {Eigenvalue distribution of time and frequency limiting},
  journal = {Journal of Mathematical Analysis and Applications},
  volume  = {77},
  number  = {2},
  year    = {1980},
  pages   = {469--481},
  doi     = {10.1016/0022-247X(80)90241-3}
}

@article{MarcecaRomeroSpeckbacher,
  author  = {Marceca, Felipe and Romero, Jos{\'e} Luis and Speckbacher, Michael},
  title   = {Eigenvalue estimates for {Fourier} concentration operators on two domains},
  journal = {Archive for Rational Mechanics and Analysis},
  volume  = {248},
  number  = {3},
  year    = {2024},
  pages   = {35},
  doi     = {10.1007/s00205-024-01979-9},
  eprint  = {2301.11685}
}

@book{SimonTrace,
  author    = {Simon, Barry},
  title     = {Trace Ideals and Their Applications},
  edition   = {2},
  series    = {Mathematical Surveys and Monographs},
  volume    = {120},
  publisher = {American Mathematical Society},
  address   = {Providence, RI},
  year      = {2005},
  doi       = {10.1090/surv/120}
}

\bigskip
\noindent
\begin{minipage}{\textwidth}
\textsc{Ahmadreza Azimifard}\\[2pt]
\textit{Harmonic Research \& Technologies, LLC}\\
New York, NY, USA\\
\textit{Email address:}
\href{mailto:afard@harmonicrt.com}{\texttt{afard@harmonicrt.com}}
\end{minipage}

\end{document}